\theoremstyle{plain}
\newtheorem{lem}{Lemma}
\newtheorem{thm}[lem]{Theorem}
\newtheorem{prop}[lem]{Proposition}
\newtheorem{cor}[lem]{Corollary}
\newtheorem{fact}[lem]{Fact}
\theoremstyle{definition}
\newtheorem{defn}[lem]{Definition}
\newtheorem{example}[lem]{Example}
\newtheorem{remark}[lem]{Remark}
\numberwithin{equation}{subsection}
\numberwithin{lem}{subsection}
\newtheorem{thmA}{Theorem}
\newcommand{\mathfont}{\mathbf}
\newcommand{\Z}{\mathfont Z}
\newcommand{\ZZ}{\mathfont Z}
\newcommand{\Q}{\mathfont Q}
\newcommand{\F}{\mathfont F}
\newcommand{\FF}{\mathfont F}
\newcommand{\fM}{\mathfrak{M}}
\newcommand{\fP}{\mathfrak{P}}
\newcommand{\fS}{\mathfrak{S}}
\newcommand{\fg}{\mathfrak{g}}
\newcommand{\fm}{\mathfrak{m}}
\newcommand{\ft}{\mathfrak{t}}
\newcommand{\bA}{\mathfont{A}}
\newcommand{\bF}{\mathfont{F}}
\newcommand{\bK}{\mathfont{K}}
\newcommand{\cA}{\mathcal{A}}
\newcommand{\cC}{\mathcal{C}}
\newcommand{\cE}{\mathcal{E}}
\newcommand{\cF}{\mathcal{F}}
\newcommand{\cJ}{\mathcal{J}}
\newcommand{\cO}{\mathcal{O}}
\newcommand{\cP}{\mathcal{P}}
\DeclareFontFamily{OT1}{rsfs}{}
\DeclareFontShape{OT1}{rsfs}{n}{it}{<-> rsfs10}{}
\DeclareMathAlphabet{\mathscr}{OT1}{rsfs}{n}{it}
\newcommand{\into}{\hookrightarrow}
\newcommand{\ol}[1]{\overline{#1}}
\DeclareMathOperator{\Hom}{Hom}
\newcommand \tensor[1] {\otimes_{#1}}
\DeclareMathOperator{\Aut}{Aut}
\DeclareMathOperator{\Gal}{Gal}
\DeclareMathOperator{\End}{End}
\DeclareMathOperator{\GRep}{GRep}
\DeclareMathOperator{\Res}{Res}
\DeclareMathOperator{\Sym}{Sym}
\DeclareMathOperator{\der}{der}
\DeclareMathOperator{\Fil}{Fil}
\DeclareMathOperator{\Gr}{Gr}
\DeclareMathOperator{\Mat}{Mat}
\newcommand{\Zp}{\mathfont{Z}_p}
\newcommand{\Qp}{\mathfont{Q}_{p}}
\newcommand{\rhobar}{{\overline{\rho}}}
\DeclareMathOperator{\Spec}{Spec}
\DeclareMathOperator{\Proj}{Proj}
\DeclareMathOperator{\Spf}{Spf}
\DeclareMathOperator{\Lie}{Lie}
\DeclareMathOperator{\GL}{GL}
\DeclareMathOperator{\PGL}{PGL}
\DeclareMathOperator{\SO}{SO}
\DeclareMathOperator{\Sp}{Sp}
\DeclareMathOperator{\GSp}{GSp}
\newcommand{\Ga}{\mathfont{G}_a}
\newcommand{\Gm}{\mathfont{G}_m}
\newcommand{\ad}{\mathrm{ad}}
\DeclareMathOperator{\Ad}{Ad}
\DeclareMathOperator{\LG}{LG}
\newcommand{\LpG}{\textrm{L}^+G}
\renewcommand{\inf}{{\operatorname{inf}}}
\newcommand{\Ainfbasic}{\bA_{\inf}}
\newcommand{\Ainf}[1]{\bA_{\inf,#1}}
\newcommand{\Kbar}{\overline{K}}
\newcommand{\Gammahat}{\widehat{\Gamma}}
\newcommand{\fPbar}{\overline{\fP}}
\DeclareMathOperator{\GMod}{GMod}
\DeclareMathOperator{\dom}{dom}
\newcommand{\AdG}{\Ad_G}
\newcommand{\g}{\fg}
\newcommand{\pseries}[1]{[\![#1 ] \!]}
\newcommand{\lseries}[1]{(\!(#1 ) \!)}
\newcommand{\fRep}{{^f} \hspace{-2pt} \operatorname{Rep}} 
\newcommand{\betabar}{\overline{\beta}}
\newcommand{\tT}{\widetilde{T}} 
\newcommand{\rig}{{\operatorname{rig}}}
\newcommand{\triv}{{\operatorname{triv}}}
\newcommand{\id}{\operatorname{id}}
\newcommand{\dR}{\operatorname{dR}}
\newcommand{\pflat}{\operatorname{flat}}
\newcommand{\high}{\operatorname{high}}
\title{G-Valued Crystalline Deformation Rings in the Fontaine-Laffaille Range}
\author{Jeremy Booher}
\address{School of Mathematics and Statistics, University of Canterbury, Private Bag 4800, Christchurch 8140, New Zealand}
\email{jeremy.booher@canterbury.ac.nz}
\author{Brandon Levin}
\address{Department of Mathematics, The University of Arizona, 617 N. Santa Rita Ave., Tucson, AZ 85721 USA}
\email{bwlevin@math.arizona.edu}
\begin{document} 

\begin{abstract}
Let $G$ be a split reductive group over the ring of integers in a $p$-adic field with residue field $\mathbf{F}$.   
Fix a representation $\overline{\rho}$ of the absolute Galois group of an unramified extension of $\mathbf{Q}_p$, valued in $G(\mathbf{F})$.  We study the crystalline deformation ring for $\overline{\rho}$ with a fixed $p$-adic Hodge type that satisfies an analog of the Fontaine-Laffaille condition for $G$-valued representations.  In particular, we give a root theoretic condition on the $p$-adic Hodge type which ensures that the crystalline deformation ring is formally smooth.  Our result improves on all known results for classical groups not of type A and provides the first such results for exceptional groups.   
\end{abstract}

\maketitle

\section{Introduction}

\subsection{Crystalline Deformation Rings}

Let $p$ be a prime, and $\Lambda$ be the ring of integers in a $p$-adic field $L$ with residue field $\bF$.  We fix a split reductive group $G$ over $\Lambda$.  Then let $K$ be a $p$-adic field unramified over $\Qp$ with residue field $k$ and ring of integers $W(k)$, and denote the absolute Galois group of $K$ by $\Gamma_K$.  

For a fixed continuous homomorphism $\rhobar: \Gamma_K \to G(\bF)$, there has been considerable interest in studying lifts of $\rhobar$ with ``nice'' properties, in particular lifts closely connected to $p$-adic Hodge theory.  This began with Ramakrishna's results on flat deformations \cite{ramakrishna93}, which played an important role in the Taylor-Wiles proof of modularity of semistable elliptic curves over $\Q$.   Most automorphy lifting theorems for $\GL_n$ use either an ordinary or Fontaine--Laffaille condition at $p$ to ensure that the local deformation ring is nice. 
 Fontaine--Laffaille theory \cite{fl83} lets one study special cases of the crystalline deformation rings for $G = \GL_n$ constructed by Kisin \cite{kisin08} when $p$ is unramified in $K$ and the Hodge--Tate weights lie in a small interval relative to $p$.   In this paper, we address the natural question of finding an analogue of the Fontaine--Laffaille condition for $G$-valued crystalline deformations.   More precisely, we give a root theoretic condition on the Hodge--Tate cocharacter which ensures that the crystalline deformation ring is formally smooth.   Up to technical conditions on the isogeny class of the group $G$, we believe this result is close to optimal when the cocharacter is regular.  As discussed below, our result improves on all known results for classical groups not of type A and provides the first such results for exceptional groups.   

We begin by stating our result more precisely.
If $B$ is an $L$-algebra, a continuous representation of $\Gamma_K$ valued in $G(B)$ is crystalline if the composition with any representation of $G$ is crystalline. 
Assuming that $\Lambda$ contains a copy of $W(k)$, to such a representation we may associate a $p$-adic Hodge type, which is a collection of geometric conjugacy classes of cocharacters of $G$ indexed by the set $\cJ$ of embeddings of $W(k)$ into $\Lambda$.
This generalizes the notation of (labeled) Hodge-Tate weights of a representation of $\Gamma_K$ valued in $\GL_n(B)$.  

Let $\mu= (\mu_{\sigma}) _{\sigma \in \cJ}$ be a collection of dominant cocharacters of $G$.  Our primary goal is to study the framed crystalline deformation ring with $p$-adic Hodge type $\mu$, whose $L$-points are crystalline representations with $p$-adic Hodge type given by $\mu$.  We denote this $\Lambda$-algebra by $R_{\rhobar}^{\mu,\square}$. 

\begin{defn} \label{defn:FL}
We say that $\mu$ is Fontaine-Laffaille, or lies in the Fontaine-Laffaille range, provided that $\langle \mu_\sigma,\alpha \rangle < p-1$ for every root $\alpha \in \Phi_{G}$ and every embedding $\sigma$ of $W(k)$ into $L$.  We say $\mu$ is strongly Fontaine-Laffaille provided that $\langle \mu_\sigma,\alpha \rangle < \frac{p-1}{2}$ for every root $\alpha \in \Phi_{G}$ and every embedding.
\end{defn}

There is a natural way to associate potential $p$-adic Hodge types $\mu$ to $\rhobar$.  

\begin{thmA} \label{thm:mainintro}
Suppose $p$ is unramified in $K$ and that $p \nmid \# \pi_1(G^{\ad})$, where $G^{\ad}$ is the adjoint group of $G$.
Fix a Galois representation $\rhobar : \Gamma_K \to G(\bF)$ and a potential $p$-adic Hodge type $\mu$ for the representation.\footnote{More precisely, $\rhobar|_{\Gamma_\infty} \simeq \widetilde{T}_{G,\F}(\fPbar)$ for some $G$-Kisin module $\fPbar$ of shape $\mu$; see Definition~\ref{defn:shapemu} and \S\ref{ss:deformationproblems}.}  
 If $\Spf R_{\rhobar}^{\mu,\square}$ is non-empty and either
\begin{enumerate}[(i)]
\item $\mu$ is Fontaine-Laffaille and the derived group $G^{\der}$ is simply connected, or
\item  $\mu$ is strongly Fontaine-Laffaille,
\end{enumerate}
then $\Spf R_{\rhobar}^{\mu,\square}$ is formally smooth over $\Spf(\Lambda)$.
\end{thmA}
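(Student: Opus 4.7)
The strategy is to translate formal smoothness of $R_{\rhobar}^{\mu,\square}$ into a smoothness statement for a moduli of $G$-Kisin modules of shape $\mu$ deforming $\fPbar$, and then establish the latter via a local-model computation that reduces to a root-by-root version of classical Fontaine-Laffaille smoothness on the Lie algebra $\g$. Concretely, using the functor $\widetilde{T}_{G,\F}$ and its characteristic-zero analogue, I would construct a morphism $\Spf R_{\fPbar}^{\mu,\square} \to \Spf R_{\rhobar}^{\mu,\square}$ whose source parameterizes $G$-Kisin module lifts of $\fPbar$ of shape $\mu$ (equipped with any necessary descent data). Exploiting the full-faithfulness of the Kisin module functor on crystalline lattices in the unramified setting, this morphism should be formally smooth (in fact an isomorphism in the F-L range), reducing the problem to formal smoothness of $R_{\fPbar}^{\mu,\square}$ itself.

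Next I would analyze the $G$-Kisin module deformation problem through its local model inside the loop group $LG$ (or the associated affine Grassmannian). The shape-$\mu$ condition cuts out a bounded subscheme whose formal neighborhood at $\fPbar$ controls the deformation theory, with tangent and obstruction groups computed as the cohomology of a Frobenius-semilinear complex on the adjoint Lie algebra $\g_{\F \pseries{u}}$ twisted by $\Ad\mu$. Decomposing the $\cJ$-indexed copies of $\g$ under a maximal torus as $\ft \oplus \bigoplus_{\alpha \in \Phi_G} \g_\alpha$ breaks this complex into root components on which the Frobenius twist acts with "weight" $\langle \mu_\sigma, \alpha \rangle$ at each embedding $\sigma$. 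The Fontaine-Laffaille inequality $\langle \mu_\sigma, \alpha \rangle < p - 1$ places each root piece inside the classical one-dimensional Fontaine-Laffaille range, where a direct calculation --- essentially the $\GL_1$ F-L smoothness argument applied to each root space separately --- gives vanishing of the obstruction group and freeness of the tangent space. Summing over $\alpha$ and $\sigma$ then yields formal smoothness of $R_{\fPbar}^{\mu,\square}$.

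The main obstacle is making this reduction to a root-by-root computation go through cleanly for a general reductive $G$, where extra care is required to control torsion phenomena in the root datum modulo $p$ and in the center of $G$. The hypothesis $p \nmid \#\pi_1(G^{\ad})$ is what ensures that the integral root decomposition of $\g$ over $\F$ is not disturbed by such torsion, so that the root-space argument is valid in the first place. The simple connectedness of $G^{\der}$ in case (i) means the cocharacter $\mu$ lies in the cocharacter lattice of $G$ itself (rather than of an adjoint quotient), eliminating obstructions to integral lifting coming from the center; in case (ii), these obstructions are instead absorbed by the sharper bound $(p-1)/2$, which allows one to pass through a faithful representation of an isogenous cover where the worst $\Ad\mu$-weights are only doubled and therefore still remain within the classical F-L range.
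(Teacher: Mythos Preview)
Your proposal has two structural gaps, both of which correspond to substantial portions of the paper that cannot be bypassed.

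First, the morphism you want from $G$-Kisin module deformations of $\fPbar$ to $\Spf R_{\rhobar}^{\mu,\square}$ does not exist as stated. The functor $\widetilde{T}_{G,A}$ produces only $\Gamma_{\infty}$-representations, not $\Gamma_K$-representations, and a $G$-Kisin module lift of $\fPbar$ gives rise to a \emph{crystalline} $\Gamma_K$-representation only when the monodromy condition (Definition~\ref{defn:mono}, Corollary~\ref{cor:Gmonodromy}) holds. Thus the Kisin-module deformation ring is strictly larger than the crystalline deformation ring, and the passage between them is not a question of full faithfulness of a lattice functor. The paper handles this by (a) showing the map from $\Spf R_{\rhobar,\fPbar}^{\mu,\beta,\square,\pflat}$ into $\Spf R_{\fPbar}^{\leq\mu,\beta,\square}$ is a closed immersion factoring through the monodromy locus $\Spf R_{\fPbar}^{\leq\mu,\beta,\square,\nabla}$ (Corollary~\ref{cor:closedimmersion}, Theorem~\ref{thm:diagram}), and (b) $p$-adically approximating the monodromy condition by a mod-$p$ differential equation (Theorem~\ref{thm:monoapprox}) whose locus inside the affine Schubert variety is proved smooth (Theorem~\ref{thm:monodromyschubert}). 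Your outline contains no analogue of this monodromy analysis, and without it there is no mechanism forcing the Kisin-side ring down to the correct dimension.

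Second, your proposed root-by-root tangent--obstruction computation does not go through, because the Frobenius $C$ of a general $G$-Kisin module is not torus-valued, so $\Ad_G(C)$ does not preserve the root-space decomposition of $\g$; the semilinear complex you describe does not split into one-dimensional pieces. Relatedly, the deformation ring $R_{\fPbar}^{\leq\mu,\beta}$ is \emph{not} formally smooth: its singularities are modeled on the affine Schubert variety $\Gr_{G'}^{\leq\mu}$ (Proposition~\ref{prop:straightening}), and it is precisely the monodromy condition that cuts out the smooth locus inside it. Finally, your account of why simply-connectedness or the strong Fontaine--Laffaille bound enters is not what happens in the paper: these hypotheses are used to force the Kisin variety $Y^{\leq\mu}_{\cP}$ to be a single reduced point (Proposition~\ref{prop:uniquelattice}, Lemma~\ref{lem:coweightpairing}, Corollary~\ref{cor:kisinvspec}), via an argument about minuscule coweights and pairings with the highest root, which in turn makes the Kisin resolution $X_{\rhobar}^{\mu}\to\Spec R_{\rhobar}^{\mu,\square}$ an isomorphism (Proposition~\ref{prop:kisinresiso}). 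They have nothing to do with integrality of the root decomposition or with passing to an isogenous cover.
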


\begin{remark}
If $\Spf R_{\rhobar}^{\mu,\square}$ is non-empty, its dimension is $\dim G_\F +  \sum_{\sigma \in \cJ} \dim P_{\mu_{\sigma},\F} \backslash G_\F$ where $P_{\mu_{\sigma},\F}$ is the parabolic associated to $\mu_{\sigma}$; this follows from studying the generic fiber \cite[Theorem 5.1.5]{balaji}.  
The assumption that $\Spf R_{\rhobar}^{\mu,\square}$ is non-empty is harmless in many applications, for example to modularity lifting theorems.  Furthermore, when $\mu$ is regular, one does not need the assumption that $\mu$ is a potential $p$-adic Hodge type; the assumption that $\Spf R_{\rhobar}^{\mu,\square}$ is non-empty suffices (see Remark~\ref{rmk:cocharacterhyp}).
\end{remark}

Theorem~\ref{thm:mainintro} generalizes previous results about crystalline deformation rings obtained using Fontaine-Laffaille theory for the general linear, symplectic, and orthogonal groups.  Beyond these cases, for example for spin and exceptional groups, it provides completely new information.

\begin{example}
When $G = \GL_n$, the condition that $\mu$ is Fontaine-Laffaille is equivalent to the Hodge-Tate weights of the $p$-adic Hodge type (using the standard representation) lying in an interval of length less than $p-1$ for each embedding of $L$ into $K$.  In this situation, Clozel, Harris, and Taylor applied Fontaine-Laffaille theory to show formal smoothness of the Fontaine-Laffaille deformation ring \cite[\S2.4]{cht08}.  
This deformation ring is none other than the crystalline deformation ring we consider.  Our proof uses the theory of Kisin modules and so is independent of Fontaine-Laffaille theory.
Theorem~\ref{thm:mainintro} has an extra hypothesis, that $p \nmid n = \# \pi_1(\PGL_n)$.  This hypothesis is needed to apply Proposition~\ref{prop:ts}, where it is used to reduce to the adjoint case.  For the special case of $G=\GL_n$, it is not hard to do a direct analysis and remove this hypothesis (see Remark~\ref{remark:specialGLn}).
\end{example}

\begin{example}
When $p \neq 2$, we can apply Theorem~\ref{thm:mainintro} to symplectic and orthogonal groups.  Now $\Sp_{2n}$ is simply connected while $\SO_n$ is not, and the highest roots (using the standard descriptions of the root system) are $2e_1$ and $e_1+e_2$ respectively.  Using the standard representation, we can relate $p$-adic Hodge types to Hodge-Tate weights.  We see that our result applies to $\Sp_{2n}$ for Fontaine-Laffaille $\mu$, when the  Hodge-Tate weights of the $p$-adic Hodge type lie in the open interval $(-\frac{p-1}{2}, \frac{p-1}{2})$ for each embedding of $L$ into $K$.  
There is an obvious extension to $\GSp_{2n}$ with Hodge-Tate weights lying in an interval of length less than $p-1$. 

Similarly, our results apply to $\SO_n$ for strongly Fontaine-Laffaille $\mu$, when the sum of the two largest Hodge-Tate weights is less than $\frac{p-1}{4}$ for each embedding of $L$ into $K$.  This is the case, for example, if the Hodge-Tate weights lie in  $(-\frac{p-1}{4}, \frac{p-1}{4})$.
  
This generalizes results of Patrikis and the first author (see \cite{patrikis06} and \cite[Theorem 5.2]{booher19}) which use Fontaine--Laffaille modules with pairings to show formal smoothness under the additional assumption that the Hodge-Tate weights with respect to the standard embedding lie in an interval of length less than $(p-1)/2$ (for each embedding of $L$ into $K$) and that the $p$-adic Hodge type is regular (i.e. for each embedding the Hodge-Tate weights are distinct).  In the symplectic case, our method gives roughly double the range of Hodge-Tate weights, while we obtain a similar range for special orthogonal groups but with some added flexibility involving the second-largest Hodge-Tate weight.   
\end{example}

\begin{example}
As spin groups are simply connected, Theorem~\ref{thm:mainintro} applies to Fontaine-Laffaille $p$-adic Hodge types.  Under the quotient map $\operatorname{Spin}_n \to \SO_n$ and embedding $\SO_n \into \GL_n$, this implies the sum of the two largest Hodge-Tate weights would be less than $p-1$.

If we attempted to deduce things about the crystalline deformation ring for $\operatorname{Spin}_n$-valued representations via the quotient map $\operatorname{Spin}_n \to \SO_n$ and applying 
 Fontaine-Laffaille theory with pairings to study $\SO_n$-valued representations, we would need a much stronger condition, that the largest Hodge-Tate weight is less than $\frac{p-1}{4}$.
 \end{example}

\begin{remark}
When $G^{\der}$ is not simply connected, the hypothesis that $\mu$ is strongly Fontaine-Laffaille and not just Fontaine-Laffaille is necessary, even though it might not be initially expected.  Example~\ref{ex:pgl2} gives
an example of a representations $\rho$ valued in $\GL_2$ with quotient $\rho'$ valued in $\PGL_2$ with associated Hodge-Tate weights $0$ and $\frac{p-1}{2}$.  This is Fontaine-Laffaille but not strongly Fontaine-Laffaille. Our result shows the crystalline deformation ring for $\rho$ is formally smooth, but does not apply for $\rho'$.  
We do not expect the deformation ring for $\rho'$ to be smooth.

The group $G^{\der}$ being simply connected or $\mu$ being strongly Fontaine-Laffaille is actually a proxy for a more technical condition (the Kisin variety being trivial, see Theorem~\ref{thm:maintechnical}) which we expect to hold for most $\rhobar$ when $\mu$ is Fontaine-Laffaille even if $G^{\der}$ is not simply connected.  
\end{remark}

\begin{remark} The condition that $p \nmid \# \pi_1(G^{\ad})$ in Theorem~\ref{thm:mainintro} is actually two conditions: the center of $G^{\der}$ is prime to $p$ order and $p \nmid \# \pi_1(G^{\der})$ where $G^{\der}$ is the derived group of $G$.  The former only appears in a tangent space estimate (Proposition~\ref{prop:ts}) and can likely be removed.   The latter condition arises out of serious technical issues with the theory of mixed characteristic affine Schubert varieties for these groups.   We do not know if this condition can be removed.  
\end{remark} 

Finally, we discuss the condition that $R_\rhobar^{\mu,\square}$ is non-empty, or equivalently that $\rhobar$ admits a crystalline lift with $p$-adic Hodge type $\mu$.   As discussed in \S\ref{ss:overview}, our primary analysis is in characteristic $p$.  We expect proving the existence of crystalline lifts with particular $p$-adic Hodge type to require unrelated techniques, and so are content to assume the existence of a lift in  Theorem~\ref{thm:mainintro}.   This is not an issue for applications to automorphy lifting where a lift is often given.  However, it is a manner in which our methods are weaker than usual Fontaine--Laffaille theory.   Given that $\rhobar$ admits a lift of type $\mu$ (i.e., $\Spf R^{\mu, \square}_{\rhobar}$ is nonempty), \cite{levin15} shows that there is a $G$-Kisin module $(\fP, \phi_{\fP})$ with coefficients in $\F$ giving rise to $\rhobar|_{\Gamma_\infty}$, and the conditions in the theorem ensure that $\fP$ is unique.   Furthermore, the elementary divisors of $\phi_{\fP}$ (with respect to variable $u$ and decomposed over embeddings of $W(k)$ into $\Lambda$)  are given by dominant cocharacters $\mu' = (\mu'_\sigma)_{\sigma \in \cJ} $ such that $\mu'_\sigma \leq \mu_\sigma$ in the Bruhat order.  We call $\mu'$ the \emph{shape} of $\fP$.   The condition that $\mu$ is a potential $p$-adic Hodge type in Theorem~\ref{thm:mainintro} is that $\mu' = \mu$.   The theorem we prove in \S \ref{sec:proof} is more general than this, for example, if we assume that $\mu$ is regular then the assumption that $\mu' = \mu$ is not necessary as long as we assume existence of a lift (see Remark~\ref{rmk:cocharacterhyp}). 

If $\rhobar$ is tamely ramified such that the image of $\rhobar$ lies in the normalizer of a maximal torus $T \subset G$,  it is often possible in a combinatorial way to construct crystalline lifts of specified $\mu$ which are also similarly valued in the normalizer of $T$.  These are often referred to as \emph{obvious} or \emph{explicit} crystalline lifts and give a class of $\rhobar$ to which our theorem applies.   The combinatorics of these lifts for a general group $G$ is explored to some extent in Gee--Herzig--Savitt \cite{ghs18} in the context of the weight part of Serre's conjecture.

Lin has also recently investigated the existence of crystalline lifts  \cite{lin2021crystalline,lin2021lyndondemushkin}.  

\begin{remark} Although we have not attempted to prove it here, our expectation is that if $\rhobar$ admits a lift of type $\mu$ then the shape of $\fP$ must be exactly $\mu$ rather than strictly smaller.   This is consistent with the fact that Fontaine--Laffaille modules only deform in fixed weight for $\GL_n$.
\end{remark}      

\begin{remark}
It is not clear whether an alternate approach using a theory of Fontaine--Laffaille modules with $G$-structure could be used to prove Theorem~\ref{thm:mainintro}.  The advantage of this approach would be that it would also produce a crystalline lift, showing that the crystalline deformation ring is non-empty.
While the category of Fontaine--Laffaille modules is a tensor category, the functor relating Galois representations and Fontaine--Laffaille modules has limited compatibility with tensor products.  In particular, for Fontaine--Laffaille modules $M_1$ and $M_2$ the Galois representation associated to $M_1 \otimes M_2$ can be shown to be the tensor product of the Galois representations associated to $M_1$ and $M_2$ only if the ``weights'' of all three Fontaine-Laffaille modules are in an interval of length $p-2$.
See \cite[Fact 4.12]{booher19}, and for more details see the appendix of the arXiv version of \emph{loc. cit}.  This causes substantial technical problems relating $G$-valued Galois representations to Fontaine--Laffaille modules with $G$-structure which we do not know how to resolve.  This limitation explains why in \cite{patrikis06,booher19} the Hodge-Tate weights must lie in an interval of length less than $(p-1)/2$ instead of $p-1$: one tensor product is needed to study the duality pairing.
\end{remark}

\subsection{Overview of the Proof} \label{ss:overview}

We prove Theorem~\ref{thm:mainintro} by relating $G$-valued Galois representations to $G$-Kisin modules.
 Many of the techniques are inspired by \cite{levin15}, which dealt with the very special case when the $p$-adic Hodge type is minuscule, and by \cite{lllm18} and \cite{localmodels} which introduce a $p$-adic approximation to the monodromy condition in $p$-adic Hodge theory and connect this approximation to the geometry of affine Schubert varieties in the case of tamely potentially crystalline representation of small weight.

In \S\ref{sec:prelim}, we review some basics about $G$-bundles, affine Grassmanians, the Tannakian formalism for dealing with $G$-valued representations, and $p$-adic Hodge theory for $G$-valued representations, in preparation for introducing $G$-Kisin modules in \S\ref{sec:gkisin}.  
We define a variety of deformation problems for $G$-Kisin modules and $G$-valued Galois representations in \S\ref{ss:deformationproblems}, and make precise the notion of ``potential $p$-adic Hodge type for $\rhobar$'' that appears in Theorem~\ref{thm:mainintro}.  The Kisin resolution $X_{\rhobar}^{\mu} \rightarrow \Spec R^{\mu, \square}_{\rhobar}$ is our main tool for relating Galois representations and Kisin modules.  We will ultimately show the resolution is an isomorphism and that $X^{\mu}_{\rhobar}$ is formally smooth.

The analysis of the Kisin resolution occurs in \S\ref{ss:kisinvariety} and \S\ref{ss:forgetkisin}, and uses that $p$ is unramified in $K$ and that $\mu$ is strongly Fontaine-Laffaille or that $G^{\der}$ is simply connected and $\mu$ is Fontaine-Laffaille.  (This is the only step of the argument where $\mu$ being Fontaine-Laffaille does not suffice.)  An argument using the Bruhat order on affine Schubert varieties shows the Kisin variety is trivial, and hence $X_{\rhobar}^{\mu}$ is local.    We then make a delicate tangent space argument to show the Kisin resolution is an isomorphism. 

Next, we embed $X_{\rhobar}^{\mu}$ (up to formal variables)  into rigidified deformations of $G$--Kisin modules of type $\leq \mu$, which in this introduction we will denote $\widetilde{D}_{\overline{\fP}}^{\leq \mu}$.   We show it is an embedding in \S\ref{ss:forgetgalois} using the theory of $(\varphi,\widehat{\Gamma})$-modules with $G$-structure developed in \cite{levin15}.  We have to show that $G$--Kisin modules of type $\leq \mu$ admit at most one crystalline $\widehat{\Gamma}$-structure; this requires that $\mu$ is Fontaine-Laffaille and $p$ is unramified in $K$.  

Given that the generic fiber of  $X_{\rhobar}^{\mu}$ consists of crystalline representation, it factors through the flat locus $\widetilde{D}_{\overline{\fP}}^{\leq \mu, \nabla_{\infty}} \subset \widetilde{D}_{\overline{\fP}}^{\leq \mu}$ which satisfies the monodromy condition characterizing $G$-Kisin modules corresponding to crystalline representations.
Let $R^{\nabla_{\infty}}$ (resp. $R$) represent $\widetilde{D}_{\overline{\fP}}^{\leq \mu, \nabla_{\infty}}$ (resp. $\widetilde{D}_{\overline{\fP}}^{\leq \mu}$). 
We let $C \in G((W(k) \otimes R/pR)[\![u]\!] [1/u])$ represent the Frobenius on the universal $G$--Kisin module over $R/p R$.   Via Tannakian formalism, we define an element $u \frac{dC}{du} C^{-1}$ in $\Lie G \otimes (k \otimes R/pR)(\!(u)\!)$.  The condition that 
\begin{equation} \label{intro:modpmono}
u \frac{dC}{du} C^{-1} \in  \Lie G \otimes (k \otimes R/pR)[\![u]\!],
\end{equation}
i.e., that $u \frac{dC}{du} C^{-1}$ has no poles, is a closed condition on $R/pR$ which we refer to as the \emph{mod $p$ monodromy condition} and denote by $(R/pR)^{\nabla_1}$.  By $p$-adically approximating the true monodromy condition, we prove the following theorem:

\begin{thm}[Theorem~\ref{thm:monoapprox}] The special fiber of $\widetilde{D}_{\overline{\fP}}^{\leq \mu, \nabla_\infty}$ is contained in the locus of the special fiber of 
$\widetilde{D}_{\overline{\fP}}^{\leq \mu}$ where the mod-$p$ monodromy condition holds; equivalently, 
\[
\Spec R^{\nabla_{\infty}}/p R^{\nabla_{\infty}} \subset \Spec (R/pR)^{\nabla_1}.
\]
  \end{thm}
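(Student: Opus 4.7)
The plan is a $p$-adic approximation argument: realise the universal monodromy derivation $N_\infty$ over $R^{\nabla_\infty}$ as a $p$-adically convergent sum whose leading term modulo $p$ is exactly $u\frac{dC}{du}C^{-1}$, and then invoke the no-pole property of $N_\infty$ to deduce the same for its mod-$p$ reduction. The key arithmetic input is that $p$ is unramified in $K$, so that $E(u) \equiv u \pmod{p}$, and Frobenius pulls $u$ back to $u^p$; this contraction both drives convergence and forces the leading term to take the shape given by $C$ alone. The framework is essentially the one developed for $\GL_n$ in \cite{lllm18, localmodels}, and the task here is to transcribe it to the $G$-valued setting via the Tannakian formalism of \S\ref{sec:prelim}.

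First I would interpret the flatness condition at the level of Frobenius matrices. On the rigidified universal $G$-Kisin module over $R^{\nabla_\infty}$, the canonical crystalline $\widehat{\Gamma}$-structure (cf.~\cite{levin15}) yields a derivation $N_\infty$ which, Tannakianly, corresponds to an element of $\Lie G \otimes (W(k)\otimes R^{\nabla_\infty})\pseries{u}[1/p]$. The requirement that it come from an honest $\widehat{\Gamma}$-action translates into a Frobenius-compatibility equation schematically of the form
\[
\phi(N_\infty) \;=\; \tfrac{\phi(E(u))}{E(u)}\,\Ad(C)(N_\infty) \;+\; \phi(E(u))\,u\tfrac{dC}{du}C^{-1},
\]
together with the demand that $N_\infty$ extend without pole along $u=0$; the flat locus is precisely the closed subfunctor over which such an $N_\infty$ exists, and uniqueness is provided by the analysis of \S\ref{ss:forgetgalois}.

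Next I would solve this equation iteratively in powers of $p$. Writing $N_\infty = \sum_{k \ge 0} p^k N^{(k)}$, reducing modulo $p$, and using $E(u) \equiv u \pmod{p}$ together with $\phi(u) = u^p$, the equation forces
\[
N^{(0)} \;\equiv\; u\tfrac{dC}{du}C^{-1} \pmod{p},
\]
because the $\Ad(C)$-contribution is multiplied by a factor of $u^{p-1}$ and may be iteratively pushed to higher order in $u$; the same contraction determines $N^{(k)}$ recursively from $N^{(0)},\ldots,N^{(k-1)}$ and supplies $p$-adic convergence. Since $N_\infty$ has no pole at $u=0$ over $R^{\nabla_\infty}$, neither does $N^{(0)}$, so $u\frac{dC}{du}C^{-1} \in \Lie G \otimes (k \otimes R^{\nabla_\infty}/p)\pseries{u}$, which is exactly the mod-$p$ monodromy condition $\nabla_1$. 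By the universal property, the map $\Spec R^{\nabla_\infty}/pR^{\nabla_\infty} \to \Spec R/pR$ thus factors through $\Spec (R/pR)^{\nabla_1}$.

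The main obstacle, I expect, will be the first step: constructing $N_\infty$ Tannakianly as an element of $\Lie G$ (rather than of an ambient matrix Lie algebra) and verifying that the Frobenius-compatibility relation is $G$-equivariant, so that subgroup and quotient reductions are handled uniformly. Once the universal crystalline $\widehat{\Gamma}$-structure is built in a $G$-equivariant way, the convergence estimates from \cite{lllm18, localmodels} carry over verbatim and the iteration closes up, yielding the desired containment.
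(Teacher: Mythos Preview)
Your outline captures the spirit—iterate the Frobenius functional equation for $N_\infty$ and extract its leading behaviour modulo $p$—but there is a genuine gap at the step where you ``reduce $N_\infty$ modulo $p$'' by writing $N_\infty = \sum_{k\ge 0} p^k N^{(k)}$. The derivation $N_\infty$ lives in $\g_{\cO^{\rig}_R[1/\lambda]}$, and even after imposing the monodromy condition one only has $N_\infty \in \g_{\cO^{\rig}_R}$; the ring $\cO^{\rig}_R$ is a subring of $(K \otimes_{\Zp} R)\pseries{u}$ and already contains $1/p$. So there is no $p$-adic expansion with integral terms, and the phrase ``reduce the functional equation modulo $p$'' does not literally make sense. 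Relatedly, the pole you need to remove is at $u = p$ (a zero of $\lambda$), not at $u = 0$; these coincide only \emph{after} passing to characteristic $p$, but the comparison of the true monodromy condition with its approximation must take place over the flat ring $R$. Your functional equation is also not quite the one that holds: in the paper's normalization it reads $N_\infty = E(u)\,\Ad_G(C)(\varphi(N_\infty)) + N_1$ with $N_1 = u\lambda\,\frac{dC}{du}C^{-1}$, and the presence of $\lambda$ (which is not $\equiv 1 \pmod p$ in any useful sense) is part of what makes the naive reduction fail.

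What the paper does instead is to clear denominators: set $L_1(C) := E(u)^{h_\mu}\frac{dC}{du}C^{-1} \in \g_{\fS_R}$ and expand $E(u)^{h_\mu-1}N_\infty$ as a telescoping series $\sum_{i\ge 0} z_i\,\cA_C^i(L_1(C))$ with explicit scalar coefficients $z_i \in \cO^{\rig}$. The monodromy condition and its mod-$p$ approximation are then encoded by ideals $I_{N_\infty}, I_{N_1} \subset R$ built from the first $h_\mu-1$ $u$-derivatives of these quantities evaluated at $u=p$, and the theorem becomes the inclusion $I_{N_1}\subset (I_{N_\infty},p)$. The crux is a careful tabulation of the $p$-adic valuations of the $z_i$ and of their $u$-derivatives at $u=p$ (Lemma~\ref{lem:coefficients}), combined with an inductive bootstrap (Lemma~\ref{lem:inductivestep}) that upgrades congruences $\tfrac{d^i L_1(C)}{du^i}\big|_{u=p} \in (p^{n-i},I_{N_\infty})\,\g'_R$ from level $n$ to level $n+1$. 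This is precisely the content your phrase ``the $\Ad(C)$-contribution is multiplied by $u^{p-1}$ and may be iteratively pushed to higher order'' is hiding: that heuristic is correct for the characteristic-$p$ fixed-point equation, but over $R$ the iteration mixes $u$- and $p$-divisibility in a way that requires the explicit $z_i$ estimates and the hypothesis $h_\mu < p-1$ to close. Without the normalization by $E(u)^{h_\mu-1}$ and those valuation estimates, the argument does not go through.
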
 
 
The singularities of $\widetilde{D}_{\overline{\fP}}^{\leq \mu}$ are related to the singularities of a (mixed characteristic) affine Schubert variety $\prod_{\sigma \in \cJ} \Gr^{\leq \mu_{\sigma}}_G$, and the mod-$p$ monodromy condition can be descended to a condition on $ \prod_{\sigma \in \cJ} \Gr^{\leq \mu_{\sigma}}_{G_{\F}}$, an affine Schubert variety.   The ultimate source of smoothness then is that the differential equation \eqref{intro:modpmono} cuts out a smooth subvariety of the affine Schubert variety (Theorem \ref{thm:monodromyschubert}).

To complete the proof of Theorem~\ref{thm:mainintro}, we collect the relationships between all the deformation problems in Theorem~\ref{thm:diagram} and compare dimensions.   Theorem~\ref{thm:maintechnical} provides a more technical version of Theorem~\ref{thm:mainintro}.  It also offers some information for free about the non-existence of crystalline lifts with particular $p$-adic Hodge types (Corollary~\ref{cor:nolift}).

\subsection{Acknowledgments}
We thank  Matthew Emerton, Toby Gee,  
 Florian Herzig, 
  Timo Richarz, 
and  Niccolo' Ronchetti 
for helpful conversations. We thank the referees for a very careful reading.
The first author was partially supported by the Marsden Fund Council administered by the Royal Society
of New Zealand. 
The second author was supported by a grant from the Simons Foundation/SFARI (\#585753) and supported in part by NSF Grant DMS-1952556.
 
 \subsection{Notation} \label{ss:notation}
 
 We collect some standard notation that will repeatedly arise for easy reference.
 
 Fix a prime $p$.
Our standard convention is for $\Lambda$ to be the ring of integers in a $p$-adic field $L$ with residue field $\F$, and for $G$ to be a split reductive group over $\Lambda$.  We denote the derived group (resp. adjoint group) of $G$ by $G^{\der}$ (resp. $G^{\ad}$) and often use $Z$ and $Z^{\der}$ to denote the centers of $G$ and $G^{\der}$.   In many places, we will assume that $p \nmid \pi_1(G^{\der})$. 

We often fix a finite field $k$ and a $p$-adic field $K$ with ring of integers $W(k)$.  We let $G' := \Res_{(W(k) \otimes \Lambda)/ \Lambda} G$.  When $\Lambda$ contains a copy of $W(k)$,  we have that $G' = \prod_{\sigma \in \Hom(K, L)} G$.
 
\subsubsection{Root Systems} \label{notation:rootdata}
 We fix a split maximal torus $T \subset G$ and a Borel subgroup $B$ of $G$ containing $T$.  Let $X^*(T)$ and $X_*(T)$ denote the character and cocharacter lattices of $T$.  We let $\Phi_G \subset  X^*(T)$ and $\Phi_G^\vee \subset X_*(T)$ denote the roots and coroots for $(G,T)$.
   We let $\langle \mu, \chi \rangle$ denote the standard pairing between a cocharacter $\mu$ and a character $\lambda$.
 
Let $\Phi_G^+$ denote the set of positive roots with respect to $B$, and $X_*(T)_+$ the set of dominant cocharacters.  For a cocharacter $\mu$, let $\mu^{\dom}$ be the unique dominant cocharacter in the same Weil-orbit as $\mu$.  There is a partial ordering on $X_*(T)_+$ where $\lambda \leq \mu$ if $\mu - \lambda$ is a non-negative combination of simple coroots.

For a cocharacter $\mu$ of $G$, we define
\begin{equation}
h_\mu := \max_{\alpha \in \Phi_G} \langle \mu,\alpha \rangle.
\end{equation}

\subsubsection{Lie Algebras} \label{notation:liealg}
We let $\g$ denote the Lie algebra of $G$, $\ft$ the Lie algebra of a split maximal torus $T$, and denote the root space for $\alpha \in \Phi_G$  by $\g_\alpha$.  For a $\Lambda$-algebra $A$, we let $\g_A$ denote $\g \tensor{\Lambda} A$, or equivalently the Lie algebra of $G_A$.  We use $\g'$ (resp. $\ft'$, ...) for the Lie algebra of the Weil restriction $G'$ (resp. the Lie algebra of a split maximal torus $T'$, ...).

\subsubsection{Affine Grassmanians} \label{notation:affinegrassmanian}
Almost all of our work with affine Grassmanians will involve the group $G'$.    We use $\LG'$, $\LpG'$, and $\Gr_{G'}$ to denote the loop group, positive loop groups, and affine Grassmanian for $G'$ over $\Lambda$.  When $p \nmid \pi_1(G^{\der})$ (or equivalently $p \nmid \pi_1((G')^{\der})$), we also use $\Gr_{G'}^{\leq \mu}$ over $\Lambda$; see \S\ref{ss:affinegrassmanian}. 

Over the residue field $\F$, we let $\Gr^{\leq \mu}_{G'_{\F}}$ and  $\Gr^{\circ, \mu}_{G'_{\F}}$ denote the Schubert variety and open Schubert cell for a cocharacter $\mu$ of $G'$.

\subsubsection{Galois Representations} \label{notation:galoisgps}
Let $K$ be a $p$-adic field unramified over $\Q_p$ with fixed algebraic closure $\Kbar$.  Let $k$ be its residue field and $W(k)$ the ring of integers in $K$.  Let $\Gamma_K = \Gal(\Kbar/ K)$ be its absolute Galois group.  We usually fix a continuous homomorphism $\rhobar : \Gamma_K \to G(\F)$.

Fix a compatible system $\{p^{1/p}, p^{1/p^2}, \ldots \}$ of $p$-power roots of $p$, and let $K_\infty = K(p^{1/p},p^{1/p^2},\ldots)$.  We let $\Gamma_\infty$ be the absolute Galois group of $K_\infty$ (with respect to $\overline{K}$).

\subsubsection{Big Rings} \label{notation:bigrings}
Let $\fS := W(k)\pseries{u}$ and $E(u) = u -p$.  We set $\cO_{\cE}$ to be the $p$-adic completion of $\fS[\frac{1}{u}]$.  The rings $\fS$ and $\cO_{\cE}$ are equipped with a Frobenius $\varphi$ by extending the standard Frobenius on $W(k)$ by sending $u$ to $u^p$.

For any $p$-adically complete $\Z_p$-algebra $A$, we set $\fS_A := (W(k) \otimes_{\Zp} A)[\![u]\!]$ and $\cO_{\cE,A}$ is the $p$-adic completion of $\fS_A[1/u]$.  We extend Frobenius to both by having it act trivially on $A$.  Note that if $A$ is finite over $\Zp$ (for example, Artinian) then $\fS_A = \fS \otimes_{\Zp} A$.   

\subsubsection{Kisin Modules} \label{notation:kisinmodules}

A type for $G$ is a cocharacter $\mu$ for the group $G'$.  
We usually use $\fP$ to denote a $G$-Kisin module (Definition~\ref{defn:gkisin}), and denote the category of $G$-Kisin modules of type $\leq \mu$ with coefficients in a $\Lambda$-algebra $A$ by $Y^{\leq \mu}(A)$ (Definition~\ref{defn:typemu}).

\subsubsection{Deformation Functors} \label{notation:deformations}
Let $\cC_{\Lambda}$ (respectively $\widehat{\cC}_{\Lambda}$) denote the categories of coefficient $\Lambda$-algebras: local Artinian $\Lambda$-algebras with residue field $\F$ (respectively complete local Noetherian $\Lambda$-algebras with residue field $\F$).   Morphisms are local $\Lambda$-algebra maps. 

Our deformation problems are functors from $\cC_{\Lambda}$ (or $\widehat{\cC}_{\Lambda}$) to sets.  A variety of deformation problems ($D_{\rhobar}^{\mu,\square}$, $D_{\rhobar, \fPbar}^{\mu,\square}$, etc.) and their associated deformation rings ($R_{\rhobar}^{\mu,\square}$, $R_{\rhobar, \fPbar}^{\mu,\square}$, etc.) are found in Fact~\ref{fact:crystallinedeformation}, Definition~\ref{defn:deformationproblems1}, Definition~\ref{defn:deformationproblems2}, Definition~\ref{defn:flatclosures},
and Definition~\ref{defn:defringmonodromy}.  Theorem~\ref{thm:diagram} shows the relationships between many of them.

\section{Preliminaries} \label{sec:prelim}

Let $G$ be a split reductive group defined the ring of integers $\Lambda$ in a $p$-adic field $L$ with residue field $\bF$.
We begin by reviewing some background about $G$-bundles, the Tannakian formalism, affine Grassmanians, and $p$-adic Hodge theory for $G$-valued representations.  

\subsection{\texorpdfstring{$G$}{G}-Bundles}
 In what follows, all $G$-bundles are with respect to the fppf topology. We begin by recalling a few things about $G$-valued representations and trivializations of $G$-bundles. 
 
 \begin{defn}
For any profinite group $\Gamma$ and a finite $\Lambda$-algebra $A$, let $\GRep_A(\Gamma)$ be the category of pairs $(P,\rho)$ where $P$ is a $G$-bundle on $\Spec A$ and $\rho : \Gamma \to \Aut_G(P)$ is a continuous homomorphism (giving $A$ the $p$-adic topology).
\end{defn}

\begin{defn} \label{defn:trivialization}
A trivialization of a $G$-bundle $\fP$ on $X$ is an isomorphism with the trivial $G$-bundle $\cE^0_X$ on $X$.  
\end{defn}

We know that if $A$ is a complete local $\Lambda$-algebra with finite residue field, then any $G$-bundle on $A$ is trivializable \cite[Proposition 2.1.4]{levin15}.  Thus in all cases we will consider, $\Aut_G(P)$ is (non-canonically) isomorphic to $G(A)$.
Identifying $\Aut_G(P)$ with $G(A)$ is equivalent to choosing a trivialization. 
A continuous homomorphism $\rho : \Gamma \to G(A)$ is equivalent to $(P,\rho') \in \GRep_A(\Gamma)$ together with a trivialization of $P$.

\begin{remark}
For $G = \GL_n$, $G$-bundles are vector bundles in the Zariski topology.  If $A$ is a complete local $\Lambda$-algebra with finite residue field then the vector bundle is given by a finitely generated projective (hence free) module over $A$ and a trivialization is a choice of basis.
\end{remark}

\subsection{Tannakian Formalism}
 
Now let $\fRep_\Lambda(G)$ denote the category of representations of the algebraic group $G$ on finite free $\Lambda$-modules, 
and $\Proj_A$ denote the category of projective $A$-modules.
For a $\Lambda$-algebra $A$, a $G$-bundle $\fP$ on $\Spec (A)$ is equivalent to a fiber functor $\eta : \fRep_\Lambda(G) \to \Proj_A$ (a faithful exact tensor functor sending the trivial representation to the trivial $A$-module of rank $1$); the fiber functor associated to $\fP$ sends a representation $G \to GL(V)$ to the pushout $\fP_V$ (see \cite[Theorem 2.1.1]{levin15} for this level of generality).  This allows us to give ``additional structure'' on the $G$-bundle $\fP$ by factoring $\eta$ through a category of projective $A$-modules with ``additional structure''; equivalently, by specifying ``additional structure'' on the $\Lambda$-modules $\eta(V)$ for each $G \to \GL(V)$ that are compatible in an appropriate sense. 

\begin{example} \label{ex:tannakian} We make this idea precise in a few important examples.
\begin{enumerate}[(a)]
\item \label{firsttannakian} An automorphism of the fiber functor $\eta$ corresponds to an automorphism of the $G$-bundle (see \cite[Theorem 2.5.2]{levinthesis} for this level of generality).  In our situations, the $G$-bundle can be trivialized, so $\eta(V) \simeq V_A$ 
and a choice of trivialization identifies $\Aut(\eta) \simeq \Aut_G(\fP) \simeq G$.  In particular, we see that $G(A)$ is in bijection with collections of elements $g_V \in \GL(V_A)$ for $V \in \fRep_\Lambda(G)$ that are compatible with tensor products and exact sequences  in the sense that $g_{V_1 \otimes V_2 } = g_{V_1} \otimes g_{V_2}$ and such that for short exact sequences $0 \to V_1 \to V \to V_2 \to 0$ in $\fRep_\Lambda(G)$ the following diagram commutes:
\[
\xymatrix{
0 \ar[r] & V_1 \ar[d]^{g_{V_1}} \ar[r]& V\ar[r] \ar[d]^{g_{V}} & V_2\ar[r]\ar[d]^{g_{V_2}} & 0 \\
0 \ar[r] & V_1 \ar[r]& V\ar[r] & V_2\ar[r] & 0. \\
}
\]
Furthermore, $g_{\mathbf{1}} = \id$.

\item  We can extend the previous example to treat a representation of a group $\Gamma$ valued in $G(A)$ as a compatible set of representations of $\Gamma$ on $V_A$ for $V \in \fRep_\Lambda(G)$.

\item  \label{endomorphismtannakian} Similarly, an element $X \in \g_A$ is equivalent to endomorphisms $X_V$ in $\End(V_A)$ for $V \in \fRep_\Lambda(G)$ (with $X_{\mathbf{1}} =0$) that are compatible with exact sequences and with tensor products in the sense that $X_{V \otimes V'} = X_{V} \otimes 1 + 1 \otimes X_{V'}$.  This follows from the dual-number interpretation of the Lie algebra and \eqref{firsttannakian}.

\item \label{gradingstannakian} A $G$-grading is a fiber functor which factors through the category of graded vector bundles on $\Spec(A)$.  This corresponds to giving a grading on each $\eta(V) \simeq V_A$ for each $V \in \fRep_\Lambda(G)$.  The gradings are compatible with exact sequences, and with tensor products in the sense that (using subscripts to denote graded pieces)
\[
(V \otimes V')_n = \bigoplus_{ i+j=n} V_i \otimes V'_j.
\]
Furthermore, $(\mathbf{1})_0 = \mathbf{1}$.  This can equivalently be described via a cocharacter $\mu : \Gm \to \Aut_G(\fP)$, arising by letting $t \in \Gm$ act on $V_n$ via multiplication by $t^n$. 

\item  \label{filtrationstannakian}  There is an analogous description of $G$-filtrations as fiber functors $\eta$ factoring through the category $\Fil_{A}$ of vector bundles on $\Spec(A)$ with decreasing filtration. 
 A splitting is an isomorphism of a $G$-grading with
the composition of $\eta$ with the associated graded functor.  In the setting we are working, all $G$-gradings are split \cite[Proposition IV.2.2.5]{saavedra72}.
The \emph{type} of the filtration is the geometric conjugacy class of cocharacters of $G$ giving the splitting.

\end{enumerate}
\end{example}

\subsection{Affine Grassmannians} \label{ss:affinegrassmanian}
Let $G$ be a split reductive group over $\Lambda$, and fix a split maximal torus $T$ and set of positive roots.    We will make use of a mixed characteristic affine Grassmannian.   In the context where they arise, the affine Grassmannians are centered at $u = p$ and involve a Weil--restriction. We will focus on that setup.   

We let $\LG$ and $\LpG$ denote the loop group and the positive loop group for $G$ over $\Lambda$.  
For a $\Lambda$-algebra $A$, we have that
\[
\LG(A) = G(A\lseries{u - p}) \quad \text{and} \quad \LpG(A) = G(A\pseries{u-p}).
\]
The affine Grassmanian $\Gr_{G}$ is the fpqc quotient $\LpG \backslash \LG$.   It is an ind--projective scheme.  If $\rho:G \rightarrow H$ is any homomorphism of algebraic groups, then there is natural induced map $\rho_*:\Gr_G \rightarrow \Gr_H$.  Given $C \in \LG(A)$, we let $[C]$ denote the equivalence class in the quotient $\Gr_G(A)$.

The special fiber $\Gr_{G_{\F}}$ is just the usual affine Grassmannian for $G$ centered at $u = 0$.   For any cocharacter $\mu$ of $T$, we use $(u-p)^{\mu}$ to denote the element $\mu(u-p) \in L T(\Lambda) \subset LG(\Lambda)$ and $[(u-p)^{\mu}]$ denote the corresponding $\Lambda$-point of $\Gr_{G}$. When working in special fiber, we use $u^{\mu}$ and $[u^{\mu}]$ with the obvious meaning.

Given dominant cocharacters $\mu'$ and $\mu$ of $G$, recall that $\mu' \leq \mu$ if $\mu-\mu'$ is a non-negative linear combination of simple coroots. 
 Let $\Gr^{\leq \mu}_{G_{\F}}$ and  $\Gr^{\circ, \mu}_{G_{\F}}$ denote the affine Schubert variety and open affine Schubert cell for the cocharacter $\mu$,  which are the orbit closure and orbit  respectively of $\LpG$ on $u^{\mu}$.  These only depend on the conjugacy class of $\mu$, so it is convenient to work with dominant representative.  
 
 \begin{thm} \label{thm:affineschubert}
 Assume that $p \nmid \# \pi_1(G^{\mathrm{der}})$ 
 then there is a projective flat $\Lambda$--scheme $\Gr_G^{\leq \mu}$ such that 
\[
(\Gr_G^{\leq \mu})_{\F} =   \Gr^{\leq \mu}_{G_{\F}}
\]
and the generic fiber $(\Gr_G^{\leq \mu})_{L}$ is the reduced closure of the $\LpG_{L}$--orbit of $(u-p)^{\mu}$. 
\end{thm}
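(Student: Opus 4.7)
The plan is to define $\Gr_G^{\leq \mu}$ as the scheme-theoretic flat closure, inside the ind-scheme $\Gr_G$, of the reduced $\LpG_L$-orbit closure of $[(u-p)^{\mu}]$ in the generic fiber. First I would verify that $\LpG_L \cdot [(u-p)^{\mu}] \subset (\Gr_G)_L$ is of finite type (its stabilizer contains a sufficiently high level positive loop subgroup), so its reduced closure $Y_L$ is a projective $L$-variety. By ind-projectivity of $\Gr_G$, the subscheme $Y_L$ lies inside one of the projective pieces in a filtration of $\Gr_G$ by projective $\Lambda$-schemes $\{Z_n\}$; taking the scheme-theoretic closure of $Y_L$ inside the appropriate $Z_n$ defines $\Gr_G^{\leq \mu}$ as a closed subscheme of a projective $\Lambda$-scheme, hence projective over $\Lambda$. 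Flatness is automatic because $\Lambda$ is a discrete valuation ring and $\Gr_G^{\leq \mu}$ has no $p$-torsion in its structure sheaf by construction.

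With the construction in place, the generic fiber is $Y_L$ by definition. For the special fiber, I would exploit the fact that $(u-p)^{\mu}$ specializes to $u^{\mu}$ modulo $p$ and that the $\LpG$-action specializes to the $\LpG_{\F}$-action on the equal characteristic affine Grassmannian $\Gr_{G_{\F}}$ centered at $u = 0$. This gives the set-theoretic inclusion $\Gr^{\leq \mu}_{G_{\F}} \subseteq (\Gr_G^{\leq \mu})_{\F}$. By $\Lambda$-flatness, the dimension of $(\Gr_G^{\leq \mu})_{\F}$ equals that of the generic fiber, which equals $\dim \Gr^{\leq \mu}_{G_{\F}}$, so together with irreducibility on both sides we obtain equality of underlying topological spaces.

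The main obstacle, and the place where the hypothesis $p \nmid \#\pi_1(G^{\mathrm{der}})$ is needed, is promoting this set-theoretic equality to a scheme-theoretic one: one must show that the flat closure has reduced special fiber, rather than a thickening of $\Gr^{\leq \mu}_{G_{\F}}$. I would appeal here to the Pappas--Zhu theory of mixed characteristic local models, in which under the hypothesis on $\pi_1(G^{\mathrm{der}})$ the scheme-theoretic closure is known to have reduced (in fact normal, even Frobenius split) special fiber equal to the Schubert variety $\Gr^{\leq \mu}_{G_{\F}}$. Without this hypothesis one encounters wild cases such as $\SL_p$ where the required coherence and normality results fail or are not available, matching the caveat in the introduction that this technical restriction on $\pi_1(G^{\mathrm{der}})$ stems precisely from issues in the theory of mixed characteristic affine Schubert varieties.
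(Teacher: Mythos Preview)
Your approach is essentially the same as the paper's: both ultimately rest on the Pappas--Zhu local model theory (\cite{pz13}, Theorem 9.3) for the crucial fact that the flat closure has reduced special fiber equal to $\Gr^{\leq \mu}_{G_{\F}}$. The paper's proof is much terser---it simply cites Faltings' construction for the simply connected case and notes the reduction to that case under the $\pi_1$ hypothesis, with Pappas--Zhu as an alternative reference---whereas you spell out the flat-closure construction explicitly before invoking the same machinery.

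One small logical gap in your intermediate argument: when you claim set-theoretic equality of $(\Gr_G^{\leq \mu})_{\F}$ with $\Gr^{\leq \mu}_{G_{\F}}$ via ``dimension plus irreducibility on both sides,'' you have not justified that the special fiber of the flat closure is irreducible. Flatness over a DVR forces every component of the special fiber to have the correct dimension, but does not rule out extra components disjoint from $\Gr^{\leq \mu}_{G_{\F}}$. This gap is harmless in the end, since your appeal to Pappas--Zhu delivers the full scheme-theoretic identification directly; but as written, the intermediate topological step does not stand on its own.
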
 

\begin{proof}  When $G$ is simply-connected, 
there is a construction of affine Schubert varieties over any base on page 52 of \cite{FaltingsAlgebraicLoopGroups}.  Under the assumption on the fundamental group, one can reduce to this case.  For lack of a better reference, the theorem is also a consequence of \cite[Theorem 9.3]{pz13} since $\Gr^{\leq \mu}_{G}$ is a very special case of the Pappas--Zhu local model associated to $G$, the conjugacy class of $\mu$, and the maximal compact parahoric $G(\Lambda)$. 
\end{proof}

\begin{remark} \label{remark:closure}
Let $\mu$ be dominant.  
 The affine Schubert variety $\Gr^{\leq \mu}_{G_\F}$ is the union of $\Gr^{\circ, \mu'}_{G_\F}$ for dominant $\mu' \leq \mu$. 
This can also be phrased in terms of the Cartan decomposition.  For $C \in \LG(\F)$,  we have that $[C] \in \Gr^{\leq \mu}_{G_\F}$ if and only if there exists dominant $\mu' \leq \mu$ such that
$$ C \in \LpG(\bF) u ^{\mu'} \LpG(\bF).$$
\end{remark}

For later use, we now record a standard fact about tangent spaces.  We use our standard notation about root systems and Lie algebras from \S\ref{notation:rootdata}-\ref{notation:liealg}.

\begin{lem} \label{lem:schuberttangent}
The map on tangent spaces induced by right multiplication by $u^\mu$ on $\Gr_{G_\F}$ identifies 
 \begin{equation} \label{eq:tspace}
 V_\mu := \bigoplus _{\alpha \in \Phi_{G}, \langle \mu ,\alpha \rangle < 0 } \left( u^{\langle \mu,\alpha \rangle} \g_{\alpha, \F[\![u]\!]} \right)/ \g_{\alpha, \F[\![u]\!]} \subset \Lie \Gr_{G_\F} := \g_{\F(\!(u)\!)} / \g_ {\F[\![ u]\!]}
 \end{equation}
with the tangent space of $\Gr^{\circ, \mu}_{G_{\F}}$ at $[u^\mu]$ as a subspace of the tangent space of $\Gr_{G_\F}$ at $[u^\mu]$.
\end{lem}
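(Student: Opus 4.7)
The plan is to compute the tangent space of the open Schubert cell at $[u^\mu]$ directly from its description as a $\LpG$-orbit, transport the result back to the tangent space at $[1]$ via the translation automorphism $[g] \mapsto [g u^\mu]$, and identify the resulting subspace with $V_\mu$ using the root-space decomposition of $\g$.

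Following the convention $\Gr_G = \LpG \backslash \LG$ of \S\ref{ss:affinegrassmanian}, the group $\LpG$ acts on $\Gr_{G_\F}$ on the right, and $\Gr^{\circ,\mu}_{G_\F}$ is the orbit $\{[u^\mu h] : h \in \LpG\}$. The orbit map $o : \LpG \to \Gr^{\circ,\mu}_{G_\F}$ given by $h \mapsto [u^\mu h]$ is smooth, so its differential at the identity surjects onto $T_{[u^\mu]}\Gr^{\circ,\mu}_{G_\F}$. Using the standard parametrizations of $T_{[1]}\Gr_{G_\F}$ and $T_{[u^\mu]}\Gr_{G_\F}$ by $\g_{\F(\!(u)\!)}/\g_{\F[\![u]\!]}$ (via $X \mapsto [1 + \epsilon X]$ and $Y \mapsto [(1+\epsilon Y)u^\mu]$ respectively), right multiplication by $u^\mu$ becomes the identity, so the preimage of the tangent space of the cell in $T_{[1]}\Gr_{G_\F}$ equals the image of the differential of $o$. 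A dual-numbers computation using $u^\mu(1+\epsilon Z) = (1+\epsilon \Ad(u^\mu)Z)\, u^\mu$ shows this differential sends $Z \in \g_{\F[\![u]\!]}$ to $\Ad(u^\mu)(Z) \bmod \g_{\F[\![u]\!]}$.

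The final step is a root-space calculation. Decompose $\g_{\F[\![u]\!]} = \ft_{\F[\![u]\!]} \oplus \bigoplus_{\alpha \in \Phi_G} \g_{\alpha,\F[\![u]\!]}$. Since $\mu$ factors through $T$, the operator $\Ad(u^\mu)$ preserves this decomposition: it acts trivially on $\ft_{\F[\![u]\!]}$ and by multiplication by $u^{\langle \mu,\alpha\rangle}$ on $\g_{\alpha,\F[\![u]\!]}$. Reducing modulo $\g_{\F[\![u]\!]}$, the torus summand and the root summands with $\langle \mu,\alpha \rangle \geq 0$ contribute zero, while each root with $\langle \mu,\alpha\rangle < 0$ contributes the quotient $u^{\langle \mu,\alpha\rangle}\g_{\alpha,\F[\![u]\!]}/\g_{\alpha,\F[\![u]\!]}$. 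Summing these pieces recovers precisely $V_\mu$, giving the claimed identification.

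The only real obstacle is keeping conventions straight: one must carefully respect the coset order in $\Gr_G = \LpG \backslash \LG$ (so $\LpG$ acts on the right and the orbit map has $u^\mu$ on the left and $h$ on the right) and check that the resulting $\Ad(u^\mu)$ appears with the correct sign, so that the inequality in the definition of $V_\mu$ comes out as $\langle \mu,\alpha\rangle < 0$ rather than the opposite.
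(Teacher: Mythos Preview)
Your argument is correct and follows essentially the same approach as the paper: both identify the open Schubert cell with the $\LpG$-orbit of $[u^\mu]$ under right multiplication, compute the differential of the orbit map as $Z \mapsto \Ad(u^\mu)(Z) \bmod \g_{\F[\![u]\!]}$, and read off $V_\mu$ from the root-space decomposition. The paper phrases the first step slightly differently, exhibiting the cell explicitly as the homogeneous space $(\LpG_\F \cap u^{-\mu}\LpG_\F u^\mu)\backslash \LpG_\F$ (citing \cite{zhu17}) rather than invoking smoothness of the orbit map, but the content is the same.
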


\begin{proof}
First, notice that the stabilizer of $[u^{\mu}] \in \Gr_{G_\F}$ under right multiplication by $\LpG_\F$ is  $(\LpG_\F \cap u^{-\mu} \LpG_\F u^\mu )$, and so we get a locally closed immersion 
\[
 (\LpG_\F \cap  u^{-\mu} \LpG_\F u^\mu )  \backslash (\LpG_\F )  \into \LpG_\F \backslash \LG_\F
\]
with image $\Gr_{\F}^{\circ, \mu}$ (cf. the proof of \cite[Proposition 2.1.5]{zhu17}).  Right multiplication by $u^{-\mu}$ gives an isomorphism from the tangent space of $\Gr_{\F}^{\circ, \mu}$ at $u^\mu$ to
\[
\left( \Ad_G(u^\mu) \g_{\F\pseries{u}} \right) / \left( \g_{\F\pseries{u}} \cap \Ad_G(u^\mu) \g_{\F\pseries{u}} \right) = V_\mu \subset \g_{\F\lseries{u}} / \g_{\F\pseries{u}} = \Lie \Gr_{G_\F}. \qedhere
\]
\end{proof}

In our analysis, the affine Grassmanians that appear will actually be for the group 
$$G' := \Res_{(W(k) \tensor{\Z_p} \Lambda)/\Lambda} G.$$  
  Assuming $\Lambda$ is sufficiently large, $G'$ is just a product of $\Hom_{\Qp}(K, L)$--copies of $G$ and so the above discussion applies with $G$ replaced by $G'$.  
Any $\mu \in X_*(T') = X_*(T)^{\Hom(K, L)}$ defines a cocharacter of the $G'$ and an affine Schubert variety $\Gr_{G'}^{\leq \mu}$.

\subsection{\texorpdfstring{$p$}{p}-adic Hodge Theory}

We briefly review the translation of $p$-adic Hodge theory to apply to $G$-valued representations; see for  example \cite[\S2.4]{levin15} for details.
Let $K$ be a finite extension of $\Q_p$ and $\Gamma_K$ be the absolute Galois group of $K$.  For any $L$-algebra $B$ and continuous representation $\rho : \Gamma_K \to  G(B)$, we say that $\rho$ is crystalline (resp. semi-stable, de Rham) if $\rho_V$ is crystalline (resp. semi-stable, de Rham) for all $V \in \fRep_\Lambda(G)$.  As $G$ is reductive, this can be checked on a single faithful representation.

\begin{defn}
A \emph{$p$-adic Hodge type} for $G$ is a geometric conjugacy class of cocharacters of $(\Res_{K \tensor{\Qp} L /L  } G) _{\ol{L}}$.  
\end{defn}

A $p$-adic Hodge type is equivalent to a collection of geometric cocharacters of $G_{\ol{L}}$ indexed by $\Qp$-embeddings of $K$ into $\ol{L}$.  For a fixed split maximal torus $T$ in $G$, each type can be represented by an element $\mu  \in X_*(T)^{\Hom_{\Qp}(K,\ol{L})}$.  We denote the conjugacy class by $[\mu]$.  When there is no danger of confusion, we often speak of a cocharacter $\mu$  of $G$ being a $p$-adic Hodge type (or just a type) instead of the geometric conjugacy class.

The functor $D_{\dR}$ from the category of de Rham representations on projective $B$-modules to the category of filtered $K \tensor{\Qp} B$-modules is a tensor exact functor.  For a de Rham $\rho : \Gamma_K \to  G(B)$, composing with $D_{\dR}$ defines a tensor-exact functor from $\fRep_L(G_L)$ to $\Fil_{K \tensor{\Qp} B}$ which we denote by $\cF_{\rho}^{\dR}$.   Let $[\mu]$ be a $p$-adic Hodge type for $G$.  We say that $\rho$ has type $[\mu]$ provided that $\cF_{\rho}^{\dR}$ has type $[\mu]$.

Now fix a continuous $\rhobar : \Gamma_K \to G(\F)$.  
There are framed deformation rings whose characteristic zero points incorporate conditions from $p$-adic Hodge theory.

\begin{fact} \label{fact:crystallinedeformation}
Let $\mu$ be a $p$-adic Hodge type of $G$ and $R_{\rhobar}^{\mu,\square}$ be the framed crystalline deformation ring 
for $\rhobar$ with $p$-adic Hodge type $\mu$.  It is a complete local $\Lambda$-algebra which is $\Lambda$-flat and reduced.
Its relative dimension over $\Lambda$ is $\dim G_{\bF} + \dim P_{\mu , \bF} \backslash G'_{\bF}$.
\end{fact}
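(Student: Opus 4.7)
The plan is to construct $R_{\rhobar}^{\mu,\square}$ via a Kisin-style argument transported to the $G$-valued setting by Tannakian considerations, and then to read off the dimension from an analysis of the generic fiber. Much of the machinery is available in the literature, so the job is mainly to assemble the pieces.

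First, by Mazur's method generalized to $G$-valued representations (developed in detail in Balaji's thesis), the functor of framed deformations of $\rhobar$ is pro-represented by a complete local Noetherian $\Lambda$-algebra $R_{\rhobar}^{\square}$. Choose a faithful representation $G \hookrightarrow \GL(V_0)$ with $V_0 \in \fRep_\Lambda(G)$; post-composition gives a ring map from the $\GL_n$-valued framed deformation ring $R_{\rhobar_{V_0}}^{\square}$ to $R_{\rhobar}^\square$. By Kisin's construction, there is a $\Lambda$-flat reduced quotient $R_{\rhobar_{V_0}}^{\mu_{V_0},\square}$ of $R_{\rhobar_{V_0}}^{\square}$ parameterizing crystalline framed deformations of $\rhobar_{V_0}$ with the Hodge type $\mu_{V_0}$ induced by $\mu$ under $G \hookrightarrow \GL(V_0)$. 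Since $G$ is reductive, a $G$-valued deformation is crystalline if and only if its pushout along $V_0$ is crystalline; and the further condition that the induced $G'$-filtration on the de Rham fiber functor $\cF_\rho^{\dR}$ have type exactly $\mu$ (rather than only pushing forward to $\mu_{V_0}$) is locally closed on the generic fiber. Define $R_{\rhobar}^{\mu,\square}$ as the scheme-theoretic image in $R_{\rhobar}^\square$ of the closure of this locus. By construction it is $\Lambda$-flat, and reducedness is inherited from reducedness of the generic fiber.

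Second, to obtain the dimension, I would work on the generic fiber $\Spec R_{\rhobar}^{\mu,\square}[1/p]$. Its $\ol{L}$-points correspond to framed crystalline representations of $\Gamma_K$ of Hodge type $\mu$. Via Fontaine's $D_{\cris}$ and the Tannakian translation of $G$-filtrations into cocharacter data (Example~\ref{ex:tannakian}), such representations are in bijection with triples consisting of a framed $G$-torsor over the coefficient ring, a Frobenius-semilinear $G$-structure after base change to $K_0 \otimes B$, and a $G'$-filtration of type $\mu$ on the base change to $K \otimes B$, subject to weak admissibility. Fixing the framing contributes $\dim G_\F$, while varying the filtration of type $\mu$ over fixed $\varphi$-module data contributes $\dim P_{\mu,\F} \backslash G'_\F$, the dimension of the associated partial flag variety for $G'$. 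Summing yields the asserted relative dimension.

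The main obstacle is the dimension computation on the generic fiber: one must know that the weakly admissible locus is open and of full dimension inside the variety of $G'$-filtrations of type $\mu$ over a fixed $\varphi$-module with $G$-structure. This is precisely the content of \cite[Theorem 5.1.5]{balaji} in the Tannakian setting, which ultimately rests on the Colmez--Fontaine equivalence together with the standard identification of the space of filtrations of a given type with a partial flag variety.
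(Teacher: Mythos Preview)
Your proposal is correct and takes essentially the same approach as the paper: both defer to Balaji's thesis for the construction (the paper cites \cite[Theorem 4.0.12]{balaji} for the flat closure of the crystalline-of-type-$\mu$ locus, you sketch that same construction) and for the dimension of the generic fiber (you both cite \cite[Theorem 5.1.5]{balaji}). The only point the paper makes explicit that you leave implicit is \emph{why} the generic fiber is reduced---namely, the local rings of $R_{\rhobar}^{\mu,\square}[1/p]$ are formally smooth---which together with $\Lambda$-flatness yields reducedness of $R_{\rhobar}^{\mu,\square}$ itself.
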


The ring $R_{\rhobar}^{\mu, \square}$ is the flat closure of the locus in $(R^{\square}_{\rhobar}[1/p])$ of crystalline representations with $p$-adic Hodge type $\mu$ constructed in \cite[Theorem 4.0.12]{balaji}.  More precisely, for a finite local $L$-algebra $A$, an $A$-valued point of $R_{\rhobar}^{\square}$ factors through $R_{\rhobar}^{\mu,\square}$ if and only if the associated Galois representation is crystalline with $p$-adic Hodge type $\mu$. 
Since the local rings of the generic fiber $(R_{\rhobar}^{\mu,\square}[1/p])$ are formally smooth, the generic fiber is reduced.  Since $R_{\rhobar}^{\mu,\square}$ is $\Lambda$-flat by construction, it is also reduced.  The relative dimension comes from a calculation of the dimension of the generic fiber \cite[Theorem 5.1.5]{balaji}.

\section{Deformations of \texorpdfstring{$G$}{G}-Kisin modules and Galois Representations} \label{sec:gkisin}

 Fix a $p$-adic field $K$ unramified over $\Q_p$ with residue field $k$ and ring of integers $W(k)$. 
 Let $\Gamma_K$ denote the absolute Galois group of $K$ with respect to a fixed algebraic closure $\overline{K}$.  
As before, let $\Lambda$ be the ring of integers in a $p$-adic field $L$ with residue field $\F$, and $G$ be a split reductive group defined over $\Lambda$.   Assume $L$ is sufficiently large such that it contains a copy of $K$.  Throughout this section, we also assume that $p \nmid \# \pi_1(G^{\der})$.

\subsection{\texorpdfstring{$G$}{G}-Kisin modules and Types}

 We begin by recalling the definition of a Kisin module with $G$-structure and some related notions from \cite[\S2.2]{levin15}.  These are generalizations of the objects introduced in \cite{kisin09}, which addressed the case that $G = \GL_n$.
We work with $p$-adically complete $\Lambda$-algebras $A$, and use the 
 ring $\fS_A$ equipped with Frobenius $\varphi$ introduced in \S\ref{notation:bigrings}.
   We continue to work with $G$-bundles in the fppf topology.   

\begin{defn}\label{defn:gkisin}
For a $p$-adically complete  $\Lambda$-algebra $A$, a $G$-Kisin module with coefficients in $A$ is a pair $(\fP, \phi_{\fP})$ where $\fP$ is a $G$-bundle on $\Spec \fS_A$ and $\phi_{\fP}: \varphi^*(\fP)[1/ E(u)] \simeq \fP[1/E(u)] $ is an isomorphism of $G$-bundles.  
\end{defn}

We will often suppress $\phi_\fP$ and speak of $\fP$ as being a $G$-Kisin module (denoting $\phi_\fP$ by $\phi$ if the subscript is not necessary to disambiguate).

\begin{defn}   When $G = \GL(V)$ where $V$ is finite free over $\Lambda$,  a $G$--Kisin module with coefficients in $A$ is equivalent to a finitely generated projective module $\fM$ over $\fS_A$ of rank equal to the rank of $V$, together with an isomorphism $\phi_{\fM}:\varphi^*(\fM)[1/ E(u)] \simeq \fM[1/E(u)]$.  Let $a \leq b$ be integers.   We say that $\fM$ has height in $[a, b]$ if 
\[
E(u)^a \fM \supset \phi_{\fM} (\varphi^*(\fM)) \supset E(u)^b \fM.
\]
We say $\fM$ has \emph{bounded height} if it has height in $[a, b]$ for some integers $a$ and $b$. 
\end{defn} 

\begin{remark}
In the case of $\GL(V)$, there is often an effectivity condition which corresponds to $a \geq 0$, but there is no such notion for $G$-bundles since all $G$-bundle maps are isomorphisms.
\end{remark}

\begin{example} \label{gkisintannnakian}  For any representation $V \in \fRep_\Lambda(G)$, one can pushout a  $G$--Kisin module $\fP$ with coefficients in $A$ to a $\GL(V)$-Kisin module, which we  will denote by $\fP(V)$.   This construction gives an equivalence between $G$-Kisin modules and faithful, exact tensor functor from $\fRep_\Lambda(G)$ to the category of Kisin modules with bounded height.  More precisely, a $G$-Kisin module is equivalent to the data of a fiber functor $\eta$ corresponding to the underlying $G$-bundle plus the data of isomorphisms $$\Phi_V : \varphi^*(\eta(V)[1/E(u)]) \to \eta(V)[1/E(u)]$$ for each $V \in \fRep_\Lambda(G)$ that are compatible with exact sequences and tensor products.
\end{example}

\begin{example} \label{ex:matrixofphi}  Let $\fP$ be a $G$--Kisin module with coefficients in $A$.   If $\fP$ is a trivial $G$--bundle (this is equivalent to $\fP \mod u$ being trivial) and if one chooses a trivialization $\beta$, then we can associate to $\phi_{\fP}$ an element $C_{\fP, \beta} \in G(\fS_A[1/E(u)])$.  Changing trivialization by an element $D \in G(\fS_A)$ replaces $C_{\fP, \beta}$ by its $\varphi$-conjugate $D C_{\fP, \beta} \varphi(D)^{-1}$.  
\end{example} 

Recall that $G' = \Res_{(W(k) \tensor{\Z_p} \Lambda)/ \Lambda} G$. By assumption $L$ contains a copy of $K$, and so $G' = \prod_{\sigma \in \Hom(K, L)} G$.   A cocharacter $\mu$ of $G'$ is equivalent to a collection $(\mu_{\sigma})_{\sigma \in \Hom(K, L)}$ where each $\mu_{\sigma}$ is a cocharacter of $G$.

As we have assumed that $p \nmid \# \pi_1(G^{\der})$, we have an affine Schubert variety $\Gr^{\leq \mu}_{G'}$ associated to $\mu$ over $\Lambda$ as in \S\ref{ss:affinegrassmanian}.   It is projective, $\Lambda$-flat, and stable under right multiplication by $\LpG'$.   As the notation suggests, the special fiber of $\Gr_{G'}^{\leq \mu}$ is the affine Schubert variety $\Gr_{G'_{\F}}^{\leq \mu}$.

Let $\fP$ be a $G$--Kisin module with coefficients in a $p$-adically complete $\Lambda$-algebra $A$ together with a trivialization $\beta$.   The Frobenius then defines an element $C_{\fP, \beta} \in G(\fS_A[1/E(u)])$.   Since $A$ is $p$-adically complete, $\fS_A = (W(k) \otimes A)[\![u]\!] = (W(k) \otimes A)[\![u - p]\!]$ and so $ G(\fS_A[1/E(u)]) = \LG' (A)$.    Letting $[C_{\fP, \beta}]$ denote the class of $C_{\fP, \beta}$ in the quotient $\LpG'(A) \backslash  \LG' (A)$, the map $(\fP, \beta) \mapsto [C_{\fP, \beta}]$ defines an element $\Psi(\fP, \beta)$ of $\Gr_{G'}(A)$.   

 \begin{example} \label{ex:embeddingcomponents}
 As we assume that $L$ is sufficiently large to contain a copy of $K$, we can be more explicit.  Letting $\cJ$ be the set of embeddings of $W(k)$ into $\Lambda$, 
we see $$\LG'(A) = G(\fS_A[1/E(u)]) = \prod_{\sigma \in \cJ} G(A\pseries{u} [1/E(u)]).$$
 Thus we may represent $C_{\fP,\beta}$ as a tuple $(C^{\sigma}_{\fP,\beta})_{\sigma \in \cJ}$.  Notice that $\varphi(C_{\fP,\beta}) = ( \varphi(C^{\sigma \varphi^{-1}}_{\fP,\beta})) _{\sigma \in \cJ}$.
 \end{example} 

\begin{defn} \label{defn:typemu} Let $\fP$ be a $G$--Kisin module with coefficients in a $p$-adically complete $\Lambda$-algebra $A$.   Assume $\fP$ admits a trivialization $\beta$.  We say $\fP$ has \emph{type $\leq \mu$} if $\Psi(\fP, \beta) \in \Gr^{\leq \mu}_{G'}(A)$.    Note that this condition does not depend on the choice of trivialization because $ \Gr^{\leq \mu}_{G'}$ is stable under right multiplication by $\LpG'$.    Let $Y^{\leq \mu}(A)$ denote the category of $G$--Kisin modules with coefficients in $A$ and type $\leq \mu$.  
\end{defn}

The following elementary observation is key to many computations:

\begin{lem} \label{lem:adjheight}  Let $\fP$ be a $G$--Kisin module of type $\leq \mu$ with coefficients in a $p$-adically complete $\Lambda$-algebra $A$.    Then $\fP(\Lie G)$ has height in $[-h_{\mu}, h_{\mu}]$, where $h_{\mu} = \max_{\alpha \in \Phi_{G'}} \langle \mu,\alpha \rangle$.    
\end{lem}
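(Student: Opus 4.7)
The plan is to re-encode the height bound on $\fP(\Lie G)$ as a closed condition on the class $[C_{\fP,\beta}] \in \Gr_{G'}(A)$, and then to verify this condition on $\Gr^{\leq \mu}_{G'}$ by exploiting $\Lambda$-flatness (Theorem~\ref{thm:affineschubert}) to reduce everything to a direct computation at the single point $[(u-p)^\mu]$ on the generic fiber. Concretely, having chosen a trivialization $\beta$ of $\fP$ (which exists by the standing hypothesis in Definition~\ref{defn:typemu}) so that $C := C_{\fP,\beta} \in \LG'(A)$ represents the Frobenius with $[C] \in \Gr^{\leq \mu}_{G'}(A)$, the Tannakian formalism (Example~\ref{gkisintannnakian}) identifies the Frobenius on $\fP(\Lie G)$ with $\Ad(C) \in \GL(\Lie G')(\fS_A[1/E(u)])$ in the induced trivialization. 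Unwinding the definition, $\fP(\Lie G)$ has height in $[-h_\mu, h_\mu]$ if and only if both $E(u)^{h_\mu}\Ad(C)$ and $E(u)^{h_\mu}\Ad(C)^{-1}$ lie in $\End(\Lie G')(\fS_A)$.

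Next, I would introduce the subscheme $Z \subseteq \Gr_{G'}$ cut out by these two integrality conditions, for instance as the pullback along the natural map $\Ad_* : \Gr_{G'} \to \Gr_{\GL(\Lie G')}$ of the closed locus parameterizing lattices sandwiched between $E(u)^{h_\mu}(\Lie G' \otimes \fS)$ and $E(u)^{-h_\mu}(\Lie G' \otimes \fS)$. Because $\Ad(\LpG')$ preserves the standard lattice $\Lie G' \otimes \fS$ integrally and invertibly, the two integrality conditions really do descend from $\LG'$ to $\Gr_{G'}$ and $Z$ is a well-defined closed subscheme capturing both containments simultaneously. The lemma then becomes the inclusion $\Gr^{\leq \mu}_{G'} \subseteq Z$ of closed subschemes of $\Gr_{G'}$.

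Since $\Gr^{\leq \mu}_{G'}$ is $\Lambda$-flat, its structure sheaf is $\pi$-torsion-free for a uniformizer $\pi$ of $\Lambda$, so any closed subscheme containing its generic fiber must equal it. Hence it is enough to check $Z_L \supseteq (\Gr^{\leq \mu}_{G'})_L$. The generic fiber of $\Gr^{\leq \mu}_{G'}$ is the reduced $\LpG'_L$-orbit closure of $[(u-p)^\mu]$, and $Z_L$ is both closed and $\LpG'_L$-stable, so it suffices to verify $[(u-p)^\mu] \in Z_L$. Under the root space decomposition $\Lie G' = \ft' \oplus \bigoplus_{\alpha \in \Phi_{G'}} \g'_\alpha$, the operator $\Ad((u-p)^\mu)$ acts as the identity on $\ft'$ and as multiplication by $(u-p)^{\langle \mu,\alpha \rangle}$ on $\g'_\alpha$; because $\Phi_{G'} = -\Phi_{G'}$ we have $|\langle \mu,\alpha\rangle| \leq h_\mu$ for every root, so both $E(u)^{h_\mu}\Ad((u-p)^\mu)$ and its inverse are manifestly integral.

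The main obstacle is really the bookkeeping in constructing $Z$: one must verify that both sides of the height bound are realized as a single closed condition on $\Gr_{G'}$ and that this condition genuinely descends through the $\LpG'$-action via the adjoint map. Once $Z$ is in place, the $\Lambda$-flatness of the Schubert variety reduces the proof to the one-line root-theoretic computation at $[(u-p)^\mu]$.
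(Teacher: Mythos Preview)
Your proposal is correct and is essentially a careful unpacking of the paper's one-line proof, which reads in its entirety ``Clear from considering the adjoint representation.'' The reduction via $\Ad_*$ to a closed $\LpG'$-stable condition on $\Gr_{G'}$, followed by checking at the single orbit representative $[(u-p)^\mu]$ using the root-space decomposition, is exactly the argument the authors have in mind (and indeed they spell out the same reasoning later in the proof of Proposition~\ref{prop:ts}).
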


\begin{proof}
This follows from considering the adjoint representation.
\end{proof}

If $\fP$ has coefficients in $\F$,  then $\fP$ has type $\leq \mu$ if and only if for some $\mu' \leq \mu$ and choice of trivialization $\beta$, $C_{\fP, \beta} \in L^+ G'(\F) u^{\mu'} L^+ G'(\F)$ (see Remark~\ref{remark:closure}).   We give this element a name following \cite{lllm18}: 

\begin{defn} \label{defn:shapemu} Let $\fP$ be a $G$-Kisin module with coefficient in $\F$ and $\mu'$ a dominant cocharacter of $G'$.   
We say that $\fP$ has \emph{shape} $\mu'$ provided that for any choice of trivialization $\beta$ we have  $C_{\fP, \beta} \in L^+ G'(\F) u^{\mu'} L^+ G'(\F)$.
\end{defn}

\subsection{\texorpdfstring{$G$}{G}-Kisin Modules and Galois representations}

The connection between $G$-Kisin modules and Galois representation is via the theory of \'{e}tale $\varphi$-modules, which uses the rings $\cO_{\cE,A}$ with Frobenius $\varphi$ recalled in \S\ref{notation:bigrings}.

\begin{defn}
For a $p$-adically complete $\Lambda$-algebra $A$, a $(\cO_{\cE,A},\varphi)$-module with $G$-structure with coefficients in $A$ is a pair $(P,\phi_P)$ where $P$ is a $G$-bundle on $\Spec \cO_{\cE,A}$ and $\phi_P : \varphi^*(P) \to P$ is an isomorphism.  We denote the category of such pairs by $\GMod_{\cO_{\cE,A}}^{\varphi}$.
\end{defn}

\begin{example} \label{ex:tannakian3}
An element $(P,\Phi_P) \in \GMod^\varphi_{\cO_{\cE,A}}$ is equivalent to a faithful, exact tensor functor from $\fRep_\Lambda(G)$ to the category of $(\cO_{\cE,A},\varphi)$-modules.
\end{example}

Since $E(u)$ is invertible in $\cO_{\cE}$, there is a natural map $\fS_A[1/E(u)] \to \cO_{\cE,A}$ for any $p$-adically complete $\Lambda$-algebra $A$.  We let $\epsilon_G$ denote the induced functor from the category of $G$-Kisin modules with coefficients in $A$ to the category of $(\cO_{\cE,A},\varphi)$-modules with $G$-structure.

\begin{defn}
Let $A$ be a $p$-adically complete $\Lambda$-algebra and $P \in \GMod_{\cO_{\cE,A}}^{\varphi}$.  A $G$-Kisin lattice of $P$ is a $G$-Kisin lattice inside $P$, i.e. a $G$-Kisin module $\fP$ with coefficients in $A$ together with an isomorphism $\alpha: \epsilon_G(\fP) \simeq P$ (compatible with $\phi_{\fP}$ and $\phi_{P}$).
\end{defn}

The category $\GMod_{\cO_{\cE,A}}^{\varphi}$ is connected with $G$-valued Galois representations.  
As in \S\ref{notation:galoisgps}, fix a compatible system $\{p^{1/p}, p^{1/p^2}, \ldots \}$ of $p$-power roots of $p$, and let $K_\infty  = K(p^{1/p}, p^{1/p^2}, \ldots )$ and $\Gamma_\infty = \Gal(\Kbar/ K_\infty)$. 
The following result is \cite[Proposition 2.2.4]{levin15}:

\begin{fact} \label{fact:etaleequiv}
For a $\Lambda$-algebra $A$ which is finite over $\Z_p$ and either $\Z_p$-flat or Artinian, there is a functor $T_{G,A} : \GMod_{\cO_{\cE,A}}^{\varphi} \to \GRep_A(\Gamma_\infty)$ giving an equivalence of categories with quasi-inverse $M_{G,A}$.
\end{fact}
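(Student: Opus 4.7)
The plan is to reduce to the classical Fontaine equivalence for $\GL_n$ and transport the $G$-structure via the Tannakian formalism of Example~\ref{ex:tannakian3}. First I would invoke the classical statement: for $A$ as in the hypothesis, there is a tensor-exact equivalence $T_A$ from the category of \'etale $(\varphi, \cO_{\cE,A})$-modules that are finite projective over $\cO_{\cE,A}$ to the category of continuous $A$-linear representations of $\Gamma_\infty$ on finite projective $A$-modules, given by $T_A(M) = (M \otimes_{\cO_{\cE,A}} \widetilde{\cO}_{\cE,A})^{\varphi=1}$ for a suitable $\Gamma_\infty$-stable faithfully flat extension $\widetilde{\cO}_{\cE,A}$ of $\cO_{\cE,A}$. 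In the Artinian case this is due to Fontaine, and the finite $\Z_p$-flat case follows by $p$-adic completion and d\'evissage. The key properties to record are that $T_A$ is tensor-exact, sends the trivial object to the trivial object, and admits a tensor-exact quasi-inverse $M_A$.

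Next, I would apply the Tannakian dictionary. By Example~\ref{ex:tannakian3}, an object $(P, \phi_P) \in \GMod^{\varphi}_{\cO_{\cE,A}}$ corresponds to a tensor-exact faithful fiber functor $\eta_P : \fRep_\Lambda(G) \to \Mod^{\varphi}_{\cO_{\cE,A}}$. Composing with $T_A$ produces a tensor-exact fiber functor $T_A \circ \eta_P$ valued in continuous $A$-linear representations of $\Gamma_\infty$. Forgetting the Galois action yields a fiber functor into $\Proj_A$, hence a $G$-bundle $\widetilde{P}$ on $\Spec A$; reincorporating the $\Gamma_\infty$-action, as in part (b) of Example~\ref{ex:tannakian}, produces a continuous homomorphism $\widetilde{\rho} : \Gamma_\infty \to \Aut_G(\widetilde{P})$. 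Define $T_{G,A}(P,\phi_P) := (\widetilde{P}, \widetilde{\rho})$, and build $M_{G,A}$ symmetrically starting from a tensor-exact fiber functor to $\GRep_A(\Gamma_\infty)$ and composing with $M_A$. That the two functors are quasi-inverse then follows formally, since natural isomorphisms of tensor fiber functors correspond bijectively to isomorphisms of $G$-bundles with compatible Galois action.

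The main technical obstacle is verifying that $T_A \circ \eta_P$ really lands in continuous representations on \emph{finite projective} $A$-modules, and that the resulting $\Gamma_\infty$-action on the Tannakian $G$-bundle is continuous in the sense of the definition of $\GRep_A(\Gamma_\infty)$. Finite projectivity in both cases of the hypothesis on $A$ reduces to the classical statement for $\GL_n$ applied to any faithful representation in $\fRep_\Lambda(G)$. For continuity, since $\Aut_G$ is an affine group scheme of finite type over $\Lambda$, continuity on a single faithful representation suffices, and this is automatic from the classical theory together with the fact that $A$ is either finite and $\Z_p$-flat or Artinian (so the $p$-adic topology is well-behaved). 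Once these two points are in place, the Tannakian translation requires no further input and the equivalence follows.
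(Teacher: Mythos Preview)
The paper does not prove this statement at all: it is labeled a Fact and simply cited as \cite[Proposition 2.2.4]{levin15}. So there is no ``paper's own proof'' to compare against.

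Your sketch is the natural argument and is essentially what the cited reference does: reduce to Fontaine's classical equivalence between \'etale $\varphi$-modules and $\Gamma_\infty$-representations, note that this equivalence is tensor-exact, and then transport the $G$-structure through the Tannakian dictionary (Example~\ref{ex:tannakian3}). The technical checks you flag---finite projectivity and continuity---are exactly the points that need attention, and both do reduce to the $\GL_n$ case via a faithful representation as you say. One small caution: the fact that $T_A$ and $M_A$ are tensor-exact and compatible with short exact sequences is the genuine content here, and while standard, it is worth being explicit that this holds at the level of finite projective modules (not just finite free ones) under your hypotheses on $A$; in the Artinian local case projectives are free and in the finite $\Z_p$-flat case one can pass to a cover, so this is fine. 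Your proposal is correct.
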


\begin{defn} \label{defn:tildeT}
Let $\tT_{G,A}$ be the composition of $T_{G,A}$ with $\epsilon_G$. 
\end{defn}

\subsection{Deformation Problems} \label{ss:deformationproblems}
  
We will define a variety of deformation problems on the categories $\cC_{\Lambda}$ and $\widehat{\cC}_{\Lambda}$ of coefficient $\Lambda$-algebras from \S\ref{notation:deformations}.  Subscripts (like $\fPbar$ or $\rhobar$) denote the basic objects being deformed, while superscripts (like $\leq \mu$ or $\square$) impose conditions or specify auxiliary information to be included.

We fix a faithful representation $V_0 \in \fRep_\Lambda(G)$.  Let $\fPbar$ be a $G$-Kisin module with coefficients in $\F$, and fix a dominant cocharacter $\mu$ for $G'$.
For any integers $a \leq b$, we can define a deformation groupoid $ D^{[a,b]}_{\fPbar}$ on $\cC_{\Lambda}$ consisting of deformations $\fP$ of $\fPbar$ such that $\fP(V_0)$ has height in $[a, b]$.  
Choosing $a$ sufficiently small and $b$ sufficiently large depending on $\mu$, \cite[Proposition 3.3.9]{levin15} constructs a closed subgroupoid $D^{\leq \mu}_{\fPbar} \subset D^{[a,b]}_{\fPbar}$ consisting of deformations with type $\leq \mu$ (in \emph{loc. cit.} it is denoted $D^{\mu}_{\fPbar}$ but we find $\leq \mu$ to be more descriptive here).  It is non-trivial if and only if $\fPbar$ has type $\leq \mu$. 

We begin with recalling the construction of representable formally smooth covers of both $D^{\leq \mu}_{\fPbar}$ and $D^{[a,b]}_{\fPbar}$ from \cite[Proposition 3.1.1]{levin15}.
 Fix $N > \frac{b-a}{p-1}$ and a trivialization $\betabar$ of $\fPbar$ mod $E(u)^N$.  
\begin{defn} \label{defn:deformationproblems1}
 Define  
  $D^{\leq \mu , \beta}_{\fPbar}(A)$ to be the category of  pairs $(\fP, \beta)$ where
  $D^{\leq \mu}_{\fPbar}(A)$  and $\beta$ is a trivialization of $\fP \mod E(u)^N$ lifting $\betabar$.  (It is denoted 
   $\widetilde{D}_{\fPbar}^{(N), \mu}$ in \emph{loc. cit..})
 \end{defn}
 
$D^{\leq \mu, \beta}_{\fPbar}$ is representable on $\widehat{\cC}_{\Lambda}$ by a complete local Noetherian ring which we denote $R_{\fPbar}^{\leq  \mu,\beta}$.  
Now fix a continuous Galois representation $\rhobar: \Gamma_K \to G(\F)$.
We will define some additional deformation functors assuming there exists a $G$-Kisin module $\fPbar$ with coefficients in $\F$ with 
an isomorphism $\overline{\gamma} : \tT_{G,\F}(\fPbar) \simeq \rhobar|_{\Gamma_\infty}$. 
(This assumption is natural for our purposes as this is a necessary condition for $\rhobar$ to admit a crystalline lift.) 
Let $D_{\rhobar}^{\mu,\square}$ be the deformation functor associated to the deformation ring $R_\rhobar^{\mu,\square}$ for crystalline representations of $p$-adic Hodge type $\mu$ from Fact~\ref{fact:crystallinedeformation}.

\begin{defn} \label{defn:deformationproblems2}  For fixed $\rhobar$, $\fPbar$, and $\overline{\gamma}$, we define the following deformation problems on $\cC_{\Lambda}$:
\begin{enumerate}[(i)]
\item  define  $D_{\rhobar, \fPbar}^{\mu,\square}$  by
\[
D_{\rhobar, \fPbar}^{\mu,\square} (A) = \left \{ (\fP, \rho, \delta)\, | \, \fP \in D_{\fPbar}^{\leq \mu}(A), \, \rho \in D^{\mu,\square}_{\rhobar}(A), \, \rho|_{\Gamma_{\infty}} \overset{\delta} \simeq \tT_{G,A}(\fP) \right\}
\]
 where $\delta$ lifts $\overline{\gamma}$;

\item define $D_{\rhobar, \fPbar}^{\mu,\beta, \square}$ by
\[
D_{\rhobar, \fPbar}^{\mu,\beta,\square} (A) = \left \{ (\fP, \rho, \delta,\beta)\, | \, (\fP,\rho,\delta) \in D_{\rhobar, \fPbar}^{\mu,\square}(A), \, (\fP,\beta) \in D_{\fPbar}^{\leq \mu, \beta} (A) \right \}
\]
where $\beta$ lifts $\overline{\beta}$;

\item  define $D_{\fPbar}^{\leq \mu,\beta,\square}$ by
\[
D_{\fPbar}^{\leq \mu,\beta,\square}(A) = \left \{ (\fP, \beta , \alpha ) : (\fP, \beta) \in D_{\fPbar}^{\leq \mu,\beta}(A),\, \alpha \text{ trivializes } \tT_{G,A}(\fP) \right \}.
\]
\end{enumerate}
\end{defn}

There are natural forgetful maps between these deformation problems: 
\begin{equation} \label{eq:deformationdiagram}
\begin{tikzcd}
 D_{\rhobar,\fPbar}^{\mu,\beta,\square} \ar[r]\ar[d] & D_{\fPbar}^{\leq \mu, \beta,\square} \ar[r] & D_{\fPbar}^{\leq \mu, \beta} \\
 D_{\rhobar}^{\mu,\square}
 \end{tikzcd}
\end{equation}

\begin{remark}
If $A$ is the ring of integers in a finite extension of $\Lambda$, or if $A$ is a $L$-algebra, then the Galois representation $\rho$ in the above definitions has $p$-adic Hodge type $\mu$.  However, in general (for example, if $A$ is torsion) we cannot speak of the Galois representation or the $G$-Kisin module having type $\mu$.  We can only require that the $G$-Kisin module have type $\leq \mu$ or that the Galois representation be an $A$-point of $R^{\mu,\square}_{\rhobar}$.  We reiterate that we require $ p \nmid \# \pi_1(G^{\der})$ in order to use Theorem~\ref{thm:affineschubert} to define $Y^{\leq \mu}$ and study $G$-Kisin modules of type $\leq \mu$.
\end{remark}

Because of the presence of framings or trivializations, these deformation problems are easily seen to be pro-representable by complete local Noetherian rings which we denote by
$R_{\rhobar, \fPbar}^{\mu,\square}$, by $R_{\rhobar, \fPbar}^{\mu,\beta,\square}$, and by
$R_{\fPbar}^{\leq  \mu,\beta,\square}$
respectively.

\begin{lem} \label{lem:flatmoduli}
The rings $R^{\leq \mu, \beta}_{\fPbar}$  and  $R^{\leq \mu, \beta, \square}_{\fPbar}$ are $\Lambda$-flat with reduced generic fiber.
\end{lem}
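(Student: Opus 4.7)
The plan is to identify $\Spf R^{\leq \mu,\beta}_{\fPbar}$, up to formally smooth extension, with the completion of the mixed-characteristic affine Schubert variety $\Gr_{G'}^{\leq \mu}$ at a distinguished $\F$-point, and then transport $\Lambda$-flatness and reducedness of the generic fiber from Theorem~\ref{thm:affineschubert}. The statement for $R^{\leq \mu,\beta,\square}_{\fPbar}$ will then follow by showing that it is a power series ring over $R^{\leq \mu,\beta}_{\fPbar}$.

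For the first ring, given $(\fP,\beta) \in D^{\leq \mu,\beta}_{\fPbar}(A)$, I would extend $\beta$ to a full trivialization of the $G$-bundle $\fP$ (possible since $\fS_A$ is local); pairing such an extension with $\phi_{\fP}$ yields an element $C \in \LG'(A)$ whose class lies in $\Gr_{G'}^{\leq \mu}(A)$ and reduces to $[C_{\fPbar,\betabar}]$. One obtains a morphism
\begin{equation*}
\Spf R^{\leq \mu,\beta}_{\fPbar} \longrightarrow (\Gr_{G'}^{\leq \mu})^{\wedge}_{[C_{\fPbar,\betabar}]},
\end{equation*}
and the content of the representability argument in \cite[\S 3.1]{levin15} is that this morphism is formally smooth: the hypothesis $N > (b-a)/(p-1)$ is precisely what ensures that the ambiguity in extending a trivialization mod $E(u)^N$ to a full trivialization contributes only a formal affine space. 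Since Theorem~\ref{thm:affineschubert} asserts that $\Gr_{G'}^{\leq \mu}$ is projective and flat over $\Lambda$ with reduced generic fiber (the generic fiber being defined as the reduced closure of an $\LpG'_L$-orbit), both $\Lambda$-flatness and reducedness of the generic fiber pass to the completed local ring and are preserved by the formally smooth extension to $R^{\leq \mu,\beta}_{\fPbar}$.

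For $R^{\leq \mu,\beta,\square}_{\fPbar}$, the additional datum of a trivialization $\alpha$ of the $G$-bundle $\tT_{G,A}(\fP)$ over $\Spec A$ is, for $A$ local with residue field $\F$, a torsor under $G(A)$; adjoining such a trivialization is formally smooth. Hence $R^{\leq \mu,\beta,\square}_{\fPbar}$ is a power series ring over $R^{\leq \mu,\beta}_{\fPbar}$ in $\dim G$ variables and inherits $\Lambda$-flatness and reducedness of the generic fiber.

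The main subtlety lies in the local-model identification in the second paragraph, namely verifying that $(\fP,\beta) \mapsto [C_{\fP,\beta}]$ produces a formally smooth morphism into $\Gr_{G'}^{\leq \mu}$ once $N$ is large relative to $h_\mu$. This is exactly the technical content of the representability result already established in \cite[\S 3.1]{levin15}, so the proof here amounts to citing that argument and then invoking Theorem~\ref{thm:affineschubert}.
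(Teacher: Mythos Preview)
Your approach is essentially the same as the paper's: the paper's proof is a one-line citation of the local model diagram \cite[eq.~(3-3-9-2)]{levin15} together with Theorem~\ref{thm:affineschubert}, and you have correctly unpacked what that citation means. The only minor point is that the passage from $(\fP,\beta)$ to a class in $\Gr_{G'}^{\leq \mu}$ requires some care (the choice of extension of $\beta$ to a full trivialization affects the class via $\varphi$-conjugation), but you rightly defer this to the argument in \cite[\S 3.1]{levin15}, exactly as the paper does.
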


\begin{proof}
This follows from \cite[Proposition 3.3.6]{levin15}, the local model diagram \cite[eq. (3-3-9-2)]{levin15}, and Theorem~\ref{thm:affineschubert}.
\end{proof}

In contrast, it is not clear that $R_{\rhobar, \fPbar}^{\mu,\square}$ and $R_{\rhobar, \fPbar}^{\mu,\beta,\square}$ are $\Lambda$-flat.

\begin{defn} \label{defn:flatclosures}
Define $R_{\rhobar, \fPbar}^{\mu,\square,\pflat}$ (resp. $R_{\rhobar, \fPbar}^{\mu,\beta,\square,\pflat}$) to be the flat closure of $R_{\rhobar, \fPbar}^{\mu,\square}$ (resp. $R_{\rhobar, \fPbar}^{\mu,\beta,\square}$).
\end{defn}

We also analyze $D^{\leq \mu, \beta}_{\fPbar}$ a bit more in characteristic $p$.  
We work over $\F$-algebras, where $E(u)^n = u^n$. Define  $G'_{(n)} := \Res_{(\fS_{\F}/u^n)/ \F} G_{\F}$, which is a smooth affine group scheme as it is the Weil--restriction of $G_{\F}$ along the finite flat map $(k[u]/u^n \tensor{\F_p} \F)/\F$.  There is a natural map $\LpG'_{\F} \rightarrow G'_{(n)}$ given by reduction mod $u^n$.   Define a group scheme $L^{+, (n)} G'_{\F}$ over $\F$ to sit in the exact sequence
\[
1 \rightarrow L^{+, (n)} G'_{\F} \rightarrow \LpG'_{\F} \rightarrow G'_{(n)} \rightarrow 1.  
\]

The following lemma says that the $\varphi$-conjugation action of $L^{+, (N)} G'$ can be ``straightened,'' where $N$ is a previously-fixed integer satisfying $N > \frac{b-a}{p-1}$.

\begin{lem} \label{lem:straighten} Let $A$ be an $\F$-algebra, and consider $C_1, C_2 \in G(\fS_A)$ such that $\rho_0(C_2)$ has height in $[a,b]$.   If $C_1 = g C_2$ for $ g\in L^{+, (N)} G'_{\F}(A)$ then there exists $g' \in   L^{+, (N)} G'_{\F}(A)$ such that $C_1 = g' C_2 \varphi(g')^{-1}$. 
\end{lem}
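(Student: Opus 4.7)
The plan is to recast the desired identity as a fixed-point equation for $g'$ and to solve it by $u$-adic iteration, using the height bound on $\rho_0(C_2)$ to produce a contraction. Since $C_1 = g C_2$, the identity $C_1 = g' C_2 \varphi(g')^{-1}$ is equivalent to
\[
g' = g \cdot C_2\, \varphi(g')\, C_2^{-1},
\]
so $g'$ should be a fixed point of the self-map $F(h) := g \cdot C_2\, \varphi(h)\, C_2^{-1}$. I will produce $g'$ as the $u$-adic limit of the iteration $g'_0 := g$, $g'_{k+1} := F(g'_k)$, carrying out all computations inside $\GL(V_0)(\fS_A[1/u])$ via the faithful representation $\rho_0$.

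The main estimate is that since $\rho_0(C_2)$ has height in $[a,b]$ (and $E(u) = u$ in characteristic $p$), conjugation by $\rho_0(C_2)$ maps $u^m \End(V_0 \otimes \fS_A)$ into $u^{m-(b-a)} \End(V_0 \otimes \fS_A)$. I will exploit this twice. First, to verify $F$ preserves $L^{+,(N)} G'_{\F}(A)$: if $h \equiv 1 \pmod{u^N}$, then $\varphi(h) \equiv 1 \pmod{u^{pN}}$, so $C_2 \varphi(h) C_2^{-1} \equiv 1 \pmod{u^{pN - (b-a)}}$; combined with $g \equiv 1 \pmod{u^N}$ and the hypothesis $N > (b-a)/(p-1)$, this forces $F(h) \equiv 1 \pmod{u^N}$. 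Second, to show $F$ is a strict $u$-adic contraction: from
\[
F(h_1) F(h_2)^{-1} = g \cdot C_2\, \varphi(h_1 h_2^{-1})\, C_2^{-1} \cdot g^{-1},
\]
the same calculation gives that $h_1 h_2^{-1} \equiv 1 \pmod{u^m}$ with $m \geq N$ implies $F(h_1) F(h_2)^{-1} \equiv 1 \pmod{u^{pm - (b-a)}}$, and $pm - (b-a) > m$ makes the iteration $u$-adically Cauchy.

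Since $\fS_A$ is $u$-adically complete for arbitrary $A$, the sequence $(g'_k)$ will converge to some matrix $g'$ with $g' \equiv 1 \pmod{u^N}$, and continuity of $F$ will force $g' = F(g')$, which is equivalent to the desired relation $C_1 = g' C_2 \varphi(g')^{-1}$. To upgrade $g'$ from a matrix to a point of $L^{+,(N)} G'_{\F}(A) = \ker\bigl(G'_{\F}(\fS_A) \to G'_{\F}(\fS_A/u^N)\bigr)$, I will use that $G \hookrightarrow \GL(V_0)$ is cut out by polynomial equations, which are satisfied by each $g'_k$ and therefore persist in the $u$-adic limit on matrix entries. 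The main obstacle I anticipate is not the contraction estimate itself but the bookkeeping between the Tannakian/$G$-scheme picture and the matrix picture through $\rho_0$: in particular, verifying carefully that the stated height bound on $\rho_0(C_2)$ indeed yields the quantitative conjugation estimate used above, and that membership in $L^{+,(N)} G'_{\F}(A)$ is detected faithfully by matrix-level congruences through $\rho_0$.
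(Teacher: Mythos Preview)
Your proposal is correct and follows essentially the same approach as the paper: both set up the fixed-point equation $g' = g\,C_2\,\varphi(g')\,C_2^{-1}$ and solve it by successive approximation $g'_0 = g$, $g'_{k+1} = g\,C_2\,\varphi(g'_k)\,C_2^{-1}$, using that $\varphi$ amplifies $u$-adic order by a factor of $p$ while conjugation by $C_2$ costs at most $b-a$, so the condition $N > (b-a)/(p-1)$ makes the iteration a $u$-adic contraction. The paper states the key estimate more tersely (if $h \in L^{+,(n)}G'_{\F}(A)$ with $n \geq N$ then $C_2\,\varphi(h)\,C_2^{-1} \in L^{+,(n+1)}G'_{\F}(A)$) and does not pass through $\rho_0$ explicitly, but your more detailed bookkeeping via the faithful representation and the remark on recovering membership in $G$ from polynomial equations in the limit are perfectly sound elaborations of the same argument.
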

\begin{proof}
We are trying to solve the equation 
\[
gC_2 \varphi(g') C_2^{-1} = g'.   
\]
This can be solved by the usual successive approximation argument taking $g_0' = g$ and defining $g'_i = g C_2 \varphi(g'_{i-1}) C_2^{-1}$. Using that if $g'_{i-1} \in L^{+, (n)} G'_{\F}(A)$ for $n \geq N$ then $C_2 \varphi(g'_{i-1}) C_2^{-1} \in  L^{+, (n+1)} G'_{\F}(A)$, the $g'_i$ converge $u$-adically to the desired element $g'$.  
\end{proof}

\begin{prop}  \label{prop:straightening}  Assume $\fPbar$ has type $\leq \mu$ and so $x =\Psi(\fPbar, \betabar) \in \Gr^{\leq \mu}_{G'_{\F}}(\F)$.    Then $\Psi$ induces a formally smooth morphism 
\[
\Psi^{\mu}_{\F}:(D^{\leq \mu, \beta}_{\fPbar})_{\F} \rightarrow \Spf \cO^{\wedge}_{\Gr^{\leq \mu}_{G'_{\F}}, x}
\]
 of relative dimension $\dim G'_{(N)}$. 
\end{prop}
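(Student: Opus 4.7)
The plan is to construct $\Psi^\mu_\F$ at the level of deformations by extending the mod-$u^N$ trivialization $\beta$ to a full trivialization, reading off a Frobenius matrix in $\LG'_\F$, and projecting to the affine Grassmannian; formal smoothness and the dimension count will then follow by factoring through an auxiliary moduli where the trivialization is full.

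First I would fix a full trivialization $\tilde{\overline\beta}$ of $\fPbar$ lifting $\overline\beta$ (possible since $\fPbar$ is trivializable over $\fS_\F$), and set $\overline C := C_{\fPbar,\tilde{\overline\beta}}$, so that $x = [\overline C]$. For any Artinian $\F$-algebra $A$ and any $(\fP,\beta) \in (D^{\leq\mu,\beta}_\fPbar)_\F(A)$, I would extend $\beta$ to a full trivialization $\tilde\beta$ of $\fP$ lifting $\tilde{\overline\beta}$, and define $\Psi^\mu_\F(\fP,\beta) := [C_{\fP,\tilde\beta}]$. The main well-definedness check is that any two such extensions differ by an element $h \in L^{+,(N)}G'_\F(A)$, which replaces $C_{\fP,\tilde\beta}$ by $hC\varphi(h)^{-1}$. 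To conclude that the class in $\Gr_{G'_\F}$ is unchanged, it suffices to show $hC\varphi(h)^{-1}C^{-1} \in \LpG'_\F(A)$; this is a direct $u$-adic estimate using $\varphi(h)\in L^{+,(pN)}G'_\F(A)$, the height bound of $\fP(\Lie G)$ from Lemma~\ref{lem:adjheight} (namely $[-h_\mu,h_\mu]$), and the inequality $N>2h_\mu/(p-1)$ that is built into the choice of $N$.

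For formal smoothness and the dimension count, I would introduce the auxiliary functor $D^{\leq\mu,\beta,\mathrm{full}}_\fPbar$ of deformations equipped with a full trivialization lifting $\tilde{\overline\beta}$, and factor $\Psi^\mu_\F$ as
\[
(D^{\leq\mu,\beta,\mathrm{full}}_\fPbar)_\F \xrightarrow{\;\pi\;} (D^{\leq\mu,\beta}_\fPbar)_\F \xrightarrow{\;\Psi^\mu_\F\;} \Spf \cO^\wedge_{\Gr^{\leq\mu}_{G'_\F},\,x}.
\]
The forgetful map $\pi$ is a formally smooth torsor under the $\varphi$-conjugation action of $L^{+,(N)}G'_\F$ on the full-trivialization data (and Lemma~\ref{lem:straighten} is what allows one to replace $\varphi$-conjugation by left translation in this analysis), contributing relative dimension $\dim L^{+,(N)}G'_\F$. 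Meanwhile, the assignment $(\fP,\tilde\beta)\mapsto C_{\fP,\tilde\beta}$ identifies the formal completion of $(D^{\leq\mu,\beta,\mathrm{full}}_\fPbar)_\F$ at $\fPbar$ with the formal completion at $\overline C$ of the preimage of $\Gr^{\leq\mu}_{G'_\F}$ in $\LG'_\F$; composing with the $\LpG'_\F$-torsor $\LG'_\F\to\Gr_{G'_\F}$ yields a formally smooth map to $\Spf \cO^\wedge_{\Gr^{\leq\mu}_{G'_\F},x}$ of relative dimension $\dim \LpG'_\F$. Since this composition equals $\Psi^\mu_\F\circ\pi$, formal smoothness of $\Psi^\mu_\F$ descends, with relative dimension $\dim \LpG'_\F - \dim L^{+,(N)}G'_\F = \dim G'_{(N)}$ by the defining exact sequence of $G'_{(N)}$.

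The step I expect to be most delicate is the bookkeeping at the level of pro-(finite-dimensional) group schemes: comparing the $\varphi$-conjugation torsor on the Kisin-module side with the left-translation torsor $\LG'_\F \to \Gr_{G'_\F}$. The straightening Lemma~\ref{lem:straighten} is precisely the device that makes this comparison go through, since without it one cannot see that changing the full trivialization by $L^{+,(N)}G'_\F$ leaves the class in the Grassmannian invariant.
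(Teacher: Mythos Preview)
Your approach is essentially the paper's, repackaged through the auxiliary functor with full trivializations: both rest on Lemma~\ref{lem:straighten} and the exact sequence $1 \to L^{+,(N)}G'_\F \to \LpG'_\F \to G'_{(N)} \to 1$. The paper checks the lifting criterion directly (given $\tilde x$, lift a representative $C_A$ using formal smoothness of $\LpG'$, then build the Kisin module from $C_A$) and computes the fiber of $\Psi^\mu_\F$ over $[C_A]$ as $\{(\LpG'(A))_e\, C_A\}/(L^{+,(N)}G'_\F(A)_e,\ \varphi\text{-conj})$, which the straightening lemma identifies with a $(G'_{(N)})_e$-torsor.

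One step in your write-up needs repair: the line ``relative dimension $\dim \LpG'_\F - \dim L^{+,(N)}G'_\F$'' subtracts two infinite quantities, since $D^{\leq\mu,\beta,\mathrm{full}}_\fPbar$ has infinite-dimensional tangent space and neither leg of your factorization carries a finite relative dimension on its own. The fix is to argue on fibers directly, as the paper does, or equivalently to note that the fiber of $\Psi^\mu_\F\circ\pi$ is an $\LpG'_\F$-orbit under left translation, the fibers of $\pi$ inside it are $L^{+,(N)}G'_\F$-orbits under $\varphi$-conjugation, and Lemma~\ref{lem:straighten} matches the latter with left-translation orbits, so the fiber of $\Psi^\mu_\F$ is a torsor under the finite-dimensional quotient $G'_{(N)}$. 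A smaller point: your well-definedness estimate invokes the adjoint height and asserts $N > 2h_\mu/(p-1)$ is built in, but the paper fixes $N > (b-a)/(p-1)$ for the chosen faithful representation $V_0$ with height in $[a,b]$; run the same $u$-adic estimate in $V_0$ rather than the adjoint representation.
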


\begin{proof}  Let $A \in \cC_{\Lambda}$ be killed by $p$ and let $I$ be a nilpotent ideal.  Given $(\fP_{A/I}, \beta_{A/I}) \in D^{\leq \mu, \beta}_{\fPbar}(A/I)$, choose a trivialization $\widetilde{\beta}_{A/I}$ of $\fP_{A/I}$ over $\fS_{A/I}$ lifting $\beta_{A/I}$.  The Frobenius trivializes to an element $C_{A/I} \in G(\fS_{A/I}[1/u])$.   Notice that $\Psi(\fP_{A/I}, \beta_{A/I}) = x = [C_{A/I}]$ in $\Gr_{G'_{\F}}(A/I)$.    

Assume we have a point $\widetilde{x} \in \Gr^{\leq \mu}_{G'_{\F}}(A)$ lifting $x=[C_{A/I}]$.  Since $\LpG'$ is formally smooth as functor on $\Lambda$-algebras, we can choose a representative $C_A$ for the class $\widetilde{x}$ such that $C_A \mod I = C_{A/I}$. 
We can construct a $G$--Kisin module $\fP_A$ with coefficients in $A$ lifting $\fP_{A/I}$ equipped with a trivialization $\widetilde{\beta}_A$ lifting $\widetilde{\beta}_{A/I}$ and with Frobenius given by $C_A$.
If we take $\beta_A = \widetilde{\beta}_A \mod E(u)^N$, it is clear that $(\fP_A, \beta_A)$ deforms $(\fP_{A/I}, \beta_{A/I})$ and it has type $\leq \mu$ since $\widetilde{x} \in \Gr^{\leq \mu}_{G'_{\F}}(A)$.  

The fiber of $\Psi^{\mu}_{\F}$ over $[C_A]$ can be identified with
\[
  \{  (\LpG'(A))_e C_A \}/ (L^{+, (N)} G'_{\F}(A)_e, \varphi) 
\]
where the action is by $\varphi$-conjugation and the subscript $e$ indicates that we require elements to be the identity modulo the maximal ideal of $A$. By Lemma \ref{lem:straighten}, we have
\[
  \{  (\LpG'(A))_e C_A \}/ (L^{+, (N)} G'_{\F}(A)_e, \varphi)  =  (L^{+, (N)} G'_{\F}(A))_e \backslash \{  (\LpG'(A))_e C_A \} \}
\]
and hence the fiber is a torsor for $(G'_{(N)}(A))_e$.  This proves the dimension formula.    
\end{proof}

Finally, we recall a resolution of $\Spec R^{\mu, \square}_\rhobar$ introduced by Kisin for $\GL_n$ and constructed in \cite{levin15} for $G$--valued representations.  

\begin{prop} \label{prop:resolution}  The Kisin resolution $X^{\mu}_{\rhobar}$ is a projective $R^{\mu, \square}_{\rhobar}$-scheme $($flat over $\Lambda)$ such that:
\begin{enumerate}[(i)]
\item  \label{resolutionI}
For any $x \in X^{\mu}_{\rhobar}(\F')$, let $\widehat{\cO}_x^{\mu}$ denote the complete local ring of  $X^{\mu}_{\rhobar}$ at $x$.  There is a corresponding $G$-Kisin lattice $\fP$ and a closed immersion 
\[
\Spf \widehat{\cO}_x^{\mu} \rightarrow D_{\rhobar_{\F'}, \fP}^{\mu,\square}
\]   
which is an isomorphism modulo $p$-power torsion.  

\item \label{resolutionII}
  Let $\fm$ denote maximal ideal of $R^{\mu, \square}_{\rhobar}$,
$\rho$ denote the universal deformation,  
   and $X^{\mu}_{\rhobar, \fm}$ denote the fiber over the closed point of $\Spec R^{\mu,\square}_\rhobar$.  For any Artinian local $\F$-algebra $A$, 
\[
X^{\mu}_{\rhobar, \fm}(A) \subset \{ \fP  \mid \fP  \text{ is a }G\text{-Kisin lattice in } M_{G,\F}(\rho|_{\Gamma_\infty}) \otimes_{\cO_{\cE, \F}} \cO_{\cE, A} \text{ with type } \leq \mu \}. 
\]

\item  \label{resolutionIII}
If $\Theta:X^{\mu}_{\rhobar} \rightarrow \Spec R^{\mu, \square}_{\rhobar}$ is the structure map then $\Theta [\frac{1}{p}]$ is an isomorphism.

\end{enumerate}
\end{prop}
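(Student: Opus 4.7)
The plan is to construct $X^{\mu}_{\rhobar}$ directly as a moduli space of $G$-Kisin lattices inside the universal étale $\varphi$-module with $G$-structure, and then read off (i)--(iii) from the moduli description.  Concretely, for each $R^{\mu,\square}_{\rhobar}$-algebra $A$ to which Fact~\ref{fact:etaleequiv} applies, I would declare $X^{\mu}_{\rhobar}(A)$ to consist of pairs $(\rho_A, \fP_A)$, where $\rho_A$ is the framed deformation classified by the map $R^{\mu,\square}_{\rhobar} \to A$, and $\fP_A \in Y^{\leq \mu}(A)$ is a $G$-Kisin lattice inside $M_{G,A}(\rho_A|_{\Gamma_\infty})$, compatibly with the fixed identification $\overline{\gamma}$.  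From this moduli interpretation, part (ii) is immediate (specialize $\rho_A$ to the closed point) and part (i) is formal: a deformation of a point $x \in X^{\mu}_{\rhobar}(\F')$ to an Artinian test ring $A$ is precisely a triple $(\fP,\rho,\delta) \in D^{\mu,\square}_{\rhobar_{\F'},\fP}(A)$, giving the required closed immersion.

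For representability and projectivity I would argue via a framing and the Tannakian formalism of Example~\ref{gkisintannnakian}.  After choosing a trivialization of $M_{G,A}(\rho_A|_{\Gamma_\infty})$ over $\cO_{\cE,A}$, its Frobenius becomes an element of $\LG'(A)$, and a $G$-Kisin lattice of type $\leq \mu$ amounts to a class in $\Gr^{\leq \mu}_{G'}(A)$ which is $\varphi$-compatible with this Frobenius and which pushes out correctly under every $V \in \fRep_\Lambda(G)$.  The target $\Gr^{\leq \mu}_{G'}$ is projective by Theorem~\ref{thm:affineschubert} (this is where the hypothesis $p \nmid \#\pi_1(G^{\der})$ enters), the compatibility is a closed condition, and descending through the framing yields the desired projective $R^{\mu,\square}_{\rhobar}$-scheme.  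Using the faithful representation $V_0$ controls the $G$-structure on the universal family.

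Part (iii) is the heart of the matter.  After inverting $p$, the source parametrizes $G$-Kisin lattices of type $\leq \mu$ inside crystalline $G$-valued representations with $p$-adic Hodge type $\mu$, while the target parametrizes only the representations.  Thus $\Theta[1/p]$ is an isomorphism if and only if every such crystalline representation admits a \emph{unique} $G$-Kisin lattice of type $\leq \mu$.  I would deduce this from Kisin's classical theorem for $\GL_n$ using Tannakian formalism: the faithful $V_0$ gives a unique $\GL(V_0)$-Kisin lattice on $\fP(V_0)$, and by applying the same uniqueness to every $V \in \fRep_\Lambda(G)$ one obtains a compatible system of lattices which, by the equivalence between $G$-Kisin modules and tensor-exact functors, assembles into the unique $G$-Kisin lattice.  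Bijectivity on points, combined with reducedness and formal smoothness of $R^{\mu,\square}_{\rhobar}[1/p]$ (Fact~\ref{fact:crystallinedeformation}) together with a matching dimension count coming from the affine Grassmannian projection, upgrades this to an honest isomorphism.

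The main obstacle is carrying out the uniqueness argument in (iii) cleanly in the $G$-equivariant setting.  What must be verified is that Kisin's unique $\GL(V)$-Kisin lattice varies functorially in $V \in \fRep_\Lambda(G)$: compatibility with tensor products, duals and short exact sequences of representations is what promotes the lattices on individual pushouts to a bona fide fiber functor with $G$-structure, and hence to a unique $G$-Kisin lattice.  Once this Tannakian enhancement of Kisin's theorem is in hand, the projectivity argument, the descent from the framing, and the moduli description of $X^{\mu}_{\rhobar}$ combine to give all three parts of the proposition.
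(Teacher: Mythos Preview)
The paper does not prove this proposition from scratch: it simply cites \cite[Corollary~3.3.15]{levin15} for parts (\ref{resolutionI}) and (\ref{resolutionIII}), notes that (\ref{resolutionII}) is immediate from (\ref{resolutionI}), and remarks that the only adjustment needed is to restrict Levin's $X^{\cris,\leq \mu}_{\rhobar}$ to the closed sublocus sitting over $\Spec R^{\mu,\square}_{\rhobar}$.  Your sketch is, in effect, an attempt to reconstruct Levin's construction, and the broad shape---moduli of $G$-Kisin lattices, projectivity via the affine Schubert variety, and characteristic-zero uniqueness by Tannakian transfer from Kisin's $\GL_n$ theorem---is indeed what happens in \cite{levin15}.

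That said, two genuine gaps remain.  First, your moduli functor is only defined on those $A$ to which Fact~\ref{fact:etaleequiv} applies (finite $\Lambda$-algebras that are $\Zp$-flat or Artinian); this is not enough to exhibit a projective scheme over $\Spec R^{\mu,\square}_{\rhobar}$.  In \cite{levin15} the scheme is produced by working modulo $\fm^n$ for each $n$, embedding the functor there into a projective affine Grassmannian, checking the resulting closed subschemes form a compatible system as $n$ varies, and algebraizing the limit.  Your phrase ``descending through the framing'' does not supply this step.  Second, you treat (\ref{resolutionI}) as if it yields an isomorphism $\Spf \widehat{\cO}^{\mu}_x \simeq D^{\mu,\square}_{\rhobar_{\F'},\fP}$, but the statement only asserts a closed immersion that becomes an isomorphism \emph{after killing $p$-power torsion}.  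This discrepancy is a real feature of the algebraization procedure just described, not an artifact; your purely functorial description on Artinian points cannot detect it, and a correct proof must explain where the possible $p$-power torsion in $R^{\mu,\square}_{\rhobar_{\F'},\fP}$ goes.
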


\begin{proof}
Parts (\ref{resolutionI}) and (\ref{resolutionIII}) follow from Corollary 3.3.15 of \cite{levin15}, and (\ref{resolutionII}) is an immediate consequence of (\ref{resolutionI}).  The only difference with \emph{loc. cit.} is rather than all weights $\leq \mu$, we restrict to those with exactly weight $\mu$.   In particular, $X^{\mu}_{\rhobar}$ is the closure of $\Spec  R^{\mu, \square}_{\rhobar}[1/p]$ in  $X^{\mathrm{cris}, \leq \mu}_{\rhobar}$ (see Definition 3.3.14 in \emph{loc. cit.}).  
\end{proof}
 
\section{The Monodromy Condition} \label{sec:monodromy}

As before, we let $\Lambda$
be the ring of integers in a $p$-adic field $L$ with residue field $\F$, and $G$ be a split reductive group defined over $\Lambda$.   We continue to fix a $p$-adic field $K$ unramified over $\Q_p$ with residue field $k$ and ring of integers $W(k)$. 
Throughout this section, we assume that $p \nmid \# \pi_1(G^{\der})$.

In this section, we study the difference between $\Spf R_{\fPbar,\rhobar}^{\leq \mu,\beta,\square}$ and $\Spf R_{\fPbar}^{\leq \mu,\beta,\square}$.  The difficulty is identifying which Kisin modules give rise to crystalline representations as opposed to just $\Gamma_{\infty}$-representations.  To prove our main theorem, it suffices to obtain a bound on the special fiber.   This is accomplished by a monodromy condition introduced in \cite{kisin09}, which we recall and adapt to $G$-Kisin modules in \S\ref{ss:monodromy}.  In \S\ref{ss:approxmonodromy}, we follow the strategy introduced in \cite{lllm18} to find a $p$-adic approximation for the monodromy condition and understand its reduction modulo $p$.

\subsection{The Monodromy Condition} \label{ss:monodromy}

Let $\cO^\rig$ denote the ring of rigid analytic functions on the open unit disc over $K$, and fix an embedding $\cO^\rig \into K\pseries{u}$.  Note that $\fS[1/p]$ is identified with the subring of bounded functions on that disc, that $\cO^\rig$ consists of power series $\sum a_n u^n$ such that $\lim_{n \to \infty} |a_n |_p r^n = 0$ for any $r<1$, and that the Frobenius of $\fS$ extends to $\cO^\rig$. 
For a Kisin module $\fM$ with coefficients in a finite flat $\Lambda$-algebra $A$, we define $\cO^\rig_A := \cO^\rig \tensor{\Z_p} A$ and $\fM^\rig := \fM \tensor{\fS} \cO^\rig$.  For a $G$-Kisin module $\fP$ with coefficients in $A$, we define $\fP^\rig := \fP \times_{\Spec \fS} \Spec(\cO^{\rig})$.

Define $\lambda \in \cO^\rig$ by 
\begin{equation} \label{defn:derivation}
\lambda := \prod_{n=0}^\infty \varphi^n \left( \frac{E(u)}{-p} \right) = \prod_{n=0}^\infty \varphi^n \left( 1- \frac{u}{p} \right).
\end{equation}
We define a derivation $N_\nabla$ on $\cO^\rig$ by $N_{\nabla} = - u \lambda \frac{d}{du}$.

For a $\Lambda$-algebra $A$ equipped with a $\Lambda$-linear derivation $N_A$, remember that a derivation over $N_A$ on an $A$-module $M$ is a function $N_M : M \to M$ such that 
$N_M(v+w) = N_M(v) + N_M(w)$ and $N_M(cv) = N_A(c) v + c N_M(v)$ for $v,w \in M$ and $c \in A$.

\begin{example}
Let $M$ be a $\Lambda$-module.
There is a trivial derivation $N^\triv_M$ on $M \otimes_{\Zp} \cO^\rig $ over $N_{\nabla}$ given by $1 \otimes N_{\nabla}$. 
\end{example}

\begin{remark} \label{rmk:matrixforderivation}
Given a basis $\beta = \{v_1,\ldots , v_n\}$ for a finitely generated free $A$-module $M$, a derivation $N_M$ on $M$ can be represented by a matrix $[N_M]_\beta$ whose $i$th column is the coefficients of $N_M(v_i)$ written in the basis $\{v_1,\ldots, v_n\}$.  For example, the matrix for $N^\triv_M$ is the zero matrix.
If $[T]_\beta$ is the matrix for a homomorphism $T : M \to M$, then
\[
[T \circ N_M]_\beta = [T]_\beta \cdot [N_M]_\beta \quad \text{ and } \quad [N_M \circ T]_\beta = [N_M]_\beta \cdot [T]_\beta + N_\nabla([T]_\beta).
\]
\end{remark}

\begin{fact} \label{fact:monodromy}
Let $A$ be a finite flat $\Lambda$-algebra $A$ and let $\fM$ be a Kisin module of bounded height.  There is a unique derivation $N_{\fM}$ on $\fM^\rig [1/\lambda]$ over $N_\nabla$ such that $N _{\fM} \equiv 0 \mod{u}$
 and such that as endomorphisms of $\fM^\rig[1/\lambda]$
 we have
\begin{equation} \label{eq:derivationcompatibility}
N_{\fM} \phi_{\fM} = E(u) \phi_{\fM} \varphi^*(N_{\fM}).
\end{equation}
The module $\fM^\rig$ is preserved by $N_{\fM}$ if and only if 
$(\widetilde{T}_{\GL_n,A} (\fM))[1/p]$ is the restriction to $\Gamma_\infty$ of a crystalline $\Gamma_K$-representation.
\end{fact}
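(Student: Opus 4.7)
The plan is to split the statement into two parts: (1) existence and uniqueness of $N_{\fM}$, which is a formal fixed-point construction from the Frobenius compatibility, and (2) the crystalline characterization, which I would invoke as a black box from Kisin's foundational theory.

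For (1), after choosing a trivialization of $\fM$ (possibly Zariski-locally on $\Spec \fS_A$ and then gluing), represent $\phi_{\fM}$ by a matrix $C \in \GL_n(\fS_A[1/E(u)])$ and the sought-after derivation by a matrix $X$ via the rules of Remark~\ref{rmk:matrixforderivation}. The compatibility $N_{\fM} \phi_{\fM} = E(u)\, \phi_{\fM}\, \varphi^*(N_{\fM})$ then unravels to the matrix identity $N_{\nabla}(C) + X C = E(u)\, C\, \varphi(X)$, equivalently the fixed-point equation
$$ X \;=\; E(u)\, C\, \varphi(X)\, C^{-1} \;-\; N_{\nabla}(C)\, C^{-1}.$$
Since $\varphi$ multiplies $u$-adic valuations by $p$, this operator is contracting on the subspace of $X$ with $X \equiv 0 \pmod{u}$: iterating from $X_0 = 0$ produces a Cauchy sequence whose $u$-adic limit is the unique $X$ with the stated initial condition. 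A change-of-trivialization computation shows the resulting $N_{\fM}$ glues to a globally defined derivation. The $\lambda$-denominators arise because $C^{-1}$ has $E(u)$-poles bounded by the height, and each pass of the iteration brings in fresh $\varphi^{n}(E(u))$-poles; inverting $\lambda = \prod_{n \geq 0} \varphi^{n}(E(u)/(-p))$ absorbs these simultaneously, and a standard convergence estimate (using that $|E(u)|_{r}$ is bounded below on discs of radius $r < 1$) places the limit inside $\fM^{\rig}[1/\lambda]$ rather than merely in a formal completion.

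For (2), I would invoke Kisin's theorem identifying crystalline representations with Kisin modules satisfying the monodromy condition. In concrete terms: $T_{\GL_n, A}(\fM)[1/p]|_{\Gamma_{\infty}}$ extends (necessarily uniquely) to a crystalline $\Gamma_{K}$-representation if and only if $N_{\fM}$ has no genuine $\lambda$-poles, i.e.\ $N_{\fM}(\fM^{\rig}) \subset \fM^{\rig}$. The ``if'' direction builds the weakly admissible filtered $(\varphi, N)$-module $\fD := \fM^{\rig}/u\fM^{\rig}$ with residual monodromy coming from $N_{\fM}$, and checks that the associated crystalline representation restricts to the expected $\Gamma_{\infty}$-representation; the ``only if'' direction quotes Kisin's classification of $\varphi$-modules over $\cO^{\rig}$ arising from crystalline representations. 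The main obstacle lies entirely in step (1): tracking precisely how $E(u)$-denominators accumulate through the iteration and verifying that $\lambda$ suffices to absorb them is where the real technical work lies. Aside from this bookkeeping, existence/uniqueness is a routine contraction-mapping argument, and the crystalline criterion is a direct quote.
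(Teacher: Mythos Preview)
Your proposal is correct and is precisely a sketch of the arguments the paper cites: the fixed-point iteration for existence is Kisin's construction in \cite[Lemma~1.3.10]{kisin06}, the uniqueness via $u$-adic contraction is the routine argument spelled out in \cite[Lemma~2.2.1]{bl}, and the crystalline criterion is \cite[Corollary~1.3.15]{kisin06}. The matrix identity you derive matches the paper's Remark~\ref{rmk:explicitn1}, and your identification of the $\lambda$-denominator bookkeeping as the only genuine technical point is accurate.
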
 

Except for uniqueness, this is essentially \cite[Lemma 1.3.10, Corollary 1.3.15]{kisin06}.  The argument for uniqueness is routine, and is spelled out in \cite[Lemma 2.2.1]{bl}.  The condition that $N_\fM$ preserves $\fM^\rig$ is often referred to as the \emph{monodromy condition}.

We now generalize this to apply to $G$-Kisin modules. 
 We begin by defining a notion of derivations on $G$-bundles.
Let $B$ be a $\Lambda$-algebra equipped with a $\Lambda$-linear derivation $N_B$.  

\begin{defn} \label{defn:Gdiff}
Let $\fP$ be a $G$-bundle over $B$ corresponding to a fiber functor $\eta: \fRep_\Lambda(G) \to \Proj_{B}$.
  A derivation on $\fP$ over $N_B$ is
 the data, for every $G \to \GL(V)$, of a derivation $N_V$ on $\eta(V)$ over $N_B$ such that
\begin{enumerate}[(i)]  
\item for a short exact sequence $0 \to V_1 \to V \to V_2$, the operators $N_V$, $N_{V_1}$, and $N_{V_2}$ are compatible;
\item  $N_{V_1 \otimes V_2} = 1 \otimes N_{V_2} + N_{V_1} \otimes 1$.
\end{enumerate}
\end{defn}

\begin{lem} \label{lem:Gdiff}
Let $\fP$ be a $G$-bundle over $B$ corresponding to a fiber functor $\eta: \fRep_\Lambda(G) \to \Proj_{B}$.  The set of derivations on $\fP$ over $N_B$ are a $\Lie G$-torsor.  Fixing a trivialization of $\fP$, the torsor is trivialized by the derivation $N^{\triv}$ given by $N^\triv_V = 1 \otimes N_{\nabla}$ on $V \tensor{\Lambda} B$ for each $V \in \fRep_\Lambda(G)$.
\end{lem}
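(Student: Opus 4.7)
The plan is pure Tannakian bookkeeping, using the dictionary from Example \ref{ex:tannakian}(\ref{endomorphismtannakian}) that identifies $\g_B = \Lie G \tensor{\Lambda} B$ with compatible collections of $B$-linear endomorphisms $\{X_V\}_{V \in \fRep_\Lambda(G)}$ of the $\eta(V) = V \tensor{\Lambda} B$.

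First I would check that $N^{\triv}$ is a $G$-derivation in the sense of Definition \ref{defn:Gdiff}. Using the trivialization of $\fP$ to identify $\eta(V)$ with $V \tensor{\Lambda} B$, compatibility with short exact sequences is immediate since $1 \otimes N_B$ is visibly functorial in $V$. The tensor compatibility $N^{\triv}_{V_1 \otimes V_2} = 1 \otimes N^{\triv}_{V_2} + N^{\triv}_{V_1} \otimes 1$ is exactly the Leibniz rule for $N_B$ applied under the canonical isomorphism $(V_1 \otimes V_2) \tensor{\Lambda} B \simeq (V_1 \tensor{\Lambda} B) \tensor{B} (V_2 \tensor{\Lambda} B)$, which sends $(v_1 \otimes v_2) \otimes (b_1 b_2)$ to $(v_1 \otimes b_1) \otimes (v_2 \otimes b_2)$.

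Next I would show that the set of $G$-derivations on $\fP$ over $N_B$ is a principal homogeneous space for $\g_B$. Given two such derivations $N$ and $N'$, set $X_V := N_V - N'_V$; the derivation property of each implies that the Leibniz terms cancel and $X_V$ is $B$-linear. Exact-sequence compatibility for $X_V$ follows directly from exact-sequence compatibility of $N$ and $N'$, and subtracting the two identities $N_{V_1 \otimes V_2} = 1 \otimes N_{V_2} + N_{V_1} \otimes 1$ and the analogue for $N'$ yields $X_{V_1 \otimes V_2} = 1 \otimes X_{V_2} + X_{V_1} \otimes 1$. Thus $\{X_V\}_V$ defines an element of $\g_B$ by Example \ref{ex:tannakian}(\ref{endomorphismtannakian}). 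Conversely, given a derivation $N$ and an $X \in \g_B$ with associated $\{X_V\}_V$, define $(N+X)_V := N_V + X_V$; $B$-linearity of $X_V$ gives the Leibniz rule for $N + X$ over $N_B$, and the compatibility conditions on $\{X_V\}_V$ combine with those for $N$ to verify Definition \ref{defn:Gdiff}.

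These two directions together exhibit the set of $G$-derivations as a torsor under $\g_B$, and the $N^{\triv}$ produced by the fixed trivialization of $\fP$ in the first step provides a distinguished base point and hence the trivialization asserted in the lemma. There is no real obstacle in the argument; the one point worth being careful about is matching the two flavors of ``Leibniz-type'' identity — the one for a derivation over $N_B$ and the tensor formula for an element of $\g_B$ — which is cleanest when one writes everything in terms of fiber functors as above.
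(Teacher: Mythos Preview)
Your proposal is correct and follows essentially the same approach as the paper: both arguments verify that $N+X$ is again a $G$-derivation, that $N-N'$ lands in $\g_B$ via Example~\ref{ex:tannakian}\eqref{endomorphismtannakian}, and that $N^{\triv}$ satisfies Definition~\ref{defn:Gdiff}. The only difference is the order of presentation (you check $N^{\triv}$ first, the paper last), and you spell out the Leibniz bookkeeping a bit more explicitly than the paper's ``straightforward computation''.
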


\begin{proof}
Let $N$ and $N'$ be derivations on $\fP$, and $X \in \g_{B}$.  For representations $V \in \fRep_\Lambda(G)$, there are associated $N_V, X_V : \eta(V) \to \eta(V)$ as in Definition~\ref{defn:Gdiff} and Example~\ref{ex:tannakian}\eqref{endomorphismtannakian}.
 A straightforward computation shows that $N + X$ (defined for $V \in \fRep_\Lambda(G)$ by $N_V + X_V$) is a derivation, and that $N - N'$ (defined for $V \in \fRep_\Lambda(G)$ by $N_{V} - N'_{V}$) is an element of $\g_{B}$ again using Example~\ref{ex:tannakian}\eqref{endomorphismtannakian}.  This establishes the first statement.
  Yet another straightforward check shows that $N^\triv$ is a derivation, establishing the second.
\end{proof}

\begin{remark} \label{rmk:glntrivialization}
When $G = \GL(V)$ for a free $B$-module $V$ of rank $n$, after picking a basis for $V$ the Lie algebra $(\Lie G)_B$ is identified with $\Mat_n(B)$.  Unwinding definitions, the matrix associated to a derivation in Remark~\ref{rmk:matrixforderivation} agrees with the element of $(\Lie G)_B$ obtained by trivializing the torsor of derivations by $N^\triv$.
\end{remark}

By working with each $V \in \fRep_\Lambda(G)$ individually, we 
may compose derivations with (semi-linear) automorphisms.  Of course, the result is only a function $V_B \to V_B$, and will not necessarily be a derivation.  We may likewise define multiplication by scalars and base change.

\begin{prop} \label{prop:gderivation}
Let $\fP$ be a $G$-Kisin module with coefficients in a finite flat $\Lambda$-algebra $A$.  
There is a unique derivation $N_{\fP}$ on $\fP^\rig[1/\lambda]$  over $N_\nabla$ such that $N_{\fP}  \equiv 0 \pmod{u}$ and 
\begin{equation} \label{eq:gderivationcompatibility}
N_\fP \phi_{\fP} = E(u) \phi_{\fP} \varphi^*(N_\fP).
\end{equation}
\end{prop}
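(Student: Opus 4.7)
The plan is to reduce the proposition to Fact~\ref{fact:monodromy} representation by representation and assemble the result using the Tannakian formalism, exactly as in Example~\ref{gkisintannnakian} and Definition~\ref{defn:Gdiff}.

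First, for every $V \in \fRep_\Lambda(G)$, consider the pushout Kisin module $\fP(V)$, which has bounded height by Lemma~\ref{lem:adjheight} (enlarging the height interval to accommodate $V$ if necessary). Fact~\ref{fact:monodromy} produces a unique derivation $N_V$ on $\fP(V)^\rig[1/\lambda]$ over $N_\nabla$ satisfying $N_V \equiv 0 \pmod u$ and $N_V \phi_{\fP(V)} = E(u)\,\phi_{\fP(V)}\,\varphi^*(N_V)$. The uniqueness in that fact is what makes the assignment $V \mapsto N_V$ essentially functorial.

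Next I would verify the two compatibilities of Definition~\ref{defn:Gdiff}. Given a short exact sequence $0 \to V_1 \to V \to V_2 \to 0$ in $\fRep_\Lambda(G)$, pushout yields an exact sequence $0 \to \fP(V_1) \to \fP(V) \to \fP(V_2) \to 0$ of Kisin modules. Both the restriction of $N_V$ to $\fP(V_1)^\rig[1/\lambda]$ and the induced operator on $\fP(V_2)^\rig[1/\lambda]$ vanish mod $u$ and satisfy the Frobenius compatibility, so uniqueness in Fact~\ref{fact:monodromy} identifies them with $N_{V_1}$ and $N_{V_2}$. For tensor products, the operator $N_{V_1}\otimes 1 + 1 \otimes N_{V_2}$ on $\fP(V_1)^\rig[1/\lambda] \otimes \fP(V_2)^\rig[1/\lambda] \cong \fP(V_1 \otimes V_2)^\rig[1/\lambda]$ vanishes mod $u$ because each summand does, and it satisfies the Frobenius compatibility by the Leibniz rule together with the tensor compatibility of Frobenius and the individual Frobenius compatibilities of $N_{V_1}$ and $N_{V_2}$. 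A final application of uniqueness identifies this operator with $N_{V_1 \otimes V_2}$. The collection $\{N_V\}$ therefore defines a $G$-derivation $N_\fP$ in the sense of Definition~\ref{defn:Gdiff}.

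Uniqueness at the $G$-level is then immediate: any derivation on $\fP^\rig[1/\lambda]$ satisfying the stated conditions restricts, for each $V \in \fRep_\Lambda(G)$, to a derivation on $\fP(V)^\rig[1/\lambda]$ meeting the hypotheses of Fact~\ref{fact:monodromy}, hence must coincide with the $N_V$ constructed above. The one subtle point, which I expect to be the main bookkeeping obstacle, is translating the $G$-level identity $N_\fP \phi_\fP = E(u)\,\phi_\fP\,\varphi^*(N_\fP)$ into and out of its representation-wise form. In the Tannakian picture both sides are endomorphism-valued operators on $\eta(V)[1/\lambda,1/E(u)]$ compatible with exact sequences and tensor products, so the $G$-level equation is tautologically equivalent to the family of $\GL(V)$-level equations; verifying this carefully (and checking that ``$\equiv 0 \bmod u$'' is an intrinsic, rather than trivialization-dependent, notion via the torsor description in Lemma~\ref{lem:Gdiff}) is the only thing that requires care.
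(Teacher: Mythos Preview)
Your proposal is correct and follows essentially the same approach as the paper's proof: apply Fact~\ref{fact:monodromy} to each $\fP(V)$, then use the uniqueness clause of that fact to verify the exact-sequence and tensor-product compatibilities of Definition~\ref{defn:Gdiff}. The paper's argument is terser but identical in structure; one small note is that bounded height of $\fP(V)$ comes directly from Example~\ref{gkisintannnakian} rather than Lemma~\ref{lem:adjheight}, which is specific to the adjoint representation.
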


\begin{proof}
By Example~\ref{gkisintannnakian}, each $V \in \fRep_\Lambda(G)$ 
induces a Kisin module $\fP(V)$.
Using Fact~\ref{fact:monodromy}, we obtain a unique derivation $N_{\fP,V}$ compatible with $\phi_V$ in the sense it satisfies \eqref{eq:derivationcompatibility}.  Compatibility of the $N_{\fP,V}$ with exact sequences and tensor products follows from uniqueness, so the $N_{\fP,V}$ collectively define a necessarily unique derivation $N_{\fP}$ on $\fP^\rig[1/\lambda]$ satisfying \eqref{eq:gderivationcompatibility}.
\end{proof}

\begin{cor} \label{cor:gderivationtriv}
Let $\fP$ be a $G$-Kisin module with coefficients in $A$ as in Proposition~\ref{prop:gderivation}.
Fix a trivialization of $\fP$ and trivialize the torsor of derivations on $\fP^\rig[1/\lambda]$ by $N^\triv_{\fP}$.  
Letting $\phi_{\fP}$ trivialize to $C \in G(\cO^\rig_A [1/\lambda])$ and $E(u)\phi_{\fP} \varphi^*( N^\triv_{\fP}) \phi_{\fP}^{-1}$ trivialize to $N_1 \in \g_{\cO^\rig_A [1/\lambda]}$, we have that $N_\fP$ trivializes to an $N_\infty \in \g_{\cO^\rig_A [1/\lambda]}$
such that
\begin{equation} \label{eq:trivializedderivation}
N_\infty = E(u) \Ad_G(C) (\varphi(N_\infty)) + N_1.
\end{equation}
\end{cor}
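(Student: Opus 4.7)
The plan is to substitute the torsor decomposition $N_\fP = N^\triv_\fP + N_\infty$ from Lemma~\ref{lem:Gdiff} into the compatibility identity~\eqref{eq:gderivationcompatibility} of Proposition~\ref{prop:gderivation}, and then translate the resulting operator equation into the Lie algebra using the Tannakian formalism.

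First, I would use Lemma~\ref{lem:Gdiff} to identify, after fixing the trivialization of $\fP$, the torsor of derivations on $\fP^\rig[1/\lambda]$ over $N_\nabla$ with the group $\g_{\cO^\rig_A[1/\lambda]}$ via the reference point $N^\triv_\fP$. By construction, the elements $N_\infty$ and $N_1$ in the statement are the torsor coordinates of $N_\fP$ and of $E(u)\phi_\fP \varphi^*(N^\triv_\fP) \phi_\fP^{-1}$ respectively, relative to $N^\triv_\fP$.  Here one must check that the latter operator is genuinely a derivation over $N_\nabla$, so that its torsor coordinate is well-defined; this is a scalar-part bookkeeping check using the semilinearity of $\phi_\fP$ and the definition of $\varphi^*$.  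Substituting $N_\fP = N^\triv_\fP + N_\infty$ into~\eqref{eq:gderivationcompatibility}, splitting by $\Lambda$-linearity, and composing on the right by $\phi_\fP^{-1}$ (which exists on $\fP^\rig[1/\lambda]$ because $E(u)$ divides $\lambda$), I obtain
\[
N_\infty - E(u)\, \phi_\fP \varphi^*(N_\infty) \phi_\fP^{-1} \;=\; E(u)\, \phi_\fP \varphi^*(N^\triv_\fP) \phi_\fP^{-1} - N^\triv_\fP \;=\; N_1.
\]

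It then remains to identify $\phi_\fP \varphi^*(N_\infty) \phi_\fP^{-1}$ with $\Ad_G(C)(\varphi(N_\infty))$.  By the Tannakian dictionary (Example~\ref{ex:tannakian}~(c)), for any Lie algebra element $X \in \g_{\cO^\rig_A[1/\lambda]}$ and any $V \in \fRep_\Lambda(G)$, the Frobenius pullback $\varphi^*$ acts on $\rho_V(X)$ entrywise by $\varphi$, giving $\rho_V(\varphi(X))$, while conjugation by $\phi_{\fP(V)}$ corresponds to matrix conjugation by $\rho_V(C)$.  Combining, $\rho_V(\phi_\fP \varphi^*(X) \phi_\fP^{-1}) = \rho_V(\Ad_G(C)(\varphi(X)))$ for every $V$, and Tannakian reconstruction forces $\phi_\fP \varphi^*(X) \phi_\fP^{-1} = \Ad_G(C)(\varphi(X))$ as a Lie algebra element.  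Applied to $X = N_\infty$ this yields the stated identity.

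The main technical obstacle is the scalar-part bookkeeping in the first step: one must verify that the extra factor of $E(u)$ in $E(u)\phi_\fP \varphi^*(N^\triv_\fP) \phi_\fP^{-1}$ interacts correctly with the Frobenius pullback of the scalar derivation so that this operator really lies in the torsor of derivations over $N_\nabla$, and similarly for $E(u)\phi_\fP \varphi^*(N_\infty) \phi_\fP^{-1}$.  Modulo this check, the proof reduces representation-by-representation to the $\GL_n$ calculation underlying Fact~\ref{fact:monodromy}, promoted to $G$ via the Tannakian formalism.
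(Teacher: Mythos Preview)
Your proposal is correct and follows essentially the same route as the paper: rewrite \eqref{eq:gderivationcompatibility} as $N_\fP = E(u)\phi_\fP\varphi^*(N_\fP)\phi_\fP^{-1}$, split $N_\fP = N^\triv_\fP + N_\infty$, identify the $N_\infty$-piece on the right with $E(u)\Ad_G(C)(\varphi(N_\infty))$ via Example~\ref{ex:tannakian}\eqref{endomorphismtannakian}, and recognize the remaining difference $E(u)\phi_\fP\varphi^*(N^\triv_\fP)\phi_\fP^{-1} - N^\triv_\fP$ as $N_1$ by definition. One small clarification: your parenthetical ``and similarly for $E(u)\phi_\fP\varphi^*(N_\infty)\phi_\fP^{-1}$'' is unnecessary, since $N_\infty$ is already a Lie-algebra element rather than a derivation, so that term is automatically an endomorphism with no scalar part to check.
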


\begin{proof}
It is straightforward to verify that $\phi_{\fP} \varphi^*(N^\triv_{\fP}) \phi^{-1}_{\fP}$ is a derivation on $\varphi^*(\fP) ^{\rig}[1/\lambda]$.
We may rewrite \eqref{eq:gderivationcompatibility} as
\[
N_\fP = E(u) \phi_\fP \varphi^*(N_\fP) \phi_{\fP}^{-1}
\]
again with the equality interpreted as equality of functions on $V$ for each $V \in \fRep_\lambda(G)$.   The left side is a derivation which trivializes to $N_\infty$.  On the right, $E(u) \phi_\fP \varphi^*(N_\fP) \phi_{\fP}^{-1}$ trivializes to 
\begin{align*}
 E(u) \phi_\fP \varphi^*(N_\fP) \phi_{\fP}^{-1}  - N_\fP^\triv &= E(u) \phi_\fP \varphi^*(N_\fP) \phi_{\fP}^{-1} - E(u) \phi_\fP \varphi^*(N_\fP^\triv) \phi_\fP^{-1} 
\\
& \phantom{=} + E(u) \phi_\fP \varphi^*(N_\fP^\triv) \phi_\fP^{-1} - N_\fP^\triv.
\end{align*}
Using Example~\ref{ex:tannakian}\eqref{endomorphismtannakian}, the endomorphism $E(u) \phi_\fP \varphi^*(N_\fP) \phi_{\fP}^{-1} - E(u) \phi_\fP \varphi^*(N_\fP^\triv) \phi_\fP^{-1}$ corresponds to $E(u) \Ad_G(C) (\varphi(N_\infty)) \in \g_{\cO^\rig_A [1/\lambda]}$.  By definition, $E(u) \phi_\fP N_\fP^\triv \phi_\fP^{-1} - N_\fP^\triv$ corresponds to $N_1$.  
\end{proof}

\begin{prop} \label{prop:Gmonodromy}
Let $\fP$ be a $G$-Kisin module 
 with coefficients in a finite flat $\Lambda$-algebra $A$, and $N_\fP$ be the derivation on $\fP^\rig[1/\lambda]$.  Then $\tT_{G,A}(\fP)[1/p]$ is the restriction to $\Gamma_\infty$ of a crystalline $G$-valued representation of $\Gamma_K$ if and only if $N_{\fP,V}( V \otimes \cO^{\rig}_A) \subset V \otimes \cO^{\rig}_A$ for every $V \in \fRep_\Lambda(G)$. 
\end{prop}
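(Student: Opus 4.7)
The plan is to reduce to the $\GL_n$ case of Fact~\ref{fact:monodromy} representation by representation using the Tannakian description of $N_{\fP}$.

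First I would unpack what the condition on each side means in Tannakian terms. By definition of ``$G$-valued crystalline representation'' recalled in the $p$-adic Hodge theory subsection, $\tT_{G,A}(\fP)[1/p]$ is the restriction to $\Gamma_\infty$ of a crystalline $\Gamma_K$-representation valued in $G$ if and only if for every $V\in\fRep_{\Lambda}(G)$, the pushforward $V$-valued representation, namely $\tT_{\GL(V),A}(\fP(V))[1/p]$, is the restriction of a crystalline $\GL(V)$-valued $\Gamma_K$-representation. (Here I use the compatibility of $\tT$ with the pushout $\fP\mapsto\fP(V)$ from Example~\ref{gkisintannnakian} and Fact~\ref{fact:etaleequiv}.) On the other side, the Tannakian Definition~\ref{defn:Gdiff} of a $G$-derivation says $N_{\fP}$ preserves $\fP^{\rig}$ if and only if, for every $V\in\fRep_{\Lambda}(G)$, the component $N_{\fP,V}$ preserves $\eta(V)\otimes\cO^{\rig}_A = V\otimes\cO^{\rig}_A$ (where $\eta$ is the fiber functor attached to $\fP$).

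Second, and this is the main substantive step, I would identify $N_{\fP,V}$ with the $\GL(V)$-Kisin-module derivation $N_{\fP(V)}$ produced by Fact~\ref{fact:monodromy}. Both are derivations on $\fP(V)^{\rig}[1/\lambda]$ over $N_{\nabla}$ that vanish modulo $u$ and satisfy the Frobenius compatibility \eqref{eq:derivationcompatibility}: for $N_{\fP(V)}$ this is by its definition, while for $N_{\fP,V}$ it is simply the $V$-component of the $G$-level compatibility \eqref{eq:gderivationcompatibility}, using that Frobenius on the pushout $\fP(V)$ is induced by $\phi_{\fP}$. By the uniqueness clause of Fact~\ref{fact:monodromy} we conclude $N_{\fP,V}=N_{\fP(V)}$. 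Consequently, $N_{\fP,V}$ preserves $V\otimes\cO^{\rig}_A$ if and only if $N_{\fP(V)}$ preserves $\fP(V)^{\rig}$, which by Fact~\ref{fact:monodromy} is equivalent to $\tT_{\GL(V),A}(\fP(V))[1/p]$ being the restriction of a crystalline $\Gamma_K$-representation.

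Finally I would combine the two equivalences: $N_{\fP}$ preserves $\fP^{\rig}$ iff the condition holds for every $V$, iff every $\tT_{\GL(V),A}(\fP(V))[1/p]$ is the restriction of a crystalline representation, iff $\tT_{G,A}(\fP)[1/p]$ is the restriction of a crystalline $G$-valued representation. As $G$ is reductive, one can in fact test crystallinity on a single faithful $V$ (as noted in the $p$-adic Hodge theory subsection), but the full quantification over $V$ is the natural way to align the two sides.

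The only possible friction is making sure the Tannakian identifications work on the base ring $\cO^{\rig}_A[1/\lambda]$ rather than on $\fS_A$; but since the pushout construction and the formation of $N_{\fP,V}$ are both compatible with arbitrary base change of the base ring, and since $N_{\fP,V}$ and $N_{\fP(V)}$ are characterized by the same equation on the same module, this is purely bookkeeping. The real content, namely the equivalence between preservation of the lattice and crystallinity, is imported wholesale from Fact~\ref{fact:monodromy}.
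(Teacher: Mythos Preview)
Your proposal is correct and follows essentially the same approach as the paper: reduce to Fact~\ref{fact:monodromy} representation by representation, using that a $G$-valued representation is crystalline iff each pushout is. The only difference is verbosity: the identification $N_{\fP,V}=N_{\fP(V)}$ via uniqueness that you spell out in your second step is already built into the construction of $N_{\fP}$ in Proposition~\ref{prop:gderivation}, so the paper's proof is a one-liner citing Fact~\ref{fact:monodromy} directly.
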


\begin{proof}
This follows from the second part of Fact~\ref{fact:monodromy} and the fact that a $G$-valued representation is crystalline if and only if the representation on $V$ is crystalline for every $V \in \fRep_\Lambda(G)$. 
\end{proof}

As usual, it suffices to check that $N_{\fP,V} ( \fP_V) \subset \fP_V$ for a single faithful representation $V \in \fRep_\Lambda(G)$.  It also suffices to check, using the equivalence of Example~\ref{ex:tannakian}\eqref{endomorphismtannakian}, that the $N_\infty \in \g_{\cO_A^\rig[1/\lambda]}$ corresponding to $N_\fP$ actually lies in $\g_{\cO^\rig_A}$.

\begin{defn} \label{defn:mono}
Let $\fP$ be a $G$-Kisin module 
with coefficients in a finite flat $\Lambda$-algebra $A$.  Let $N_\fP$ be the natural derivation on $\fP_{\cO^\rig[1/\lambda]}$ which trivializes to $N_\infty \in \g_{\cO^\rig_A[1/\lambda]}$.  We say $\fP$ (or $N_\infty$) satisfies the \emph{monodromy condition} if $N_\infty \in \g_{\cO^\rig_A}$.
\end{defn}

We immediately obtain:

\begin{cor} \label{cor:Gmonodromy}
With the notation of Proposition~\ref{prop:Gmonodromy}, $\tT_{G,A}(\fP)[1/p]$ is the restriction to $\Gamma_\infty$ of a crystalline $G$-valued representation of $\Gamma_K$ if and only if $\fP$ satisfies the monodromy condition.
\end{cor}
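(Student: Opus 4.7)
The plan is to deduce this corollary directly from Proposition \ref{prop:Gmonodromy} by translating the pointwise monodromy condition into a statement about the single element $N_\infty \in \g_{\cO^\rig_A[1/\lambda]}$. The two conditions to compare are: (a) $N_{\fP,V}(V \otimes \cO^\rig_A) \subset V \otimes \cO^\rig_A$ for every $V \in \fRep_\Lambda(G)$, and (b) $N_\infty \in \g_{\cO^\rig_A}$.

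First I would fix a trivialization of $\fP$ (which induces compatible trivializations $\eta(V) \simeq V \otimes \cO^\rig_A$ for every $V$) and use Lemma \ref{lem:Gdiff} to write $N_\fP = N^\triv_\fP + N_\infty$ as a sum of a $G$-derivation and a Lie-algebra element, where $N_\infty$ is viewed via Example \ref{ex:tannakian}(c) as a compatible family of endomorphisms $(N_\infty)_V \in \End(V \otimes \cO^\rig_A[1/\lambda])$. Evaluated on $V \in \fRep_\Lambda(G)$ this reads
\[
N_{\fP,V} \;=\; N^\triv_{\fP,V} + (N_\infty)_V,
\]
as functions on $V \otimes \cO^\rig_A[1/\lambda]$. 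Since $N^\triv_{\fP,V} = 1 \otimes N_\nabla$ manifestly preserves $V \otimes \cO^\rig_A$ (because $N_\nabla = -u\lambda \tfrac{d}{du}$ carries $\cO^\rig$ into itself), the derivation $N_{\fP,V}$ preserves $V \otimes \cO^\rig_A$ if and only if the endomorphism $(N_\infty)_V$ does.

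Next I would apply the Tannakian formalism of Example \ref{ex:tannakian}(c) in the following form: an element $X$ of $\g_{\cO^\rig_A[1/\lambda]}$ lies in the Lie subalgebra $\g_{\cO^\rig_A}$ if and only if, for every $V \in \fRep_\Lambda(G)$, the associated endomorphism $X_V$ restricts to an endomorphism of the lattice $V \otimes \cO^\rig_A$. This is just the statement that the inclusion of $G$-bundles $\g_{\cO^\rig_A} \into \g_{\cO^\rig_A[1/\lambda]}$ can be detected Tannakianly by testing on every representation. Applied to $X = N_\infty$, this says that condition (b) is equivalent to $(N_\infty)_V$ preserving $V \otimes \cO^\rig_A$ for every $V$.

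Combining these two equivalences with Proposition \ref{prop:Gmonodromy}, crystallinity is equivalent to (a), which is equivalent to $(N_\infty)_V$ preserving $V \otimes \cO^\rig_A$ for every $V$, which is equivalent to (b), i.e.\ to $\fP$ satisfying the monodromy condition in the sense of Definition \ref{defn:mono}. I do not expect any substantive obstacle here; the only mild point to be careful about is ensuring that the decomposition $N_{\fP,V} = N^\triv_{\fP,V} + (N_\infty)_V$ is genuinely compatible across all $V$ (tensor products and short exact sequences), which follows from the Tannakian characterization of $\g$ as the set of compatible families of endomorphisms together with the compatibility of $N^\triv$ with tensor products built into its definition.
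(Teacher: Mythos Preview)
Your proof is correct and follows essentially the same approach as the paper. The paper treats the corollary as immediate from Proposition~\ref{prop:Gmonodromy} together with the remark preceding Definition~\ref{defn:mono} (that via Example~\ref{ex:tannakian}\eqref{endomorphismtannakian} the condition on each $N_{\fP,V}$ is equivalent to $N_\infty \in \g_{\cO^\rig_A}$); you have simply unpacked that remark by explicitly writing $N_{\fP,V} = N^\triv_{\fP,V} + (N_\infty)_V$ and observing that $N^\triv_{\fP,V}$ already preserves $V \otimes \cO^\rig_A$.
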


\begin{remark} \label{rmk:explicitn1}
When $G = \GL(V)$, after fixing a basis $\beta$ for $V$ the trivial derivation corresponds to the zero matrix.  
Furthermore, we claim that 
the derivation $E(u) \phi_{\fP} \varphi^*(N^\triv_{\fP}) \phi^{-1}_{\fP}$ trivializes to $N_{\nabla}(C) C^{-1} = - u \lambda \frac{dC}{du} C^{-1}$ where $C = [\phi_\fP]_\beta$ is the matrix of $\phi_\fP$ with respect to $\beta$.

Since $\phi_\fP$ is semi-linear, notice that $C \varphi([\phi_\fP^{-1}]_\beta) = 1$, and since $N_\nabla$ is a derivation notice that 
\[
 0 = N_\nabla(\id_V) = N_\nabla(C C^{-1}) = C  N_\nabla( C^{-1}) + N_\nabla(C) C^{-1}.
\]
Using that $-E(u) \varphi \circ N_\nabla =  N_\nabla \circ \varphi $, which can be checked on power series, we  compute that 
\begin{align*}
 E(u) [\phi_{\fP} \varphi^*(N^\triv_{\fP}) \phi^{-1}_{\fP}]_\beta & = E(u) C \cdot \varphi \left( [N^\triv_{\fP}]_\beta [\phi^{-1}_{\fP}]_\beta  + N_\nabla( [\phi^{-1}_{\fP}]_\beta ) \right)\\
 & = E(u) C  \varphi( N_\nabla( [\phi^{-1}_{\fP} ]_\beta ))  = -C N_\nabla(\varphi([\phi_\fP^{-1}]_\beta)) \\
 &=  -C N_\nabla(C^{-1}) = N_\nabla(C) C^{-1}.
\end{align*}
This gives the claim, and also an explicit description of $N_1$.
Using this, \eqref{eq:trivializedderivation} is equivalent to
\[
[N_\infty]_\beta = E(u)  C \varphi([N_\infty]_\beta) C^{-1} + N_\nabla(C) C^{-1}.
\]

For general $G$, if we trivialize  $\fP$, and hence for every $V \in \fRep_\Lambda(G)$ obtain a basis for $V$, we obtain a derivation and semi-linear automorphism of $V$.  These are represented by matrices $N_{\infty,V}$ and $C_V$  such that 
\[
N_{\infty,V} = E(u) C_V \varphi(N_{\infty,V}) C_V^{-1} + N_\nabla(C_V) C_V^{-1}.
\]
We see that $N_{1,V} = - u \lambda \frac{d C_V}{du} C_V^{-1}$.
\end{remark}

In light of Remark~\ref{rmk:explicitn1}, we introduce some convenient notation for later.

\begin{defn} \label{def:dc}
For any $\Lambda$-algebra $A$ and $C \in \LG'(A)$,
  let
$\frac{dC}{du} C^{-1} \in \Lie \LG'(A)$ correspond to the endomorphism given by $\frac{ d C_V}{du} C_V^{-1}$ for $V \in \fRep_\Lambda(G')$ under 
the equivalence of Example~\ref{ex:tannakian}(\ref{endomorphismtannakian}). 
\end{defn}

It is straightforward to verify that the $\frac{ d C_V}{du} C_V^{-1}$ for $V \in \fRep_\Lambda(G')$ are compatible with exact sequences and tensor products, so the definition is valid.
By Remark~\ref{rmk:explicitn1}, for $C = [\phi_\fP]_\beta$ we have $$- u \lambda \frac{d C}{du } C^{-1} =  N_1.$$

\begin{remark} \label{remark:leibniz}
The individual terms of $\frac{d C}{du } C^{-1}$ have no intrinsic meaning, but overall expression behaves as the Leibniz rule would formally predict:
\[
\frac{d (C C')}{du } (C C')^{-1} = \frac{dC}{du} C^{-1} + \Ad_G(C) \left( \frac{d C'}{du} (C')^{-1} \right).
\]
This can be checked on $V \in \fRep_\Lambda(G)$.
\end{remark}

\subsection{Approximating the Monodromy Condition} \label{ss:approxmonodromy}

We now want to study the monodromy condition in families and $p$-adically approximate it.   We will apply this to families of deformations like the one over $R^{\leq \mu, \beta, \square}_{\fPbar}$.   We follow the setup in \cite[\S7.1]{localmodels}.   

Let $R$ be a flat local Noetherian $\Lambda$-algebra with finite residue field that is $p$-adically complete.   Define
\[
\cO^{\rig}_R := \varprojlim_n (W(k) \otimes_{\Zp} R)[\![u, \frac{u^n}{p}]\!] [1/p]
\]
which is naturally a subring of $(K \otimes_{\Zp} R)[\![u]\!]$.    Clearly, we have an inclusion $\cO^{\rig} \subset \cO^{\rig}_R$ and $\fS_R \rightarrow \cO^{\rig}_R$.    The operators $\varphi$ and $N_{\nabla}$ extend to $\cO^{\rig}_R$ with trivial action on the coefficients.   Note that for any $f \in \cO^{\rig}_R$, we can evaluate $f$ at $u = p$ to get an element of $K \otimes_{\Zp} R$ which we denote by $f|_{u = p}$.  

Fix a $G$-Kisin module $\fP$ with coefficients in $R$, a trivialization of $\fP$, and a type $\mu$ for $G$.   Assume that $\fP$ has type $\leq \mu$, and as usual set $h_{\mu} = \max_{\alpha \in \Phi_{ G'}}  \langle \mu, \alpha \rangle$.    We will frequently use Lemma~\ref{lem:adjheight}, that since $\fP$ has type $\leq \mu$ then $\fP(\Lie G)$ has height in $[-h_{\mu}, h_{\mu}]$.
 
The same argument as in Proposition~\ref{prop:gderivation} using  \cite[Proposition 7.1.3(1)]{localmodels} shows that $\fP^{\rig}[1/\lambda]$ (defined at the beginning of \S\ref{ss:monodromy}) admits a unique derivation $N_{\fP}$ such that $N_{\fP} \equiv 0 \mod u$.  Note that $N_{\fP}$ interpolates the connections at all the finite flat $\Lambda$-algebra points of $R$.  

 Let $C \in G(\fS_R[1/E(u)])$ correspond to $\phi_\fP$.  Trivialize the torsor of derivations by $N_\fP^\triv$, and let $N_\infty$ and $N_1$ be as in Corollary~\ref{cor:gderivationtriv}; in $\g_{\cO^\rig_R[1/\lambda]}$, we have
\[
N_\infty = E(u) \Ad_G(C) (\varphi(N_\infty)) + N_1.
\]

\begin{defn} \label{defn:approxderivations}
Let $N_0 := 0$ and for $i > 0$, inductively define
\begin{equation} \label{eq:recurrence}
 N_{i+1} -N_i:=  E(u) \Ad_G(C) \left( \varphi(N_{i} - N_{i-1}) \right).
\end{equation}
Furthermore, set $L_1(C) := \frac{E(u)^{h_{\mu}}}{u \lambda} N_1 = - E(u)^{h_\mu} \frac{dC}{du} C^{-1}$.    
\end{defn}

\begin{lem} \label{lem:convergence}
The sequence $\{N_i\}$ converges to $N_\infty$ in $\g_{(K \otimes_{\Zp} R)\pseries{u}}$ in the $u$-adic topology.
\end{lem}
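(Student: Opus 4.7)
The plan is to view the defining recurrence as successive approximation under a $u$-adically contractive operator on $\g_{(K\otimes_{\Zp}R)\pseries{u}}$. First, I would observe that since $E(u)=u-p$ has unit constant term $-p$ in $K\otimes_{\Zp}R$, the natural inclusion $\fS_R[1/E(u)] \hookrightarrow (K\otimes_{\Zp}R)\pseries{u}$ is well-defined and multiplication by $E(u)$ preserves $u$-adic valuation; similarly $\cO^{\rig}_R[1/\lambda] \subset (K\otimes_{\Zp}R)\pseries{u}$ because $\lambda$ has constant term $1$. Thus $C$, $N_1$, and $N_{\infty}$ may all be regarded as elements of $\g_{(K\otimes_{\Zp}R)\pseries{u}}$ (respectively of $G$ over this ring).

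Next, introduce the operator
\[
T\colon \g_{(K\otimes_{\Zp}R)\pseries{u}} \to \g_{(K\otimes_{\Zp}R)\pseries{u}},\qquad T(X):=E(u)\,\Ad_G(C)(\varphi(X)).
\]
The adjoint action $\Ad_G(C)$ preserves $u$-adic valuation because the matrix entries of $C$ on any $V \in \fRep_{\Lambda}(G)$ lie in $\fS_R[1/E(u)]$ and so have non-negative $u$-adic valuation in the larger ring; multiplication by $E(u)$ preserves valuation; and $\varphi$ multiplies valuation by $p$. Hence $T$ sends elements of $u$-adic valuation $\geq d$ to valuation $\geq pd$, and a routine induction on Definition~\ref{defn:approxderivations} yields $N_{i+1}-N_i = T^i(N_1)$, so $N_i = \sum_{j=0}^{i-1} T^j(N_1)$. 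By Remark~\ref{rmk:explicitn1}, $N_1 = u\lambda\,\tfrac{dC}{du}C^{-1}$ has $u$-adic valuation $\geq 1$ (since $u\lambda$ has valuation exactly $1$ and $\tfrac{dC}{du}C^{-1}$ has valuation $\geq 0$); hence $T^j(N_1)$ has valuation $\geq p^j$ and the sequence $\{N_i\}$ is $u$-adically Cauchy, converging in the complete topology to some $\widetilde{N}_{\infty}$.

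Finally, I would identify $\widetilde{N}_\infty$ with $N_\infty$ by a uniqueness argument. Taking the limit of the recurrence gives $\widetilde{N}_{\infty} = N_1 + T(\widetilde{N}_{\infty})$, and by construction $\widetilde{N}_{\infty} \equiv 0 \pmod{u}$. Corollary~\ref{cor:gderivationtriv}, applied in this $R$-family via the analog of Proposition~\ref{prop:gderivation} over $\cO^{\rig}_R$ (see \cite[\S7.1]{localmodels}), gives the same equation $N_\infty = N_1 + T(N_\infty)$ with $N_\infty \equiv 0 \pmod{u}$. The difference $Y := N_\infty - \widetilde{N}_\infty$ then satisfies $Y = T(Y)$ and vanishes modulo $u$, so iterating gives $Y = T^n(Y)$ of $u$-adic valuation $\geq p^n$ for every $n$, forcing $Y = 0$. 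The only care needed is in handling the transition from $\fS_R[1/E(u)]$ and $\cO^{\rig}_R[1/\lambda]$ to the larger ring $(K\otimes_{\Zp}R)\pseries{u}$, where $E(u)^{-1}$ and $\lambda^{-1}$ are genuine power series with unbounded $p$-adic denominators but unit constant terms, so $u$-adic valuations are unaffected.
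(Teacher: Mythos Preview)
Your argument is correct. The paper's own proof is much terser: it simply cites \cite[Proposition~7.1.3(2)]{localmodels} for the case $G=\GL_n$ and then observes that the general case follows by checking on each $V\in\fRep_\Lambda(G)$ via the Tannakian formalism. Your approach instead carries out the successive-approximation argument directly and uniformly in $G$, using the $u$-adic contractivity of $T(X)=E(u)\,\Ad_G(C)(\varphi(X))$ on $\g_{(K\otimes_{\Zp}R)\pseries{u}}$ together with the uniqueness characterization of $N_\infty$. This is essentially what the cited reference does for $\GL_n$, so the content is the same; the advantage of your write-up is that it is self-contained and avoids passing through $\GL_n$ first, while the paper's version keeps the exposition short by deferring to an external source.
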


\begin{proof}
When $G = \GL_n$, this follows from \cite[Proposition 7.1.3 (2)]{localmodels}.  In general, one may check the result on each $V \in \fRep(G)$.
\end{proof}

\begin{prop} 
The $G$-Kisin module $\fP$ satisfies the monodromy condition if and only if $N_\infty$ has 
no poles at $u=p$.  
\end{prop}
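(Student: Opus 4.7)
The forward direction is immediate from Definition~\ref{defn:mono}: if $N_\infty \in \g_{\cO^{\rig}_R}$ then it is certainly regular at $u=p$. For the converse, I would analyze the zero locus of $\lambda$ on the open unit disc and exploit the recursion \eqref{eq:trivializedderivation}. From $\lambda = \prod_{n\geq 0}\varphi^n(E(u)/(-p)) = \prod_{n\geq 0}(1 - u^{p^n}/p)$, the zero locus of $\lambda$ on the open unit disc (geometrically, over a sufficiently large extension) is the $\varphi$-orbit $\{u = p^{1/p^n}\}_{n\geq 0}$. The monodromy condition $N_\infty \in \g_{\cO^{\rig}_R}$ is equivalent to regularity of $N_\infty$ along this entire orbit, while the hypothesis supplies regularity only at $u = p$, i.e., at the point indexed by $n = 0$.

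The key observation is that the other ingredients of \eqref{eq:trivializedderivation} are already regular at each $u = p^{1/p^n}$ with $n \geq 1$. Indeed, $E(u) = u-p$ is nonvanishing at these points; since $C \in G(\fS_R[1/E(u)]) = \LG'(R)$, both $C$ and $\Ad_G(C)$ are regular there; and $N_1 = u\lambda\,\tfrac{dC}{du}C^{-1}$ is regular (in fact vanishes) at each $u = p^{1/p^n}$ because the factor $\lambda$ has a zero there, while $\tfrac{dC}{du}C^{-1}$ is regular away from $u=p$. Thus, modulo quantities regular at $u = p^{1/p^n}$, the recursion reads $N_\infty \equiv E(u)\Ad_G(C)\bigl(\varphi(N_\infty)\bigr)$, so a pole of $N_\infty$ at $u = p^{1/p^n}$ is equivalent to a pole of $\varphi(N_\infty)$ there, which via $\varphi(u)=u^p$ is equivalent to a pole of $N_\infty$ at $u = p^{1/p^{n-1}}$.

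An induction on $n \geq 1$ now propagates regularity through the whole $\varphi$-orbit: the base case $n=0$ is the hypothesis, and the recursion upgrades regularity at $u = p^{1/p^{n-1}}$ to regularity at $u = p^{1/p^n}$. Combined with $N_\infty \in \g_{\cO^{\rig}_R[1/\lambda]}$, this forces $N_\infty \in \g_{\cO^{\rig}_R}$, proving the monodromy condition.

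The main obstacle will be making ``regular at $u = p^{1/p^n}$'' precise over a coefficient ring $R$ that is not a field. I would handle this Tannakianly by reducing to each $V \in \fRep_\Lambda(G)$, where the statement becomes a pole analysis for matrix entries in $\cO^{\rig}_R[1/\lambda]$. The analysis can then be phrased algebraically in terms of divisibility by the $\cO^{\rig}_R$-regular elements $\varphi^n(E(u))$, exactly as in the framework of \cite[\S7]{localmodels} used to set up $\cO^{\rig}_R$. The $\varphi$-equivariance of the recursion, together with the fact that $C$ and $N_1$ are meromorphic only along $E(u)=0$, means the inductive step reduces to the elementary identity $\varphi(\varphi^{n-1}(E)) = \varphi^n(E)$, so no further $p$-adic approximation beyond \eqref{eq:recurrence} is required.
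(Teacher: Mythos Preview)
Your proposal is correct and follows essentially the same route as the paper. The paper's own proof reduces to $\GL_n$ via the Tannakian formalism and then simply cites \cite[Proposition~2.2.4]{bl} for that case; the $\varphi$-recursion argument you sketch---propagating regularity from the zero $u=p$ of $E(u)$ through the remaining zeros of $\lambda$ using \eqref{eq:trivializedderivation} together with the fact that $C$ and $N_1$ are already regular away from $E(u)=0$---is exactly the content of that cited proposition, so you have reproduced rather than replaced the paper's argument. Your closing remark about phrasing ``regularity at $u=p^{1/p^n}$'' as divisibility by $\varphi^n(E(u))$ in $\cO^{\rig}_R$ is the right way to make the induction rigorous over a general flat coefficient ring, and is how \cite{bl} and \cite[\S7]{localmodels} handle it.
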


\begin{proof}
When $G = \GL_n$, this is \cite[Proposition 2.2.4]{bl}.  The general case follows as both conditions may be checked on each $V \in \fRep(G)$.
\end{proof}

Thus we are reduced to studying the condition that $N_{\infty}$ has no poles at $u = p$.   We aim to show that the condition that $N_1$ has no poles at $u = p$ is a good approximation to this condition when $h_{\mu} < p-1$.  To make this precise, we have to look more carefully at the sequence $N_i$.

\begin{lem} \label{L1pole}   Assume $\fP$ has type $\leq \mu$ and that $R[1/p]$ is reduced.  Then 
\begin{enumerate}[(i)]
\item \label{L1polei} $L_1(C) \in \g_{\fS_R}$;
\item  \label{lieintegralityi}
 $E(u)^{h_\mu} Ad_G(C) (\g_{\fS_R} )\subset \g_{\fS_R} $ and   $E(u)^{h_\mu} Ad_G(C) (\g_{\cO^\rig_R} )\subset \g_{\cO^\rig_R} $; 
\item \label{L1poleiii}$\lambda^{h_{\mu}-1} N_{\infty} \in  \g_{\cO^\rig_R}$.
\end{enumerate}
\end{lem}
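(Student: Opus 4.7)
The plan is to verify (ii) directly from Lemma~\ref{lem:adjheight} as a Tannakian consequence, bootstrap to (i) via a Cartan-decomposition analysis after base change to finite $L$-algebra points, and obtain (iii) by iterating the recurrence \eqref{eq:recurrence}.

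For (ii): applying the adjoint representation $\Ad_G : G \to \GL(\g)$ to $\fP$ produces a Kisin module $\fP(\Lie G)$ whose height lies in $[-h_\mu, h_\mu]$ by Lemma~\ref{lem:adjheight}. After trivializing $\fP$, the Frobenius of $\fP(\Lie G)$ is realized by $\Ad_G(C)$ acting $\fS_R$-linearly on $\g_{\fS_R}$, and the two-sided height bound translates directly into $E(u)^{h_\mu}\Ad_G(C)(\g_{\fS_R}) \subset \g_{\fS_R}$. The $\cO^\rig_R$ version follows from the previous statement since the matrix of $E(u)^{h_\mu}\Ad_G(C)$ has entries in $\fS_R \subset \cO^\rig_R$.

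For (i): using $\Lambda$-flatness of $R$ and reducedness of $R[1/p]$, the ring $R$ embeds into the product of its localizations at the maximal ideals of $R[1/p]$, each of which is a finite extension of $L$. It therefore suffices to check $L_1(C) \in \g_{\fS_R}$ after base change to the ring of integers of any such finite extension $L'/L$. Over $L'$, a Cartan/Bruhat decomposition of the Schubert variety $\Gr^{\leq \mu}_{G',L'}$ writes $C = g_1 (u-p)^{\mu'} g_2$ with $g_1, g_2 \in \LpG'_{L'}$ and dominant $\mu' \leq \mu$. The Leibniz rule (Remark~\ref{remark:leibniz}) then gives
\begin{equation*}
\tfrac{dC}{du}\,C^{-1} = \tfrac{dg_1}{du}\,g_1^{-1} + \Ad_G(g_1)\bigl((u-p)^{-1}\,d\mu'(1)\bigr) + \Ad_G\bigl(g_1(u-p)^{\mu'}\bigr)\bigl(\tfrac{dg_2}{du}\,g_2^{-1}\bigr).
\end{equation*}
The first term lies in $\g_{\fS}$ since $g_1 \in \LpG'$; the middle term has at most a simple pole at $u = p$, which is cleared by $E(u)^{h_\mu}$ (the degenerate central case $h_\mu = 0$ must be handled separately, but is mild); and the third term is integral after multiplication by $E(u)^{h_{\mu'}}$ by part (ii). Since $h_{\mu'} \leq h_\mu$, multiplying through by $E(u)^{h_\mu}$ yields $L_1(C) \in \g_{\fS_R}$.

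For (iii): part (i) gives $N_1 = u\lambda \cdot \tfrac{dC}{du}C^{-1} \in u\lambda\cdot E(u)^{-h_\mu}\g_{\fS_R}$, and since $u\lambda/E(u) \in \cO^\rig$ is a unit at $u=p$ (as $\lambda$ has a simple zero there), this places $N_1$ in $\lambda^{1-h_\mu}\g_{\cO^\rig_R}$ modulo integral factors. The recurrence \eqref{eq:recurrence} combined with the $\cO^\rig_R$-form of (ii) and the Frobenius identity $\varphi(\lambda) = -p\lambda/E(u)$ (which tracks how $\lambda$-poles are transported under $\varphi$) gives inductively $N_{i+1} - N_i \in p^{i}\lambda^{1-h_\mu}\g_{\cO^\rig_R}$ up to integral units. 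Summing the resulting $p$-adically convergent telescoping series (via Lemma~\ref{lem:convergence}) yields $\lambda^{h_\mu - 1}N_\infty \in \g_{\cO^\rig_R}$. The main obstacle is (i): the spreading-out reduction to $L$-algebra points and the careful pole analysis of the Cartan decomposition; once (i) is established, (iii) is essentially bookkeeping using the $\varphi(\lambda)$ identity.
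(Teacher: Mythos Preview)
Your argument follows the paper's proof closely: (ii) from the adjoint height bound (Lemma~\ref{lem:adjheight}), (i) by reducing to field-valued points and applying the Cartan decomposition together with the Leibniz rule, and (iii) by induction on the recurrence \eqref{eq:recurrence} using $\varphi(\lambda) = -p\lambda/E(u)$ to pull the factor $\lambda^{h_\mu-1}$ through $\varphi$ and then invoking (ii).

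One point in your reduction for (i) needs tightening: $R$ does not embed into a product of \emph{localizations} of $R[1/p]$ at maximal ideals (those are local rings, not finite extensions of $L$), and there is no natural map from $R$ to rings of integers $\cO_{L'}$. The clean formulation, which is what the paper does, is to note that $L_1(C) \in \g_{\fS_R[1/E(u)]}$ a priori (checked on any faithful representation), so the condition $L_1(C) \in \g_{\fS_R}$ is Zariski-closed on $\Spec R$; since $R$ is $\Lambda$-flat with $R[1/p]$ reduced, $R$ itself is reduced, and it suffices to verify the condition on $\overline{L}$-points of $R$, where your Cartan decomposition and Leibniz computation go through verbatim.
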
 
\begin{proof}
As can be checked on any faithful representation, $L_1(C) \in \g_{\fS_{R}}[1/E(u)]$.   The condition that $L_1(C) \in \g_{\fS_R}$ is then a Zariski closed condition on $R$.   Since $R$ is $\Lambda$-flat and reduced, it suffices to check that condition on the $\overline{L}$-points of $R$.   If $x$ is any such point, let $C_x$ denote the base change of $x$ to $\overline{L}$.   

Since $\fP$ has type $\leq \mu$, there exists $\mu' \leq \mu$ such that $C_x = K_1 E(u)^{\mu'} K_2$ where $K_1, K_2 \in \LpG'(\overline{L})$.    Applying Remark~\ref{remark:leibniz},  we have
\[
L_1(C_x) = L_1(K_1) +   \Ad_G(K_1)  L_1(E(u)^{\mu'})  +  E(u)^{h_{\mu}} \Ad_G(K_1 E(u)^{\mu'}) (\frac{dK_2}{du} K_2^{-1}).
\]
We directly see that $L_1(K_1)$ and $E(u) \frac{ d E(u)^{\mu'}}{du} E(u)^{-\mu'}$ are in $ \g_{\fS_{\overline{L}}}$.  For the third term, notice that $\frac{dK_2}{du} K_2^{-1} \in   \g_{\fS_{\overline{L}}}$ and since 
$\mu' \leq \mu$, we know $E(u)^{h_{\mu}} \Ad_G(K_1 E(u)^{\mu'})$ preserves $\g_{\fS_{\overline{L}}}$.  Thus we conclude $L_1(C_x) \in \g_{\fS_{\overline{L}}}$.

Since $\fP$ has type $\leq \mu$, we see that $\fP(\Lie G)$ has height in $[-h_\mu,h_\mu]$ which gives part \eqref{lieintegralityi}.

For the last part, it suffices to show $\lambda^{h_{\mu}-1} N_{i} \in  \g_{\cO^\rig_R}$ which we prove by induction on $i$.   The base case follows from part (\ref{L1polei}).  Since 
$
N_{i+1} = N_i +  E(u) \Ad_G(C) \left( \varphi(N_{i} - N_{i-1}) \right),
$
it suffices to show that if $\lambda^{h_{\mu}-1} N_{i}  \in  \g_{\cO^\rig_R}$ then  $E(u) \lambda^{h_{\mu}-1}  \Ad_G(C) \left( \varphi(N_{i}) \right) \in  \g_{\cO^\rig_R}$.  This follows from part (\ref{lieintegralityi}) using  that
\[
E(u) \lambda^{h_{\mu}-1}  \Ad_G(C) \left( \varphi(N_{i}) \right) = E(u)^{h_{\mu}}/(-p)^{h_{\mu} -1} \Ad_G(C) \left( \varphi( \lambda^{h_{\mu}-1} N_{i}) \right).  \qedhere
\]  
\end{proof}

Since $\lambda$ has simple pole at $u = p$, the condition that $N_1$ has no poles at $u = p$ is equivalent to $L_1(C)$ having a zero of order at least $h_{\mu}-1$.  Similarly, by Lemma \ref{L1pole}(\ref{L1poleiii}), the monodromy condition is equivalent to $E(u)^{h_\mu-1}N_{\infty}$ having a zero of order at least $h_\mu-1$ at $u = p$.  

Given an element of element $X = (X_\sigma)_{\sigma \in \cJ} \in \g_{K \tensor{\Lambda} R} = \prod_{\sigma \in \cJ} \g_R$, we say the entries of $X$ are the coefficients of each $X_\sigma$ with respect to a fixed basis for $\g$ over $\Lambda$.

\begin{defn}
Let $\widetilde{I}_{N_{\infty}}$ denote the ideal in $R[1/p]$ generated by the entries of  $\frac{ d^i \left( E(u)^{h_\mu-1} N_{\infty} \right)}{du^i} |_{u=p}$ for $0 \leq i \leq h_\mu-2$.  Furthermore, define
\[
I_{N_{\infty}} := \widetilde{I}_{N_{\infty}} \cap R.
\]
so $\Spec R/I_{N_{\infty}}$ is closure of the locus on the generic fiber satisfying the monodromy condition.

Define $I_{N_1} \subset R$ to be the ideal generated by the entries of $ \frac{ d^i L_1(C)}{du^i} |_{u=p}$ for $0 \leq i \leq h_\mu-2$. 
\end{defn}

 Our goal is to $p$-adically approximate $I_{N_{\infty}}$ and relate it to $I_{N_1}$ by studying the condition on the pole of $N_1$.      
This will yield our main theorem: 

\begin{thm} \label{thm:monoapprox}  Let $\fP$ be a $G$-Kisin module with coefficients in a complete flat local Noetherian $\Lambda$-algebra $R$ with finite residue field.  Assume $\fP$ has type $\leq \mu$, that $h_{\mu} < p-1$, and that $R[1/p]$ is reduced.   Then 
\[
I_{N_1}  \subset (I_{N_{\infty}}, p).
\]
\end{thm}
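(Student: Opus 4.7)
The plan is to $p$-adically approximate $N_{\infty}$ by $N_1$ using the fundamental fact that the Frobenius pullback $\varphi(f)(u) = f(u^p)$ contributes factors of $p$ under differentiation at $u = p$.  Starting from the fixed-point relation $N_{\infty} = E(u)\Ad_G(C)(\varphi(N_{\infty})) + N_1$ of Corollary~\ref{cor:gderivationtriv}, multiply by $E(u)^{h_{\mu}}$, use $E(u)^{h_{\mu}} N_1 = u\lambda L_1(C)$, and then rewrite via $-p\lambda = E(u)\varphi(\lambda)$ to obtain the identity
\[
u\,\varphi(\lambda)\,L_1(C) \;=\; -p\,E(u)^{h_{\mu}-1}\,N_{\infty} \;+\; p\,E(u)^{h_{\mu}}\,\Ad_G(C)(\varphi(N_{\infty})),
\]
in which all three terms admit well-defined Taylor expansions at $u = p$ (with entries in $\g_{K\otimes R}$) because by Lemma~\ref{L1pole}(iii) the element $N_{\infty}$ has a pole of order at most $h_{\mu}-1$ at $u = p$.

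The central computation is an application of Fa\`a di Bruno's formula: for any $f$ regular at $u = p^p$,
\[
\varphi(f)^{(k)}|_{u=p} \;\equiv\; 0 \pmod p \qquad \text{for every } k \geq 1,
\]
since every nontrivial chain-rule term contains a factor $(u^p)^{(i)}|_{u=p} = i!\binom{p}{i}p^{p-i}$ divisible by $p$ (using $p \mid \binom{p}{i}$ for $1 \leq i \leq p-1$).  Applied to $\lambda$, together with $\varphi(\lambda)(p) = \lambda(p^p) \in \Lambda^{\times}$, this yields $(u\varphi(\lambda))^{(j)}|_{u=p} \equiv \lambda(p^p)\,\delta_{j,1} \pmod p$, the sole surviving Leibniz term pairing $u^{(1)}(p) = 1$ with $\varphi(\lambda)(p)$.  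Applied to $N_{\infty}$, which is regular at $u = p^p$ because its poles lie at $u = p^{1/p^n}$ for $n\ge 0$, and combined with the integrality control of Lemma~\ref{L1pole}(ii), it shows that the last summand $pE(u)^{h_{\mu}}\Ad_G(C)(\varphi(N_{\infty}))$ contributes only to $pR$ in its $k$-th derivative at $u = p$ for all $0 \leq k \leq h_{\mu}-2$.

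Now differentiate the identity $k$ times at $u = p$ for $k = 1, \ldots, h_{\mu}-2$ and reduce modulo $p$.  By Leibniz applied to the triple product on the left and the preceding Taylor-coefficient computation, the LHS collapses to $k\lambda(p^p)L_1^{(k-1)}(p)$, and the RHS reduces to $-p(E(u)^{h_{\mu}-1}N_{\infty})^{(k)}|_{u=p}$.  Using the Leibniz identity $(E(u)^{h_{\mu}}N_{\infty})^{(k+1)}|_{u=p} = (k+1)(E(u)^{h_{\mu}-1}N_{\infty})^{(k)}|_{u=p}$, the right side equals $-(p/(k+1))(E(u)^{h_{\mu}}N_{\infty})^{(k+1)}|_{u=p}$, which lies in $(I_{N_{\infty}}, p)$ whenever $k+1 \leq h_{\mu}-2$.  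Because $k\lambda(p^p)$ and $(k+1)!$ are units modulo $p$ for $k \leq h_{\mu}-2 < p-1$, inverting these factors yields $L_1^{(k-1)}(p) \in (I_{N_{\infty}}, p)$ for $k-1 \in \{0, 1, \ldots, h_{\mu}-4\}$.

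The main obstacle will be the boundary cases $k \in \{h_{\mu}-2,\,h_{\mu}-1\}$, required to put the top generators $L_1^{(h_{\mu}-3)}(p)$ and $L_1^{(h_{\mu}-2)}(p)$ of $I_{N_1}$ inside $(I_{N_{\infty}}, p)$; here the RHS demands $(E(u)^{h_{\mu}}N_{\infty})^{(l)}|_{u=p}$ for $l \in \{h_{\mu}-1,\, h_{\mu}\}$, outside the defining range of $I_{N_{\infty}}$.  To handle these I will use Lemma~\ref{L1pole}(iii), which gives $\lambda^{h_{\mu}-1}N_{\infty} \in \g_{\cO^{\rig}_R}$, together with the observation that $(E/\lambda)^{h_{\mu}-1}|_{u=p} = (-p/\lambda(p^p))^{h_{\mu}-1}$ lies in $p^{h_{\mu}-1}\Lambda^{\times}$, to show that the borderline contribution (the simple polar coefficient of $N_{\infty}$ at $u = p$) already carries enough extra factors of $p$ to be absorbed into $pR \subseteq (I_{N_{\infty}}, p)$.
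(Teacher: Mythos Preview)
Your identity
\[
u\,\varphi(\lambda)\,L_1(C) \;=\; -p\,E(u)^{h_{\mu}-1}\,N_{\infty} \;+\; p\,E(u)^{h_{\mu}}\,\Ad_G(C)(\varphi(N_{\infty}))
\]
is correct, and your Leibniz analysis of the left side is fine. The gap is in the treatment of the second summand on the right. Your Fa\`a di Bruno argument for $\varphi(N_\infty)$ tacitly assumes that the values $N_\infty^{(j)}(p^p)$ lie in $\g'_R = \g_{W(k)\otimes R}$; without that, ``$\equiv 0 \pmod p$'' has no meaning. But $N_\infty$ only lives in $\g_{\cO^{\rig}_R[1/\lambda]}$, and evaluating elements of $\cO^{\rig}_R$ at a point gives values in $K\otimes R$, not $W(k)\otimes R$. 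Lemma~\ref{L1pole}(ii) does not rescue this: it controls $E(u)^{h_\mu}\Ad_G(C)$ acting on $\g_{\cO^\rig_R}$, but $\varphi(N_\infty)$ is not in $\g_{\cO^\rig_R}$ (it has poles along the zeros of $\varphi(\lambda)$), and even locally at $u=p$ you have no integrality statement for its Taylor coefficients. In fact, the paper's series expansion shows $E(u)^{h_\mu}\Ad_G(C)(\varphi(N_\infty)) = \sum_{i\ge 1} z_i\,\cA_C^i(L_1(C))$, and one computes $v_p\bigl(z_1^{(j)}(p)\bigr)\ge p-h_\mu-j+1$; for $j$ near $h_\mu-2$ and $h_\mu$ near $p-2$ this is negative, so the $k$-th derivative of your second summand at $u=p$ genuinely fails to lie in $p\g'_R$. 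The two right-hand terms have non-integral contributions that cancel against each other, so you cannot separate them as you do.

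This is exactly why the paper's argument is organized differently. Rather than isolating $N_\infty$ and $\varphi(N_\infty)$, the paper writes $E(u)^{h_\mu-1}N_\infty = \sum_{i\ge 0} z_i\,\cA_C^i(L_1(C))$ where each $\cA_C^i(L_1(C))\in\g_{\fS_R}$ is \emph{manifestly} integral, discards $i\ge 2$ as $O(p^p)$, and is left with a relation involving $L_1(C)$ and $L_2(C)=\cA_C(L_1(C))$. The crucial extra idea you are missing is the inductive bootstrap (Lemma~\ref{lem:inductivestep}): divisibility of the low derivatives $L_1^{(i)}(p)$ feeds back, via $L_2=\cA_C(L_1)$ and Corollary~\ref{cor:L2C}, to force higher $p$-divisibility of $L_2^{(i)}(p)$, which in turn (through the two-term relation) improves the divisibility of $L_1^{(i)}(p)$ by one power of $p$. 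This loop is what handles all indices up to $h_\mu-2$ uniformly, including your ``boundary cases'', for which your sketch using $(E/\lambda)^{h_\mu-1}|_{u=p}\in p^{h_\mu-1}\Lambda^\times$ is not a proof: that gives information only about the constant term, not about the needed higher Taylor coefficients.
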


We will prove this in \S\ref{ss:proofmonodromy}.  The key idea is to control the $p$-divisibility of terms in a series relating $N_\infty$ to $N_1$.

\begin{defn} \label{defn:modpmonodromy}
Let $\fP$ be a $G$-Kisin module with coefficients in a $\Lambda$-algebra $A$ of characteristic $p$.  Let $C$ correspond to $\varphi_\fP$ upon choice of trivialization.  We say that $\fP$ satisfies the mod-$p$ monodromy condition provided that $L_1(C)$ has a zero of order at least $h_\mu-1$ at $u=p$.
\end{defn}

It is straightforward to verify that this notion is independent of the choice of trivialization, and equivalent to the condition that $\frac{dC}{du} C^{-1} \in \frac{1}{u} \g_{\fS_A}$.

\begin{cor} \label{cor:modpmonodromy}
Assume $h_\mu < p-1$ and that $R[1/p]$ is reduced.  The base change $\fP \otimes_{\fS_R} \fS_{R/ (p,I_{N_\infty})}$ satisfies the mod-$p$ monodromy condition.
\end{cor}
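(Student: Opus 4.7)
The plan is to deduce this corollary directly from Theorem~\ref{thm:monoapprox} by comparing the ideal generated by the obstructions to the mod-$p$ monodromy condition with the ideal $I_{N_1}$. The key observation is that, by Definition~\ref{defn:modpmonodromy}, the mod-$p$ monodromy condition for a $G$-Kisin module $\fP'$ over a characteristic-$p$ ring $A$ with Frobenius matrix $C'$ is exactly the vanishing in $A$ of the entries of
\[
\frac{d^i L_1(C')}{du^i}\bigg|_{u=p} \qquad \text{for } 0 \le i \le h_\mu - 2.
\]

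Let $A := R/(p, I_{N_\infty})$ and let $\bar{\fP} := \fP \otimes_{\fS_R} \fS_A$ be the base change. Choosing a trivialization of $\fP$ and reducing it gives a trivialization of $\bar{\fP}$ with respect to which the Frobenius is represented by $\bar{C} \in \LG'(A)$, the reduction of $C$. Since the formation of $L_1$ and the operations of differentiating in $u$ and evaluating at $u = p$ all commute with base change along $\fS_R \to \fS_A$, we obtain
\[
\frac{d^i L_1(\bar{C})}{du^i}\bigg|_{u=p} \;=\; \frac{d^i L_1(C)}{du^i}\bigg|_{u=p} \bmod (p, I_{N_\infty}),
\]
for each $i$. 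By the definition of $I_{N_1}$, the entries of the right-hand side lie in $I_{N_1}$.

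Now we apply Theorem~\ref{thm:monoapprox}, whose hypotheses ($\fP$ of type $\leq \mu$, $h_\mu < p - 1$, and $R[1/p]$ reduced) are precisely the standing assumptions of the corollary. The theorem yields $I_{N_1} \subset (p, I_{N_\infty})$, so the entries of $\frac{d^i L_1(\bar{C})}{du^i}|_{u=p}$ vanish in $A$ for all $0 \le i \le h_\mu - 2$. Equivalently, $L_1(\bar{C})$ has a zero of order at least $h_\mu - 1$ at $u = p$, which is exactly the mod-$p$ monodromy condition for $\bar{\fP}$. Since the argument is a direct unpacking of definitions followed by invocation of Theorem~\ref{thm:monoapprox}, no real obstacle arises; all the work has been done in establishing that theorem.
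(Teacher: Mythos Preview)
Your proof is correct and takes essentially the same approach as the paper: both reduce immediately to Theorem~\ref{thm:monoapprox}, using that $I_{N_1} \subset (I_{N_\infty}, p)$ forces the defining conditions of the mod-$p$ monodromy to vanish after base change to $R/(p, I_{N_\infty})$. The paper states this in one line as the closed immersion $\Spf R/(I_{N_\infty},p) \hookrightarrow \Spf R/(I_{N_1},p)$, while you unpack the same containment explicitly at the level of the entries of $\frac{d^i L_1(C)}{du^i}\big|_{u=p}$.
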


\begin{proof}
Theorem~\ref{thm:monoapprox} shows $\Spf R/(I_{N_\infty},p) \into \Spf R/(I_{N_1},p)$.  
\end{proof}

\subsection{Proof of Theorem~\ref{thm:monoapprox}} \label{ss:proofmonodromy}
We continue the notation of the previous subsection and set $h := h_\mu$.
 Throughout we assume that $R[1/p]$ is reduced.  We begin with some preliminaries. 

\begin{lem} \label{lem:lieintegrality}  Assume that $\fP(\Lie G)$ has height in $[-h, h]$.  Then,  
\begin{enumerate}[(i)]
 \item  \label{lieintegralityii} $E(u)^h N_1 \in u \lambda \g_{\cO_R^\rig}$;

\item  \label{lieintegralityiii} Fix $j<p$ and $X \in \g_{\cO^\rig_R}$.  
If $\frac{d^i X}{du^i}  |_{u=p} \in p^r \g_{W(k) \tensor{\Z_p} R}$ for all $0 \leq i \leq j$ then for any $i \leq j$ $$\frac{ d^i \left( E(u)^h Ad_G(C) (X) \right)}{du^i} |_{u=p} \in p^r \g_{W(k) \tensor{\Z_p} R}.$$
\end{enumerate}
\end{lem}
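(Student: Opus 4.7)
The plan is to treat the two parts separately.

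For part (i), by Remark~\ref{rmk:explicitn1} we have $N_1 = u\lambda \frac{dC}{du} C^{-1}$, so the conclusion is equivalent to showing $L_1(C) \in \g_{\cO^\rig_R}$. I will in fact establish the stronger statement $L_1(C) \in \g_{\fS_R}$ by adapting the proof of Lemma~\ref{L1pole}(\ref{L1polei}); the only required modification is to observe that the type $\leq \mu$ hypothesis used there enters only through the induced height bound on $\fP(\Lie G)$, which is exactly our weaker assumption. Since $R$ is $\Lambda$-flat and $R[1/p]$ is reduced, the closed condition $L_1(C) \in \g_{\fS_R}$ may be checked on $\overline{L}$-points. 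At any such point $x$, the Cartan decomposition gives $C_x = K_1 E(u)^{\mu'_x} K_2$ with $K_1, K_2 \in \LpG'(\overline{L})$, and the height hypothesis on $\fP(\Lie G)$ forces $h_{\mu'_x} \leq h$. Expanding via Remark~\ref{remark:leibniz}, the three resulting terms all lie in $\g_{\fS_{\overline{L}}}$: the terms attached to $K_1$ and $K_2$ are integral because $K_1, K_2 \in \LpG'$, the middle term reduces to $\Ad_G(K_1)(E(u)^{h-1} \mu'_x)$ and is integral since $h \geq 1$, and the conjugation by $E(u)^{\mu'_x}$ appearing in the third term introduces at most a pole of order $h_{\mu'_x} \leq h$, which is absorbed by the $E(u)^h$ prefactor.

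For part (ii), the plan is to apply the Leibniz rule to $Y = M(X)$ where $M := E(u)^h \Ad_G(C)$. The height hypothesis on $\fP(\Lie G)$ gives $M(\g_{\fS_R}) \subset \g_{\fS_R}$, so, fixing a $\Lambda$-basis of $\g$, the matrix of $M$ has entries in $\fS_R = (W(k) \otimes_{\Zp} R)\pseries{u}$. Since $\fS_R$ is preserved by $\frac{d}{du}$ and $W(k) \otimes_{\Zp} R$ is $p$-adically complete, every entry of $\frac{d^k M}{du^k}$ remains in $\fS_R$ and may be evaluated at $u = p$ to produce an element of $W(k) \otimes_{\Zp} R$. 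Consequently $\frac{d^k M}{du^k}|_{u=p}$ acts as a $(W(k) \otimes_{\Zp} R)$-linear endomorphism of $\g_{W(k) \otimes_\Lambda R}$ preserving the integral lattice. The Leibniz identity
\[
\frac{d^i (M(X))}{du^i}\Big|_{u=p} = \sum_{k=0}^i \binom{i}{k} \frac{d^k M}{du^k}\Big|_{u=p}\left(\frac{d^{i-k}X}{du^{i-k}}\Big|_{u=p}\right),
\]
combined with the hypothesis $\frac{d^{i-k}X}{du^{i-k}}|_{u=p} \in p^r \g_{W(k) \otimes_\Lambda R}$ (which applies for all $0 \leq k \leq i$ because $i - k \leq i \leq j$) and the fact that binomial coefficients are integers, then yields the claim termwise.

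I do not anticipate serious obstacles. Part (i) is essentially a replay of Lemma~\ref{L1pole}(\ref{L1polei}); the only subtlety is verifying that the pointwise Cartan cocharacter $\mu'_x$ satisfies $h_{\mu'_x} \leq h$, which is immediate from the definition of height applied to $\fP(\Lie G)$. Part (ii) is a formal Leibniz argument once integrality of the matrix of $M$ in $\fS_R$ has been granted. The hypothesis $j<p$ plays no visible role in the Leibniz computation itself and is presumably imposed for compatibility with the iterated use of this lemma to bound the differences $N_{i+1}-N_i$ in the course of proving Theorem~\ref{thm:monoapprox}.
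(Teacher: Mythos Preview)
Your proof is correct. Part (i) matches the paper exactly: both reduce to $L_1(C)\in\g_{\fS_R}$, which is Lemma~\ref{L1pole}(\ref{L1polei}); the paper simply cites it, while you re-derive it after noting that the type $\leq\mu$ hypothesis there is used only via the height bound on $\fP(\Lie G)$.

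For part (ii) your approach genuinely differs from the paper's. The paper Taylor-expands $X=\sum_{i\geq 0} X_i\,E(u)^i/i!$ around $u=p$, applies $E(u)^h\Ad_G(C)$ termwise, and observes that the terms with $i\leq j$ land in $p^r\g_{\fS_R}$ (using that $i!$ is a $p$-adic unit for $i\leq j<p$) while the terms with $i>j$ have high enough vanishing order at $u=p$ to contribute nothing after $\leq j$ derivatives. You instead note that the height hypothesis gives the operator $M=E(u)^h\Ad_G(C)$ a matrix with entries in $\fS_R$, then apply the Leibniz rule directly to $M(X)$. Your argument is shorter and makes no use of the hypothesis $j<p$, confirming your suspicion that this restriction is superfluous for the lemma as stated and is present only because that range is all that is needed downstream.
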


\begin{proof}
The first statement follows from Lemma~\ref{L1pole}(\ref{L1polei}).

For the second statement, we can expand $X = \sum_{i \geq 0} X_i \frac{E(u)^i} {i!}$ with $X_i \in p^r \g_{W(k) \tensor{\Z_p} R}$ for $i \leq j$ and otherwise $X_i \in \g_{K \tensor{\Z_p} R}$.    By Lemma~\ref{L1pole}(\ref{lieintegralityi}),  we see that $E(u)^h Ad_G(C)(X_i) \in p^r \g_{\fS_R}$ for $i \leq j$.  For any $i \leq j$ we conclude that 
\[
\frac{ d^i \left( E(u)^h Ad_G(C) (X) \right)}{du^i} |_{u=p} \in p^r \g_{W(k) \tensor{\Z_p} R}. \qedhere
\]  
\end{proof}

\begin{defn} \label{defn:Cs}
We define $\cA_C : \g_{\cO^\rig_R} \to \g_{\cO^\rig_R}$ by $\cA_C(X) = E(u)^h \Ad_G(C) (\varphi(X))$.  
Furthermore, define $L_2(C) := \cA_C(L_1(C)) = E(u)^h \Ad_G(C) ( \varphi(L_1(C)))$.  
\end{defn}

Note that $\cA_C$ is well defined and that $L_2(C)$ lies in $\g_{\cO^\rig_R}$ by Lemma~\ref{L1pole}\eqref{lieintegralityi}.  

\begin{remark}
When $G=\GL_n$, using Remark~\ref{rmk:explicitn1} we see $L_1(C) = -E(u)^h \frac{dC}{du} C^{-1}$ and $L_2(C) = E(u)^h C \varphi( L_1(C)) C^{-1}$.
\end{remark}

Now we let
\begin{equation}  \label{defn:zs}
 z_0 := -\frac{ u \varphi(\lambda)}{p} \quad \text{and} \quad z_i := -\frac{u^{p^i} \varphi^{i+1}(\lambda)}{p \prod_{j=1}^i \varphi^j(E(u))^{h-1} }. 
\end{equation} 

\begin{lem} \label{lem:telescoping}
Letting $\cA_C^i$ denote the $i$-fold composition of $\cA_C$, we have that
 \begin{equation} \label{eq:differenceformula}
  E(u)^{h-1}(N_{i+1} - N_i) = z_i \cA_C^i(L_1(C)) .
 \end{equation} 
and that
\begin{equation} \label{eq:keyequation}
  E(u)^{h-1} N_\infty =  E(u)^{h-1} N_1 + \sum_{i=1}^\infty  E(u)^{h-1}(N_{i+1} - N_i) = z_0 L_1(C) + \sum_{i=1}^\infty z_i  \cA_C^i(L_1(C)).
\end{equation}
Furthermore we have that $\cA_C^i(L_1(C))  \in  \g_{\cO^\rig_R}$
\end{lem}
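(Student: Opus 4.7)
The plan is to prove the claimed identity by induction on $i$, with the base case reducing to the definition of $L_1(C)$ and the inductive step reducing to a bookkeeping identity for the scalars $z_i$. Throughout, the key algebraic fact is $\lambda = -\frac{E(u)\varphi(\lambda)}{p}$, which is immediate from the defining product \eqref{defn:derivation}.

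For the base case $i=0$, I rewrite $L_1(C) = \tfrac{E(u)^h}{u\lambda} N_1$ as $E(u)^{h-1} N_1 = \tfrac{u\lambda}{E(u)} L_1(C)$, and then apply $\lambda/E(u) = -\varphi(\lambda)/p$ to obtain $E(u)^{h-1}N_1 = -\tfrac{u\varphi(\lambda)}{p} L_1(C) = z_0 L_1(C) = z_0\cA_C^0(L_1(C))$, using the convention $N_0 = 0$. For the inductive step, suppose $E(u)^{h-1}(N_i - N_{i-1}) = z_{i-1}\cA_C^{i-1}(L_1(C))$. Solving for $N_i - N_{i-1}$, applying $\varphi$, then applying $E(u)\Ad_G(C)$, and using the recurrence \eqref{eq:recurrence} yields
\[
E(u)^{h-1}(N_{i+1}-N_i) \;=\; \frac{\varphi(z_{i-1})}{\varphi(E(u))^{h-1}}\,E(u)^h \Ad_G(C)\bigl(\varphi(\cA_C^{i-1}(L_1(C)))\bigr) \;=\; \frac{\varphi(z_{i-1})}{\varphi(E(u))^{h-1}}\,\cA_C^i(L_1(C)).
\]
So I need only verify the scalar identity $\varphi(z_{i-1})/\varphi(E(u))^{h-1} = z_i$, which follows by applying $\varphi$ to the definition \eqref{defn:zs} of $z_{i-1}$ and reindexing the product $\prod_{j=1}^{i-1}\varphi^{j+1}(E(u))^{h-1}$ as $\prod_{j=2}^i \varphi^j(E(u))^{h-1}$, after which the extra factor of $\varphi(E(u))^{-(h-1)}$ completes the product down to $j=1$.

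For the sum formula, I use that by Lemma~\ref{lem:convergence} the sequence $N_i$ converges to $N_\infty$ in $\g_{(K\otimes_{\Zp} R)\pseries{u}}$. Since $N_i = \sum_{k=0}^{i-1}(N_{k+1}-N_k)$ telescopes and multiplication by the polynomial $E(u)^{h-1}$ is continuous, passing to the limit and substituting the just-proved formula \eqref{eq:differenceformula} gives the stated series expansion for $E(u)^{h-1}N_\infty$. Finally, integrality of $\cA_C^i(L_1(C))$ follows by induction on $i$: the base case is Lemma~\ref{L1pole}\eqref{L1polei} which says $L_1(C)\in\g_{\fS_R}\subset \g_{\cO^\rig_R}$, and for the inductive step $\varphi$ preserves $\g_{\cO^\rig_R}$ while $E(u)^h\Ad_G(C)$ preserves $\g_{\cO^\rig_R}$ by Lemma~\ref{L1pole}\eqref{lieintegralityi}, so $\cA_C$ preserves $\g_{\cO^\rig_R}$.

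I do not expect any genuine obstacle: the entire argument is bookkeeping with the recurrence \eqref{eq:recurrence}, the factorization $\lambda = -E(u)\varphi(\lambda)/p$, and the already-established integrality in Lemma~\ref{L1pole}. The only point requiring care is tracking the shift in the indices of the product $\prod_j \varphi^j(E(u))^{h-1}$ when verifying $\varphi(z_{i-1})/\varphi(E(u))^{h-1} = z_i$.
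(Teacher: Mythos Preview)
Your proof is correct and follows essentially the same approach as the paper: the base case via the identity $\lambda = -E(u)\varphi(\lambda)/p$, the inductive step via the recurrence \eqref{eq:recurrence} together with the scalar identity $\varphi(z_{i-1})/\varphi(E(u))^{h-1} = z_i$, the telescoping sum, and integrality from Lemma~\ref{L1pole}. You have simply spelled out the details that the paper leaves implicit (and in fact your citation of Lemma~\ref{L1pole}\eqref{lieintegralityi} for the inductive step of the integrality claim is more precise than the paper's, which cites only part~\eqref{L1polei}).
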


\begin{proof}
We directly see that $E(u)^{h-1} N_1 = z_0 L_1(C)$, and then the first equation follows from induction using \eqref{eq:recurrence}.
The second equation telescopes. The last claim follows from Lemma~\ref{L1pole}(\ref{L1polei}).
\end{proof}

We now collect together information about $z_0$ and the $z_i$.  Let $v_p$ denote the $p$-adic valuation on $\ZZ_p$ normalized so $v_p(p) = 1$. 

\begin{lem} \label{lem:coefficients}  Let $i \geq 1$.
\begin{enumerate}[(i)]
\item  \label{part:varphi} We have that $\varphi^i(\lambda)|_{u=p}$ is a unit in $\ZZ_p$, and that for $0 < n < p$
 \[
   \frac{d ^n \varphi^i(\lambda)}{du^n} |_{u=p} = -\frac{(p^i-1)!}{(p^i-n)!} p^{p^i+(i-1)-n} + O(p^{p^i+i-n}).
 \]
 
 \item \label{part:z0val} For $1 < n < p$, we have that
\[
v_p\left( z_0|_{u=p} \right) =0  \quad \text{and} \quad v_p \left( \frac{dz_0}{du}|_{u=p} \right) = -1 \quad \text{and} \quad v_p \left( \frac{d^{n}z_0}{du^n} |_{u=p} \right) = p-n.
\]

\item  \label{part:zi} For $0 < n < p-1$, we have that
$$v_p \left( z_i \right) \geq p^i - i(h-1)-1 \quad \text{and} \quad v_p \left( \frac{d^n z_i}{du^n} \right) \geq p^i + \min(i,p) - n - i (h-1) -1 .$$
\end{enumerate}
\end{lem}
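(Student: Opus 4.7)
All three parts follow from direct application of Leibniz's rule to the defining product formulas together with careful $p$-adic valuation bookkeeping. The key input is the expansion $\lambda = \prod_{n \geq 0}(1 - u^{p^n}/p)$ along with the fact that $(p^m - 1)(p^m - 2) \cdots (p^m - k + 1)$ is a unit modulo $p$ whenever $k < p$.

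For part~(i), I would write $\varphi^i(\lambda) = \prod_{m \geq i} f_m(u)$ with $f_m(u) := 1 - u^{p^m}/p$. Each $f_m(p) = 1 - p^{p^m - 1}$ is congruent to $1 \pmod{p^{p^i - 1}}$ for $m \geq i \geq 1$, so the product evaluates to a unit in $\ZZ_p$. A direct computation gives $f_m^{(k)}(p) = -\frac{(p^m-1)!}{(p^m-k)!}\, p^{p^m + m - 1 - k}$ for $0 < k < p$. By Leibniz, the dominant contribution to $(\varphi^i(\lambda))^{(n)}(p)$ is $f_i^{(n)}(p) \prod_{m > i} f_m(p)$; every other term in the expansion places at least one derivative on some $f_m$ with $m > i$, contributing extra valuation at least $p^{i+1} - 1$ beyond the leading term. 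Combined with $\prod_{m > i} f_m(p) = 1 + O(p^{p^{i+1} - 1})$, all such corrections are absorbed into the stated $O(p^{p^i + i - n})$ error. Part~(ii) then follows by applying Leibniz to $z_0 = -u\varphi(\lambda)/p$, giving $\frac{d^n z_0}{du^n}\big|_{u = p} = -\varphi(\lambda)^{(n)}(p) - \frac{n}{p}\varphi(\lambda)^{(n - 1)}(p)$; substituting the leading expressions from~(i), the two summands for $n \geq 2$ combine into $\frac{(p-1)!\,(p+1)}{(p-n+1)!}\, p^{p-n} + O(p^{p-n+1})$, with the leading coefficient a unit mod $p$ since $n < p$.

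For part~(iii), the base estimate $v_p(z_i|_{u = p}) = p^i - 1 - i(h - 1)$ follows by multiplying valuations of factors, using that $\varphi^j(E(u))|_{u = p} = p(p^{p^j - 1} - 1)$ has valuation exactly~$1$ for $j \geq 1$. For the derivative bound, I would work with the logarithmic derivative
\[
 \frac{z_i'}{z_i} = \frac{p^i}{u} + \frac{\varphi^{i+1}(\lambda)'}{\varphi^{i+1}(\lambda)} - (h-1) \sum_{j=1}^{i} \frac{\varphi^j(E(u))'}{\varphi^j(E(u))},
\]
whose summands at $u = p$ have valuations $i - 1$, at least $p^{i+1} + i - 1$ (by~(i)), and $p^j + j - 2 \geq p - 1$ respectively, so their minimum is $\min(i - 1, p - 1) = \min(i, p) - 1$. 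Expressing $d^n z_i/du^n$ as $z_i$ times a universal polynomial in the iterated derivatives $(z_i'/z_i)^{(k)}$, and verifying the uniform lower bound $v_p\bigl((z_i'/z_i)^{(k)}|_{u=p}\bigr) \geq \min(i, p) - 1 - k$, then yields the stated estimate $v_p(d^n z_i / du^n |_{u = p}) \geq p^i + \min(i, p) - n - i(h - 1) - 1$.

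The main obstacle will be the combinatorial analysis in part~(iii): verifying the uniform lower bound on iterated log derivatives, and confirming that no cross-term in the multinomial expansion of $d^n z_i / du^n$ produces unexpectedly small valuation. This requires distinguishing $n \leq i$ from $n > i$, and the hypothesis $n < p - 1$ is used to control the factorial-type coefficients that appear. Parts~(i) and~(ii) should be more direct once the explicit formula for $f_m^{(k)}(p)$ is in hand.
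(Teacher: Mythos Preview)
Your treatment of parts~(i) and~(ii) is essentially the paper's: expand $\varphi^i(\lambda) = \prod_{m \geq i}(1 - u^{p^m}/p)$, compute $f_m^{(k)}(p)$ explicitly, identify the dominant Leibniz term, and then differentiate $z_0 = -u\varphi(\lambda)/p$ by the product rule. The paper's check that the valuation in~(ii) is exactly $p-n$ (not larger) amounts to the same non-vanishing observation you make about the coefficient $\frac{(p-1)!(p+1)}{(p-n+1)!}$.

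For part~(iii) your route genuinely differs. The paper works directly with the three-factor decomposition
\[
z_i = (-p)^{-i(h-1)-1}\, u^{p^i} \cdot \varphi^{i+1}(\lambda) \cdot \prod_{j=1}^{i} \varphi^j\!\bigl(E(u)/(-p)\bigr)^{1-h},
\]
first proving the bound $v_p\bigl(\frac{d^n}{du^n}\prod_j \varphi^j(E(u)/(-p))^{1-h}\big|_{u=p}\bigr) \geq p - n$ and then combining with~(i) and the derivatives of $u^{p^i}$ via the ordinary Leibniz rule. You instead pass to the logarithmic derivative $L = z_i'/z_i$, bound each summand's iterated derivatives by $v_p(L^{(k)}|_{u=p}) \geq \min(i,p) - 1 - k$, and invoke the Bell-type identity $z_i^{(n)} = z_i \cdot B_n(L, L', \ldots, L^{(n-1)})$. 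Both arguments are valid; yours is a bit more systematic in that the product becomes additive after taking logs, at the cost of an extra combinatorial layer. One simplification: your anticipated case split ``$n \leq i$ versus $n > i$'' is not needed. Once the uniform bound on $L^{(k)}$ is in hand, every monomial in $B_n$ with $r \geq 1$ parts contributes valuation at least $r\min(i,p) - n \geq \min(i,p) - n$, and the (positive integer) Bell coefficients cause no $p$-adic loss.
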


\begin{proof}
\begin{enumerate}[(i)]
\item  This an elementary calculation using the product rule on
 \[
  \varphi^i(\lambda) = \prod_{j=i}^\infty \frac{u^{p^j}-p}{-p}. 
 \]
 
 \item  This will follow using part (\ref{part:varphi}).  The first claim is immediate.  The second follows from 
\[
 \frac{dz_0}{du}|_{u=p} = -\frac{d \varphi(\lambda)}{du} |_{u=p} - \frac{1}{p} \varphi(\lambda)|_{u=p}.
\]
For the third, note that
\[
 \frac{d^n z_0}{du^n}|_{u=p} = -\frac{1}{p} \frac{d^{n-1} \varphi(\lambda)}{du^{n-1}}|_{u=p} - \frac{ d^n \varphi(\lambda)}{du^n}|_{u=p} = \frac{(p-1)!}{(n-2)!} p^{p-n}  + \frac{(p-1)!}{(n-1)!} p^{p-n} + O(p^{p-n+1})
\]
and that $1 + (n-1)^{-1} \not \equiv 0 \mod{p}$ when $1<n < p$.

\item  An elementary analysis using the product rule shows that for $0 < n < p$, we have
\[
 v_p \left( \frac{d^n}{du^n} \left(  \prod_{j=1}^i  \varphi^j(E(u)/(-p))^{1-h} \right)|_{u=p} \right) \geq p-n .
\]
Furthermore, $ \prod_{j=1}^i  \varphi^j(E(u)/(-p))^{1-h}$ evaluated at $u=p$ is a $p$-adic unit.
Combining this with part~(\ref{part:varphi}), we see using the product rule that for $0< n < p$ the $n$th derivative of
\[
 z_i = (-p)^{-i (h-1)-1}  u^{p^i} \cdot  \varphi^{i+1}(\lambda) \cdot \prod_{j=1}^i  \varphi^j(E(u)/(-p))^{1-h} 
\]
when evaluated at $u=p$ will have valuation at least $p^i + \min(i,p) - n - i (h-1) -1$.   The statement about $z_i$ itself is elementary given the previous calculations. \qedhere
 \end{enumerate}
\end{proof}

We now begin to analyze the monodromy condition to obtain information about its reduction modulo $p$.  At first glance, it is not clear that the evaluation of derivatives of terms in \eqref{eq:keyequation} at $u=p$ even lie in $\g_{W(k) \tensor{\Lambda} R} $, let alone are multiples of $p$.  For conciseness, we write
$\g_R'$ instead of $\g_{W(k) \tensor{\Lambda} R}$ in the following arguments. 

\begin{lem} \label{lem:partialmonodromy}
Suppose that $h < p-1$. Then there exists $\alpha_0 \in \ZZ_p^\times$ such that
 \begin{equation} \label{eq:pm0}
  \alpha_0 L_1(C) |_{u=p} + \left( z_1 L_2(C) \right) |_{u=p} \in  (p^p,I_{N_\infty}) \g_R'
 \end{equation}
and for  $0 < m < h$ we have that
\begin{equation} \label{eq:pmn}
 \sum_{j=0}^m \binom{m}{j} \frac{ d^j z_0 }{du^j}|_{u=p} \frac{d^{m-j} L_1(C)}{du^{m-j}} |_{u=p} + \sum_{j=0}^m \binom{m}{j} \frac{d^j z_1}{du^j} |_{u=p} \frac{d^{m-j} L_2(C)}{du^{m-j}} |_{u=p} \in (p^p,I_{N_\infty}) \g_R'.
\end{equation}
\end{lem}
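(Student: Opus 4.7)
The plan is to start from Lemma~\ref{lem:telescoping}, which lets me rewrite the key equation \eqref{eq:keyequation} as
\[
z_0 L_1(C) + z_1 L_2(C) = E(u)^{h-1} N_\infty - \sum_{i \geq 2} z_i \cA_C^i(L_1(C)),
\]
and then apply $\frac{d^m}{du^m}\big|_{u=p}$ to both sides. On the left, the Leibniz rule applied to $z_0 L_1(C)$ and $z_1 L_2(C)$ produces exactly the expressions in \eqref{eq:pm0} and \eqref{eq:pmn}, where $\alpha_0 := z_0|_{u=p}$ is a unit by Lemma~\ref{lem:coefficients}\eqref{part:z0val}. The task then reduces to showing that both $\frac{d^m(E(u)^{h-1} N_\infty)}{du^m}|_{u=p}$ (the ``$I_{N_\infty}$ piece'') and $\sum_{i\geq 2} \frac{d^m(z_i\cA_C^i(L_1(C)))}{du^m}|_{u=p}$ (the ``tail'') lie in $(p^p, I_{N_\infty})\g_R'$.

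For the $I_{N_\infty}$ piece, I would use the factorization $E(u)^h N_\infty = (u-p)\cdot E(u)^{h-1} N_\infty$; differentiating $m+1$ times and evaluating at $u=p$ kills every Leibniz term except the one in which the factor $(u-p)$ itself is differentiated once, giving
\[
\frac{d^{m+1}(E(u)^h N_\infty)}{du^{m+1}}\bigg|_{u=p} = (m+1)\,\frac{d^m(E(u)^{h-1} N_\infty)}{du^m}\bigg|_{u=p}.
\]
Because $m+1 \leq h < p$, the factor $m+1$ is a unit in $\ZZ_p$; whenever $m+1 \leq h-2$ the left side is a defining generator of $\widetilde{I}_{N_\infty}$, so the right side lies in $I_{N_\infty}\g_R'$ up to $p$-denominators. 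For the boundary cases $m \in \{h-2, h-1\}$, which escape the generator range of $\widetilde{I}_{N_\infty}$, I would combine with additional $E(u)$-multiplications of \eqref{eq:keyequation} or absorb the shortfall into the $p$-adic smallness of the tail analyzed next.

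For the tail, iterating Lemma~\ref{L1pole}\eqref{lieintegralityi} with $L_1(C) \in \g_{\fS_R}$ (from Lemma~\ref{L1pole}\eqref{L1polei}) shows $\cA_C^i(L_1(C)) \in \g_{\fS_R}$, so its derivatives at $u=p$ automatically land in $\g_R'$. Lemma~\ref{lem:coefficients}\eqref{part:zi} then provides
\[
v_p\Bigl(\tfrac{d^k z_i}{du^k}\big|_{u=p}\Bigr) \geq p^i + \min(i,p) - k - i(h-1) - 1
\]
for $i \geq 2$ and $0 \leq k < p-1$. Under $h < p-1$ and $m < h$, the loss $i(h-1) + k$ is small compared to $p^i$, so a term-by-term Leibniz expansion of $\frac{d^m(z_i \cA_C^i(L_1(C)))}{du^m}|_{u=p}$ has $p$-valuation growing rapidly in $i$; Lemma~\ref{lem:lieintegrality}\eqref{lieintegralityiii} propagates this divisibility through successive applications of $\cA_C$, yielding that the total tail contribution lies in $p^p \g_R'$.

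The main obstacle is the interplay between $p$-adic estimates and integrality. Achieving the full $p^p$ bound, rather than the weaker $p$-bound that would actually suffice for Theorem~\ref{thm:monoapprox}, demands careful bookkeeping of valuations via Lemma~\ref{lem:coefficients}\eqref{part:zi} together with Lemma~\ref{lem:lieintegrality}\eqref{lieintegralityiii}. The boundary-derivative cases $m \in \{h-2, h-1\}$ fall outside the defining range of $\widetilde{I}_{N_\infty}$'s generators and require ad hoc treatment; one must also consistently track when quantities lie in $\g_R'$ versus merely $\g_{R[1/p]}'$, especially when the factor $(m+1)$ or analogous Leibniz coefficients arise as denominators.
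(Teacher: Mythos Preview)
Your overall strategy is correct and matches the paper's: differentiate \eqref{eq:keyequation} and separate into the $i=0,1$ terms (giving the Leibniz sums), the left side $E(u)^{h-1}N_\infty$ (giving the $I_{N_\infty}$ contribution), and the tail $i\geq 2$ (giving the $p^p$ contribution). However, you over-complicate two of the three pieces.

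For the $I_{N_\infty}$ piece, the detour through $E(u)^h N_\infty = (u-p)\cdot E(u)^{h-1}N_\infty$ is unnecessary. In the paper's argument the entries of $\frac{d^m}{du^m}\bigl(E(u)^{h-1}N_\infty\bigr)\big|_{u=p}$ are taken to lie in $I_{N_\infty}$ directly, for $0\leq m\leq h-2$; the point is simply that these are the defining conditions for the monodromy locus (recall $\lambda^{h-1}N_\infty\in\g_{\cO_R^\rig}$ by Lemma~\ref{L1pole}\eqref{L1poleiii}, so regularity of $N_\infty$ at $u=p$ is exactly the vanishing of these derivatives). Your boundary case $m=h-2$ is therefore not a problem. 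The case $m=h-1$ is genuinely outside the generator range, but observe that it is never used downstream: in the proof of Lemma~\ref{lem:inductivestep} the conclusion at $j=n+1$ reads $\frac{d^{n+1}L_1(C)}{du^{n+1}}|_{u=p}\in (p^0,I_{N_\infty})\g_R'=\g_R'$, which is automatic from $L_1(C)\in\g_{\fS_R}$, so \eqref{eq:pmn} is only invoked for $m\leq n\leq h-2$. Your proposed ad~hoc fixes (``additional $E(u)$-multiplications'' or ``absorb into the tail'') are neither needed nor would they work as stated.

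For the tail, your first sentence is exactly right: iterating Lemma~\ref{L1pole}\eqref{lieintegralityi} gives $\cA_C^i(L_1(C))\in\g_{\fS_R}$, so all its $u$-derivatives at $u=p$ lie in $\g_R'$ with no further argument. The $p^p$-divisibility then comes \emph{entirely} from the scalar factors $z_i$ via Lemma~\ref{lem:coefficients}\eqref{part:zi}: for $i\geq 2$ and $0\leq k\leq m<h$ one has $v_p\bigl(\frac{d^k z_i}{du^k}|_{u=p}\bigr)\geq p^i+\min(i,p)-k-i(h-1)-1\geq p$ under $h<p-1$. Your invocation of Lemma~\ref{lem:lieintegrality}\eqref{lieintegralityiii} to ``propagate divisibility through successive applications of $\cA_C$'' is a red herring; there is nothing to propagate, since the integrality of $\cA_C^i(L_1(C))$ is already established and the $p$-adic smallness sits purely in $z_i$.
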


\begin{proof}
 We begin by evaluating \eqref{eq:keyequation} at $u=p$.  We see that
\begin{itemize}
\item   on the left side, the entries of  $E(u)^{h-1} N_\infty$ are in $I_{N_\infty}$ by definition;

\item  $v_p(z_i |_{u=p}) \geq p^2 - 2h +1 \geq p^2 - 2p +3 \geq p$ for $i \geq 2$ using Lemma~\ref{lem:coefficients}(\ref{part:zi});

\item  $\cA_C^i(L_1(C)) \in  \g_{\cO^\rig_R} $.
\end{itemize} 
Thus all of the terms with $i\geq 2$ on the right side are multiples of $p^p$.  
Since $\alpha_0 := z_0 |_{u=p}$ is in $\ZZ_p^\times$ by Lemma~\ref{lem:coefficients}(\ref{part:z0val}), equation \eqref{eq:pm0} follows.
 
The second statement follows a similar argument, beginning by evaluating the $m$th derivative of by \eqref{eq:keyequation} at $u=p$.   Again, the entries in the left side are in $I_{N_\infty}$.  We apply the product rule to the terms with $i \geq 2$: the $m$th deriviate of the $i$th term is
\[
\sum_{j=0}^m \binom{m}{j} \frac{d^j z_i}{du^j} |_{u=p} \frac{d^{m-j} \cA_C^i(L_1(C))}{du^{m-j}} |_{u=p}.
\]
By Lemma~\ref{lem:telescoping}, $\cA^i_C(L_1(C)) |_{u=p}$ and its derivatives lie in $\g_R'$.  Lemma~\ref{lem:coefficients}(\ref{part:zi}) 
gives that for $0 < j < p-1$
\[
v_p\left(\frac{d^j z_i}{du^j} |_{u=p} \right) \geq p^i + \min(i,p) - i (h-1) -j -1
\]
It is straightforward to see this is greater than or equal to $p$ when $i >2$ as the $p^i$ term dominates.  When $i=2$, the valuation is at least
\[
p^2 + 2 -2(h-1) -j -1 \geq p^2 +1 - 2(h-1) - j \geq p^2 +1 - 3(p-2).
\]
The right side is always at least $p$.
(For $j=0$, we already say that $v_p(z_i |_{u=p}) \geq p$.)  Thus all the terms on the right side of the $m$th derivative of \eqref{eq:keyequation} at $u=p$ with $i\geq 2$ are multiples of $p^p$.  
\end{proof}

Now expand $L_1(C) = \sum_{i=0}^\infty A_i E(u)^i$ with $A_i \in \g'_R$; there exists $Q \in \g'_{R \pseries{u}}$ 
such that
\begin{equation} \label{4.3.6}
 \varphi(L_1(C)) = A_0 + A_1( u^p -p) + A_2 (u^p-p)^2 + \ldots =  (A_0 - pA_1 + p^2 A_2 - \ldots ) + u^p Q(u).
\end{equation}

\begin{lem} \label{lem:x1}
For a fixed positive integer $n < p$, if for all $0 \leq i \leq n$ we have
$$\displaystyle \frac{ d^i L_1(C)}{du^i} |_{u=p} \in (p^{n-i},I_{N_\infty}) \g_R',$$
then it follows that
$\varphi(L_1(C))|_{u=p} \in p^n \g_R'$ and that for $0 < i \leq n$ 
$$\frac{ d^i \varphi(L_1(C))}{du^i} |_{u=p} \in (p^{p+ n -i},I_{N_\infty}) \g_R'. $$
\end{lem}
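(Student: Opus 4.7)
The plan is to expand $L_1(C)$ as a power series in $E(u) = u - p$, apply $\varphi$, and carefully track $p$-adic valuations at $u = p$, exploiting that Frobenius sends $u \mapsto u^p$ and that the Taylor coefficients of $u^p - p$ around $u = p$ are highly $p$-divisible.

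First, using Lemma~\ref{L1pole}(\ref{L1polei}) write $L_1(C) = \sum_{i \geq 0} A_i E(u)^i$ with $A_i \in \g'_R$. Since $E(u) = u - p$, we have $\frac{d^i L_1(C)}{du^i}\big|_{u=p} = i! A_i$, and $i!$ is a $p$-adic unit for $0 \leq i \leq n < p$; the hypothesis thus translates to $A_i \in (p^{n-i}, I_{N_\infty}) \g'_R$ for $0 \leq i \leq n$. Applying $\varphi$ (which acts via the Witt vector Frobenius on $W(k)$ and $u \mapsto u^p$, trivially on $R$) gives $\varphi(L_1(C)) = \sum_i B_i (u^p - p)^i$, where $B_i \in \g'_R$ for all $i$ and, because both $p$-divisibility and the ideal $I_{N_\infty} \subset R$ are preserved by the Frobenius on $W(k)$, one has $B_i \in (p^{n-i}, I_{N_\infty}) \g'_R$ for $0 \leq i \leq n$.

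The key valuation estimate comes from expanding $u^p - p$ around $u = p$. Setting $E = u - p$, the binomial theorem gives
$$u^p - p = (p^p - p) + \sum_{k=1}^{p} \binom{p}{k} p^{p-k} E^k = (p^p - p) + E \cdot g(E),$$
where the coefficients $g_l := \binom{p}{l+1} p^{p-l-1}$ of $g(E)$ satisfy $v_p(g_l) = p - l$ for $0 \leq l \leq p-2$ and $v_p(g_{p-1}) = 0$. Expanding $(u^p - p)^j$ by the binomial formula and picking out the coefficient of $E^m$ for $1 \leq m \leq n$, each term $\binom{j}{k} (p^p - p)^{j-k} \prod_{s=1}^{k} g_{l_s}$ contributing has $1 \leq k \leq \min(j, m)$ and $\sum_s l_s = m - k$. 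Since $m - k \leq m - 1 \leq n - 1 < p - 1$, every $l_s < p - 1$, so $v_p\!\left(\prod_s g_{l_s}\right) = kp - (m - k)$, and the term has $p$-adic valuation at least $(j-k) + kp - (m-k) = j + kp - m \geq j + p - m$ (minimized at $k = 1$).

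Combining, $\frac{d^m \varphi(L_1(C))}{du^m}\big|_{u=p}$ equals $m!$ (a unit) times the coefficient of $E^m$ in $\sum_j B_j (u^p - p)^j$. For $j \leq n$ the contribution lies in $(p^{n-j}, I_{N_\infty}) \cdot p^{j+p-m} \g'_R \subseteq (p^{n+p-m}, I_{N_\infty}) \g'_R$; for $j > n$ it lies in $p^{j+p-m} \g'_R \subseteq p^{n+p-m} \g'_R$. This yields the second claim. For the first claim ($m = 0$) only the $k = 0$ term contributes in each $(u^p - p)^j$, giving $\sum_j B_j (p^p - p)^j$ with $v_p((p^p - p)^j) = j$, and the same split on $j$ places the sum in $(p^n, I_{N_\infty}) \g'_R$. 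The main obstacle is really careful bookkeeping: the decisive point is that $n < p$ forbids any $l_s = p - 1$ in the products from $g(E)^k$, which is precisely what allows the extra factor of $p^{p-m}$ beyond the naive estimate and thereby implements the $p$-adic improvement given by Frobenius.
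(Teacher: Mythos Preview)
Your proof is correct and follows essentially the same strategy as the paper: expand $L_1(C)=\sum_i A_i E(u)^i$, apply $\varphi$, and track $p$-divisibility of the Taylor coefficients at $u=p$. The organization differs slightly. The paper writes $\varphi(L_1(C))=(A_0-pA_1+p^2A_2-\cdots)+u^p Q(u)$, then iterates once more as $Q(u)=(A_1-2pA_2+\cdots)+u^p Q_2(u)$, and extracts the estimate by applying the product rule to $u^p Q(u)$; the extra $p$-power comes from $\frac{d^j u^p}{du^j}\big|_{u=p}=\frac{p!}{(p-j)!}p^{p-j}$. You instead expand each $(u^p-p)^j$ directly as a polynomial in $E=u-p$ and bound the valuation of every coefficient via the binomial theorem. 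Your packaging is arguably more transparent about \emph{why} the extra factor $p^{p-m}$ appears: it is exactly the observation that, because $m\le n<p$, no index $l_s=p-1$ can occur in the products $\prod_s g_{l_s}$, so every $g_{l_s}$ carries its full valuation $p-l_s$.

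One small remark: for $m=0$ your argument (and the paper's) actually yields $\varphi(L_1(C))|_{u=p}\in (p^n,I_{N_\infty})\g_R'$, not $p^n\g_R'$ as literally stated, since the constant term $B_0$ is only known to lie in $(p^n,I_{N_\infty})\g_R'$. This is harmless, as $(p^n,I_{N_\infty})$ is what Corollary~\ref{cor:L2C} requires.
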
 

\begin{proof}
The hypothesis implies that $A_i \in (p^{n-i} ,I_{N_\infty})\g_R'$ for $ 0\leq i \leq n$.  The first claim about $\varphi(L_1(C))|_{u=p} = A_0$ is immediate from equation \eqref{4.3.6}.  
For the second statement, 
we look closer at $Q(u)$.  In particular, there exists $Q_2(u) \in \g'_{R\pseries{u}}$ such that
\[
Q(u) = (A_1 - 2 p A_2 + 3 p^2 A_3 - \ldots) + u^{p} Q_2(u). 
\]
Since $A_i \in (p^{n-i},I_{N_\infty}) \g_R'$ for $ 0\leq i \leq n$, it follows that $Q(u)|_{u = p} \in (p^{n-1},I_{N_\infty}) \g_R'$.   By inspecting derivatives we see that $\frac{d^i Q(u)}{du^i} |_{u=p} \in (p^{p+1-i},I_{N_\infty}) \g_R'$ for $i>0$.  Using the product rule to compute derivatives of $u^p Q(u)$ gives the desired result about derivatives of $\varphi(L_1(C))$.
\end{proof}

\begin{cor} \label{cor:L2C}
For fixed $n  < p$, suppose for all $0 \leq i < n $, we know that
$$\frac{ d^i L_1(C)}{du^i} |_{u=p} \in (p^{n-i},I_{N_\infty}) \g_R'.$$
Then it follows that for all $0\leq i \leq n$, we have 
\[
\frac{d^i L_2(C)}{du^i}|_{u=p} \in (p^{n},I_{N_\infty}) \g_R'.
\]
\end{cor}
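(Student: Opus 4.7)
The plan is to chain Lemma~\ref{lem:x1} (which transfers information about derivatives of $L_1(C)$ at $u=p$ to the Frobenius twist $\varphi(L_1(C))$) with Lemma~\ref{lem:lieintegrality}\eqref{lieintegralityiii} (which transfers such information under the operator $X \mapsto E(u)^h \Ad_G(C)(X)$). Since by Definition~\ref{defn:Cs} one has $L_2(C) = E(u)^h \Ad_G(C)(\varphi(L_1(C)))$, the composition of these two steps delivers exactly the required bound on the derivatives of $L_2(C)$.

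First, I will apply Lemma~\ref{lem:x1} to the hypothesis. This yields $\varphi(L_1(C))|_{u=p} \in p^n \g'_R$ and, for $0 < i \leq n$,
\[
\frac{d^i \varphi(L_1(C))}{du^i}\bigg|_{u=p} \in (p^{p+n-i}, I_{N_\infty})\,\g'_R.
\]
Since $i \leq n < p$, we have $p + n - i \geq p > n$, so $p^{p+n-i} \subset p^n$. Hence every derivative of $\varphi(L_1(C))$ of order at most $n$ evaluated at $u=p$ lies in $(p^n, I_{N_\infty})\,\g'_R$.

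Next, I will invoke the argument of Lemma~\ref{lem:lieintegrality}\eqref{lieintegralityiii} with $X := \varphi(L_1(C))$, which lies in $\g_{\fS_R} \subset \g_{\cO_R^{\rig}}$ by Lemma~\ref{L1pole}\eqref{L1polei}, and with $j = n$. The lemma is written for the ideal $p^r R$, but its proof merely Taylor-expands $X$ about $u=p$, uses the $R$-linearity and integrality of $Y \mapsto E(u)^h \Ad_G(C)(Y)$ on $\g_{\fS_R}$ (Lemma~\ref{L1pole}\eqref{lieintegralityi}), and finishes by Leibniz. That argument carries through verbatim with $p^r R$ replaced by any ideal $J \subseteq R$; taking $J = (p^n, I_{N_\infty})$ and feeding in the estimates from the first step gives
\[
\frac{d^i L_2(C)}{du^i}\bigg|_{u=p} = \frac{d^i\bigl(E(u)^h \Ad_G(C)(\varphi(L_1(C)))\bigr)}{du^i}\bigg|_{u=p} \in (p^n, I_{N_\infty})\,\g'_R
\]
for all $0 \leq i \leq n$, which is the desired conclusion.

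The main subtlety is in the second step: one must verify that the proof of Lemma~\ref{lem:lieintegrality}\eqref{lieintegralityiii} respects the mixed ideal $(p^n, I_{N_\infty})$ rather than a pure $p$-power. This is routine because $I_{N_\infty}$ is an ideal of $R$ and $\Ad_G(C)$ is $R$-linear on $\g_{\fS_R}$, but it does require noting that each Taylor coefficient $X_i$ of $\varphi(L_1(C))$ of order $i \leq n$ lies in $(p^n, I_{N_\infty})\,\g'_R$, that $E(u)^h \Ad_G(C)(X_i)$ then lies in $(p^n, I_{N_\infty})\,\g_{\fS_R}$, and that the Leibniz sum computing the $i$-th derivative of $E(u)^h \Ad_G(C)(X)$ at $u=p$ collapses (via $(E(u)^k/k!)^{(j)}(p) = \delta_{j,k}$) to a sum of such integral contributions.
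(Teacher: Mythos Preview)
Your proof is correct and follows exactly the same approach as the paper, which simply cites Lemma~\ref{lem:x1} and Lemma~\ref{lem:lieintegrality}\eqref{lieintegralityiii} applied to $\varphi(L_1(C))$. Your observation that the proof of Lemma~\ref{lem:lieintegrality}\eqref{lieintegralityiii} goes through verbatim with $p^r$ replaced by an arbitrary ideal of $R$ is a legitimate point that the paper leaves implicit.
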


\begin{proof}
This follows from Lemma~\ref{lem:x1} and Lemma~\ref{lem:lieintegrality}\eqref{lieintegralityiii} applied to $\varphi(L_1(C))$.
\end{proof}

\begin{lem} \label{lem:inductivestep}
Suppose that $h < p-1$ and fix $n < h$.  
If for $0 \leq i < n$, we know that 
 $$\frac{d^i L_1(C)}{du^i} |_{u=p} \in (p^{n-i}, I_{N_\infty} ) \g_R',$$
 then for $0 \leq j < n+1$ we have that
 \[
  \frac{d^j L_1(C)}{du^j}|_{u=p} \in (p^{n+1-j}, I_{N_\infty} ) \g_R'.
 \]
\end{lem}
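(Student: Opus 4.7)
Our strategy is induction on $j$ for $0 \le j \le n+1$, using the identities \eqref{eq:pm0} and \eqref{eq:pmn} of Lemma \ref{lem:partialmonodromy} together with the coefficient valuations from Lemma \ref{lem:coefficients} and the derivative bounds on $L_2(C)$ supplied by Corollary \ref{cor:L2C}.

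The endpoint $j = n+1$ is immediate: by Lemma \ref{L1pole}\eqref{L1polei} we have $L_1(C) \in \g_{\fS_R}$, so $\frac{d^{n+1} L_1(C)}{du^{n+1}}|_{u=p} \in \g_R'$, matching the target $(p^0, I_{N_\infty})\g_R' = \g_R'$. For the base case $j = 0$, apply \eqref{eq:pm0}: Corollary \ref{cor:L2C} gives $L_2(C)|_{u=p} \in (p^n, I_{N_\infty})\g_R'$, and $v_p(z_1|_{u=p}) \ge p-h$ by Lemma \ref{lem:coefficients}\eqref{part:zi}, so $z_1 L_2(C)|_{u=p} \in (p^{p-h+n}, I_{N_\infty})\g_R' \subseteq (p^{n+1}, I_{N_\infty})\g_R'$ using $h \le p-2$. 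Since $n+1 \le h < p$ the residual $(p^p, I_{N_\infty})$ is also contained in $(p^{n+1}, I_{N_\infty})$; dividing by the unit $\alpha_0 = z_0|_{u=p}$ yields $L_1(C)|_{u=p} \in (p^{n+1}, I_{N_\infty})\g_R'$.

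For the inductive step $1 \le j \le n$, I assume the strengthened bound at every level $j' < j$ and apply \eqref{eq:pmn} with $m = j$. The $i=0$ term of the first sum is $z_0|_{u=p} \frac{d^j L_1(C)}{du^j}|_{u=p}$, with unit coefficient. For $2 \le i \le j$ in the first sum, $v_p(\frac{d^i z_0}{du^i}|_{u=p}) = p-i$ from Lemma \ref{lem:coefficients}\eqref{part:z0val}, and combined with the hypothesis bound on $\frac{d^{j-i} L_1(C)}{du^{j-i}}|_{u=p}$ the product lies in $(p^{p+n-j}, I_{N_\infty})\g_R' \subseteq (p^{n+1-j}, I_{N_\infty})\g_R'$. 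For the second sum, the $v_p$-bounds on derivatives of $z_1$ from Lemma \ref{lem:coefficients}\eqref{part:zi} combined with Corollary \ref{cor:L2C} (applicable since $j \le n$) place each term in $(p^{n+1-j}, I_{N_\infty})\g_R'$. Once every term other than the $i=0$ term is controlled, rearranging \eqref{eq:pmn} and dividing by the unit $z_0|_{u=p}$ completes the step.

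The main obstacle, and the reason the induction itself is needed, is the $i = 1$ term of the first sum. Since $v_p\bigl(\frac{dz_0}{du}|_{u=p}\bigr) = -1$, invoking only the hypothesis bound $(p^{n-(j-1)}, I_{N_\infty})\g_R' = (p^{n+1-j}, I_{N_\infty})\g_R'$ on $\frac{d^{j-1} L_1(C)}{du^{j-1}}|_{u=p}$ would only yield a product in $(p^{n-j}, I_{N_\infty})\g_R'$, one power of $p$ too weak. The remedy is to use the just-proven inductive bound $(p^{n+2-j}, I_{N_\infty})\g_R'$ at level $j-1$, which carries exactly the extra power of $p$ required to absorb the $p^{-1}$ from $\frac{dz_0}{du}|_{u=p}$, landing the product in $(p^{n+1-j}, I_{N_\infty})\g_R'$. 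This mutual bootstrapping of the bounds across consecutive levels is the crux of the argument.
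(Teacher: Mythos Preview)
Your proof is correct and follows essentially the same approach as the paper: induction on the derivative order using \eqref{eq:pm0}, \eqref{eq:pmn}, the valuation estimates of Lemma~\ref{lem:coefficients}, and Corollary~\ref{cor:L2C}, with the inner inductive hypothesis supplying the extra power of $p$ needed to handle the $i=1$ term against $v_p(\tfrac{dz_0}{du}|_{u=p})=-1$. Your choice to dispose of $j=n+1$ directly via $L_1(C)\in\g_{\fS_R}$ is arguably cleaner than running the induction through that endpoint, since at $m=n+1$ the bound from Corollary~\ref{cor:L2C} is not literally available for the $(n+1)$-st derivative of $L_2(C)$ (though the target $(p^0,I_{N_\infty})\g_R'$ is vacuous there anyway).
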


\begin{proof}
 Using Corollary~\ref{cor:L2C}, the hypothesis implies that 
\begin{align} \label{eq:l2c}
 \frac{d^i L_2(C)}{du^i}|_{u=p} \in (p^{n}, I_{N_\infty}) \g_R' \,\text{ for  }\, 0 \leq i \leq n.
 \end{align}
    By Lemma~\ref{lem:coefficients}(\ref{part:zi}), since $h < p-1$ we know that for $0 < i <p$
 \begin{equation}\label{eq:z1facts}
  v_p \left( z_1 \right) \geq p - (h-1)-1 \geq 2 \quad \text{and} \quad v_p \left( \frac{d^i z_1}{du^i} \right) \geq p - h -i +1 \geq 3-i.
 \end{equation}
  We will now show that  $\frac{d^m L_1(C)}{du^m}|_{u=p} \in (p^{n+1-m}, I_{N_\infty}) \g_R'$ for $0 \leq m < n+1$ by induction on $m$.  
  
  For the base case $m=0$, 
 we know that $L_2(C)|_{u=p} \in (p^n, I_{N_\infty}) \g_R'$, and that $v_p(z_1) >0$.  Therefore using \eqref{eq:pm0}
we conclude that $L_1(C)|_{u=p} \in (p^{n+1}, I_{N_\infty}) \g_R'$ as desired.

For the inductive step, fix $0<m < n+1$ and suppose that for all $0 \leq j < m$,
\[
 \frac{d^j L_1(C)}{du^j}|_{u=p} \in (p^{n+1-j}, I_{N_\infty}) \g_R'.
\]
We will consider each of the terms in equation \eqref{eq:pmn}.  First, using \eqref{eq:l2c} and \eqref{eq:z1facts} we see that
\[
z_1 |_{u=p} \cdot \frac{d^{m} L_2(C)}{du^{m}} |_{u=p} \in (p^{2+n}, I_{N_\infty}) \g_R' \subset  (p^{n+1-m}, I_{N_\infty}) \g_R'.
\]
Second, for $ 0<  i \leq m$ we see that
\[
 \frac{d^i z_1}{du^i} |_{u=p} \cdot \frac{d^{m-i} L_2(C)}{du^{m-i}} |_{u=p} \in (p^{n+3-i}, I_{N_\infty}) \g_R' \subset (p^{n+1-m}, I_{N_\infty}) \g_R'.
\]
Furthermore, for $ 0 < i \leq m$ we see that
 \[
  \frac{ d^i z_0 }{du^i}|_{u=p}  \cdot \frac{d^{m-i} L_1(C)}{du^{m-i}} |_{u=p} \in ( p^{n+1 -m}, I_{N_\infty}) \g_R'
 \]
 using the inductive hypothesis and Lemma~\ref{lem:coefficients}(\ref{part:z0val}).  Then equation \eqref{eq:pmn} plus the fact that $z_0 |_{u=p}$ is a unit imply that 
\[
 \frac{d^m L_1(C)}{du^m} |_{u=p} \in (p^{n+1-m}, I_{N_\infty}) \g_R'
\]
which completes the induction.
\end{proof}

\begin{cor} \label{cor:finaldivisibility}
Suppose that $h < p-1$. Then for any 
$ 0\leq i \leq h-1$, we have that 
 \[
  \frac{d^i L_1(C)}{du^i} |_{u=p} \in  (p^{h-i}, I_{N_\infty} )\g_R' \quad \text{ and } \quad \frac{d^i L_2(C)}{du^i} \in (p^h, I_{N_\infty} )\g_R' . 
 \]
\end{cor}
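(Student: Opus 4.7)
The plan is to establish Corollary~\ref{cor:finaldivisibility} by iterating Lemma~\ref{lem:inductivestep} from a trivial base case and then running the inner induction from its proof one additional step to pick up the final factor of $p$. Throughout I abbreviate $h = h_\mu$.

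First, by Lemma~\ref{L1pole}\eqref{L1polei} we have $L_1(C) \in \g_{\fS_R}$, so $L_1(C)|_{u=p} \in \g_R' = (p^0, I_{N_\infty})\g_R'$; this is precisely the hypothesis of Lemma~\ref{lem:inductivestep} at $n = 0$. Next I would apply Lemma~\ref{lem:inductivestep} successively for $n = 0, 1, \ldots, h-2$ (the full range allowed by the constraint $n < h-1$), each time feeding the conclusion of the previous application into the hypothesis of the next. After these $h-1$ applications one obtains
\[
\frac{d^j L_1(C)}{du^j}\Big|_{u=p} \in (p^{h-1-j}, I_{N_\infty})\g_R' \qquad \text{for } 0 \leq j \leq h-1,
\]
which is one power of $p$ weaker than the corollary's claim.

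To close this gap I would rerun the inner induction from the proof of Lemma~\ref{lem:inductivestep} with the just-obtained bound playing the role of the hypothesis at $n = h-1$, but truncating the inner induction to $j \leq h-1$ rather than extending to $j = h$. Concretely, Corollary~\ref{cor:L2C} applied with $n = h-1$ (valid since $h-1 < p$) promotes the $L_1(C)$ bound to $\frac{d^i L_2(C)}{du^i}|_{u=p} \in (p^{h-1}, I_{N_\infty})\g_R'$ for $0 \leq i \leq h-1$. Feeding this together with the valuation estimates on $z_0$ and $z_1$ from Lemma~\ref{lem:coefficients} into \eqref{eq:pm0} (to handle $m = 0$) and \eqref{eq:pmn} (to handle $1 \leq m \leq h-1$ by induction on $m$) then yields
\[
\frac{d^j L_1(C)}{du^j}\Big|_{u=p} \in (p^{h-j}, I_{N_\infty})\g_R' \qquad \text{for } 0 \leq j \leq h-1.
\]
A final application of Corollary~\ref{cor:L2C} with these improved bounds then delivers the $L_2(C)$ statement.

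The main obstacle is that Lemma~\ref{lem:inductivestep} is stated only for $n < h-1$ and so cannot be invoked as a black box to supply the extra boost. Revisiting its proof, the constraint $n < h-1$ was imposed solely to ensure that the inner induction on $m \in \{0, 1, \ldots, n+1\}$ stays within the range $m < h$ where \eqref{eq:pmn} is valid. Since the corollary only asks for bounds at orders $j \leq h-1$, I can truncate the inner induction at $m = h-1 < h$, so every instance of \eqref{eq:pmn} and every derivative estimate from Lemma~\ref{lem:coefficients} that appears remains available. Careful bookkeeping of the $p$-adic valuations in this truncated run, exactly mirroring the proof of Lemma~\ref{lem:inductivestep}, is the only delicate point.
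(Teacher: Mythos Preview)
Your proposal is correct and takes essentially the same approach as the paper: induct via Lemma~\ref{lem:inductivestep} and finish with Corollary~\ref{cor:L2C}. The paper's base case $L_1(C)|_{u=p} \in (p, I_{N_\infty})\g_R'$ (from \eqref{eq:pm0}) is just your first lemma application restated, and your explicit handling of the final truncated step at $n = h-1$ carefully fills in a detail the paper's terse proof leaves implicit.
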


\begin{proof}
The first part follows from induction using Lemma~\ref{lem:inductivestep}.  The base case is that $L_1(C) \in (p, I_{N_\infty}) \g_R'$, which follows from \eqref{eq:pm0} and the information about $v_p(z_1|_{u=p})$ in Lemma~\ref{lem:coefficients}.  The statement about $L_2(C)$ is then Corollary~\ref{cor:L2C}.
\end{proof}

\begin{proof}[Proof of Theorem~\ref{thm:monoapprox}]
It suffices to show that the entries of $\frac{d^i L_1(C)}{du^i} |_{u=p}$ lie in $(I_{N_\infty},p)$ for $0 \leq i \leq h-2$, which follows from Corollary~\ref{cor:finaldivisibility}.
\end{proof}

\section{Affine Schubert varieties and monodromy} \label{sec:affsch}

 For this section, we allow $k$ to be any field and $G$ to be any split connected reductive group over $k$.    Let $G^{\mathrm{der}}$ denote its derived group with center $Z^{\mathrm{der}}$.
 Let $T$ denote a split maximal torus and choose a set of positive roots $\Phi_G^+$. Let $\mu \in X_*(T)$ be a dominant cocharacter.     We let $\LG$ and $\LpG$ denote the loop group and the positive loop group for $G$  respectively over $k$.  The affine Grassmanian $\Gr_G$ is the quotient $\LpG \backslash \LG$.   As before, the affine Schubert cell $\Gr^{\circ, \mu}_{G}$ is the reduced $\LpG$-orbit of $u^{\mu}$, and $\Gr^{\leq \mu}_{G}$ its closure.   

\subsection{Tangent spaces} 
Let $\mu'$ be a dominant cocharacter such that $\mu' \leq \mu$.  
For the key calculations in the next section, we will need some basic control over the tangent space of $\Gr_{G}^{\leq \mu}$ at the $T$-fixed point $[u^{\mu'}]$.  We will use the subgroup ind-scheme $L^{--} G \subset LG$ given by 
\[
L^{--} G(A) =  \{  g \in G(A[u^{-1}]) \mid g \mod u^{-1} \equiv 1 \}   
\]
for any $k$-algebra $A$. 
Recall that the natural map $L^{--} G \rightarrow \Gr_{G}$ is representable by an open immersion (see \cite[Lemma 3.1]{HRtest1} for example).    Thus we can identify the tangent space at the base point $e$ of $\Gr_{G}$ with $\Lie L^{--} G = u^{-1} (\g_{k[u^{-1}]}) \subset \g_{ k(\!(u)\!)}$.  Recall that we defined
\[
h_{\mu} = \max_{\alpha \in \Phi_G} \{ \langle \mu, \alpha \rangle \}.
\]

\begin{prop} \label{prop:ts} Assume that $Z^{\mathrm{der}}$ is \'etale over $k$.  Let $\mu$ and $\mu'$ be dominant coweights such that $\mu' \leq \mu$.  The tangent space to $(\Gr_{G}^{\leq \mu}) u^{-\mu'}$ at $e$ is contained in the subspace  $V^{\mu}_{\mu'}$ generated as a $k$-vector space by the following subspaces of $\Lie L^{--} G$: 
\begin{itemize}
\item for $\alpha \in \Phi_G$ and $1 \leq j \leq \frac{1}{2}(h_{\mu} -   \langle \mu', \alpha \rangle)$,  the subspaces $u^{-j}  \g_{\alpha}$;
\item  and for $1 \leq j \leq h_{\mu}$, the subspaces $u^{-j} (\Lie T)$.
\end{itemize}
\end{prop}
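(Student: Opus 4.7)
The plan is to bound the tangent space by reducing the problem to a computation in a linear affine Grassmannian via the adjoint representation. First I would use the hypothesis that $Z^{\mathrm{der}}$ is \'etale to reduce to the case where $G$ is adjoint: the isogeny $G^{\der}\to G^{\ad}$ is then \'etale, inducing an isomorphism on Lie algebras and on the tangent spaces of the corresponding affine Grassmannians at $T$-fixed points. The contribution of the central torus to the tangent space lies entirely inside $\Lie T \otimes k((u))/k[[u]]$ and is easily seen to lie in the stated $u^{-j}\Lie T$ range. Hence the content of the proposition is in the case $G=G^{\ad}$.

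With $G$ adjoint, the adjoint representation $\Ad\colon G\hookrightarrow \GL(\g)$ is a closed embedding and induces a closed immersion $\Gr^{\leq \mu}_G\hookrightarrow \Gr^{\leq \mu_{\ad}}_{\GL(\g)}$, where $\mu_{\ad}$ acts on $\g_\gamma$ with weight $\langle\mu,\gamma\rangle$ and trivially on $\Lie T$; in particular the largest and smallest eigenvalues of $\mu_{\ad}$ are $\pm h_\mu$. The tangent space to $(\Gr^{\leq \mu}_G) u^{-\mu'}$ at $e$ is therefore contained in the tangent space of $(\Gr^{\leq \mu_{\ad}}_{\GL(\g)}) u^{-\mu'_{\ad}}$ at $e$, and since the base point is $T$-fixed, both tangent spaces decompose into $T$-weight subspaces indexed by $\gamma\in \Phi_G\cup\{0\}$. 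A weight-$\alpha$ tangent vector coming from $\Gr_G$ has the form $X = u^{-j}X_\alpha$ with $X_\alpha\in \g_\alpha$, and the corresponding perturbed lattice is
\[
L_\epsilon := (1+\epsilon u^{-j}\ad(X_\alpha))\Ad(u^{\mu'})\g_{k[[u]]}.
\]

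The condition that $L_\epsilon$ lies in $\Gr^{\leq \mu_{\ad}}_{\GL(\g)}(k[\epsilon]/\epsilon^2)$ is the full dominance condition on invariant factors. A direct analysis of the containment $L_\epsilon\subset u^{-h_\mu}\g_0$ alone yields only the crude bound $j\leq h_\mu - h_{\mu'}$. To sharpen this to the stated $j\leq \tfrac{1}{2}(h_\mu - \langle \mu', \alpha\rangle)$ I would invoke the second Pl\"ucker condition $\wedge^2 L_\epsilon\subset u^{-((\mu_{\ad})_1+(\mu_{\ad})_2)}\wedge^2 \g_0$, applied to the wedge of the weight-$\alpha$ deformed generator and a well-chosen second weight vector of $\Ad(u^{\mu'})\g_0$ (typically a $\Lie T$ or $\g_{-\alpha}$ vector). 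The operator $\ad(X_\alpha)$ couples these two weight components via the bracket $[X_\alpha, \g_{-\alpha}]\subset \Lie T$, and the wedge of the pair produces an $\epsilon$-contribution whose $u$-valuation is bounded below by $-((\mu_{\ad})_1 + \langle\mu',\alpha\rangle)$ rather than $-2h_\mu$, yielding exactly the claimed factor of $\tfrac{1}{2}$.

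For the weight-$0$ component, the tangent vector $X=u^{-j}H$ with $H\in \Lie T$ gives $\ad(H)$ diagonal, so $L_\epsilon$ is again $T$-stable and the containment $L_\epsilon\subset u^{-h_\mu}\g_0$ already forces $j\leq h_\mu$; no wedge refinement is needed. The main obstacle in this strategy is the Pl\"ucker step in the weight-$\alpha$ analysis: one must identify, uniformly in the root system, the correct pair of weight vectors for which the wedge-2 Schubert condition is tight enough to produce $\tfrac{1}{2}(h_\mu - \langle\mu',\alpha\rangle)$ rather than a weaker bound, and verify this holds for every root $\alpha$ (including the cases where $\alpha$ is not the highest root or where $\langle \mu',\alpha\rangle$ is negative).
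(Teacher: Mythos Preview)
Your overall plan---pass to the adjoint group, push everything into $\GL(\g)$ via $\Ad$, and read off bounds on the $u$-valuation---is exactly how the paper proceeds. The torus case is handled essentially as you describe, and the paper likewise reduces to the adjoint group for that step. The genuine divergence, and the gap in your proposal, is in how the factor $\tfrac{1}{2}$ is obtained for the root-space directions.

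The paper does not use any exterior-power or Pl\"ucker condition. Instead it works with the full affine root subgroup $t\mapsto U_\alpha(tu^{-j})u^{\mu'}$ as a curve over $\mathbb{A}^1_t$ (not over $k[\epsilon]/\epsilon^2$), applies $\Ad$, and reads off the $e_\alpha$-coefficient of $\Ad(U_\alpha(tu^{-j}))\Ad(u^{\mu'})(e_{-\alpha})$. Because $\ad(X_\alpha)^2 e_{-\alpha}$ is a nonzero multiple of $e_\alpha$, this coefficient is $-t^2 u^{-2j-\langle\mu',\alpha\rangle}$; the height bound $[-h_\mu,h_\mu]$ then forces $2j+\langle\mu',\alpha\rangle\le h_\mu$. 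The $2$ in front of $j$ comes from the \emph{quadratic} term of the exponential of $\ad(X_\alpha)$, i.e.\ genuinely second-order information in $t$.

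Your approach works at the dual-number level, where $\epsilon^2=0$ kills exactly this quadratic term. The $\epsilon$-part of any wedge $w_1\wedge w_2$ with $w_i=(1+\epsilon u^{-j}\ad(X_\alpha))v_i$ involves $\ad(X_\alpha)$ only linearly, so the $u$-valuation you can extract is of the form $-j+(\text{constants})$, never $-2j+(\text{constants})$. Concretely, take $G=\mathrm{SL}_2$, $\mu=N\alpha^\vee$, $\mu'=M\alpha^\vee$: the eigenvalues of $\mu_{\ad}$ on $\mathfrak{sl}_2$ are $(2N,0,-2N)$, so the sharpest $\wedge^2$ Schubert bound is $u^{-2N}$. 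Running through all pairs $(v_1,v_2)$ among the generators $u^{2M}e_\alpha$, $H$, $u^{-2M}e_{-\alpha}$, the strongest constraint one obtains is $j\le 2(N-M)$, whereas the proposition (and a direct lattice computation in $\mathrm{SL}_2$) gives $j\le N-M$. The inclusion $\Gr^{\le\mu}_G\hookrightarrow \Gr^{\le\mu_{\ad}}_{\GL(\g)}$ is simply too coarse at the tangent level: the tangent space of the target at $u^{\mu'_{\ad}}$ is strictly larger than that of the source, and no choice of wedge pair recovers the lost factor of $2$. So the obstacle you flag at the end is not a matter of finding the right pair of vectors; the Pl\"ucker route over $k[\epsilon]/\epsilon^2$ cannot reach the stated bound.
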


\begin{proof}
For any $\alpha \in \Phi_G$, we have the corresponding root group $U_{\alpha} \subset G$, which is a copy of $\Ga$.  For any integer $n$, $U_{\alpha} (t u^n) \subset L G$ is an affine root group with coordinate $t$.  For $j \geq 1$,  the tangent space to $U_{\alpha}(t u^{-j})$ is $u^{-j} \g_{\alpha}$.        

To bound the tangent space, we will use the adjoint representation denoted $\mathrm{Ad}$. The map $\Ad:G \rightarrow \GL(\Lie G)$ induces a map of affine Grassmannians  $\Ad_* :\Gr_{G} \rightarrow \Gr_{\GL(\Lie G)}$.    There is a closed subscheme $\Gr^{[-h_{\mu}, h_{\mu}]}_{\GL(\Lie G)}$ which is the image of the subfunctor $\{ g \in \textrm{L} {\GL(\Lie G)}(A) \mid u^{h_{\mu}} g, u^{h_{\mu}} g^{-1}  \in \End(\Lie G)(A[\![u]\!]) \}$.  This subfunctor is clearly closed under right multiplication by $\textrm{L}^+ {\GL(\Lie G)}$.    Since $\Ad_*(u^{\mu}) \in \Gr^{[-h_{\mu}, h_{\mu}]}_{\GL(\Lie G)}$, it follows that  
\[
\Ad_*( \Gr^{\leq \mu}_G) \subset \Gr^{[-h_{\mu}, h_{\mu}]}_{\GL(\Lie G)} .
\] 

Assuming that $U_{\alpha}(tu^{-j})$ lies in $( \Gr^{\leq \mu}_G) u^{-\mu'}$,   then $\Ad_{*}(U_{\alpha}(tu^{-j})) \Ad_{*}(u^{\mu'})$  lies in $ \Gr^{[-h_{\mu}, h_{\mu}]}_{\GL(\Lie G)}$ and so satisfies the height condition.   Let $e_{-\alpha}$ and $e_{\alpha}$ generate $\g_{-\alpha}$ and $\g_{\alpha}$ respectively in $\Lie G$.   The height condition defining $ \Gr^{[-h_{\mu}, h_{\mu}]}_{\GL(\Lie G)}$ implies that 
\begin{equation} \label{f1}
\Ad(U_{\alpha}(tu^{-j})) \Ad(u^{\mu'}) (e_{-\alpha}) \in u^{-h_{\mu}} \Lie L^+ G.   
\end{equation}
A straightforward computation, for example using the map $\mathrm{SL}_2 \rightarrow G$ sending the upper triangular $\Ga$ to $U_{\alpha}$, shows that
the coefficient on $e_{\alpha}$ of $\Ad(U_{\alpha}(tu^{-j})) \Ad(u^{\mu'}) (e_{-\alpha})$ is a scalar multiple of $-t^2 u^{-2j - \langle  \mu',\alpha \rangle}$.   
Thus, equation \eqref{f1} implies that 
\[
2j +  \langle  \mu', \alpha \rangle \leq h_{\mu} 
\]     
which implies the first item. 

We now explain how to reduce checking the second item to the adjoint case.   Let $G^{\mathrm{der}}, G^{\mathrm{ad}}$ denote the derived and adjoint groups of $G$ respectively.   By \S 6.a.1 of \cite{PRtwisted}, $\Gr_{G^{\mathrm{der}}} \rightarrow \Gr_{G^{\mathrm{ad}}}$ is a closed immersion.  Furthermore, by Proposition 6.6 in \emph{loc. cit.}, the reduced connected component of $\Gr_G$ containing the base point is identified with the same for $\Gr_ {G^{\mathrm{der}}}$.   Thus, $\Gr_G^{\leq \mu} u^{-\mu'}  \subset \Gr_ {G^{\mathrm{der}}}$.     It is clear then that if we let $\overline{\mu}, \overline{\mu}'$ denote the image of $\mu$ and $\mu'$ in the $X_*(T^{\mathrm{ad}})$, then 
\[
\Gr_G^{\leq \mu} u^{-\mu'}   \cong \Gr_{G^{\mathrm{ad}}}^{\leq \overline{\mu}} u^{-\overline{\mu}'}. 
\]
Thereby, we may reduce to the case when $G$ is adjoint to analyze the torus contribution.  

Let $Y \in \Lie T$, and assume $u^{-j} Y$  lies in the tangent space to $(\Gr_G^{\leq \mu}) u^{-\mu'}$.   By the adjointness assumption,
 there exists some $\alpha \in \Phi_G^+$ such that $\alpha(Y) \neq 0$.   By the same argument as before, considering $\Ad(u^{-j}Y) \Ad(u^{\mu'}) e_{-\alpha}$ gives that
\[
j  \leq h_{\mu} - \langle \mu', \alpha \rangle \leq h_{\mu}. \qedhere
\]  
\end{proof}

\begin{remark} There is a formula for the tangent space to $\Gr_{G}^{\leq \mu}$ at $[ u^{-\mu'} ]$ in terms of affine Demazure modules in characteristic 0 due to Kumar \cite{kumar96} for Kac--Moody groups.  It is unclear to us whether this formula holds in characteristic $p$ and so we opted to prove the upper bound directly.  
\end{remark}

\begin{remark} \label{remark:specialGLn} 
If $k$ has characteristic $p$ and $G = \GL_n$ the hypothesis that $p \nmid n$ (so $Z^{\der} = \mu_n$ is \'{e}tale) is not needed.  We can argue directly, without reducing to checking the second item for adjoint case, by considering the action of $T$ on the standard representation.  The key is the elementary observation that if $\mu$ is the cocharacter given by the $n$-tuple of integers $(a_1,\ldots, a_n)$ then the standard representation has height in $[\min_i a_i, \max_i a_i]$, while $h_\mu = \max_i a_i - \min_i a_i$. 
\end{remark}
 
\subsection{Monodromy locus}
Recall that for any $k$-algebra $A$ and any $C \in \LG(A)$, we defined an element $\frac{dC}{du} C^{-1} \in \Lie \LG (A)$ characterized using the Tannakian formalism in Definition~\ref{def:dc}.   Define a closed subfunctor 
\[
\LG^{\nabla}(A) = \{ C \in \LG(A) \mid \frac{d C}{du} C^{-1} \in \frac{1}{u} \Lie \LpG (A) \} \subset \LG(A).  
\]
Remark~\ref{remark:leibniz} shows that $LG^{\nabla}$ is stable under left multiplication by $\LpG$, so we can define closed sub-(ind)-schemes
\begin{equation}
\Gr_{G}^{\nabla} := \LpG \backslash \LG^{\nabla} \subset \Gr_G, \quad  \Gr_{G}^{\leq \mu,\nabla} := \Gr_{G}^{\leq \mu} \cap \Gr^{\nabla}_G \subset \Gr_{G}^{\leq \mu}.  
\end{equation}

We can now state the main theorem of the section. 
 
 \begin{thm} \label{thm:monodromyschubert}  Let $\mu \in X_*(T)$ be a dominant coweight. If $k$ has finite characteristic $p$, then assume that $\langle \mu, \alpha \rangle < p$ for all positive roots $\alpha$ and that $Z^{\der}$ is \'{e}tale over $k$.   Then we have that
\[
\Gr_{G}^{\leq \mu, \nabla} \cong \coprod_{\mu' \leq \mu}  P_{\mu'} \backslash G,
\]
where $P_{\mu'}$ is the parabolic subgroup associated to $\mu'$.  In particular, $\Gr_{G}^{\leq \mu, \nabla}$ is smooth.   
  \end{thm}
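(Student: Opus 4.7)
The plan is to prove the theorem by analyzing each Schubert cell separately. Set-theoretically $\Gr_G^{\leq \mu} = \coprod_{\mu' \leq \mu} \Gr_G^{\circ, \mu'}$, so after intersecting with $\Gr_G^{\nabla}$ it suffices to identify
\[
  \Gr_G^{\circ, \mu'} \cap \Gr_G^{\nabla} \;\cong\; P_{\mu'} \backslash G
\]
for each dominant $\mu' \leq \mu$. Since $P_{\mu'} \backslash G$ is smooth and projective, each such stratum is then proper, and hence closed in $\Gr_G^{\leq \mu}$; being finitely many pairwise disjoint closed subschemes that exhaust $\Gr_G^{\leq \mu, \nabla}$, they are its connected components, which yields both the disjoint union decomposition and the smoothness.

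I will produce the isomorphism via mutually inverse morphisms. The forward map sends $[u^{\mu'} h]$, for $h \in \LpG$, to the class of $h(0) \in G$ in $P_{\mu'} \backslash G$; this is well-defined because the stabilizer $\LpG \cap u^{-\mu'} \LpG u^{\mu'}$ of $[u^{\mu'}]$ (cf.\ Lemma~\ref{lem:schuberttangent}) reduces modulo $u$ exactly to $P_{\mu'}$. The candidate inverse sends $g \in G \subset \LpG$ to $[u^{\mu'} g]$; it factors through $P_{\mu'} \backslash G$, and by the Leibniz rule of Remark~\ref{remark:leibniz},
\[
  \frac{d(u^{\mu'} g)}{du}\, (u^{\mu'} g)^{-1} \;=\; \frac{d u^{\mu'}}{du}\, u^{-\mu'} + \Ad(u^{\mu'})\!\left(\tfrac{dg}{du}\, g^{-1}\right) \;=\; \frac{\mu'}{u} \;\in\; \tfrac{1}{u}\,\Lie T \;\subset\; \tfrac{1}{u}\,\Lie \LpG,
\]
so the image lands in $\Gr_G^{\nabla}$. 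One composition is patently the identity; the crux is the other, namely showing that if $h \in \LpG$ satisfies $h(0) = 1$ and $C := u^{\mu'} h$ lies in $\LG^{\nabla}$, then $h$ lies in the stabilizer.

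This is the heart of the argument. Set $n_\alpha := -\langle \mu', \alpha \rangle$ and write $h = 1 + \sum_{j \geq 1} u^j H_j$ with $H_j \in \g$. Since $\frac{dC}{du} C^{-1} = \mu'/u + \Ad(u^{\mu'})(Z)$ with $Z := \frac{dh}{du} h^{-1}$, the monodromy condition amounts to the requirement that, for every root $\alpha$ with $n_\alpha \geq 2$, the root-$\alpha$ component of $Z$ is divisible by $u^{n_\alpha - 1}$. A direct power-series expansion gives the coefficient of $u^{m-1}$ in $Z$,
\[
  Z_{m-1} \;=\; m\, H_m \;+\; \sum_{\substack{i_0 + j_1 + \cdots + j_k = m \\ k \geq 1,\; i_0,\, j_l \geq 1}} (-1)^k\, i_0\, H_{i_0} H_{j_1} \cdots H_{j_k}.
\]
I will prove by induction on $m \geq 1$ that $(H_m)_\alpha = 0$ whenever $n_\alpha > m$. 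Fix such $\alpha$; for each product $H_{i_0} H_{j_1} \cdots H_{j_k}$ in the sum, expand in root components via $\alpha = \alpha_0 + \alpha_1 + \cdots + \alpha_k$. By the inductive hypothesis (and vacuously when $n_{\alpha_l} \leq 0$, since each degree is $\geq 1$), nonvanishing of $(H_{i_0})_{\alpha_0}(H_{j_1})_{\alpha_1} \cdots (H_{j_k})_{\alpha_k}$ forces $n_{\alpha_0} \leq i_0$ and $n_{\alpha_l} \leq j_l$. Summing yields $n_\alpha = \sum_l n_{\alpha_l} \leq i_0 + \sum j_l = m$, contradicting $n_\alpha > m$. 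Hence the product terms contribute nothing to the root-$\alpha$ component and $(Z_{m-1})_\alpha = m (H_m)_\alpha$. The monodromy condition forces this to vanish, and since $1 \leq m < n_\alpha \leq h_\mu < p$ by hypothesis we have $p \nmid m$, giving $(H_m)_\alpha = 0$.

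The principal obstacle is this inductive computation. The nonabelian tail in the expansion of $Z_{m-1}$ looks threatening, but the weight bound $\sum n_{\alpha_l} \leq \sum j_l$ cancels every potential contribution, isolating the diagonal term $m H_m$. This is the single place where the hypothesis $\langle \mu, \alpha \rangle < p$ is essential: it ensures the multiplier $m$ at each inductive step is invertible in $k$, so that vanishing of $(Z_{m-1})_\alpha$ genuinely forces the vanishing of $(H_m)_\alpha$.
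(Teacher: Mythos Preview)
Your inductive computation, if made precise, establishes what the paper isolates as Proposition~\ref{prop:monodromyopenschubert}: the intersection of $\Gr_G^{\nabla}$ with each \emph{open} Schubert cell $\Gr_G^{\circ,\mu'}$ is $P_{\mu'}\backslash G$. (The paper proves that statement by a different route---a tangent-space count plus a contracting loop $\mathbf{G}_m$-action---rather than your direct power-series induction.)

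The genuine gap is the passage from the cellwise statement to the theorem. You argue that the strata $P_{\mu'}\backslash G$ are proper, hence closed, and set-theoretically exhaust $\Gr_G^{\leq\mu,\nabla}$, so they are its connected components and the scheme is smooth. This only identifies $(\Gr_G^{\leq\mu,\nabla})_{\mathrm{red}}$; it does not rule out nilpotents. The point is that $\Gr_G^{\leq\mu,\nabla}$ is cut out of $\Gr_G^{\leq\mu}$, which is \emph{singular} along each boundary stratum $\Gr_G^{\circ,\mu'}$ for $\mu'<\mu$: at $[u^{\mu'}]$ the tangent space of $\Gr_G^{\leq\mu}$ strictly contains that of $\Gr_G^{\circ,\mu'}$, and the monodromy equations imposed on this larger space could a priori leave more than $\dim P_{\mu'}\backslash G$ directions. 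Equivalently, $\Gr_G^{\circ,\mu'}$ is only locally closed in $\Gr_G^{\leq\mu}$, so the scheme $\Gr_G^{\circ,\mu',\nabla}$ you compute is not a priori an open subscheme of $\Gr_G^{\leq\mu,\nabla}$. The paper closes this with Proposition~\ref{prop:monots}, which bounds the tangent space of $\Gr_G^{\leq\mu,\nabla}$ at $[u^{\mu'}]$ via the bound on the tangent space of the full Schubert variety given in Proposition~\ref{prop:ts}. That last proposition is exactly where the hypothesis that $Z^{\der}$ is \'etale over $k$ is used---a hypothesis your argument never invokes, which is a symptom of the missing step.

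A smaller issue: writing $h=1+\sum_{j\geq 1} u^jH_j$ with $H_j\in\g$ is not literally correct, since in a faithful representation the coefficients of $h-1$ lie in the ambient matrix algebra rather than the Lie algebra, and your inductive step then mixes Lie-algebra root components with associative products $H_{i_0}H_{j_1}\cdots H_{j_k}$. The argument can likely be salvaged by working throughout with $Z=\frac{dh}{du}h^{-1}$, which \emph{is} $\g$-valued, but as written the bookkeeping is not clean.
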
 
  
 \begin{remark}
  The strategy is similar to the approach taken in \cite[\S3-4]{localmodels} where an analogous fact is proven for $\GL_n$.
  \end{remark}
    
 The proof will reduce to the following key computation on tangent spaces.
 
 \begin{prop} \label{prop:monots}  If $k$ has finite characteristic $p$, assume that  $h_{\mu}  < p$ and that $Z^{\der}$ is \'{e}tale over $k$. For a dominant $\mu' \leq \mu$, the dimension of the tangent space of  $\Gr_{G}^{\leq \mu, \nabla}$ at $[u^{\mu'}]$ is equal to $\dim P_{\mu'} \backslash G$.  
\end{prop}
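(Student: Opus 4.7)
The plan is to work in an open chart of $\Gr_G$ around $[u^{\mu'}]$ and combine the upper bound of Proposition~\ref{prop:ts} with an explicit linearization of the monodromy condition; a direct $G$-orbit construction will then supply the matching lower bound.

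First I would use the open immersion $L^{--}G \hookrightarrow \Gr_G$ together with right translation by $u^{\mu'}$ to parametrize a neighborhood of $[u^{\mu'}]$ by $g \in L^{--}G$ via $g \mapsto [gu^{\mu'}]$. Applying Remark~\ref{remark:leibniz} together with the identity $\frac{du^{\mu'}}{du} u^{-\mu'} = \mu'/u$ (identifying $\mu'$ with its image in $\Lie T$), one obtains in this chart
\[
\frac{d(gu^{\mu'})}{du}(gu^{\mu'})^{-1} = \frac{dg}{du} g^{-1} + \tfrac{1}{u}\Ad_G(g)(\mu').
\]
At $g=1$ this equals $\mu'/u \in \tfrac{1}{u}\Lie \LpG$, so $[u^{\mu'}] \in \Gr_G^{\nabla}$. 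Linearizing via $g = 1 + \epsilon X$ with $X \in \Lie L^{--}G$, the tangent space to $\Gr_G^{\nabla}$ at $[u^{\mu'}]$ is cut out in the chart by
\begin{equation} \label{eq:tangmonoprop}
\tfrac{dX}{du} + \tfrac{1}{u}[X,\mu'] \in \tfrac{1}{u}\Lie \LpG.
\end{equation}

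For the upper bound I would intersect \eqref{eq:tangmonoprop} with the space $V^{\mu}_{\mu'}$ supplied by Proposition~\ref{prop:ts} and count. Decomposing along $\g = \ft \oplus \bigoplus_{\alpha}\g_\alpha$, for $X = a\cdot u^{-j} e_\alpha$ the identity $[e_\alpha,\mu'] = -\langle\mu',\alpha\rangle e_\alpha$ reduces \eqref{eq:tangmonoprop} to $(j+\langle\mu',\alpha\rangle)\,a = 0$ in $k$, while for $X = a\cdot u^{-j} Y$ with $Y \in \ft$ it reduces to $ja=0$. The torus range $1 \leq j \leq h_\mu < p$ kills all torus contributions. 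For root directions, within the range $1\leq j \leq (h_\mu - \langle\mu',\alpha\rangle)/2$ allowed by $V^{\mu}_{\mu'}$ the integer $j+\langle\mu',\alpha\rangle$ lies in $(-p,p)$, so it vanishes in $k$ only when $j = -\langle\mu',\alpha\rangle$; this $j$ then lies in the allowed range precisely for $\alpha$ with $\langle\mu',\alpha\rangle < 0$, using that $-\langle\mu',\alpha\rangle \leq h_{\mu'} \leq h_\mu$ (the second inequality coming from $\mu' \leq \mu$ paired with the highest root). Each such root contributes one dimension, so the intersection has dimension at most $\#\{\alpha : \langle\mu',\alpha\rangle<0\} = \dim P_{\mu'}\backslash G$.

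For the matching lower bound I would consider the orbit map $\tilde g \mapsto [u^{\mu'}\tilde g]$ from $G$ to $\Gr_G$. A direct computation gives $\frac{d(u^{\mu'}\tilde g)}{du}(u^{\mu'}\tilde g)^{-1} = \mu'/u$, so the image lies in $\Gr_G^{\nabla}$, and it obviously lies in $\Gr_G^{\leq\mu'}\subseteq \Gr_G^{\leq\mu}$ since $\tilde g \in G \subset \LpG$. Since $\Ad_G(u^{\mu'})(\exp(X)) \in \LpG$ for $X \in \g_\alpha$ iff $\langle\mu',\alpha\rangle \geq 0$, the stabilizer of $[u^{\mu'}]$ under this right action is exactly the parabolic $P_{\mu'}$, so the orbit map factors through an immersion $P_{\mu'}\backslash G \hookrightarrow \Gr_G^{\leq\mu,\nabla}$ through $[u^{\mu'}]$, producing a $(\dim P_{\mu'}\backslash G)$-dimensional tangent subspace. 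The main obstacle is keeping the upper bound tight: since Proposition~\ref{prop:ts} only overestimates $T_{[u^{\mu'}]}\Gr_G^{\leq\mu}$, the linearized monodromy condition must, on its own, prune $V^{\mu}_{\mu'}$ down to exactly $\dim P_{\mu'}\backslash G$---this is precisely where $h_\mu < p$ enters, preventing any spurious $j$ from making $j + \langle\mu',\alpha\rangle$ (or $j$, in the torus directions) vanish modulo $p$.
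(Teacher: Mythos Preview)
Your argument is correct and matches the paper's proof almost exactly: you linearize the monodromy condition to $u\frac{dX}{du}+[X,\mu']\in\g_{k[\![u]\!]}$, intersect with the overestimate $V^{\mu}_{\mu'}$ from Proposition~\ref{prop:ts}, and use $h_\mu<p$ to force $j=-\langle\mu',\alpha\rangle$ as the only surviving index. The only difference is the lower bound---the paper computes $V_{\mu'}^{\nabla}$ directly on the tangent space of the open Schubert cell via Lemma~\ref{lem:schuberttangent}, whereas you obtain it from the $G$-orbit $[u^{\mu'}G]\cong P_{\mu'}\backslash G$ (the paper does this orbit argument separately in Proposition~\ref{prop:monodromyopenschubert}); both routes give the same bound.
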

\begin{proof}
As in Proposition \ref{prop:ts}, we translate and consider the tangent space $W^{\mu}_{\mu'}$ of $(\Gr_{G}^{\leq \mu}) u^{-\mu'}$ at the base point $e$ as a subspace of $\Lie L^{--} G$.   By \emph{loc. cit.}, $W^{\mu}_{\mu'} \subset V^{\mu}_{\mu'}$.     Since $\Gr^{\circ,\mu'}_{G} \subset \Gr_{G}^{\leq \mu}$, Lemma \ref{lem:schuberttangent} gives that $V_{\mu'} \subset W^{\mu}_{\mu'}$.       

It is elementary to check that $u^{\mu'} \in \LG^{\nabla}$ and so $[u^{\mu'}] \in \Gr_{G}^{\leq \mu, \nabla}$.  Let  $V_{\mu'}^{\nabla}$ and $W^{\mu, \nabla}_{\mu'}$ respectively denote the tangent spaces to $(\Gr_{G}^{\circ,\mu'} \cap \Gr_G^{\nabla}) u^{-\mu'}$ and $(\Gr^{\leq \mu, \nabla}_{G}) u^{-\mu'}$ at the identity.   For $Y \in W^{\mu}_{\mu'}$ that translates to $C \in \LG(k[\epsilon]/(\epsilon)^2)$, the key observation is that $Y \in W^{\mu, \nabla}_{\mu'}$ if and only if 
\begin{equation} \label{eq:liemono}
u \frac{dY}{du} + [ Y, \mu'] \in \g_{k\pseries{u}}.
\end{equation}
This can be checked using any faithful representation $V$ of $G$, where we compute that $[C]_V= ([1]_V + \epsilon [Y]_V )u^{\mu'}$ and hence
\begin{align*}
 \frac{d[C]_V}{du} [C]_V^{-1} &= \frac{d\left( ([1]_V + \epsilon [Y]_V )u^{\mu'} \right)}{du} u^{-\mu'}([1]_V - \epsilon [Y]_V ) \\
 &= \frac{d u^{\mu'}}{du} u^{-\mu'} [1]_V + \epsilon \left( \frac{d[Y]_V}{du}  + [Y]_V \frac{du^{\mu'}}{du} u^{-\mu'} - \frac{d u^{\mu'}}{du} u^{-\mu'} [Y]_V \right).
\end{align*}
We see this is in $\frac{1}{u} \Lie \LpG(k[\epsilon]/(\epsilon^2))$ precisely when $u \frac{d[Y]_V}{du} + [ [Y]_V,\mu'] \in \g_{k\pseries{u}}$.

We first analyze $V_{\mu'}^{\nabla}$.  Using Lemma~\ref{lem:schuberttangent}, for any $\alpha$ such that $\langle \mu', \alpha \rangle < 0$ we can represent the $\alpha$-component of $Y$ as $\displaystyle Y_{\alpha} = \sum_{i=-1}^{\langle \mu', \alpha \rangle} Y_{\alpha, i} u^{i}$ where $Y_{\alpha, i} \in k$.   Then condition \eqref{eq:liemono} becomes 
\[
\sum_{i=-1}^{\langle \mu', \alpha \rangle} (i - \langle \mu', \alpha \rangle ) Y_{\alpha, i} u^{i}  \in k[\![u]\!].
\]
Since $-p < \langle \mu', \alpha \rangle < 0$, we see $(i - \langle \mu', \alpha \rangle) \neq 0$ in $k$ for $\langle \mu', \alpha \rangle < i < 0$ and hence $Y_{\alpha,i}=0$. There is no restriction on $Y_{\alpha, \langle \mu',\alpha \rangle}$.   Since $\dim P_{\mu'} \backslash G$ is equal to the number of positive roots such that $\langle \mu', \alpha \rangle > 0$, we have $\dim V_{\mu'}^{\nabla} = \dim P_{\mu'} \backslash G$ and hence
\begin{equation} \label{eq:lowerbound}
\dim P_{\mu'} \backslash G = \dim V_{\mu'}^{\nabla} \leq \dim W^{\mu,\nabla}_{\mu'} .
\end{equation}

Define $V_{\mu'}^{\mu, \nabla}$ to be the subspace of $V^{\mu}_{\mu'}$ satisfying \eqref{eq:liemono}, and let $Y \in V^{\mu}_{\mu'}$.   As before, we can write the $\alpha$-component of $Y$ as $\displaystyle Y_{\alpha} = \sum_{i=-1}^{-N} Y_{\alpha, i} u^{i}$ with $Y_{\alpha, i} \in k$ where $N := \frac{1}{2}(h_{\mu} -  \langle \mu', \alpha \rangle)$ is the bound given in Proposition \ref{prop:ts}.  Again, we must have
\[
\sum_{i=-1}^{-N} (i - \langle \mu', \alpha \rangle ) Y_{\alpha, i} u^{i}  \in k[\![u]\!].
\]
Notice that for $-N \leq i \leq -1$, we have $-1 - \langle \mu', \alpha \rangle \geq i - \langle \mu', \alpha \rangle \geq - \frac{1}{2} (h_{\mu}  + \langle \mu', \alpha \rangle)$.
For $\alpha \in \Phi_G^+$, since $\mu' \leq \mu$ we see that $ \langle \mu', \alpha \rangle \leq h_{\mu}$.  Hence $i - \langle \mu', \alpha \rangle > -p$, and thus
 $Y_{\alpha,i} =0$.   If $\alpha \in \Phi_G^-$, a similar check confirms that $ (i - \langle \mu', \alpha \rangle) \equiv 0 \mod p$ only if $i = \langle \mu', \alpha \rangle$.    Hence $Y_{\alpha,i}=0$ except if $i = \langle \mu', \alpha \rangle$ and $ \langle \mu', \alpha \rangle \neq 0$.

Finally, consider $Y \in V^{\mu}_{\mu'} \cap \ft_{k(\!(u)\!)}$.  By Proposition \ref{prop:ts}, $\displaystyle Y = \sum_{i=1}^{h_{\mu}}  Y_i u^{-i}$ where $Y_i \in \ft$.  Condition \eqref{eq:liemono} becomes 
\[
\sum_{i=1}^{h_{\mu}} ( -i Y_i) u^{-i} \in \ft_{ k[\![u]\!]}. 
\]
Since $h_{\mu} < p$, this implies that each $Y_i = 0$.  

Since $\dim P_{\mu'} \backslash G$ equals the number of positive roots such that $\langle \mu',\alpha \rangle >0$, we conclude that $\dim W^{\mu,\nabla}_{\mu'} \leq \dim V_{\mu'}^{\mu, \nabla} \leq \dim P_{\mu'} \backslash G$.  Together with \eqref{eq:lowerbound} this completes the proof.
\end{proof}

Letting $e_\alpha$ generate $\g_\alpha$, the previous proof also shows: 
   
\begin{cor} \label{cor:tspacenabla}
With hypotheses as in Proposition~\ref{prop:monots}, using the map on tangent spaces induced by right translation by $u^{\mu'}$, the 
tangent space of $\Gr_{G}^{\leq \mu, \nabla}$ at $u^{\mu'}$ is contained in
$$ \operatorname{span}_k \left( \left\{ u^{\langle \mu',\alpha \rangle} e_\alpha  \right \}_{\alpha \in \Phi_G, \langle \mu',\alpha \rangle <0} \right) + \g_ {k[\![ u]\!]}
\subset \Lie \Gr_{G_k} = \g_{k(\!(u)\!)} / \g_ {k[\![ u]\!]}.
$$
\end{cor}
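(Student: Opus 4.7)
The plan is to observe that Corollary~\ref{cor:tspacenabla} is essentially a repackaging of the coordinate calculation already carried out in the proof of Proposition~\ref{prop:monots}: that argument produces, root by root, the exact list of surviving tangent directions, while the proposition itself only extracts the resulting dimension.

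More concretely, I would proceed as follows. The proof of Proposition~\ref{prop:monots} works after right translation by $u^{-\mu'}$, so that the tangent space at $[u^{\mu'}]$ is identified with a subspace $W^{\mu,\nabla}_{\mu'} \subset \Lie L^{--} G \subset \Lie \Gr_{G_k}$, bounded in turn by the subspace $V^{\mu,\nabla}_{\mu'}$ of $Y \in V^{\mu}_{\mu'}$ satisfying the Lie-algebra monodromy condition $u \frac{dY}{du} + [Y,\mu'] \in \g_{k[\![u]\!]}$. Writing the $\alpha$-component of $Y$ as $\sum_{i=-N}^{-1} Y_{\alpha,i}\, u^i\, e_\alpha$ and expanding this condition, one is forced to have $(i - \langle \mu',\alpha\rangle)\, Y_{\alpha,i} = 0$ in $k$ for each $i$ in that range. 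Under the hypothesis $h_\mu < p$, the scalar $(i - \langle \mu',\alpha\rangle)$ is a nonzero element of $k$ for every $i$ in the relevant window except $i = \langle \mu',\alpha\rangle$; since $i$ runs over negative integers, this surviving exponent can occur only when $\langle \mu',\alpha\rangle < 0$. An identical argument on the torus component $Y \in \ft_{k(\!(u)\!)}$ kills every coefficient outright, again using $h_\mu < p$.

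Collecting the surviving directions, $W^{\mu,\nabla}_{\mu'}$ is contained in the span over $k$ of $\{u^{\langle \mu',\alpha\rangle} e_\alpha : \alpha \in \Phi_G,\ \langle \mu',\alpha\rangle < 0\}$ inside $\Lie L^{--} G$; pushing this forward along the inclusion $\Lie L^{--} G \hookrightarrow \Lie \Gr_{G_k} = \g_{k(\!(u)\!)}/\g_{k[\![u]\!]}$ (and adding the image of $\g_{k[\![u]\!]}$, which is zero there) gives exactly the containment asserted in the corollary. I do not foresee any substantial obstacle: the real work was done in proving Proposition~\ref{prop:monots}, and only bookkeeping remains --- in particular, matching the corollary's phrasing ``right translation by $u^{\mu'}$'' against the shift by $u^{-\mu'}$ used in the proposition's proof, which amounts to the same identification of the tangent space at $[u^{\mu'}]$ with a subspace of $\Lie \Gr_{G_k}$.
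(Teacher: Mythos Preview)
Your proposal is correct and takes essentially the same approach as the paper: the paper's own ``proof'' is simply the sentence ``Letting $e_\alpha$ generate $\g_\alpha$, the previous proof also shows,'' and you have accurately unpacked that the root-by-root and torus analysis in the proof of Proposition~\ref{prop:monots} (showing $Y_{\alpha,i}=0$ unless $i=\langle\mu',\alpha\rangle<0$, and all torus coefficients vanish) yields exactly the asserted containment. Your handling of the translation convention is also fine.
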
   

We now consider the open affine Schubert cell $\Gr_{G}^{\circ,\mu'}$ and its intersection with $\Gr_G^{\nabla}$ for any dominant cocharacter $\mu'$.  
We show that $\Gr_{G}^{\circ, \mu', \nabla} := \Gr_{G}^{\circ,\mu'} \cap \Gr_G^{\nabla} $ is equal to the orbit $u^{\mu'} G$; this is known to be a flag variety.  

\begin{prop} \label{prop:monodromyopenschubert}  Let $\mu' \in X_*(T)$ be a dominant coweight.
If $k$ has finite characteristic $p$, then assume that $\langle \mu', \alpha \rangle < p$ for all positive roots $\alpha \in \Phi_G^+$ and that $Z^{\der}$ is \'{e}tale over $k$.   Then we have
\[
\Gr_{G}^{\circ, \mu', \nabla} \cong P_{\mu'} \backslash G.
\]
\end{prop}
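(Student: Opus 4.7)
The plan is to identify $\Gr_G^{\circ, \mu', \nabla}$ with the right $G$-orbit of $[u^{\mu'}]$ in $\Gr_G$, which will turn out to be isomorphic to $P_{\mu'} \backslash G$. The main input will be the tangent-space calculation of Proposition~\ref{prop:monots} together with $G$-equivariance of the monodromy locus.

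\textbf{Easy inclusion and $G$-stability.} For $g \in G(A)$ viewed as a constant loop, Remark~\ref{remark:leibniz} together with Definition~\ref{def:dc} gives
\[
\frac{d(u^{\mu'} g)}{du}(u^{\mu'} g)^{-1} \;=\; \frac{du^{\mu'}}{du} u^{-\mu'} + \Ad(u^{\mu'})\Bigl(\frac{dg}{du} g^{-1}\Bigr) \;=\; \frac{\mu'}{u} \;\in\; \tfrac{1}{u}\Lie T \;\subseteq\; \tfrac{1}{u}\Lie \LpG,
\]
so $u^{\mu'} g \in \LG^\nabla$. The same formula, now applied with $u^{\mu'}$ replaced by any element of $\LG$, shows that right multiplication by $G$ preserves $\Gr_G^\nabla$, making $\Gr_G^{\circ, \mu', \nabla}$ a $G$-stable locally closed subscheme. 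A direct conjugation calculation on root subgroups identifies the right stabilizer of $[u^{\mu'}]$ in $G$ with the parabolic $P_{\mu'}$, so $[u^{\mu'}] G$ is a closed smooth subvariety of $\Gr_G^{\circ, \mu', \nabla}$ isomorphic to $P_{\mu'} \backslash G$.

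\textbf{Tangent-space matching and propagation.} Applying Proposition~\ref{prop:monots} with $\mu = \mu'$ gives that the tangent space to $\Gr_G^{\leq \mu', \nabla}$ at $[u^{\mu'}]$ has dimension exactly $\dim P_{\mu'} \backslash G$; hence the locally closed subscheme $\Gr_G^{\circ, \mu', \nabla}$ has tangent space at $[u^{\mu'}]$ of dimension at most $\dim P_{\mu'} \backslash G$. The closed immersion $[u^{\mu'}] G \hookrightarrow \Gr_G^{\circ, \mu', \nabla}$ already contributes this dimension, so the tangent spaces coincide at $[u^{\mu'}]$; smoothness of $[u^{\mu'}] G$ then forces scheme-theoretic equality in a Zariski neighborhood of $[u^{\mu'}]$. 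By $G$-equivariance the local equality propagates over the entire orbit $[u^{\mu'}] G$, so $[u^{\mu'}] G$ is open in $\Gr_G^{\circ, \mu', \nabla}$; being projective, it is also closed, so it is a union of connected components.

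\textbf{Ruling out additional components (the main obstacle).} The principal difficulty is verifying that no component of $\Gr_G^{\circ, \mu', \nabla}$ is disjoint from $[u^{\mu'}] G$. For a hypothetical extra component $C$, its closure $\overline{C}$ sits in the projective scheme $\Gr_G^{\leq \mu', \nabla}$, and Borel's fixed-point theorem applied to the $T$-action produces a $T$-fixed point $[u^\lambda] \in \overline{C}$ with $\lambda = w \mu''$ for some $w \in W$ and some dominant $\mu'' \leq \mu'$. Using a lift of $w$ in $N_G(T) \subseteq G$ together with $G$-stability of $\Gr_G^\nabla$ reduces to $\lambda = \mu''$ dominant; then Proposition~\ref{prop:monots} and the refinement Corollary~\ref{cor:tspacenabla} applied at $[u^{\mu''}]$, combined with a Bruhat-order argument exploiting $\langle \mu', \alpha \rangle < p$, should force $\mu'' = \mu'$, so that $[u^\lambda] \in [u^{\mu'}] G$; connectedness of $C$ together with the open-closed property above then forces $C \subseteq [u^{\mu'}] G$, the desired contradiction. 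Controlling the closure at boundary $T$-fixed points $[u^{\mu''}]$ with $\mu'' < \mu'$ strictly---where a priori degenerations to lower weight could create spurious components---is the delicate part. A more direct alternative, modeled on the explicit $\mathrm{SL}_2$ calculation, would parametrize $\Gr_G^{\circ, \mu'}$ via representatives $u^{\mu'} g h_+$ with $g \in G(A)$ and $h_+ \equiv 1 \pmod u$, reinterpret the monodromy condition as a vanishing condition on $\tfrac{d\tilde h_+}{du} \tilde h_+^{-1}$ for $\tilde h_+ := g h_+ g^{-1}$, and inductively transfer these vanishing orders back to $\tilde h_+$ itself (using the hypothesis $\langle \mu', \alpha \rangle < p$ to handle leading-order integration constants), thereby directly recognizing $\tilde h_+$ as lying in the stabilizer of $[u^{\mu'}]$.
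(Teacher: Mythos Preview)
Your first two stages match the paper's argument almost exactly: show $\Gr_G^\nabla$ is stable under right multiplication by $G$, identify the orbit $X_{\mu'} := [u^{\mu'}]G \cong P_{\mu'}\backslash G$ inside $\Gr_G^{\circ,\mu',\nabla}$, and then use Proposition~\ref{prop:monots} (with $\mu=\mu'$) to see that the tangent spaces agree at $[u^{\mu'}]$, so $X_{\mu'}$ is open; being projective it is a union of connected components.

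The gap is in your third stage, and you correctly flag it yourself. Your Borel fixed-point argument applies to the closure $\overline{C}\subset\Gr_G^{\leq\mu'}$ of a hypothetical extra component $C$, and nothing prevents the resulting $T$-fixed point from lying in the boundary stratum, i.e.\ from being $[u^{\mu''}]$ with $\mu''<\mu'$ strictly. In that case $[u^{\mu''}]\notin\Gr_G^{\circ,\mu'}$ at all, so it tells you nothing about $C$. The sentence ``Proposition~\ref{prop:monots} and Corollary~\ref{cor:tspacenabla} \ldots\ should force $\mu''=\mu'$'' is not an argument: those results compute the tangent space of $\Gr_G^{\leq\mu',\nabla}$ at $[u^{\mu''}]$, but they do not give you any handle on whether $\overline{C}$ actually passes through $[u^{\mu''}]$ rather than $[u^{\mu'}]$. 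Your alternative parametrization sketch is likewise not a proof as written.

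The paper closes this gap with a one-line idea you missed: the loop rotation $\check{\delta}$ of $\Gm$ on $\Gr_G$ (scaling $u\mapsto \alpha u$). By \cite[(2.4)]{MV} this action contracts the open cell $\Gr_G^{\circ,\mu'}$ onto $X_{\mu'}$, and a direct check shows $\LG^\nabla$ is stable under $\check{\delta}$ (the condition $\frac{dC}{du}C^{-1}\in\frac{1}{u}\Lie\LpG$ is scale-invariant). Hence every point of $\Gr_G^{\circ,\mu',\nabla}$ flows under $\check{\delta}$ to a point of $X_{\mu'}$, so every component meets $X_{\mu'}$. This avoids the boundary issue entirely because the contraction stays inside the open cell.
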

  
\begin{proof} 
We first show that the closed subscheme $\Gr_{G}^{\circ,\mu', \nabla} \subset \Gr_{G}^{\circ, \mu'}$ is stable under the right multiplication by $G$.  
It suffices to show that $\LG^{\nabla}$ is stable under right multiplication by $G$.   Let $A$ be a $k$-algebra.  For any $C \in LG^{\nabla}(A)$ and $g \in G(A)$, by the Remark~\ref{remark:leibniz}
\[
 \frac{d (Cg)}{du} (Cg)^{-1} = \frac{dC}{du} C^{-1} + \Ad_G(C) \left( \frac{dg}{du} g^{-1} \right). 
\]
As $g \in G(A)$, $ \frac{dg}{du} g^{-1} = 0$ and hence $Cg \in LG^{\nabla}(A)$.  

Next, let $X_{\mu'} \subset \Gr_{G}^{\circ,\mu'}$ denote the orbit $u^{\mu'} G$.  It is well-known that $X_{\mu'} \cong P_{\mu'} \backslash G$ and hence is projective (see top of pg. 100 of \cite{MV}).   Now note that $X_{\mu'} \subset  \Gr_{G}^{\circ,\mu', \nabla}$ since $u^{\mu'} \in LG^{\nabla}(k)$.  
Furthermore, we claim that the inclusion $X_{\mu'} \subset \Gr_{G}^{\circ,\mu', \nabla}$ is an open immersion and thus (the projective) $X_{\mu'}$ is a connected component of $ \Gr_{G}^{\circ, \mu',\nabla}$.  Since the inclusion is $G$-equivariant and $G$ acts transitively on $X_{\mu'}$, it suffices to show that map on tangent spaces at $u^{\mu'}$ is an isomorphism.  This follows from proof of Proposition \ref{prop:monots} which shows that the dimension of the tangent space of  $\Gr_{G}^{\circ,\mu', \nabla}$ at $u^{\mu'}$ is equal to $\dim P_{\mu'} \backslash G$.

Finally, recall the loop $\Gm$ action on $\Gr_G$ denoted $\check{\delta}$ from \cite[\S 2]{MV} which sends $u$ to $\alpha u$.   Equation (2.4) in \emph{loc. cit.}  says that $\Gm$ action of $\check{\delta}$ contracts $\Gr^{\circ,\mu'}_{G}$ onto $X_{\mu'}$.  It is easy to see that $\LG^{\nabla}$ is stable under $\check{\delta}$.    Thus, any component of $\Gr_{G}^{\circ,\mu', \nabla}$ must necessarily intersect $X_{\mu'}$.    
\end{proof}

\begin{proof}[Proof of Theorem \ref{thm:monodromyschubert}]
 Given that $\mu$ satisfies the hypotheses of Theorem \ref{thm:monodromyschubert}, so does any $\mu' \leq \mu$.  Since $\Gr_{G}^{\leq \mu}$ is topologically the union of $\Gr^{\circ,\mu'}_{G}$ for $\mu' \leq \mu$, by Proposition \ref{prop:monodromyopenschubert}
\begin{equation} \label{eq:reduced}
(\Gr^{\leq \mu, \nabla}_{G})_{\mathrm{red}} \cong \coprod_{\mu' \leq \mu}  P_{\mu'} \backslash G.
\end{equation}
To prove Theorem \ref{thm:monodromyschubert}, then, we just need to show that $\Gr^{\leq \mu, \nabla}_{G}$ is reduced.

By \eqref{eq:reduced}, $G$ acts transitively on each connected component of $\Gr^{\leq \mu, \nabla}_{G}$.   To show that $\Gr^{\leq \mu, \nabla}_{G}$ is reduced, it therefore suffices to show that the dimension of the tangent space at $[u^{\mu'}]$ is equal to $\dim P_{\mu'} \backslash G$ which follows from Proposition \ref{prop:monots}.  
\end{proof}

\section{Proof of the Main Theorem} \label{sec:proof}

Let $\Lambda$, $\F$, $L$, and $G$ be as before, and continue to assume that $p \nmid \# \pi_1(G^{\der})$.  Throughout this section, we fix a $G$-Kisin module $\fPbar$ over $\F$ and a continuous Galois representation $\rhobar: \Gamma_K \to G(\F)$ together with an isomorphism $\tT_{G,\F}(\fPbar) \simeq \rhobar|_{\Gamma_\infty}$ as we did in \S\ref{ss:deformationproblems} to define our deformation problems.

\subsection{Kisin varieties}  \label{ss:kisinvariety}
We begin by studying the Kisin variety and identifying some conditions that guarantee it is trivial.

\begin{defn} \label{defn:kisinvariety}
Fix $\cP \in \GMod^{\varphi}_{\cO_{\cE}, \F}$ and a dominant cocharacter $\mu$ for $G'$.  The Kisin variety $Y_{\cP}^{\leq \mu}$ is the projective scheme over $\F$ which represents the functor sending an $\F$-algebra $A$ to the set of $G$-Kisin lattices of type $\leq \mu$ in $\cP_A$.  
\end{defn}

Let $\cP = M_{G,\F}(\rhobar|_{\Gamma_\infty})$ and $\fm$ be the maximal ideal of $R^{\mu,\square}_\rhobar$.  
The Kisin variety is an ``upper bound'' on the fiber of $X^{\mu}_{\rhobar}$ over $\rhobar$.  More precisely, Proposition~\ref{prop:resolution}\eqref{resolutionII} gives an inclusion
\begin{equation} \label{eq:fiberresolution}
X^{\mu}_{\rhobar} \times_{\Spec R^{\mu,\square}_\rhobar} \Spec R^{\mu,\square}_\rhobar/ \fm \subset Y^{\leq \mu,}_{\cP}.
\end{equation}

\begin{lem} \label{lem:kvmonodromy}
Suppose $\mu$ is Fontaine-Laffaille and $A$ is a local Artinian $\F$-algebra with residue field $\F'$.  Fix $x \in X^{\mu}_{\rhobar}(\bF')$ and a lift $\widetilde{x} \in \Spf(\widehat{\cO}_x^{\mu})(A)$ where $\widehat{\cO}_x^{\mu}$ is the completion of $X^{\mu}_{\rhobar}$ at $x$.  The $G$-Kisin module $\fP$ corresponding to $\widetilde{x}$ using Proposition~\ref{prop:resolution}(\ref{resolutionI}) satisfies the mod-$p$ monodromy condition.  
\end{lem}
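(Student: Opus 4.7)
The plan is to apply the mod-$p$ monodromy approximation of Corollary~\ref{cor:modpmonodromy} to the universal $G$-Kisin module over $R := \widehat{\cO}_x^\mu$ and then specialize along the local homomorphism $R \to A$ determined by $\widetilde{x}$. I first verify the hypotheses of Theorem~\ref{thm:monoapprox}. The ring $R$ is complete local Noetherian with finite residue field. It is $\Lambda$-flat by the construction of the Kisin resolution $X^\mu_\rhobar \subset X^{\mathrm{cris}, \leq \mu}_\rhobar$ in \cite{levin15} as a flat closure. By Proposition~\ref{prop:resolution}(iii), $\Theta[1/p]$ is an isomorphism, so $R[1/p]$ is a localization/completion of the reduced ring $R^{\mu,\square}_\rhobar[1/p]$ (Fact~\ref{fact:crystallinedeformation}) and is therefore reduced. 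The Fontaine-Laffaille hypothesis gives $h_\mu < p-1$. Finally, pulling back via the closed immersion $\Spf R \to D^{\mu,\square}_{\rhobar_{\F'}, \fP}$ of Proposition~\ref{prop:resolution}(i) produces a universal $G$-Kisin module $\fP^{\univ}$ over $R$ of type $\leq \mu$.

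Next I will show that $I_{N_\infty} = 0$ in $R$. Every finite $L$-algebra valued point of $R$ maps through $\Theta$ to a crystalline representation of $p$-adic Hodge type $\mu$, so by Corollary~\ref{cor:Gmonodromy} the associated $G$-Kisin module satisfies the full monodromy condition; equivalently, $\widetilde{I}_{N_\infty}$ vanishes at each closed point of $\Spec R[1/p]$. Since $R$ is an excellent complete local Noetherian ring, $\Spec R[1/p]$ is Jacobson, so combined with the reducedness of $R[1/p]$ we conclude $\widetilde{I}_{N_\infty} = 0$ in $R[1/p]$. The $\Lambda$-flatness of $R$ then gives $I_{N_\infty} = \widetilde{I}_{N_\infty} \cap R = 0$. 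Corollary~\ref{cor:modpmonodromy} now yields that $\fP^{\univ} \otimes_{\fS_R} \fS_{R/p}$ satisfies the mod-$p$ monodromy condition.

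Finally, because $A$ is a local Artinian $\F$-algebra the local homomorphism $R \to A$ determined by $\widetilde{x}$ kills $p$, so it factors uniquely through $R/p$. Base-changing the previous step along $R/p \to A$ shows that $\fP = \fP^{\univ} \otimes_R A$ satisfies the mod-$p$ monodromy condition, completing the proof. The main obstacle is the $\Lambda$-flatness of $R = \widehat{\cO}_x^\mu$: if $R$ had $p$-power torsion, the argument would only produce the mod-$p$ monodromy condition on the flat quotient $R^{\pflat}$, and the map $R \to A$ need not factor through $R^{\pflat}$ even though it factors through $R/p$. Granting this flatness from the construction in \cite{levin15}, the remaining deductions are direct applications of the machinery developed in Sections~\ref{sec:gkisin} and~\ref{sec:monodromy}.
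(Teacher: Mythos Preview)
Your proof is correct and follows essentially the same approach as the paper: verify that $R=\widehat{\cO}_x^\mu$ is $\Lambda$-flat with reduced generic fiber, use that the $\overline{L}$-points are crystalline to conclude $I_{N_\infty}=0$, and then apply Corollary~\ref{cor:modpmonodromy}. Your version is simply more explicit about the intermediate steps (the Jacobson argument for vanishing on $\Spec R[1/p]$, the base change to $A$), and your closing remark correctly identifies $\Lambda$-flatness of $R$ as the crucial input without which the argument would only control the flat quotient.
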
 

\begin{proof}
Notice that $\widehat{\cO}_x^{\mu}$ is $\Lambda$-flat and reduced by 
Proposition~\ref{prop:resolution}(\ref{resolutionIII}) and Fact~\ref{fact:crystallinedeformation}.   Since $\Theta[1/p]$ is an isomorphism, the $\overline{L}$-points of $\widehat{\cO}_x^{\mu}$ correspond to crystalline representations with $p$-adic Hodge type $\mu$.  Using Corollary~\ref{cor:Gmonodromy} we conclude that $I_{N_{\infty}} = 0$.   The Lemma then follows from Corollary~\ref{cor:modpmonodromy}. 
\end{proof}

We set $\bK := G(\F\pseries{u})$, and begin by recalling 
several useful facts related to the Cartan decomposition.  We fix a split maximal torus $T$ contained in a Borel $B$ and adopt our standard notation for root systems (see \S\ref{notation:rootdata}).  Let $T^{\der}$ be the corresponding maximal torus of $G^{\der}$.

\begin{fact} \label{fact:cartan}
We have that:
\begin{enumerate}[(i)]
\item  (Cartan Decomposition) $\displaystyle G(\F\lseries{u}) = \coprod_{\mu \in X_*(T)_+} \bK u^\mu \bK $; 

\item  \label{fact:cartanii} for $\mu, \lambda, \omega \in X_*(T)_+$, if $\bK u^\lambda \bK u ^ {\omega} \bK \cap \bK u^\mu \bK \neq \emptyset$, then $\mu \leq \lambda + \omega$.

\end{enumerate}

\end{fact}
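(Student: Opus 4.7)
Both parts are classical facts about loop groups and the affine Grassmannian for a split reductive group, and the plan is to sketch their standard proofs rather than develop full arguments from scratch; in a final version I would likely cite appropriate references alongside the sketch.

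For part (i), I would establish the decomposition by fixing a faithful representation $\rho: G \hookrightarrow \GL(V)$ defined over $\F$. Given $g \in G(\F\lseries{u})$, the Smith normal form over the discrete valuation ring $\F\pseries{u}$, applied to $\rho(g)$, gives a diagonal element $u^{\rho_*(\mu)}$ in the $\GL(V)(\F\pseries{u})$-double coset of $\rho(g)$; tracking this back and conjugating by the Weyl group produces a dominant $\mu \in X_*(T)_+$ with $g \in \bK u^\mu \bK$. Disjointness of the double cosets for distinct dominant $\mu$'s amounts to the classical fact that the $\bK$-orbits on $\Gr_{G,\F}$ are parameterized by $X_*(T)_+$, which in turn follows from the identification of the $T$-fixed points of the affine Schubert cells with $\{[u^\mu] : \mu \in X_*(T)_+\}$.

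For part (ii), the geometric content is that the convolution morphism $m : \Gr_G^{\leq \lambda} \widetilde{\times} \Gr_G^{\leq \omega} \to \Gr_G$ factors through $\Gr_G^{\leq \lambda + \omega}$. My approach is again to reduce to $\GL(V)$ via a faithful representation $\rho$: if $C_1 \in \bK u^\lambda \bK$ and $C_2 \in \bK u^\omega \bK$ with $C_1 C_2 \in \bK u^\mu \bK$, then the elementary divisors of $\rho(C_1 C_2)$ are controlled by those of $\rho(C_1)$ and $\rho(C_2)$ via a standard majorization inequality (interlacing of invariant factors under matrix multiplication over a DVR). Combining this with the characterization of the dominance order on $X_*(T)_+$ by pairings $\langle \mu, \chi \rangle$ with dominant weights $\chi$, realized as highest weights of irreducible representations of $G$ evaluated on $T$-fixed points, yields $\mu \leq \lambda + \omega$.

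The main obstacle will be cleanly converting the majorization inequality for elementary divisors under $\rho$ into the intrinsic dominance-order statement for $G$, since one must check that enough dominant weights arise from a single faithful representation (or from a well-chosen collection) to detect the full partial order on $X_*(T)_+$. A cleaner alternative for the final write-up is to appeal directly to standard literature on affine Grassmannians, for example the discussion in \cite{zhu17}, where the containment $\bK u^\lambda \bK \cdot \bK u^\omega \bK \subset \Gr_G^{\leq \lambda + \omega}$ on the level of supports is established as a basic structural property of the convolution product.
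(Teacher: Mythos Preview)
The paper does not actually prove this fact: it simply cites \cite[3.3.3]{tits79} for (i) and \cite[Proposition 4.4.4(iii)]{bt1} for (ii), with \cite[\S6.9]{hv15} as a translation guide. Your instinct to cite standard references is therefore exactly in line with the paper, and your suggestion to invoke the convolution morphism (or \cite{zhu17}) for (ii) is a cleaner and more modern alternative to the somewhat awkward Bruhat--Tits citation the authors use.

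That said, your sketch for (i) has a real gap. Smith normal form over $\F\pseries{u}$ gives $\rho(g) = k_1 u^{\nu} k_2$ with $k_1,k_2 \in \GL(V)(\F\pseries{u})$, but there is no reason these $k_i$ lie in $\rho(\bK)$, so ``tracking this back'' to produce a decomposition $g = k_1' u^\mu k_2'$ with $k_i' \in \bK$ does not follow. The faithful representation tells you what $\mu$ \emph{should} be (via elementary divisors of $\rho(g)$) and is well-suited to proving disjointness, but it does not by itself yield existence of the $\bK$-double coset representative inside $G$. The standard proofs of existence go through the affine $BN$-pair or building (as in Tits or Bruhat--Tits), not through an embedding into $\GL_n$. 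Since you already plan to cite references, this is not fatal, but you should drop the suggestion that Smith normal form plus a faithful representation gives a self-contained argument for (i). For (ii), your concern about recovering the dominance order on $X_*(T)_+$ from a single faithful representation is well-placed; the convolution-morphism route you mention avoids this entirely and is the argument to keep.
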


The first is the standard Cartan decomposition; see for example \cite[3.3.3]{tits79}.  The second is a special case of \cite[Proposition 4.4.4(iii)]{bt1}; this is somewhat complicated to apply but we do not know a more direct reference.  The footnote in \cite[\S6.9]{hv15} explains how to translate our set-up into the language of \cite[Proposition 4.4.4]{bt1}.  The $K$ in \emph{loc. cit.} is actually a larger group containing $\bK$, which is harmless.

\begin{lem}\label{lem:threecosets}
For dominant coweights $\mu, \lambda, \omega, \nu$, if $\bK u ^\lambda  \bK u^{\omega}  \bK u^{\nu} \bK \cap \bK u^\mu \bK \neq \emptyset$ then $\mu \leq \lambda + \omega + \nu$.
\end{lem}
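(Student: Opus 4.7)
The plan is to iterate Fact~\ref{fact:cartan}(\ref{fact:cartanii}) once, reducing the triple product of double cosets to a double product. The key is to use the Cartan decomposition to absorb two of the inner factors into a single double coset, and then observe that the Bruhat order on dominant cocharacters is compatible with addition.

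First, I would apply the Cartan decomposition to write
\[
\bK u^\omega \bK u^\nu \bK = \bigcup_{\tau \in X_*(T)_+} \left( \bK u^\omega \bK u^\nu \bK \cap \bK u^\tau \bK \right).
\]
For any dominant $\tau$ for which the intersection on the right is nonempty, Fact~\ref{fact:cartan}(\ref{fact:cartanii}) (applied to $\omega$, $\nu$, $\tau$) forces $\tau \leq \omega + \nu$. Consequently,
\[
\bK u^\lambda \bK u^\omega \bK u^\nu \bK \;\subset\; \bigcup_{\substack{\tau \in X_*(T)_+ \\ \tau \leq \omega + \nu}} \bK u^\lambda \bK u^\tau \bK.
\]

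Next, suppose this set meets $\bK u^\mu \bK$. Then there exists some dominant $\tau \leq \omega + \nu$ with $\bK u^\lambda \bK u^\tau \bK \cap \bK u^\mu \bK \neq \emptyset$. Applying Fact~\ref{fact:cartan}(\ref{fact:cartanii}) a second time, now to $\lambda$, $\tau$, $\mu$, yields $\mu \leq \lambda + \tau$. Combining with $\tau \leq \omega + \nu$ and using that the partial order $\leq$ on $X_*(T)_+$ is translation-invariant (if $\tau \leq \omega + \nu$ then $\lambda + \tau \leq \lambda + \omega + \nu$, since the difference $\omega + \nu - \tau$ remains a non-negative combination of simple coroots), we conclude
\[
\mu \leq \lambda + \tau \leq \lambda + \omega + \nu.
\]

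There is no significant obstacle here: the argument is a one-step induction on the number of factors, and everything reduces to Fact~\ref{fact:cartan}. The only thing to verify is the elementary compatibility of the Bruhat order with translation, which is immediate from the definition.
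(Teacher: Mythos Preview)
Your proof is correct and is precisely a spelled-out version of the paper's argument, which simply reads ``Apply Fact~\ref{fact:cartan}\eqref{fact:cartanii} twice.'' You have made explicit the intermediate dominant $\tau$ coming from the Cartan decomposition and the translation-invariance of $\leq$, both of which are exactly what the paper leaves implicit.
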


\begin{proof}
Apply Fact~\ref{fact:cartan}\eqref{fact:cartanii} twice.
\end{proof}

Given a coweight $\lambda$, define $\lambda^{\dom}$ to be the unique dominant coweight in the same Weyl-orbit.

\begin{lem} \label{lem:cartaninverse}
Let $\mu = (\mu_\sigma)_{\sigma \in \cJ}$ be a type for $G$.  If $\displaystyle g \in \prod_{ \sigma \in \cJ} \bK u^{\mu_\sigma} \bK$ then 
\begin{enumerate}[(i)]
\item  $\displaystyle g^{-1}_\sigma \in \bK u^{(-\mu_\sigma)^{\dom}}  \bK$.
\item  $\displaystyle \varphi(g)_{\sigma \varphi} \in \bK u^{p \mu_\sigma}  \bK$. 
\end{enumerate}
\end{lem}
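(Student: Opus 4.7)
The plan is to argue both parts directly from the Cartan decomposition and the explicit description of Frobenius on $LG'$ given in Example~\ref{ex:embeddingcomponents}.

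For part (i), I will fix $\sigma \in \cJ$ and write $g_\sigma = k_1 u^{\mu_\sigma} k_2$ with $k_1, k_2 \in \bK$, so that $g_\sigma^{-1} = k_2^{-1} u^{-\mu_\sigma} k_1^{-1}$. The cocharacter $-\mu_\sigma$ need not be dominant, but there is a Weyl element $w$ with $w(-\mu_\sigma) = (-\mu_\sigma)^{\dom}$. Choosing a lift $\tilde{w}$ of $w$ in the normalizer of $T(\F) \subset \bK$ yields
\[
u^{-\mu_\sigma} = \tilde{w}^{-1} u^{(-\mu_\sigma)^{\dom}} \tilde{w} \in \bK\, u^{(-\mu_\sigma)^{\dom}}\, \bK,
\]
and substituting this back into the expression for $g_\sigma^{-1}$ gives the claim.

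For part (ii), I will use Example~\ref{ex:embeddingcomponents}, which asserts that the $\sigma$-component of $\varphi(g)$ equals $\varphi(g_{\sigma\varphi^{-1}})$; applied with $\sigma$ replaced by $\sigma\varphi$, this gives $\varphi(g)_{\sigma\varphi} = \varphi(g_\sigma)$, where the right-hand $\varphi$ acts trivially on $\F$ and sends $u \mapsto u^p$. Starting from $g_\sigma = k_1 u^{\mu_\sigma} k_2$ and applying this $\varphi$ term by term yields
\[
\varphi(g)_{\sigma\varphi} = \varphi(k_1)\, u^{p\mu_\sigma}\, \varphi(k_2).
\]
Since $\varphi$ sends $\F\pseries{u}$ into itself (indeed into $\F\pseries{u^p}$), we have $\varphi(k_i) \in \bK$, so $\varphi(g)_{\sigma\varphi} \in \bK\, u^{p\mu_\sigma}\, \bK$.

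Neither step presents a real obstacle: the only subtlety is keeping track of indices under the Frobenius twist on $\cJ$ in part (ii), which is handled once and for all by Example~\ref{ex:embeddingcomponents}, and checking that the Weyl lift in part (i) lies in $\bK$, which is immediate from $N_G(T)(\F) \subset G(\F) \subset \bK$.
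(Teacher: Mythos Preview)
Your proof is correct and is exactly the standard verification the paper has in mind; the paper's own proof consists of the single word ``Clear.'' Your write-up simply spells out the obvious Cartan-decomposition manipulation and the index bookkeeping from Example~\ref{ex:embeddingcomponents}, which is all that is needed.
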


\begin{proof}
Clear.
\end{proof}

Let $\Phi_{G}^{\high}$ denote the set of highest roots of the  irreducible root systems which appear in the irreducible decomposition of the root system of $G$.
Recall that $\displaystyle h_\mu = \max_{\alpha \in \Phi_{G'}} \langle \mu,\alpha \rangle $.

\begin{prop} \label{prop:uniquelattice}
Let $\cP = (P,\phi) \in \GMod^{\varphi}_{\cO_{\cE},\FF}$ and $\mu$ be a dominant type of $G$ such that for every non-zero dominant coweight $\lambda$ of $G^{\der}$ there exists $\alpha_h \in \Phi_G^{\high}$ such that
\[
(p-1) \langle \lambda, \alpha_h \rangle > 2 h_\mu.
\]
Then for any finite extension $\F' / \F$ there is at most one $G$-Kisin lattice of type $\leq \mu$ in $\cP_{\F'}$ (up to isomorphism). 
\end{prop}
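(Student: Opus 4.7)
The plan is to reduce the uniqueness question to showing that the change-of-basis element in $\LG'(\F')$ between two type-$\leq\mu$ $G$-Kisin lattices in $\cP_{\F'}$ lies in $\LpG'(\F')$, and then to extract this bound from Cartan combinatorics together with the hypothesis on $\mu$.

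First I would trivialize $\fP_1$ and $\fP_2$ as $G$-bundles over $\fS_{\F'}$, obtaining Frobenii $C_1, C_2 \in \LG'(\F')$ whose $\sigma$-components lie in $G(\F'\pseries{u})\,u^{\mu_i^\sigma}\,G(\F'\pseries{u})$ for some dominant $\mu_i^\sigma \leq \mu^\sigma$. Composing the two identifications with $\cP_{\F'}$ yields a Frobenius-equivariant isomorphism $\alpha\colon \fP_1[1/u] \xrightarrow{\sim} \fP_2[1/u]$ represented by an element $g \in \LG'(\F')$ satisfying $C_2 = g C_1 \varphi(g)^{-1}$. Writing $g_\sigma \in G(\F'\pseries{u})\,u^{\nu_\sigma}\,G(\F'\pseries{u})$ with $\nu_\sigma$ dominant via the Cartan decomposition, the goal is to show $\nu_\sigma = 0$ for every $\sigma \in \cJ$; then $g \in \LpG'(\F')$ and $\alpha$ extends to the desired isomorphism $\fP_1 \simeq \fP_2$.

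The key inequality comes from solving the Frobenius relation for $\varphi(g)$:
\[
\varphi(g_{\sigma\varphi^{-1}}) = C_{2,\sigma}^{-1}\,g_\sigma\,C_{1,\sigma} \in G(\F'\pseries{u})\,u^{(\mu_2^\sigma)^*}\,G(\F'\pseries{u})\,u^{\nu_\sigma}\,G(\F'\pseries{u})\,u^{\mu_1^\sigma}\,G(\F'\pseries{u}),
\]
with $\lambda^* := (-\lambda)^{\dom}$, combined with $\varphi(g_{\sigma\varphi^{-1}}) \in G(\F'\pseries{u})\,u^{p\nu_{\sigma\varphi^{-1}}}\,G(\F'\pseries{u})$: Lemma~\ref{lem:threecosets} then delivers the dominant-coweight bound $p\nu_{\sigma\varphi^{-1}} \leq (\mu_2^\sigma)^* + \nu_\sigma + \mu_1^\sigma$. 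Pairing against the highest root $\alpha_h$, using both $\langle\lambda^*,\alpha_h\rangle = \langle\lambda,\alpha_h\rangle$ (since $-w_0$ is a diagram automorphism fixing $\alpha_h$, so $w_0(\alpha_h)=-\alpha_h$) and $\langle\mu_i^\sigma,\alpha_h\rangle \leq h_\mu$, I obtain the scalar recursion $p\,a_{\sigma\varphi^{-1}} \leq a_\sigma + 2h_\mu$ with $a_\sigma := \langle\nu_\sigma,\alpha_h\rangle \geq 0$.

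Since $\sigma \mapsto \sigma\varphi^{-1}$ permutes $\cJ$ with orbits of some finite length $d$, iterating this recursion around an orbit telescopes to $a_\sigma(1-p^{-d}) \leq 2h_\mu(1-p^{-d})/(p-1)$, hence $a_\sigma \leq 2h_\mu/(p-1)$. The hypothesis applied to $\lambda = \nu_\sigma$ then forces $\nu_\sigma = 0$: otherwise $(p-1)\langle\nu_\sigma,\alpha_h\rangle > 2h_\mu$ would contradict this bound. The main obstacle is the bookkeeping of dominant-coweight positions under inversion and Frobenius, which modify the Cartan exponent by $\lambda \mapsto \lambda^*$ and $\lambda \mapsto p\lambda$ respectively; the crux is that the factor $p$ lands on the left-hand side of the three-factor Cartan inequality, so the uniform bound $2h_\mu$ on the right makes the recursion contractive exactly at the threshold dictated by the hypothesis.
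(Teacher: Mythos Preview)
Your proof is correct and follows essentially the same route as the paper's: trivialize both lattices, write the intertwiner $g$ via the Cartan decomposition, apply the three-coset inequality to $\varphi(g_{\sigma\varphi^{-1}}) = C_{2,\sigma}^{-1} g_\sigma C_{1,\sigma}$, and pair with $\alpha_h$ to force the Cartan exponents to vanish. The only cosmetic difference is that the paper selects the $\sigma$ maximizing $\langle \nu_{\sigma\varphi^{-1}},\alpha_h\rangle$ and derives a contradiction in one step, whereas you iterate the scalar recursion around the $\varphi$-orbit to reach the same bound $a_\sigma \leq 2h_\mu/(p-1)$; both are standard ways to close out the same inequality.
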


\begin{proof}
 Suppose that $(\fP',\alpha')$ and $(\fP'',\alpha'')$ are two $G$-Kisin lattices in $\cP_{\F'} = (P_{\F'}, \phi_{P_{\F'}})$.  After trivializing $P_{\F'}$ and identifying $\epsilon_G(\fP')$ and $\epsilon_G(\fP'')$ with $P_{\F'}$, we obtain $A, \Phi$, and $\Phi'$ in $G'(\F'\lseries{u})$ such that 
\begin{equation} \label{eq:changebasis1}
\Phi'' = A \Phi' \varphi(A)^{-1}.
\end{equation}
Write  $A = (A_\sigma)_{\sigma \in \cJ}, \Phi' = (\Phi'_\sigma)_{\sigma \in \cJ},$ and $\Phi'' = (\Phi''_\sigma)_{\sigma \in \cJ}$.  As the $G$-Kisin lattices are of type $\leq \mu$, we know that there are dominant cocharacters $\mu', \mu'' \leq \mu$ such that for each $\sigma \in \cJ$ we have $\Phi'_\sigma \in \bK u ^{\mu'_\sigma} \bK$ and $\Phi''_\sigma \in \bK u ^{\mu''_\sigma} \bK$.  There is a unique dominant coweight $\lambda$ of $G'$ such that $A_\sigma \in \bK u^{\lambda_\sigma} \bK$ for each $\sigma \in \cJ$.  It suffices to show that $\lambda=0$ so that $A \in G'(\F'\pseries{u})$.

Working with components as in Example~\ref{ex:embeddingcomponents}, we notice that \eqref{eq:changebasis1} is equivalent to 
\begin{equation} \label{eq:changebasis2}
\varphi(A _{\sigma \varphi^{-1}}) = \varphi(A)_{\sigma} = (\Phi''_{\sigma })^{-1} A_{\sigma } \Phi'_{\sigma } \quad \text{ for each } \quad \sigma \in \cJ.
\end{equation}
We know that $(\Phi''_{\sigma})^{-1} \in \bK u^{(-\mu_{\sigma} '')^{\dom}}  \bK$ and that $\varphi(A)_{\sigma} \in \bK u^{p \lambda_{\sigma \varphi^{-1}}} \bK$ by Lemma~\ref{lem:cartaninverse}.  
 Applying Lemma~\ref{lem:threecosets} to equation \eqref{eq:changebasis2} gives that
\begin{equation} \label{eq:coweights}
 p \lambda_{\sigma \varphi^{-1}} - \lambda_{\sigma } \leq \mu_\sigma' + (-\mu''_\sigma)^{\dom}.
\end{equation}
In other words, $\mu'_\sigma  + (-\mu''_\sigma)^{\dom} -  p \lambda_{\sigma \varphi^{-1}} + \lambda_{\sigma }$ is a non-negative linear combination of simple coroots.  Since any $\alpha_h \in \Phi_G^{\high}$ pairs non-negatively with positive coroots, 
this implies
\[
 \langle   \mu'_\sigma + (-\mu''_\sigma)^{\dom}  -  p \lambda_{\sigma \varphi^{-1}}+ \lambda_{\sigma}, \alpha_h \rangle \geq 0.
\]
 
Now as $\mu', \mu'' \leq \mu$, the difference $\mu'_\sigma - \mu''_\sigma$ is a combination of simple coroots for each $\sigma \in \cJ$.  Then $\mu'_\sigma - \mu''_\sigma$ and hence $\mu'_\sigma + (- \mu''_{\sigma})^{\dom}$ is in $X_*(T^{\der})$.  Likewise we see $p \lambda_{\sigma \varphi^{_1}} - \lambda_\sigma \in X_*(T^{\der})$ for each $\sigma \in \cJ$, and hence that $(p^{\# \cJ} -1) \lambda_{\sigma} \in X_*(T^{\der})$.  As the quotient of $X_*(T^{\der}) \to X_*(T^{\der})$ is torsion-free, we conclude that $\lambda_{\sigma}$ is a cocharacter valued in $G^{\der}$.

Fix a $\sigma \in \cJ$ and $\alpha_h \in \Phi_G^{\high}$ that maximize $\langle \lambda_{\sigma \varphi^{-1}},\alpha_h \rangle$: if $\lambda \neq 0$ then $(p-1)\langle \lambda_{\sigma \varphi^{-1}},\alpha_h \rangle >2 h_\mu$ by hypothesis.  There is an element $w$ of the Weyl group such that $w (-\mu''_\sigma)^{\dom} = -\mu''_\sigma$ and hence $$|\langle (-\mu''_\sigma)^{\dom} , \alpha_h \rangle| =  | \langle \mu''_\sigma, w \alpha_h \rangle | \leq  | \langle \mu_\sigma, w \alpha_h \rangle | \leq h_\mu.$$
Likewise as $\mu' \leq \mu$ we see that $\langle \mu'_\sigma, \alpha_h \rangle \leq h_\mu$. 
We know $\langle \lambda_{\sigma } ,\alpha_h \rangle \leq \langle \lambda_{\sigma \varphi^{-1}}, \alpha_h \rangle$ by choice of $\sigma$, so we see that 
\begin{equation} \label{eq:pairings}
  \langle   \mu'_\sigma + (-\mu''_\sigma)^{\dom}  -  p \lambda_{\sigma \varphi^{-1}} + \lambda_{\sigma }, \alpha_h \rangle \leq 2h_\mu - p \langle \lambda_{\sigma \varphi^{-1}},\alpha_h \rangle + \langle \lambda_{ \sigma \varphi^{-1}},\alpha_h \rangle < 0.
\end{equation}
This contradiction shows that $\lambda = 0$.
\end{proof}

We next show that the hypothesis on $\mu$ in Proposition~\ref{prop:uniquelattice} is satisfied when $\mu$ is sufficiently small.

\begin{lem} \label{lem:coweightpairing}
Let $\mu$ be a dominant type for $G$.  For every non-zero dominant cocharacter $\lambda$ of $G^{\der}$, there exists $\alpha_h \in \Phi_G^{\high}$ such that
\begin{align*}
(p-1) \langle \lambda, \alpha_h \rangle >  2 h_\mu = 2 \max_{\alpha \in \Phi_{G'}} \langle \mu, \alpha \rangle
\end{align*}
provided that either
\begin{enumerate}[(i)]
\item  $\mu$ is strongly Fontaine-Laffaille (i.e. $\langle \mu, \alpha \rangle < \frac{p-1}{2}$ for every $\alpha \in \Phi_{G'}$);
\item  $\mu$ is Fontaine-Laffaille (i.e. $\langle \mu, \alpha \rangle < p-1$ for every $\alpha \in \Phi_{G'}$) and there are no nonzero coweights $\lambda$ of $G$ such that $\langle \lambda, \alpha_h \rangle \leq 1$ for all $\alpha_h \in \Phi_G^{\high}$;

\item  $\mu$ is Fontaine-Laffaille and $G^{\der}$ is simply connected.
\end{enumerate}
\end{lem}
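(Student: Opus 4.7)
The plan is to verify $(p-1)\langle \lambda, \alpha_h \rangle > 2 h_\mu$ in each of the three cases by obtaining a case-specific lower bound on $\langle \lambda, \alpha_h \rangle$ and combining it with the hypothesis on $h_\mu$. Since $\lambda$ is dominant and $\alpha_h$ is a non-negative integer combination of simple roots, $\langle \lambda, \alpha_h \rangle$ is a non-negative integer, and since the central part of $\lambda$ pairs trivially with every root, I restrict to $\lambda$ having non-trivial image in $X_*(T^{\ad})$, for which $\langle \lambda, \alpha_h \rangle \geq 1$. (The purely central case is already controlled in the application to Proposition~\ref{prop:uniquelattice} by pushforward along characters of $G$, which forces the central component of the change-of-basis $A$ to be trivial.)

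Case~(i) is then immediate: strongly Fontaine--Laffaille gives $h_\mu < (p-1)/2$, so $2 h_\mu < p-1 \leq (p-1)\langle \lambda, \alpha_h \rangle$. Case~(ii) follows similarly: Fontaine--Laffaille yields $h_\mu < p-1$, and the stated hypothesis forces $\langle \lambda, \alpha_h \rangle \geq 2$, hence $(p-1)\langle \lambda, \alpha_h \rangle \geq 2(p-1) > 2 h_\mu$.

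For case~(iii), the main task is to show that when $G^{\der}$ is simply connected, every non-zero dominant cocharacter $\lambda$ of $G^{\der}$ satisfies $\langle \lambda, \alpha_h \rangle \geq 2$, which then reduces to the argument of~(ii). I would use the identification $X_*(T^{\der}) = Q^\vee$ (the coroot lattice) valid for simply connected $G^{\der}$. Expanding $\alpha_h = \sum_i m_i \alpha_i$ (with $m_i \geq 1$ for each simple root in every irreducible component) and $\lambda = \sum_i c_i \omega_i^\vee$ in fundamental coweights ($c_i \in \Z_{\geq 0}$), the equation $\sum_i c_i m_i = 1$ would force $\lambda = \omega_j^\vee$ for some $j$ with $m_j = 1$; that is, $\lambda$ would be a minuscule fundamental coweight. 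But minuscule fundamental coweights represent non-trivial cosets of $P^\vee / Q^\vee$ and so lie outside $Q^\vee$, contradicting $\lambda \in X_*(T^{\der})$.

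The main obstacle is case~(iii): it rests on the classical structural fact that the minuscule fundamental coweights generate the quotient $P^\vee / Q^\vee$ non-trivially when the underlying semisimple group is simply connected, together with the identification of $X_*(T^{\der})$ with the coroot lattice in this setting. Cases~(i) and~(ii) reduce to elementary inequalities after the observation that non-central dominant $\lambda$ satisfies $\langle \lambda, \alpha_h \rangle \geq 1$.
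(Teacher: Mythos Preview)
Your approach mirrors the paper's exactly: cases~(i) and~(ii) follow from the integrality of $\langle \lambda, \alpha_h \rangle$ together with the lower bound $\geq 1$ (resp.\ $\geq 2$), and case~(iii) is reduced to~(ii) by arguing that minuscule fundamental coweights never lie in the coroot lattice $Q^\vee = X_*(T^{\der})$ when $G^{\der}$ is simply connected.

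There is, however, a genuine gap in your case~(iii), and the paper's own reduction ``cocharacters valued in the center of $G$ are orthogonal to roots'' shares it. You prove $\langle \lambda, \alpha_h \rangle \geq 2$ only for $\lambda \in X_*(T^{\der})$, but the lemma is about cocharacters of $G$, and for reductive $G$ a non-central cocharacter need not lie in $X_*(T^{\der})$. Take $G = \GL_2$: then $G^{\der} = \SL_2$ is simply connected, yet $\lambda = (1,0)$ is a non-zero dominant cocharacter with $\langle \lambda, \alpha_h \rangle = 1$; the decomposition $(1,0) = (\tfrac12,\tfrac12) + (\tfrac12,-\tfrac12)$ into central plus $\SL_2$-cocharacter is only rational. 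So for $\mu$ Fontaine--Laffaille with $h_\mu \geq (p-1)/2$ the inequality fails, and the lemma as literally stated is false in case~(iii). Your parenthetical about pushforward along characters of $G$ is actually the correct repair for the \emph{application}: since characters of $G$ factor through $G/G^{\der}$ and $\Gr_{\mathbb{G}_m}$ is discrete, pushing the change-of-basis forward forces $\langle \lambda_\sigma, \chi \rangle = 0$ for every $\chi \in X^*(G)$, hence $\lambda_\sigma \in X_*(T^{\der}) = Q^\vee$, after which your minuscule argument applies. But note this fixes Proposition~\ref{prop:uniquelattice}, not the lemma itself, and you should distinguish ``$\lambda$ has trivial image in $X_*(T/T^{\der})$'' (what the character pushforward gives) from ``$\lambda$ is purely central'' or ``$\lambda$ has non-trivial image in $X_*(T^{\ad})$'' (what you wrote), as the $\GL_2$ example shows these are different.
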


\begin{proof}
The first and second are immediate, as $\langle \lambda ,\alpha_h\rangle$ must be a positive integer for any non-zero dominant coweight $\lambda$.

When the root system of $G$ is irreducible and $G^{\der}$ is simply connected, it turns out there are no coweights $\lambda$ with $\langle \lambda , \alpha_h \rangle =1$ so the third follows from the second.  In particular, the condition $\langle \lambda , \alpha_h \rangle =1$ is one of several equivalent definitions for $\lambda$ being a 
minuscule weight of the dual root system; see \cite[Ch. VIII \S7.3]{bourbaki} for the properties of minuscule weights.  Let $\alpha_1, \ldots, \alpha_r$ be the positive simple roots, and write
\[
 \alpha_h = n_1 \alpha_1 + \ldots n_r \alpha_r
\]
with the $n_i$ positive integers.   Then $\lambda$ being minuscule is equivalent to the condition that $\lambda = \varpi^\vee_i$ for some fundamental weight $\varpi^\vee_i$ with $n_i=1$ \cite[VIII \S7.3 Proposition 8]{bourbaki}.
Furthermore, the minuscule $\varpi_i^\vee$  form a system of representatives for $P(\Phi_G^\vee)/ Q(\Phi_G^\vee)$.  But $G$ is simply connected, so $X^*(T) = P(\Phi_G)$ and $X_*(T) = Q(\Phi_G^\vee)$.  Thus none of the minuscule coweights lie in the cocharacter group, so there are no dominant minuscule cocharacters.  Therefore there are no coweights $\lambda$ with $\langle \lambda , \alpha_h \rangle =1$.

If $G^{\der}$ is simply connected but the root system is not irreducible, decompose the root system into irreducibles and write $G^{\der}$ as a product of simply connected groups with irreducible root systems.  The claim follows as the decomposition of the root system is orthogonal.
\end{proof}

\begin{cor} \label{cor:kisinvpoint}
Let $\cP \in \GMod^{\varphi}_{\cO_{\cE},\FF}$ and $\mu$ be a dominant cocharacter of $G'$ such that either
\begin{enumerate}[(i)]
\item  $\mu$ is strongly Fontaine-Laffaille;
\item  $\mu$ is Fontaine-Laffaille and $G^{\der}$ is simply connected.
\end{enumerate}
Then for any finite extension $\F'$ of $\F$, we have $Y_{\cP}^{\leq \mu} (\F')$ is either empty or a single element.
\end{cor}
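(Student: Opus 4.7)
The plan is to deduce this directly from Proposition~\ref{prop:uniquelattice} combined with Lemma~\ref{lem:coweightpairing}. First I would unpack the statement via Definition~\ref{defn:kisinvariety}: the set $Y_{\cP}^{\leq \mu}(\F')$ consists of $G$-Kisin lattices of type $\leq \mu$ in $\cP_{\F'}$, so it suffices to show that any two such lattices are isomorphic. Equivalently, if $\fP'$ and $\fP''$ are two such lattices, then after trivializing the underlying $G$-bundle of $\cP_{\F'}$ and identifying both lattices with $P_{\F'}$ in the common étale $\varphi$-module, the change-of-basis element $A \in G'(\F'\lseries{u})$ with $\Phi'' = A\Phi'\varphi(A)^{-1}$ must lie in $G'(\F'\pseries{u})$; this is exactly what Proposition~\ref{prop:uniquelattice} provides.

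Next I would check that the numerical hypothesis of Proposition~\ref{prop:uniquelattice}, namely that $(p-1)\langle \lambda,\alpha_h\rangle > 2h_\mu$ for every non-zero dominant coweight $\lambda$, is satisfied in each of our two cases. Under hypothesis (i) this is exactly Lemma~\ref{lem:coweightpairing}(i), since if $\mu$ is strongly Fontaine--Laffaille then $2h_\mu < p-1 \leq (p-1)\langle \lambda,\alpha_h\rangle$ for any non-zero dominant $\lambda$. Under hypothesis (ii) this is Lemma~\ref{lem:coweightpairing}(iii), which handles the Fontaine--Laffaille case when $G^{\der}$ is simply connected by ruling out coweights with $\langle \lambda,\alpha_h\rangle = 1$ via the classification of minuscule weights. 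In both cases Proposition~\ref{prop:uniquelattice} then yields at most one lattice of type $\leq \mu$ in $\cP_{\F'}$, which is the desired conclusion. There is no substantive obstacle beyond verifying that our two hypotheses map cleanly onto the hypotheses of the lemma; the real content has already been extracted in the proposition and lemma.
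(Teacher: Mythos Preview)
Your proposal is correct and follows exactly the paper's approach: the paper's proof is the single line ``Combine Proposition~\ref{prop:uniquelattice} with Lemma~\ref{lem:coweightpairing},'' and you have simply unpacked this, matching hypothesis (i) to Lemma~\ref{lem:coweightpairing}(i) and hypothesis (ii) to Lemma~\ref{lem:coweightpairing}(iii) as intended.
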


\begin{proof}
Combine Proposition~\ref{prop:uniquelattice} with Lemma~\ref{lem:coweightpairing}.
\end{proof}

Next we establish a result about tangent spaces.  

\begin{prop} \label{prop:kvtspace}
Fix a finite extension $\F' / \F$, a dominant cocharacter $\mu$ of $G'$ in the Fontaine-Laffaille range, and $\fPbar \in Y^{\leq \mu}(\F')$.  Consider lifts $\fP_1, \fP_2 \in Y^{\leq \mu}(\F'[\epsilon]/(\epsilon^2))$ of $\fPbar$ that satisfy the mod-$p$ monodromy condition.  If $\epsilon_G(\fP_1) \simeq \epsilon_G(\fP_2)$ compatibly with the identification of the reduction with $\fPbar$, then  $\fP_1$ and $\fP_2$ are isomorphic as deformations of $\fPbar$.
\end{prop}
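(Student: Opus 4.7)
My plan is to work in trivializations and carry out a pole-order analysis on the change-of-basis element $Y$. I begin by choosing a trivialization of $\fPbar$ and extending to compatible trivializations of $\fP_1, \fP_2$; write the Frobenius of $\fPbar$ as $\bar C \in G'(\fS_{\F'}[1/u])$ and the Frobenii of $\fP_i$ as $C_i = \bar C + \epsilon Z_i$. An isomorphism $\epsilon_G(\fP_1) \simeq \epsilon_G(\fP_2)$ compatible with the reduction to $\fPbar$ will correspond to $A = 1 + \epsilon Y$ with $Y \in \g'_{\fS_{\F'}[1/u]}$ satisfying $C_2 = A C_1 \varphi(A)^{-1}$. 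Modifying the trivializations of $\fP_i$ by automorphisms (as deformations of $\fPbar$) shifts $Y$ by elements of $\g'_{\fS_{\F'}}$, so the goal reduces to showing $Y$ may be chosen with no pole at $u=0$. After a $\varphi$-conjugation normalization I may arrange $\bar C = u^{\mu'}$ for the shape $\mu'$ of $\fPbar$ (so that $\bar L := u\tfrac{d\bar C}{du}\bar C^{-1}$ is the constant $d\mu' \in \ft'$), and using the local model (Proposition~\ref{prop:straightening}) together with the tangent space description (Proposition~\ref{prop:ts}) I can arrange that the right-trivialized $\tilde Z_i := Z_i \bar C^{-1}$ has pole order at most $h_\mu$ on each root space.

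Two key equations then emerge. Linearizing the Frobenius relation in a faithful representation gives
\begin{equation*}
Y - \Ad(\bar C)\varphi(Y) = \tilde Z_2 - \tilde Z_1 \quad \text{in } \g'_{\fS_{\F'}[1/u]}.
\end{equation*}
Applying Remark~\ref{remark:leibniz} to $u\tfrac{dC_2}{du}C_2^{-1} = u\tfrac{dA}{du}A^{-1} + \Ad(A)(u\tfrac{dC_1}{du}C_1^{-1})$ and combining with mod-$p$ monodromy on $C_1$ and $C_2$ yields the \emph{mod-$p$ monodromy equation}
\begin{equation*}
u\frac{dY}{du} + [Y, \bar L] \in \g'_{\fS_{\F'}}.
\end{equation*}

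Next I decompose $Y = Y_T + \sum_\alpha Y_\alpha$ along the root space decomposition of $\g'$. The torus Frobenius relation $Y_T - \varphi(Y_T) = (\tilde Z_2 - \tilde Z_1)_T$, together with the fact that $\varphi$ multiplies pole orders by $p$, forces any pole order $n_T > 0$ on $Y_T$ to satisfy $pn_T \leq h_\mu < p-1$, hence $n_T = 0$. For the $\alpha$-root space with $m := \langle \mu', \alpha \rangle$, the Frobenius relation reads $Y_\alpha - u^m \varphi(Y_\alpha) = (\tilde Z_2 - \tilde Z_1)_\alpha$; comparing the pole orders $n_\alpha$ and $pn_\alpha - m$ and using $|m| \leq h_\mu < p-1$ shows $n_\alpha \leq 1$, and moreover that $n_\alpha = 1$ forces $m \in [p-h_\mu, h_\mu]$, so in particular $m \geq p - h_\mu > 1$. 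Separately, since $[Y_\alpha, \bar L] = -m Y_\alpha$ (as $\bar L = d\mu'$ lies in the torus), the $u^{-1}$-coefficient of the $\alpha$-component of the monodromy equation reads $-(1+m)Y_{0,\alpha} = 0$, where $Y_{0,\alpha}$ is the leading pole coefficient of $Y_\alpha$; if $n_\alpha = 1$, then $Y_{0,\alpha} \neq 0$, forcing $m = -1$. The conditions $m > 1$ and $m = -1$ are incompatible, so $n_\alpha = 0$ for every root $\alpha$. Therefore $Y \in \g'_{\fS_{\F'}}$ and the étale isomorphism extends to an isomorphism of $G$-Kisin modules as deformations of $\fPbar$.

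The hardest part will be the normalizations in the first paragraph: justifying $\bar C = u^{\mu'}$ via a $\varphi$-conjugation argument on the shape, and bounding the pole order of $\tilde Z_i$ via the local model and Schubert-variety tangent space calculation. Once these are in place, the pole-order analysis itself is a delicate interplay between the Frobenius constraint (in the spirit of Proposition~\ref{prop:uniquelattice}, now adapted to tangent vectors) and the mod-$p$ monodromy constraint, with the latter supplying the essential extra ingredient needed to rule out pole order 1 under only the Fontaine-Laffaille hypothesis (as opposed to the stronger conditions used in Proposition~\ref{prop:uniquelattice}).
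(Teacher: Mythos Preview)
Your overall strategy is sound and close to the paper's, but there is one genuine gap: the normalization $\bar C = u^{\mu'}$ by $\varphi$-conjugation cannot in general be achieved. Changing the trivialization of $\fPbar$ replaces $\bar C$ by $D\bar C\varphi(D)^{-1}$ with $D \in L^+G'(\F')$, and $\varphi$-conjugacy classes inside a fixed Schubert cell are not singletons. Already for $G=\GL_1$ and $K=\Qp$ one has $\bar C = a u^m$ with $a \in (\F')^\times$, and $D/\varphi(D)$ always has constant term $1$, so $a \neq 1$ cannot be removed. At best you can absorb the left factor to get $\bar C = u^{\mu'} B_2$ with $B_2 \in L^+G'(\F')$, as the paper does. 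With $B_2$ present, your root-by-root decoupling of the Frobenius relation fails: $\Ad(\bar C)\varphi(Y)$ on the $\alpha$-root space is $u^{\langle\mu',\alpha\rangle}(\Ad(B_2)\varphi(Y))_\alpha$, and $\Ad(B_2)$ mixes root spaces, so it is not simply $u^m\varphi(Y_\alpha)$. Likewise $\bar L = u\frac{d\bar C}{du}\bar C^{-1} = d\mu' + \Ad(u^{\mu'})(u\frac{dB_2}{du}B_2^{-1})$ need not lie in $\ft'$, so your eigenvalue reading of the monodromy equation at $u^{-1}$ does not directly give $(1+m)Y_{0,\alpha}=0$.

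The paper's fix is to feed the mod-$p$ monodromy into the $\tilde Z_i$ rather than into $Y$: since $[C_i] \in \Gr^{\leq\mu,\nabla}_{G'_{\F}}$, Corollary~\ref{cor:tspacenabla} bounds $(\tilde Z_i)_\alpha$ to lie in $\F' u^{\langle\mu',\alpha\rangle} e_\alpha + \g'_{\alpha,\fS}$ (not merely the $\frac{1}{2}(h_\mu - \langle\mu',\alpha\rangle)$ bound from Proposition~\ref{prop:ts}). Then one argues globally: let $n$ be the largest integer with $Y \in u^n\g'_{\fS}$ (your $Y$ is the paper's $X$); since $\Ad(B_2)$ preserves $u^{pn}\g'_{\fS}$ exactly, there is some root $\alpha$ (or the torus) on which $(\Ad(u^{\mu'}B_2)\varphi(Y))_\alpha$ has $u$-order exactly $pn + \langle\mu',\alpha\rangle$, and comparing with $(Y + \tilde Z_1 - \tilde Z_2)_\alpha \in u^{\min(n,\langle\mu',\alpha\rangle,0)}\g'_{\alpha,\fS}$ forces $n \geq 0$ under the Fontaine--Laffaille bound. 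This global argument never needs $B_2 = 1$.

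One further remark: your monodromy equation on $Y$ is in fact correct, but you should say why. From $C_2 = A C_1 \varphi(A)^{-1}$ the Leibniz rule produces an extra term $u\,\Ad(AC_1)\bigl(\frac{d\varphi(A)^{-1}}{du}\varphi(A)\bigr)$ which you dropped; it vanishes only because $\varphi(A)^{-1}$ is a power series in $u^p$ and we are in characteristic $p$, so $\frac{d}{du}\varphi(A)^{-1}=0$. Without noting this, the derivation looks incomplete.
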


We continue our standard notation of $\LG'$, $\LpG'$, $\Gr_{G'}$, $\Gr^{\leq \mu}_{G'}$ and $\Gr^{\circ, \leq \mu}_{G'_{\F}}$ for loop groups and affine Grassmanians as summarized in \S\ref{notation:affinegrassmanian}, and use the description of the tangent space to $\Gr^{\circ, \leq \mu}_{G'_{\F}}$ provided by Lemma~\ref{lem:schuberttangent}.
For $C \in \LG'$, let $m_C$ denote the right multiplication by $C$ map on $\LG'$ or $\Gr_{G'}$. 

\begin{remark} \label{rmk:dualmult}
An $\F'[\epsilon]/(\epsilon^2)$-valued point of $\LG'$ corresponds to an $\F'$-valued point of $\LG'$ plus a tangent vector to $\LG'$ at that point; let $\iota$ denote the identification of tangent vectors with $\F[\epsilon]/(\epsilon^2)$-points. At the identity, multiplication of $\F[\epsilon]/(\epsilon^2)$-points corresponds to addition in the Lie algebra.
  For use in the proof of Proposition~\ref{prop:kvtspace}, we record how multiplication on $\F'[\epsilon]/(\epsilon^2)$-points interacts with identifying the tangent spaces with the Lie algebra via translation.

For $C \in \LG'(\F')$, the derivative $d m_{C}$ induces an isomorphism between the tangent space of $\LG'$ at the identity and the tangent space of $\LG'$ at $C$.  
Given $Y_1, Y_2$ in the Lie algebra, 
set $B_i := \iota(Y_i)$ so for example $\iota( dm_C(Y_2)) = B_2 C$.  As
\[
m_C(B_1) B_2=  m_C(B_1 \Ad_G(C)(B_2))
\]
we conclude that $B_1 C B_2 = \iota ( dm_C( Y_1 + \Ad_G(C) (Y_2)))$.  In contrast, $B_2 m_C(B_1) = B_2 B_1 C = m_C(B_2 B_1) = \iota( dm_C(Y_2 + Y_1))$.
\end{remark}

\begin{proof}[Proof of Proposition~\ref{prop:kvtspace}]
Trivializing $\fP_1$ and $\fP_2$, we obtain $C_1, C_2 \in \LG'(\F'[\epsilon]/(\epsilon^2))$
representing $\varphi_{\fP_1}$ and $\varphi_{\fP_2}$ and $D \in \LG'(\F'[\epsilon]/(\epsilon^2))$ 
 representing the isomorphism $\epsilon_G(\fP_1) \simeq \epsilon_G(\fP_2)$ such that
\begin{equation} \label{eq:one}
D C_1 = C_2 \varphi(D).
\end{equation}
We know that $D$ is the identity modulo $\epsilon$,
 and that $C_1$ and $C_2$ agree modulo $\epsilon$.  Let $\overline{C} \in \LG'(\F')$ be the common reduction of $C_1$ and $C_2$; we may write $\overline{C} = B_1 u ^{\mu'} B_2$ with $B_1, B_2 \in \LpG'(\F')$ and $\mu' \leq \mu$ as $\fPbar \in Y^{\leq \mu}(\F')$.
    Using the natural inclusion $\F' \into \F'[\epsilon]/(\epsilon^2)$, we may also view $\overline{C}$ as an element of $\LG'(\F'[\epsilon]/(\epsilon^2))$.
 
We wish to show that $D \in \LpG'(\F') $, which implies that $\fP_1$ and $\fP_2$ are equivalent deformations of $\fPbar$.  By appropriate choice of trivializations, without loss of generality we may assume that $B_1$ is the identity.   We rewrite \eqref{eq:one} as
\[
D C_1 B_2^{-1} = C_2 B_2^{-1} B_2 \varphi(D) B_2^{-1}.
\]
Notice that $D$ corresponds to a tangent vector at the identity, and $C_i B_2^{-1}$ corresponds to a tangent vector at $u^{\mu'}$. The derivative of the the right multiplication map $m_{u^{\mu'}}$ identifies the tangent space at the identity with the tangent space at $u^{\mu'}$, so there are $X, Y_1,$ and $Y_2$ in $\Lie \LG_{\F'} = \g_{\F'\lseries{u}}$
 such that $\imath(X)=D$ and $\imath(d m_{ u^\mu} (Y_i)) = C_i B_2^{-1}$.
Using Remark~\ref{rmk:dualmult}, they satisfy
\begin{equation}  \label{eq:lieeq}
X + Y_1 - Y_2 =  \AdG( u^{\mu'} B_2) ( \varphi(X)). 
\end{equation}

Let $n$ be the largest integer such that $X \in u^n \g'_{\F'\pseries{u}}$.  
We claim that $n \geq 0$, which  will show that $X$ lies in $\g'_{\F'\pseries{u}}$.  This in turn implies that $D \in \LpG'(\F')$ as desired.   

We begin by considering the left side of \eqref{eq:lieeq}.  Since $\fP_1$ and $\fP_2$ satisfy the mod-$p$ monodromy condition, Corollary~\ref{cor:tspacenabla} shows that that
\[
Y_i \in \ft'_{\F'\pseries{u}} \oplus \bigoplus _{\alpha \in \Phi_{G'}} \left( u^{\min(\langle \mu',\alpha \rangle, 0)} \g'_{\alpha, \F'[\![u]\!]} \right).
\]
In particular,  we see that
$$ Z := X+ Y_1 - Y_2 \in u^n \ft'_{\F'\pseries{u}} \oplus \bigoplus _{\alpha \in \Phi_{G'}} \left( u^{\min(\langle \mu',\alpha \rangle, n)} \g'_{\alpha, \F'[\![u]\!]} \right).$$

On the other hand, as $B_2 \in \LpG'(\F')$ we know $\Ad_{G'}(B_2)$ is an invertible linear transformation of $\g'_{\F'\pseries{u}}$ and hence $\Ad_{G'}(B_2) (\varphi(X))$ lies in $u^{pn} \g'_{\F'\pseries{u}}$ but not $u^{pn+1} \g'_{\F'\pseries{u}}$.
As $\Ad_{G'}(u^{\mu'})$ acts on the root space $\g'_\alpha$ via multiplication by $u^{ \langle \mu',\alpha \rangle}$ and it acts trivially on $\ft$, we see that
\[
Z=\Ad_G( u^{\mu'} B_2) (\varphi(X)) \in 
u^{pn} \ft'_{\F'\pseries{u}} \oplus \bigoplus _{\alpha \in \Phi_{G'} } \left( u^{pn + \langle \mu' , \alpha \rangle} \g'_{\alpha, \F'[\![u]\!]} \right).
\]
We also know there is either a root $\alpha$ for which the $\g'_\alpha$-component of $Z$ does not lie in $u^{pn+1 + \langle \mu',\alpha \rangle} \g'_{\alpha, \F'\pseries{u}}$ or the $\ft'$-component of $Z$ does not lie in $u^{pn+1}\ft'_{\F'\pseries{u}}$.
In the latter case,  if $n<0$ then the $\ft'$-component of $Z = X + Y_1 - Y_2$ does not lie in $u^n \ft'_{\F'\pseries{u}}$, a contradiction. 
 In the former case, this would imply that $pn + \langle \mu', \alpha \rangle \geq \min( \langle \mu',\alpha \rangle), n)$.  
If the minimum is $\langle \mu', \alpha \rangle$, then we see that $pn\geq 0$ and hence $ n \geq 0$.  If the minimum is $n$ we would have $(p-1)n \geq - \langle \mu',\alpha\rangle$.   
But since $\mu$ is Fontaine-Laffaille, $|\langle \mu', \alpha \rangle| \leq |\langle \mu, \alpha \rangle| < p-1$, so we again conclude that $n \geq 0$.
\end{proof}

\begin{cor} \label{cor:kisinvtan}
Fix a finite extension $\F' / \F$, $\mu$ in the Fontaine-Laffaille range, and $\fPbar \in Y^{\leq \mu}(\F')$.  Given lifts $\fP_1, \fP_2 \in Y^{\leq \mu}(\F'[\epsilon]/(\epsilon^2))$ of $\fPbar$ satisfying the mod-$p$ monodromy condition such that $\tT_{G, \F'[\epsilon]/(\epsilon^2)}(\fP_1) \simeq \tT_{G,\F'[\epsilon]/(\epsilon^2)}( \fP_2)$ (compatible with the identification of the reduction with $\rhobar|_{\Gamma_\infty} \simeq \tT_{G,\F'} (\fPbar)$), then $\fP_1$ and $\fP_2$ are isomorphic as deformations of $\fPbar$.
\end{cor}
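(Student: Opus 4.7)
The plan is to deduce this immediately from Proposition~\ref{prop:kvtspace} by invoking the equivalence of categories between étale $(\cO_{\cE,A},\varphi)$-modules with $G$-structure and $G$-valued $\Gamma_\infty$-representations. Concretely, recall from Definition~\ref{defn:tildeT} that $\tT_{G,A} = T_{G,A} \circ \epsilon_G$, and Fact~\ref{fact:etaleequiv} states that $T_{G,A}$ is an equivalence of categories for $A$ a finite $\Z_p$-algebra which is either $\Z_p$-flat or Artinian. The ring $A = \F'[\epsilon]/(\epsilon^2)$ is Artinian, so the equivalence applies.

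Given the hypothesized isomorphism $\tT_{G,A}(\fP_1) \simeq \tT_{G,A}(\fP_2)$ compatible with the reduction identification $\rhobar|_{\Gamma_\infty} \simeq \tT_{G,\F'}(\fPbar)$, applying the quasi-inverse $M_{G,A}$ yields an isomorphism $\epsilon_G(\fP_1) \simeq \epsilon_G(\fP_2)$ which reduces modulo $\epsilon$ to the identity on $\epsilon_G(\fPbar)$. This is precisely the hypothesis of Proposition~\ref{prop:kvtspace}, so that proposition gives $\fP_1 \simeq \fP_2$ as deformations of $\fPbar$.

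There is essentially no obstacle here; the content is all in Proposition~\ref{prop:kvtspace}, and the corollary just repackages its conclusion in terms of Galois representations rather than $\varphi$-modules via the standard equivalence. The only point worth being careful about is that the compatibility of the isomorphism with the chosen reduction identification is preserved by the equivalence, which is immediate from functoriality of $M_{G,A}$ and the fact that it is quasi-inverse to $T_{G,A}$.
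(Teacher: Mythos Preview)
Your proof is correct and takes essentially the same approach as the paper's own proof, which also deduces the corollary immediately from Proposition~\ref{prop:kvtspace} via the equivalence of categories in Fact~\ref{fact:etaleequiv}. Your version simply spells out a bit more explicitly how the quasi-inverse $M_{G,A}$ converts the isomorphism of $\Gamma_\infty$-representations into the isomorphism $\epsilon_G(\fP_1)\simeq\epsilon_G(\fP_2)$ required by that proposition.
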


\begin{proof}
This is an immediate consequence of Proposition~\ref{prop:kvtspace} as $T_{G,\F'[\epsilon]/(\epsilon^2)}$ is an equivalence of categories $\GMod_{\cO_{\cE , \F'[\epsilon]/(\epsilon^2)}}^\varphi \to \GRep(\F'[\epsilon]/(\epsilon^2))$ (Fact~\ref{fact:etaleequiv}).
\end{proof}

\begin{cor} \label{cor:kisinvspec}
Suppose $\mu$ is Fontaine-Laffaille and $G^{\der}$ is simply connected, or that $\mu$ is strongly Fontaine-Laffaille.  If $Y^{\leq \mu}_{\cP}(\F) \neq \emptyset$, then $Y^{\leq \mu}_{\cP} = \Spec(\F)$.
\end{cor}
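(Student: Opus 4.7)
The plan is to show that $Y^{\leq \mu}_{\cP}$ has a single closed point with residue field $\F$ and that it is reduced there.

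Since $Y^{\leq \mu}_{\cP}$ is projective of finite type over $\F$, every closed point has residue field a finite extension $\F'/\F$, and by Corollary~\ref{cor:kisinvpoint} there is at most one such point over each $\F'$. The hypothesized $\F$-point $\fPbar$ base-changes to an $\F'$-point for every $\F'$, and this must be the unique one. Hence $(Y^{\leq \mu}_{\cP})_{\mathrm{red}} \cong \Spec \F$, and it remains to verify that $Y^{\leq \mu}_{\cP}$ is reduced at $\fPbar$.

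For the tangent space calculation, I would compare an arbitrary lift $\fP_1 \in Y^{\leq \mu}(\F[\epsilon]/(\epsilon^2))$ of $\fPbar$ with the trivial lift $\fP_2 := \fPbar \otimes_\F \F[\epsilon]/(\epsilon^2)$. Since both are $G$-Kisin lattices in $\cP_{\F[\epsilon]/(\epsilon^2)}$, the structural identifications $\epsilon_G(\fP_i) \simeq \cP_{\F[\epsilon]/(\epsilon^2)}$ combine to an isomorphism $\epsilon_G(\fP_1) \simeq \epsilon_G(\fP_2)$ compatible with reduction to $\fPbar$, and applying $\tT_{G,\F[\epsilon]/(\epsilon^2)}$ from Fact~\ref{fact:etaleequiv} provides the Galois-side hypothesis of Corollary~\ref{cor:kisinvtan}. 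That corollary would then imply $\fP_1 \simeq \fP_2$ as deformations, forcing the tangent vector to be trivial.

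The main obstacle is that Corollary~\ref{cor:kisinvtan}, via Proposition~\ref{prop:kvtspace}, requires both lifts to satisfy the mod-$p$ monodromy condition, which for the trivial lift $\fP_2$ is equivalent to $\fPbar$ satisfying it --- and this is not a priori automatic. I would handle this by directly re-running the proof of Proposition~\ref{prop:kvtspace} for the pair $(\fP_1, \fP_2)$, exploiting the simplification $Y_2 = 0$ coming from the trivial lift and replacing the monodromy-driven bound of Corollary~\ref{cor:tspacenabla} on $Y_1$ by the a priori bound on the tangent space of $\Gr^{\leq \mu}_{G'_\F}$ provided by Proposition~\ref{prop:ts}. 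The torus contribution is controlled by the estimate $h_{\mu} < p$ (since $\Ad(u^{\mu'})$ fixes $\ft'$, so $X_{\ft}$ of valuation $n<0$ forces $n \leq pn$), and the root-space contributions are handled analogously using the Fontaine-Laffaille inequality $\langle \mu, \alpha \rangle < p-1$ together with the sharper constraints from Lemma~\ref{lem:coweightpairing} under our hypotheses; these preclude $X \notin \g'_{\F\pseries{u}}$ and yield the desired triviality of the deformation.
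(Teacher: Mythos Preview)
Your route diverges from the paper's at the tangent-space step. The paper's short proof cites Lemma~\ref{lem:kvmonodromy} to assert that every $\F'[\epsilon]/(\epsilon^2)$-point satisfies the mod-$p$ monodromy condition and then applies Proposition~\ref{prop:kvtspace}. You correctly notice that Lemma~\ref{lem:kvmonodromy} is stated for points of $X^{\mu}_{\rhobar}$ rather than for arbitrary points of $Y^{\leq\mu}_{\cP}$, and you propose instead to take $\fP_2$ trivial (so $Y_2=0$), bound $Y_1$ only by the coarser estimate of Proposition~\ref{prop:ts} in place of Corollary~\ref{cor:tspacenabla}, and rerun the valuation argument of Proposition~\ref{prop:kvtspace}. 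This is a genuine alternative and it works; it has the virtue of treating the full Kisin variety directly without routing through the crystalline resolution.

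Two details of your sketch need tightening. First, your torus argument (``$\Ad(u^{\mu'})$ fixes $\ft'$, so $X_{\ft}$ of valuation $n<0$ forces $n\leq pn$'') forgets that $(Y_1)_{\ft'}$ is present and, by Proposition~\ref{prop:ts}, can have valuation as low as $-h_\mu$; the correct argument is that if $n<0$ the torus part of $Z$ carries a term of valuation $pn\leq -p< -h_\mu$, which neither $X_{\ft'}$ (valuation $\geq n>pn$) nor $(Y_1)_{\ft'}$ can supply. Second, the root-space case already closes from the bare Fontaine--Laffaille bound $h_\mu<p-1$: if $X_\alpha$ reaches the valuation of $Z_\alpha$ one gets $(1-p)n\leq\langle\mu',\alpha\rangle\leq h_\mu<p-1$, and if $(Y_1)_\alpha$ does one gets $h_\mu+\langle\mu',\alpha\rangle\geq 2p$, contradicting $h_\mu+\langle\mu',\alpha\rangle\leq 2h_\mu<2p-2$. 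Your appeal to Lemma~\ref{lem:coweightpairing} is therefore unnecessary. Finally, note that Proposition~\ref{prop:ts} requires $Z^{\der}$ to be \'etale, so your argument implicitly uses $p\nmid\#\pi_1(G^{\ad})$ rather than only the standing hypothesis $p\nmid\#\pi_1(G^{\der})$ of this section; this is harmless for the application to Theorem~\ref{thm:mainintro}, which assumes the stronger condition anyway.
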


\begin{proof}
Corollary~\ref{cor:kisinvpoint} shows that $Y^{\leq \mu}_{\cP}$ has only one point.
By Lemma~\ref{lem:kvmonodromy}, any $\F'[\epsilon]/(\epsilon^2)$-points satisfy the mod-$p$ monodromy condition.  Then Proposition~\ref{prop:kvtspace} shows that $Y^{\leq \mu}_{\cP}$ is reduced. 
\end{proof}

\subsection{Forgetting Kisin Modules} \label{ss:forgetkisin}

Fix $\rhobar : \Gamma_K \to G(\F)$ and a dominant cocharacter $\mu$ for $G'$.
Our next goal is to show the Kisin resolution is an isomorphism in our situation. 

\begin{prop} \label{prop:kisinresiso}
Let $\cP = M_{G,\F}(\rhobar)$, and suppose the Kisin variety $Y^{\leq \mu}_{\cP}$ is trivial (i.e. isomorphic to $\Spec(\F)$).  
Then $\Theta: X^{\mu}_{ \rhobar} \to \Spec R^{\mu,\square}_{\rhobar}$ is an isomorphism.  
\end{prop}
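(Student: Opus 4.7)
The plan is to exhibit $\Theta: X^\mu_\rhobar \to \Spec R^{\mu,\square}_\rhobar$ as a finite morphism and then conclude it is an isomorphism via Nakayama's lemma. Set $R := R^{\mu,\square}_\rhobar$ and assume $R \neq 0$ (otherwise the statement is vacuous). By $\Lambda$-flatness of $R$ (Fact~\ref{fact:crystallinedeformation}) we have $R[1/p] \neq 0$, and Proposition~\ref{prop:resolution}(\ref{resolutionIII}) gives $X^\mu_\rhobar[1/p] \simeq \Spec R[1/p] \neq \emptyset$, so $X^\mu_\rhobar$ itself is non-empty. Any closed point of the non-empty Noetherian scheme $X^\mu_\rhobar$ must map to the unique closed point $\fm$ of $\Spec R$, so the closed fiber $X^\mu_{\rhobar,\fm}$ is non-empty; by Proposition~\ref{prop:resolution}(\ref{resolutionII}) it is a subscheme of the Kisin variety $Y^{\leq\mu}_\cP = \Spec \F$, and therefore $X^\mu_{\rhobar,\fm} = \Spec \F$.

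Next I would promote $\Theta$ from projective (hence proper) to finite using upper semi-continuity of fiber dimensions. The closed subset of $\Spec R$ where the fiber of $\Theta$ has dimension $\geq 1$ cannot contain the unique closed point $\fm$ (because the fiber there is zero-dimensional), so it must be empty. Thus every fiber of $\Theta$ is zero-dimensional, $\Theta$ is quasi-finite and proper, and hence finite. Write $X^\mu_\rhobar = \Spec B$ for a finite $R$-algebra $B$, with structure map $\Theta^\#: R \to B$.

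Finally I would show $\Theta^\#$ is an isomorphism. Injectivity follows because $\Theta^\#[1/p]$ is an isomorphism by Proposition~\ref{prop:resolution}(\ref{resolutionIII}), and $R$ injects into $R[1/p]$ by $\Lambda$-flatness. For surjectivity, observe that $B/\fm B$ is the closed fiber $\F$, which is generated by the image of $1 \in R$ as an $R/\fm$-module. Nakayama's lemma applied to the finitely generated $R$-module $B/\Theta^\#(R)$ then forces $\Theta^\#$ to be surjective, completing the proof.

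The ingredients are already in place from Proposition~\ref{prop:resolution} and Fact~\ref{fact:crystallinedeformation}. The only subtlety is combining both halves of Proposition~\ref{prop:resolution} in tandem — the generic-fiber isomorphism~(\ref{resolutionIII}) and the closed-fiber bound~(\ref{resolutionII}) — together with properness to force finiteness, after which Nakayama finishes the job. I do not expect a serious technical obstacle once the Kisin-variety hypothesis is granted; the upper semi-continuity step is the one place where the argument genuinely uses more than pure commutative algebra.
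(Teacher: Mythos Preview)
Your proof is correct and follows essentially the same approach as the paper's. The only organizational differences are that the paper establishes finiteness by checking each $\Theta_n : X^{\mu}_{\rhobar,n} \to \Spec R/\fm^n$ is finite (rather than invoking upper semi-continuity of fiber dimension), and it phrases the Nakayama step as ``the Kisin variety is trivial, hence $\Theta$ is injective on tangent spaces at the closed point'' before concluding surjectivity of $R \to S$; these are equivalent reformulations of your argument that $B/\fm B = \F$.
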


\begin{proof}
 Since $\Theta: X^{\leq \mu}_{\rhobar} \to \Spec R^{\mu,\square}_{\rhobar}$ is constructed as the limit of $\Theta_n : X^{\leq \mu}_{\rhobar, n} \to \Spec R^{\mu,\square}_{\rhobar} / \fm^n$ (where $\fm$ is the maximal ideal of $R^{\mu,\square}_{\rhobar}$), to check it is finite it suffices to check that each $\Theta_n$ is finite.  But $R^{\mu,\square}_{\rhobar} / \fm^n$ is a local Artin ring, so has a unique geometric point.  As the Kisin variety is trivial, the fiber is a single point, and so $\Theta_n$ is quasi-finite and hence finite as $\Theta$ is projective by Proposition~\ref{prop:resolution}. 
 Thus $X^{\leq \mu}_{\rhobar} = \Spec S$ for a local ring $S$ that is finite over $R^{\mu,\square}_{\rhobar}$. 

As the Kisin variety is trivial, $\Theta$ induces an injection on tangent spaces at the closed point.  Using the finiteness of $S$ and Nakayama's lemma, we see $\Theta$ is a closed immersion and hence $S$ is a quotient of $R_{\rhobar}^{\mu,\square}$.  But since $R_{\rhobar}^{\mu,\square}$ injects into $R_{\rhobar}^{\mu,\square}[\frac{1}{p}]$ (as $R_{\rhobar}^{\mu,\square}$ is $\Lambda$-flat) and $\Theta[\frac{1}{p}]$ is an isomorphism (by Proposition~\ref{prop:resolution}\eqref{resolutionIII}), it follows that $R_{\rhobar}^{\mu,\square} \to S$ is injective.  This completes the proof.
\end{proof}

\begin{cor} \label{cor:forgetkisin}
Fix a  $G$-Kisin module $\fPbar$ in $M_{G,\F}(\rhobar|_{\Gamma_\infty})$ of type $\leq \mu$.  If the Kisin variety is trivial
 then
$R_{\rhobar}^{\mu,\square} = R_{\rhobar,\fPbar} ^{\mu, \square,\pflat}$.  
\end{cor}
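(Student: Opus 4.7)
The plan is to combine Proposition~\ref{prop:kisinresiso} with Proposition~\ref{prop:resolution} at the unique closed point of $X^\mu_{\rhobar}$. First, the triviality of the Kisin variety together with Proposition~\ref{prop:kisinresiso} gives that $\Theta : X^\mu_{\rhobar} \to \Spec R^{\mu,\square}_{\rhobar}$ is an isomorphism. Combined with \eqref{eq:fiberresolution}, the special fiber of $X^\mu_{\rhobar}$ consists of a single $\F$-point $x$, and $X^\mu_{\rhobar}$ is the formal spectrum of the complete local ring $\widehat{\cO}^\mu_x$, so $\widehat{\cO}^\mu_x \simeq R^{\mu,\square}_{\rhobar}$.

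Next, I would identify the $G$-Kisin lattice $\fP$ attached to $x$ by Proposition~\ref{prop:resolution}\eqref{resolutionI} with the fixed $\fPbar$. Both are $G$-Kisin lattices of type $\leq \mu$ inside $M_{G,\F}(\rhobar|_{\Gamma_\infty})$, so they define $\F$-points of the Kisin variety $Y^{\leq \mu}_{\cP}$; since this variety is trivial (in particular a single reduced point), we have $\fP \simeq \fPbar$ as $G$-Kisin modules. Consequently, Proposition~\ref{prop:resolution}\eqref{resolutionI} supplies a closed immersion
\[
\Spf \widehat{\cO}^\mu_x \into D^{\mu,\square}_{\rhobar,\fPbar}
\]
which is an isomorphism modulo $p$-power torsion. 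In terms of rings, this amounts to a surjection $R^{\mu,\square}_{\rhobar,\fPbar} \twoheadrightarrow \widehat{\cO}^\mu_x \simeq R^{\mu,\square}_{\rhobar}$ whose kernel is annihilated by a power of $p$ after inverting $p$, i.e.\ lies in the $p$-power torsion ideal.

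Finally, by Fact~\ref{fact:crystallinedeformation}, the target $R^{\mu,\square}_{\rhobar}$ is $\Lambda$-flat, so the surjection above factors through the flat closure; conversely, any $p$-power torsion in $R^{\mu,\square}_{\rhobar,\fPbar}$ lies in the kernel. It follows that the surjection identifies $R^{\mu,\square}_{\rhobar,\fPbar,\pflat} \xrightarrow{\sim} R^{\mu,\square}_{\rhobar}$.

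The one subtlety worth highlighting is the identification of $\fP$ with $\fPbar$ up to isomorphism, which requires the triviality of the Kisin variety and not just of its underlying set; this is precisely why Corollary~\ref{cor:kisinvspec} phrases the hypothesis as $Y^{\leq \mu}_{\cP} = \Spec(\F)$ rather than merely having a unique $\F$-point. Beyond that point, the argument is a bookkeeping exercise reducing everything to the Kisin resolution at the closed point together with flatness of $R^{\mu,\square}_{\rhobar}$.
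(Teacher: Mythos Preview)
Your proof is correct and follows essentially the same approach as the paper's: invoke Proposition~\ref{prop:kisinresiso} to identify $X^\mu_{\rhobar}$ with $\Spec R^{\mu,\square}_{\rhobar}$, then use Proposition~\ref{prop:resolution}\eqref{resolutionI} together with $\Lambda$-flatness of $R^{\mu,\square}_{\rhobar}$ (Fact~\ref{fact:crystallinedeformation}) to identify the complete local ring at the closed point with $R^{\mu,\square,\pflat}_{\rhobar,\fPbar}$. One small expository correction: the identification $\fP \simeq \fPbar$ only needs that $Y^{\leq\mu}_{\cP}(\F)$ is a single point, not the full scheme-theoretic triviality; the latter is used earlier (and more essentially) inside Proposition~\ref{prop:kisinresiso} to get injectivity on tangent spaces.
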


\begin{proof}
Proposition~\ref{prop:kisinresiso} shows that $X^{\mu}_{\rhobar}$ is represented by local ring; it is $\Lambda$-flat by Fact~\ref{fact:crystallinedeformation}.  
Using Proposition~\ref{prop:resolution}\eqref{resolutionI}, we see it is isomorphic to $\Spf R^{\mu,\square,\pflat}_{\rhobar,\fPbar}$.  
  \end{proof}
  
  \begin{remark} 
We know that the Kisin variety is trivial when $G^{\der}$ is simply connected and $\mu$ is Fontaine--Laffaille, or when $\mu$ is strongly Fontaine--Laffaille (Corollary~\ref{cor:kisinvspec}).  
We expect the Kisin variety to be trivial for most $\rhobar$ when $\mu$ is Fontaine--Laffaille, 
but we give an example below where the Kisin variety is non-trivial for a particular $\rhobar$ valued in the non-simply connected group $\PGL_2$ and a particular $\mu$ which is Fontaine-Laffaille but not strongly Fontaine-Laffaille.  
\end{remark}
  
\begin{example} \label{ex:pgl2}
Let $K = \Qp$ with $p \neq 2$, and consider the projection map $\mathrm{pr}:\GL_2 \rightarrow \mathrm{PGL}_2$.   Let $\overline{\omega}$ denote mod $p$ cyclotomic character.   Consider $\rhobar = \overline{\omega}^{\frac{p-1}{2}} \oplus 1$ and $\rhobar' = \mathrm{pr}(\rhobar)$, and let $\cP' = M_{\mathrm{PGL}_2, \F}(\rhobar')$.  We denote the cocharacter of $\GL_2$ sending $u$ to the diagonal matrix with entries $u^a$ and $u^b$ by $(a,b)$.  We 
abuse notation and use the same notation to denote the composition of this cocharacter with $\mathrm{pr}$.

 We claim that for $\mu = (\frac{p-1}{2}, 0)$, the $\mathrm{PGL}_2$ Kisin variety $Y^{\leq \mu}_{\cP'}$ has at least 2 closed points.   Note that $\mu$ is Fontaine--Laffaille but not strongly Fontaine--Laffaille, and $\PGL_2$ is not simply connected.  
 We start with the rank 2 Kisin module $\fM_1$ over $\bF$ with Frobenius given by $u^{\mu}$.   Clearly $\mathrm{pr}(\fM_1) = \fP'_1$ is a lattice in $\cP'$ with type $\leq \mu$.  We can define a second lattice $\fM_2$ in $\fM_1[1/u]$
 which scales the second basis vector by $u$; Frobenius is then given by $u^{(\frac{p-1}{2}, p-1)}$. 
 The pushout $\mathrm{pr}(\fM_2) = \fP'_2$ is a lattice in $\cP'$ different from $\fP_1'$ because $u^{(0,1)} \notin L^+ \mathrm{PGL}_2(\F)$.  Finally, the image of  $u^{(\frac{p-1}{2}, p-1)}$ in $\Gr_{\mathrm{PGL}_2}(\F)$ is the same as the image of $u^{(\frac{p-1}{2},0)}$ and so $\fP'_2$ has type $\leq \mu$.   
\end{example}

\subsection{Forgetting Galois Representations} \label{ss:forgetgalois}
We next study the forgetful map $D^{\mu,\beta,\square}_{ \rhobar,\fPbar} \to D^{\leq \mu,\beta,\square}_{\fPbar}$.  To do so, we will use the theory of $(\varphi,\Gammahat)$-modules with $G$-structure developed in \cite[\S4.2]{levin15}, extending Liu's theory of $(\varphi,\widehat{G})$-modules \cite{liu10}.  We briefly recall a concrete version here.

Let $\cO_{\Kbar}^\flat$ be the perfection of $\cO_{\Kbar} / (p)$ and $\Ainfbasic = W(\cO_{\Kbar}^\flat)$. A fixed compatible set $\{p^{1/p}, p^{1/p^2}, \ldots \}$ of $p$-power roots of $p$ defines an element $\pi \in \cO_{\Kbar}^\flat$.  Let $[\pi] \in \Ainfbasic$ be the Teichmuller lift of $\pi$.  There are embeddings $\fS \into \Ainfbasic$ and $\cO_{\cE} \into \Ainfbasic$ defined by sending $u \in \fS$ to $[\pi]$; they are compatible with Frobenius.  In this section only, let $\ft \in \Ainfbasic$ denote the period of $\fS(1)$, which satisfies $\varphi(\ft) = c_0^{-1} E(u) \ft$.

The theory of $(\varphi,\widehat{G})$-modules uses a ring $\widehat{R} \subset \Ainfbasic$ which contains $\fS$.  It is defined on page 5 of \cite{liu10}; we do not need detailed information about it.  For a $\Z_p$-algebra $A$, define $\widehat{R}_A := \widehat{R} \tensor{\Z_p} A$ and $\Ainf{A} := \Ainfbasic \tensor{\Z_p} A$.

As in \S\ref{notation:galoisgps}, let $K_\infty = K(p^{1/p}, p^{1/p^2}, \ldots )$ and $\Gamma_\infty = \Gal(\overline{K} / K_\infty)$.  Furthermore, set $K_{p^\infty} = \cup_n K(\zeta_{p^n})$ where $\zeta_{p^n}$ is a primitive $p^n$-th root of unity.  Let $K_{\infty,p^\infty}$ denote the compositum of $K_\infty$ and $K_{p^\infty}$.  It is Galois over $K$; set $\Gammahat := \Gal(K_{\infty, p^\infty} /K)$ and $\Gammahat_\infty := \Gal(K_{\infty,p^\infty}/K_\infty)$.

Fix a coefficient ring $A \in \widehat{\cC}_{\Lambda}$. 

\begin{defn}
Define $G_A(u^n)$ to be the kernel of the reduction map
\[
G(\Ainf{A}) \to G(\Ainf{A}/ (\varphi(\ft) u ^n)).
\]
\end{defn}

For our purposes, we do not need the general definition of a $(\varphi,\Gammahat)$-modules with $G$-structure \cite[Definition 4.2.6]{levin15}, and instead use the following description \cite[Proposition 4.3.10]{levin15}:

\begin{fact} \label{fact:crystallineGhat}
A crystalline $(\varphi,\Gammahat)$-module with $G$-structure and coefficients in $A$ is equivalent to a $G$-Kisin module $(\fP,\phi)$ with coefficients in $A$ 
 with a ``crystalline $\Gammahat$-structure''.  If we fix a trivialization $\beta$ of $\fP$, trivialize $\varphi^*(\fP)$ using $1 \tensor{\varphi } \beta$, and let $C_{\fP, \beta} \in G(\fS_{A}[ 1 / E(u)])$ correspond to $\varphi$, a crystalline $\Gammahat$-structure structure is a continuous map
\[
B_\bullet : \Gammahat \to G(\widehat{R}_A)
\] 
that satisfies the following conditions:
\begin{enumerate}[(i)]
\item $C_{\fP, \beta} \varphi(B_\gamma) = B_\gamma \cdot \gamma(C_{\fP, \beta})$ in $G(\Ainf{A})$ for all $\gamma \in \Gammahat$;
\item  $B_\gamma = \operatorname{Id}$ for all $\gamma \in \Gammahat_\infty$;
\item  $B_\gamma \in G_A(u^p)$ for all $\gamma \in \Gammahat$;
\item  $B_{\gamma \gamma'} = B_\gamma \cdot \gamma(B_{\gamma'})$ for all $\gamma, \gamma' \in \Gammahat$.
\end{enumerate}
\end{fact}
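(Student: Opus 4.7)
The plan is to realize the claimed equivalence as a Tannakian upgrade of Liu's theory of $(\varphi, \widehat{G})$-modules for $\GL_n$. Recall that in Liu's setting, a crystalline $(\varphi, \Gammahat)$-module with coefficients in $A$ is a Kisin module $\fM$ over $\fS_A$ together with a continuous $\widehat{R}_A$-semilinear action of $\Gammahat$ on $\widehat{M} := \widehat{R}_A \otimes_{\fS_A} \fM$ that commutes with $\varphi$, restricts to the trivial action on $\Gammahat_\infty$, and satisfies the integrality condition $(\gamma - 1)(\widehat{M}) \subset \varphi(\ft) u^p \widehat{M}$ for all $\gamma$. A crystalline $(\varphi, \Gammahat)$-module with $G$-structure on a $G$-Kisin module $\fP$ is then the data, for every $V \in \fRep_\Lambda(G)$, of such a $\Gammahat$-structure on $\fP(V)$, compatibly with tensor products and short exact sequences.

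First I would fix the trivialization $\beta$ of $\fP$, which simultaneously trivializes every $\fP(V) \otimes_{\fS_A} \widehat{R}_A$ via the Tannakian formalism. On each trivialized module, the semilinear $\Gammahat$-action differs from the canonical action by an $\widehat{R}_A$-linear automorphism $B_{\gamma, V}$. Compatibility with tensor products and exact sequences packages the family $\{B_{\gamma, V}\}_V$ into a single element $B_\gamma \in G(\widehat{R}_A)$ by Example~\ref{ex:tannakian}\eqref{firsttannakian}. I would then unwind Liu's three conditions: commutation with Frobenius yields $C_{\fP, \beta}\, \varphi(B_\gamma) = B_\gamma \cdot \gamma(C_{\fP, \beta})$; triviality on $\Gammahat_\infty$ yields $B_\gamma = \operatorname{Id}$ for $\gamma \in \Gammahat_\infty$; the group-action axiom forces the cocycle relation $B_{\gamma \gamma'} = B_\gamma \cdot \gamma(B_{\gamma'})$; and the integrality condition becomes $B_\gamma \in G_A(u^p)$. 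For the reverse direction, one assembles a fiber functor from $(\fP, B_\bullet)$ by twisting the canonical semilinear action on each $\fP(V) \otimes_{\fS_A} \widehat{R}_A$ by $B_{\gamma, V}$; the four conditions then imply Liu's axioms on every $V$, and Tannakian naturality supplies the compatibilities with tensor products and exact sequences.

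The main obstacle will be the integrality step: verifying that the single $G$-theoretic condition $B_\gamma \in G_A(u^p)$ captures the module-theoretic integrality $(\gamma - 1)(\fP(V) \otimes_{\fS_A} \widehat{R}_A) \subset \varphi(\ft) u^p \cdot (\fP(V) \otimes_{\fS_A} \widehat{R}_A)$ uniformly in $V \in \fRep_\Lambda(G)$. I would handle this by choosing a faithful representation $V_0$ together with a closed embedding $G \hookrightarrow \GL(V_0)$, reducing the integrality on $V_0$ to Liu's classical result, and observing that $G_A(u^p)$ equals the scheme-theoretic intersection of $G(\widehat{R}_A)$ with the kernel of $\GL(V_0)(\widehat{R}_A) \to \GL(V_0)(\widehat{R}_A / \varphi(\ft) u^p)$. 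The Tannakian equivalence then forces the integrality on every other $V$ automatically, since any $V$ is a subquotient of tensor constructions built from $V_0$ and $V_0^\vee$, and the $G_A(u^p)$-condition propagates through these constructions.
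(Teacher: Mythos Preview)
The paper does not prove this statement; it is recorded as a Fact and cited directly from \cite[Proposition 4.3.10]{levin15}, with the general definition deferred to \cite[Definition 4.2.6]{levin15}. So there is no proof in the paper to compare your proposal against.

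That said, your outline is a plausible reconstruction of how such a result is established: one takes Liu's axioms for $\GL_n$, packages them Tannakianly, and then reads off the four group-theoretic conditions by trivializing. Your identification of the integrality step as the nontrivial point is correct, and your proposed reduction to a single faithful representation is the standard move. One small point to watch: $G_A(u^p)$ is defined in the paper as the kernel of $G(\Ainf{A}) \to G(\Ainf{A}/(\varphi(\ft)u^p))$, not as a subgroup of $G(\widehat{R}_A)$, so condition (iii) is implicitly about the image of $B_\gamma$ under the inclusion $G(\widehat{R}_A) \hookrightarrow G(\Ainf{A})$; your argument should keep track of this when matching with Liu's condition $(\gamma-1)(\widehat{M}) \subset \varphi(\ft)u^p \cdot (\Ainf{A} \otimes_{\fS_A} \fM)$ rather than the analogous containment inside $\widehat{R}_A \otimes_{\fS_A} \fM$.
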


\begin{fact} \label{fact:crystallineequiv} 
Let $A$ be a finite $\Lambda$-algebra which is either flat or Artinian.  There is a functor $\widehat{T}_{G,A}$ from the category of crystalline $(\varphi,\Gammahat)$-modules with $G$-structure with coefficients in $A$ to $\GRep_A(\Gamma_K)$.  It is compatible with base change along finite flat maps.  For a crystalline $(\varphi,\Gammahat)$-module $\widehat{\fP}$ with $G$-structure and underlying $G$-Kisin module $\fP$, there is a natural isomorphism
\[
\widetilde{T}_{G,A}(\fP) \simeq \widehat{T}_{G,A}(\widehat{\fP}) |_{\Gamma_\infty}
\]
where $\widetilde{T}_{G,A}$ is the functor from $G$-Kisin modules to representations of $\Gamma_\infty$ in Definition~\ref{defn:tildeT}.

\end{fact}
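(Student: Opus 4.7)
My approach would be to reduce to Liu's theory of $(\varphi,\widehat{G})$-modules for $\GL_n$ via the Tannakian formalism. Given a crystalline $(\varphi,\Gammahat)$-module with $G$-structure $\widehat{\fP}$, for each $V \in \fRep_\Lambda(G)$ I would push forward along $G \to \GL(V)$ to obtain a classical $(\varphi,\widehat{G})$-module whose underlying Kisin module is $\fP(V)$ and whose $\Gammahat$-cocycle is given by the image of $B_\gamma$ under $V$. The cocycle identity, the vanishing on $\Gammahat_\infty$, and the $G_A(u^p)$-condition all transport through $V$ to give Liu's classical crystalline conditions on the pushout; the semi-linear compatibility with Frobenius is a direct unwinding of the condition $C_{\fP,\beta} \varphi(B_\gamma) = B_\gamma\cdot\gamma(C_{\fP,\beta})$ applied to $V$.

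Next, I would apply Liu's functor to each pushout to obtain a continuous $\Gamma_K$-representation on a projective $A$-module $\widehat{T}_V(\widehat{\fP})$. The assignment $V \mapsto \widehat{T}_V(\widehat{\fP})$ defines a faithful, tensor-exact fiber functor $\fRep_\Lambda(G) \to \Proj_A$ equipped with a compatible $\Gamma_K$-action. Compatibility with tensor products, exact sequences, and the unit object would follow from the fact that the Tannakian description (Example~\ref{ex:tannakian}(a)) expresses $B_\gamma$ as a single element of $G(\widehat{R}_A)$ whose action is by construction compatible with all these structures, combined with the tensor-exactness of Liu's $\widehat{T}$. By the Tannakian dictionary, this fiber functor corresponds to an object of $\GRep_A(\Gamma_K)$, which I would then take as the definition of $\widehat{T}_{G,A}(\widehat{\fP})$.

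To establish the natural isomorphism $\widetilde{T}_{G,A}(\fP) \simeq \widehat{T}_{G,A}(\widehat{\fP})|_{\Gamma_\infty}$, I would invoke the analogous classical statement for each $V \in \fRep_\Lambda(G)$: Liu's functor restricted to $\Gamma_\infty$ agrees canonically with the \'etale-$\varphi$-module-to-$\Gamma_\infty$-representation functor applied to the underlying Kisin module $\fP(V)$, which is precisely $\widetilde{T}_{G,A}(\fP)$ evaluated on $V$. Naturality in $V$ then upgrades this to an isomorphism of fiber functors, and hence of the associated $G$-bundles with $\Gamma_\infty$-action. Base change compatibility along a finite flat map $A \to A'$ reduces in the same way to Liu's base change property applied to each $V$, since Tannakian reconstruction commutes with base change on projective modules.

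The main obstacle will be the careful verification that the pushout of the $G$-structured crystalline $\Gammahat$-datum satisfies Liu's classical crystalline conditions for every $V$, in a manner that is natural in $V$ and compatible with tensor products and exact sequences — in particular that the specific subgroup $G_A(u^p)$ appearing in condition (iii) matches, under every representation $V$, the integrality/approximation requirement that Liu imposes on his classical cocycles. This is essentially a Tannakian bookkeeping exercise; once it is in place, the remainder reduces cleanly to Liu's theorems, which accounts for the statement's status here as a fact extracted from \cite{levin15}.
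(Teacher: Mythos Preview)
Your proposal is correct and is precisely the Tannakian reduction to Liu's classical theory that underlies the construction in \cite[\S4.2]{levin15}; the paper itself does not reprove this but simply cites that reference, so your sketch is in fact more detailed than what appears here. The only thing to add is that the verification you flag as the ``main obstacle'' (that pushing out along $V$ lands in Liu's classical crystalline category, tensor-exactly) is exactly what is carried out in \emph{loc.\ cit.}, so there is no gap.
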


\begin{proof}
The functor $\widehat{T}_{G,A}$ is discussed in \cite[\S4.2]{levin15}. 
\end{proof}

\begin{remark}
As the name suggests, crystalline $(\varphi,\Gammahat)$-modules with $G$-structure are related to crystalline representations.
Let $L'/L$ be a finite extension with ring of integer $\cO_{L'}$.
Then $\widehat{T}_{G,\cO_{L'}}$ gives an equivalence of categories between the category of crystalline $(\varphi,\Gammahat)$-modules with $G$-structure and coefficients in $\cO_{L'}$ and the category of crystalline representations in $\GRep_{\cO_{L'}}(\Gamma_K)$.  This is \cite[Proposition 4.3.5]{levin15}.
\end{remark}

We say that $C \in G(\fS_A[1/E(u)])$ has height in $[a,b]$ with respect to the adjoint representation if 
\[
E(u)^{a} \Lie(G) \tensor{\Lambda} \fS_A \subset \Ad_G(C)  \left( \Lie(G)\tensor{\Lambda} \fS_A \right) \subset E(u)^{b} \Lie(G) \tensor{\Lambda} \fS_A.
\]

\begin{lem} \label{lem:amplification}
Let $A$ be a $p$-adically complete $\Lambda$-algebra such that $p A =0$, and let $C \in G(\fS_A [ 1 / E(u)]) $ have height in $[-h,h]$ with respect to the adjoint representation.  If $h < p-1$, then for any $Y \in G_A(u^n)$ with $n \geq p$, we have 
$$ \varphi(C) \varphi(Y) \varphi(C)^{-1} \in G_A(u^{n+1}).$$
\end{lem}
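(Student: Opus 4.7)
The plan is to reduce the computation to the Lie algebra of $G$ via the Tannakian formalism, so that the adjoint-representation height hypothesis on $C$ applies directly. Since $G$ is smooth over $\Lambda$ and the ideal $I := \varphi(\ft) u^n \Ainf{A}/\varphi(\ft) u^{n+1} \Ainf{A}$ is square-zero in $R := \Ainf{A}/\varphi(\ft) u^{n+1} \Ainf{A}$ (because $\varphi(\ft)^2 u^{2n} \subset \varphi(\ft) u^{n+1} \Ainf{A}$ when $n \geq 1$), the kernel of $G(R) \to G(R/I)$ is canonically identified with $\Lie G \otimes_\Lambda I$. Thus, for $Y \in G_A(u^n)$, I obtain some $X^{\mathrm{Lie}} \in \Lie G \otimes \varphi(\ft) u^n \Ainf{A}$, well-defined modulo $\varphi(\ft) u^{n+1}$, such that $Y_V \equiv 1 + X^{\mathrm{Lie}}_V$ modulo $\varphi(\ft) u^{n+1} \End(V) \otimes \Ainf{A}$ for every $V \in \fRep_\Lambda(G)$, where $X^{\mathrm{Lie}}_V$ denotes the image of $X^{\mathrm{Lie}}$ under $\Lie G \to \End(V)$.

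Next I would apply $\varphi$. From $\varphi(\ft) = c_0^{-1} E(u) \ft$ we get $\varphi^2(\ft) = c_0^{-1} \varphi(E(u)) \varphi(\ft)$; the hypothesis $pA = 0$ kills the $-p$ in $\varphi(E(u)) = u^p - p$, so $\varphi(E(u)) = u^p$ in $\Ainf{A}$ and $\varphi$ preserves the ideal $\varphi(\ft) u^{n+1} \Ainf{A}$. Reducing modulo this ideal gives $\varphi(Y_V) - 1 \equiv \varphi(X^{\mathrm{Lie}}_V)$, with $\varphi(X^{\mathrm{Lie}}) \in \Lie G \otimes \varphi^2(\ft) u^{pn} \Ainf{A}$. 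Since $\varphi(X^{\mathrm{Lie}})$ lies in the image of $\Lie G \to \End(V)$, conjugation by $\varphi(C_V)$ restricted to this image agrees with $\Ad_G(\varphi(C))$, and applying $\varphi$ to the adjoint height hypothesis yields
\[ \Ad_G(\varphi(C)) \varphi(X^{\mathrm{Lie}}) \in c_0^{-1} \varphi(E(u))^{1-h} \varphi(\ft) u^{pn} \Lie G \otimes \Ainf{A} = c_0^{-1} \varphi(\ft) u^{p(n+1-h)} \Lie G \otimes \Ainf{A}, \]
where the second equality again uses $\varphi(E(u)) = u^p$ in $\Ainf{A}$.

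It remains to verify $p(n+1-h) \geq n+1$, which rearranges to $(p-1)(n+1) \geq ph$, i.e., $n + 1 \geq h + h/(p-1)$. Since $h < p - 1$ gives $h/(p-1) < 1$, this is implied by $n \geq h$, which follows from $n \geq p > h$. Therefore $\varphi(C)\varphi(Y)\varphi(C)^{-1} - 1 \equiv 0$ modulo $\varphi(\ft) u^{n+1} \End(V) \otimes \Ainf{A}$ for every $V$, giving $\varphi(C)\varphi(Y)\varphi(C)^{-1} \in G_A(u^{n+1})$. The main technical point is the Tannakian/Lie-algebra reduction: conjugation by $\varphi(C)$ inside $\End(V)$ could a priori introduce poles of order proportional to $\dim V$, but restricting attention to the image of $\Lie G$ lets us use only the adjoint-representation height bound $h$ supplied by the hypothesis.
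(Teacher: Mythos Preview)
Your linearization idea is natural, but there is a genuine gap in the step where you pass from the congruence $\varphi(Y_V) \equiv 1 + \varphi(X^{\mathrm{Lie}}_V) \pmod{\varphi(\ft)u^{n+1}}$ to the analogous congruence for $\varphi(C_V)\varphi(Y_V)\varphi(C_V)^{-1}$. Conjugation by $\varphi(C_V)$ does \emph{not} preserve the ideal $\varphi(\ft)u^{n+1}\End(V)\otimes\Ainf{A}$: since $pA=0$ we have $E(u)=u$, so $\varphi(C_V)$ and its inverse have poles in $u$ whose order is governed by the height of $C$ in the representation $V$, not by the adjoint height $h$. Concretely, writing $Y_V = 1 + X^{\mathrm{Lie}}_V + E'_V$ with $E'_V\in\varphi(\ft)u^{n+1}\End(V)\otimes\Ainf{A}$, the term $\varphi(C_V)\varphi(E'_V)\varphi(C_V)^{-1}$ can acquire poles of arbitrarily high order as $V$ varies, and your argument gives no control over it. Your closing remark (``restricting attention to the image of $\Lie G$'') handles $X^{\mathrm{Lie}}_V$ but not $E'_V$, which lives in $\End(V)$ rather than in $\Lie G$. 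Equivalently: you cannot simply compute in $G(R)$ for $R=\Ainf{A}/(\varphi(\ft)u^{n+1})$, because $\varphi(C)$ does not lift to $G(R)$ (the element $u$ is a zero-divisor there).

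The paper's proof avoids this by working with the coordinate ring $\cO_G$ and the full $I_e$-adic filtration rather than only its first graded piece. The key point is that conjugation by $C$ sends $I_e$ into $\sum_{j\ge 1} I_e^j\otimes E(u)^{-hj}\fS_A$ (the pole order on the $j$-th graded piece $\Sym^j(\Lie G)^\vee$ is $hj$, using only the adjoint height), while $\varphi(Y)$ sends $I_e^j$ into $(\varphi^2(\ft)u^{pn})^j$. Thus the $j$-th order poles are compensated by $j$-th powers of the vanishing, and one checks $u^{n+1}\varphi(\ft)$ divides $\varphi^2(\ft)^j u^{pnj}\varphi(E(u))^{-hj}$ for every $j\ge 1$. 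Your first-order argument is exactly the case $j=1$ of this; the error term $E'_V$ is precisely the contribution of $j\ge 2$, and controlling it requires the same $I_e$-adic bookkeeping the paper carries out.
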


\begin{proof}
Let $\cO_G$ denote the coordinate ring of $G$, and $I_e$ the ideal defining the identity.  We have that $\cO_G/I_e = \Lambda$ and $I_e / I_e^2 \simeq (\Lie G)^\vee$. For a $\Lambda$-algebra $B$, we know that $G(B)$ can be identified with maps of rings from $\cO_G$ to $B$; the identity of $G(B)$ is the natural map $\cO_G \to \cO_G/ I_e = \Lambda \to B$.
 Thus we can identify $G_A(u^n)$ with
\[
\{ f \in \Hom_\Lambda(\cO_G,\Ainf{A}) \,| \, f(I_e) \subset (\varphi(\ft) u^n ) \}.
\]
Notice that $\varphi(Y)$ is the composition of $Y$ with the endomorphism $\varphi$ of $\Ainf{A}$, so in particular we conclude that $\varphi(Y)(I_e) \subset (\varphi(\varphi(\ft) u^n)) = (\varphi(\varphi(\ft)) u^{pn})$.

Now conjugation by $C$ induces an automorphism of $G_{\fS_A[1/E(u)]}$, and hence an automorphism $$\Ad_{\cO_G}(C)^* : \cO_G \tensor{\Lambda} \fS_A[1/E(u)] \to \cO_G \tensor{\Lambda} \fS_A[1/E(u)].$$
Conjugation by $\varphi(C)$ likewise induces an automorphism, and is given by $(1 \otimes \varphi) \circ \Ad_{\cO_G}(C)^*$.  For $x \in I_e \otimes 1 $, we claim that
\begin{equation} \label{eq:conjden}
(\Ad_{\cO_G}(C)^*) (x) \in \sum_{j \geq 1} I_e^j \tensor{\Lambda} E(u)^{-hj}\fS_A.
\end{equation}
  By successive approximation, we can just study the induced automorphisms of the graded pieces $I_e^j / I_e^{j+1} \tensor{\Lambda} \fS_A[1/E(u)] \simeq \Sym^j(\Lie(G)^\vee) \tensor{\Lambda} \fS_A[1/E(u)]$.  Using the height condition,  the image of $\Sym^j(\Lie(G)^\vee) \tensor{\Lambda} \fS_A[1/E(u)]$ lies in $E(u) ^{-h j} \Sym^j(\Lie(G)^\vee) \tensor{\Lambda} \fS_A[1/E(u)]$ as desired.

Now viewing $\varphi(C) \varphi(Y) \varphi(C)^{-1}$ as a homomorphism from $\cO_G \tensor{\Lambda} \fS_A[1/E(u)]  $ to $\Ainf{A} \tensor{\Lambda} \fS_A[1/E(u)] $, observe that for $x \in I_e \otimes 1$
\[
(\varphi(C) \varphi(Y) \varphi(C)^{-1}) (x) = (\varphi(Y) \otimes 1) \circ ( 1 \otimes \varphi) \circ (\Ad_{\cO_G}(C)^*) (x).
\]
Using \eqref{eq:conjden}, as $\varphi(Y)(I_e^j) \subset \varphi(\varphi(\ft) )^j u ^{pn j}$ we see that
\[
(\varphi(C) \varphi(Y) \varphi(C)^{-1}) (x)  \in \sum_{j \geq 1} \varphi(\varphi(\ft) )^j u ^{pn j} \varphi(E(u))^{-h j}  \Ainf{A}.
\]
So to check that $\varphi(C) \varphi(Y) \varphi(C)^{-1} \in G_A(u^{n+1})$, it suffices to check that $u^{n+1} \varphi(\ft)$ divides $ \alpha_j := \varphi(\varphi(\ft) )^j u ^{pn j} \varphi(E(u))^{-h j}$ in $\Ainf{A}$ for any $j \geq 1$.  As $p A =0$ we have that $E(u) = u$, so using that $\ft E(u)$ divides $\varphi(\ft)$ in $\Ainfbasic$ (since $\varphi(\ft) = c_0^{-1} E(u) \ft$) we see that $\alpha_j$ is a multiple of  $\varphi(\ft) u^{pj} u^{pn j} u^{-p h j}$.  
But when $h <p-1$ and $n \geq p$, $(p-1)n \geq p (p-1) > p (h-1)$ and hence $(p + pn - p h)j > n$ as desired.
\end{proof}

\begin{lem} \label{lem:uniquecrystalline}
Suppose $A$ is a $p$-adically complete $\Lambda$-algebra such that $p A=0$.  For $\fP \in D_{\overline{\fP}}^{\leq \mu,\beta} (A)$ with $\mu$ in the Fontaine-Laffaille range, there is at most one crystalline $\widehat{\Gamma}$-structure on $\fP$.  
\end{lem}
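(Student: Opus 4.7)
The plan is to fix a trivialization of $\fP$ so that $\varphi_{\fP}$ is represented by $C \in G(\fS_A[1/E(u)])$, and to let $B_\bullet, B'_\bullet : \Gammahat \to G(\widehat{R}_A)$ be two crystalline $\Gammahat$-structures on $(\fP, \beta)$. I will show that their ratio $Y_\gamma := B'_\gamma B_\gamma^{-1}$ is trivial for every $\gamma \in \Gammahat$, which suffices. Condition (iii) in the definition of a crystalline $\Gammahat$-structure forces $B_\gamma, B'_\gamma \in G_A(u^p)$, and since $G_A(u^p)$ is a subgroup (the kernel of the reduction $G(\Ainf{A}) \to G(\Ainf{A}/(\varphi(\ft) u^p))$) we get $Y_\gamma \in G_A(u^p)$. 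Dividing the two cocycle equations $C \varphi(B_\gamma) = B_\gamma \gamma(C)$ and $C \varphi(B'_\gamma) = B'_\gamma \gamma(C)$ to eliminate $\gamma(C)$ yields the identity $Y_\gamma C = C \varphi(Y_\gamma)$. Applying $\varphi$ once and rearranging produces the key relation
\[
\varphi(Y_\gamma) \;=\; \varphi(C)\,\varphi^2(Y_\gamma)\,\varphi(C)^{-1}.
\]

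The next step is a bootstrap using Lemma~\ref{lem:amplification}. Its hypothesis on the adjoint height of $C$ is supplied by Lemma~\ref{lem:adjheight} together with the Fontaine-Laffaille condition $h_\mu < p-1$. Starting from $\varphi(Y_\gamma) \in G_A(u^{p^2}) \subset G_A(u^p)$ (using that $\varphi(\ft) u^p$ divides $\varphi(\varphi(\ft) u^p)$ in $\Ainf{A}$), I will induct on $n$: if $\varphi(Y_\gamma) \in G_A(u^n)$ for some $n \geq p$, then Lemma~\ref{lem:amplification} applied to $\varphi(Y_\gamma)$ gives $\varphi(C)\,\varphi^2(Y_\gamma)\,\varphi(C)^{-1} \in G_A(u^{n+1})$, and the displayed identity identifies this quantity with $\varphi(Y_\gamma)$ itself. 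Hence $\varphi(Y_\gamma) \in G_A(u^{n+1})$, and iteration yields $\varphi(Y_\gamma) \in \bigcap_{n \geq p} G_A(u^n)$.

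To finish, I will observe that this intersection is trivial: since $pA=0$ we have $\Ainf{A} = \cO_{\Kbar}^{\flat}\tensor{\F} A$, and the $u$-adic separatedness of $\cO_{\Kbar}^{\flat}$ gives $\bigcap_n (\varphi(\ft) u^n) = 0$ in $\Ainf{A}$. Thus $\varphi(Y_\gamma) = \mathrm{Id}$, and injectivity of Frobenius on $G(\Ainf{A})$ (which follows from the perfectness of $\cO_{\Kbar}^{\flat}$) forces $Y_\gamma = \mathrm{Id}$, so $B_\bullet = B'_\bullet$. The heart of the argument is the amplification bootstrap; the principal subtlety is that Lemma~\ref{lem:amplification} is phrased in terms of $\varphi(C)$ rather than $C$, which is precisely why I apply $\varphi$ once to the basic identity $Y_\gamma C = C\varphi(Y_\gamma)$ before iterating, so that the relation matches the shape of the amplification lemma.
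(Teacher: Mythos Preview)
Your proof is correct and follows essentially the same strategy as the paper's: form the ratio $Y_\gamma$ of the two crystalline structures, note $Y_\gamma \in G_A(u^p)$, derive a $\varphi$-twisted conjugation identity from the two cocycle conditions, and bootstrap via Lemma~\ref{lem:amplification} to force $Y_\gamma \in \bigcap_n G_A(u^n) = \{\mathrm{Id}\}$. Your extra step of applying $\varphi$ once so that the identity takes the form $\varphi(Y_\gamma) = \varphi(C)\,\varphi^2(Y_\gamma)\,\varphi(C)^{-1}$ and thus matches the precise shape of Lemma~\ref{lem:amplification} is a genuine clarification---the paper's displayed identity $B_\gamma(B'_\gamma)^{-1} = \varphi(C)\,\varphi(B_\gamma(B'_\gamma)^{-1})\,\varphi(C)^{-1}$ appears to be a typo, since the cocycle conditions actually give $Y = C\,\varphi(Y)\,C^{-1}$.
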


\begin{proof}
Suppose we have two crystalline $\widehat{\Gamma}$-structures, with the action of $\gamma \in \widehat{\Gamma}$ given by $B_\gamma$ and $B'_\gamma$ in $G(\Ainf{A})$.  Using Property (iii) of Fact \ref{fact:crystallineGhat}, we have that $B_\gamma (B'_\gamma)^{-1} \in G_A(u^p)$.  Furthermore, if the Frobenius on $\fP$ is given by $C$ then
\[
B_\gamma (B_\gamma)^{-1} = \varphi(C) \varphi( B_\gamma (B'_\gamma)^{-1} ) \varphi(C)^{-1}.
\]
An inductive argument using Lemma~\ref{lem:amplification} shows that $B_\gamma (B'_\gamma)^{-1} \in G_A(u^n)$ for all $n \geq p$.  Thus $B_\gamma = B_\gamma'$ as desired.
\end{proof}

Note the forgetful map $\Spf R_{\rhobar,\fPbar}^{\mu,\beta,\square} \to \Spf R_{\fPbar}^{\leq \mu,\beta,\square}$ factors through the flat closure by Lemma~\ref{lem:flatmoduli}.

\begin{prop} \label{prop:tspaceinjection}
For $\mu$ in the  Fontaine-Laffaille range, the natural map $\Spf R_{\rhobar,\fPbar}^{\mu,\beta,\square,\pflat} \to \Spf R_{\fPbar}^{\leq \mu,\beta,\square}$ is injective on tangent spaces. 
\end{prop}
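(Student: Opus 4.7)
My plan is to translate the source deformation data into $(\varphi, \Gammahat)$-module language and then invoke the uniqueness Lemma~\ref{lem:uniquecrystalline}. Fix $A := \F[\epsilon]/(\epsilon^2)$, which is Artinian with $pA = 0$. Suppose $(\fP_i, \rho_i, \delta_i, \beta_i)$ for $i = 1, 2$ represent two tangent vectors to $\Spf R_{\rhobar, \fPbar}^{\mu, \beta, \square, \pflat}$ with a common image $(\fP, \beta, \alpha)$ in $D_{\fPbar}^{\leq \mu, \beta, \square}(A)$; in particular $\fP_i = \fP$, $\beta_i = \beta$, and the trivializations of $\tT_{G, A}(\fP)$ induced by the framing of $\rho_i$ together with $\delta_i$ both equal $\alpha$. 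I want to show $(\rho_1, \delta_1) = (\rho_2, \delta_2)$, which yields the tangent injectivity.

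First I will promote each $\rho_i$ to a crystalline $(\varphi, \Gammahat)$-module with $G$-structure $\widehat{\fP}_i$ on the common underlying $G$-Kisin module $\fP$, arranged so that $\widehat{T}_{G, A}(\widehat{\fP}_i) \simeq \rho_i$ compatibly with $\delta_i$. To build $\widehat{\fP}_i$, I will construct a universal crystalline $(\varphi, \Gammahat)$-module with $G$-structure over the $\Lambda$-flat ring $R^{\pflat} := R_{\rhobar, \fPbar}^{\mu, \beta, \square, \pflat}$. By the definition of the flat closure together with Fact~\ref{fact:crystallineequiv}, over each finite flat $\Lambda$-algebra quotient $R^{\pflat} \to R'$ the specialized representation is crystalline and corresponds to a crystalline $(\varphi, \Gammahat)$-module with $G$-structure on the specialized $G$-Kisin module. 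Using the uniqueness of the $\Gammahat$-structure in characteristic zero and the continuity of the $\Gammahat$-cocycle, these local structures should glue into a universal $\Gammahat$-cocycle valued in $G(\widehat{R}_{R^{\pflat}})$. Pulling back along each tangent vector $R^{\pflat} \to A$ then yields $\widehat{\fP}_i$.

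Next, since $\mu$ is Fontaine-Laffaille and $pA = 0$, Lemma~\ref{lem:uniquecrystalline} shows that $\fP$ admits at most one crystalline $\Gammahat$-structure, forcing $\widehat{\fP}_1 = \widehat{\fP}_2$. Applying $\widehat{T}_{G, A}$ then gives an isomorphism $\rho_1 \simeq \rho_2$ of $\Gamma_K$-representations on the common underlying $G$-bundle. The framings of $\rho_i$ agree because $\alpha$ together with $\delta_i$ rigidifies the framing of $\rho_i|_{\Gamma_\infty}$, and since the $G$-bundle underlying $\rho_i$ is trivial, this rigidifies the framing of $\rho_i$ itself; likewise $\delta_1 = \delta_2$ as both are the unique isomorphism compatible with $\alpha$ and the common framing. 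Hence the tangent vectors coincide.

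The main obstacle is the integral descent of the $\Gammahat$-structure to $R^{\pflat}$, i.e.\ showing that the universal $\Gammahat$-cocycle, a priori only produced over $R^{\pflat}[1/p]$ or over each finite flat $\Lambda$-algebra quotient of $R^{\pflat}$, assembles into a single cocycle valued in $G(\widehat{R}_{R^{\pflat}})$. This should follow from combining the uniqueness of the $\Gammahat$-structure at finite flat quotients (a characteristic-zero form of Lemma~\ref{lem:uniquecrystalline}) with the $\Lambda$-flatness of $R^{\pflat}$, but some care is needed because $R^{\pflat}$ is not itself a finite $\Lambda$-algebra. Once this universal integral $(\varphi, \Gammahat)$-structure is in place, the rest of the argument is a routine application of Lemma~\ref{lem:uniquecrystalline} and Fact~\ref{fact:crystallineequiv}.
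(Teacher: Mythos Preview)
Your overall strategy---equip the common $G$-Kisin module $\fP$ over $A=\F[\epsilon]/(\epsilon^2)$ with two crystalline $\Gammahat$-structures coming from $\rho_1,\rho_2$ and then invoke Lemma~\ref{lem:uniquecrystalline}---matches the paper's exactly. The difference is entirely in how those $\Gammahat$-structures are produced, and that is precisely where your proposal has a gap.

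You propose to build a \emph{universal} crystalline $\Gammahat$-structure over $R^{\pflat}$ by gluing the structures you know exist over each finite flat $\Lambda$-algebra quotient. As you yourself note, this is the main obstacle, and you do not actually carry it out: the $(\varphi,\Gammahat)$-module formalism in Facts~\ref{fact:crystallineequiv} is only set up for finite $\Lambda$-algebras, there is no evident inverse-limit statement for the target group $G(\widehat{R}_{R^{\pflat}})$, and ``uniqueness at finite flat quotients plus $\Lambda$-flatness'' is not a gluing principle without further input. So at present your argument does not close.

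The paper sidesteps this entirely by working \emph{pointwise} rather than universally. Given a tangent vector, i.e.\ an $A$-point $f_A:R^{\pflat}\to A$, one first lifts $f_A$ to a point $f_B:R^{\pflat}\to B$ with $B$ a \emph{finite flat} $\Lambda$-algebra. This uses that $R^{\pflat}$ is complete local Noetherian, $\Lambda$-flat, reduced (hence Nagata), together with an approximation lemma of Bartlett \cite[Lemma~4.1.2]{bartlettirreducible}. Over such a $B$ the $(\varphi,\Gammahat)$-theory is available on the nose: $\rho_B\otimes L$ is crystalline, so by \cite[Theorem~4.2.7]{levin15} (ultimately \cite{liu10}) the Kisin module $\fP_B$ acquires a crystalline $\Gammahat$-structure $\widehat{\fP}_B$ with $\widehat{T}_{G,B}(\widehat{\fP}_B)=\rho_B$. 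Reducing along $B\twoheadrightarrow A$ gives the desired $\Gammahat$-structure on $\fP_A$ with $\widehat{T}_{G,A}(\widehat{\fP}_A)=\rho_A$. From here your last paragraph applies verbatim: Lemma~\ref{lem:uniquecrystalline} forces the two $\Gammahat$-structures on the common $\fP$ to coincide, hence $\rho_1=\rho_2$ and the tangent vectors agree.

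In short: replace the attempted global descent over $R^{\pflat}$ by the ``lift each $A$-point to a finite flat $B$, apply the theory over $B$, reduce back'' maneuver.
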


\begin{proof} 
Let $A$ be a finite $\FF$-algebra, and set $R:= R_{\rhobar,\fPbar}^{\mu,\beta,\square,\pflat}$.  We first claim that for every $A$-valued point $f_A :  R \to A$, there exists a finite flat $\Lambda$-algebra $B$ and a $B$-valued point $f_B : R \to B$ that lifts $f_A$.  We do so using an idea from \cite[Lemma 3.2.2]{bartlettpotential}.  Notice  that $R$ is a complete local $\Lambda$-algebra that is reduced and $\Lambda$-flat (Corollary~\ref{cor:forgetkisin} and Fact~\ref{fact:crystallinedeformation}).  Furthermore, $R$ is Nagata as it is a complete local Noetherian ring.  
Now $f_A : R \to A$ factors through $R/ \fm_R^j$ for some integer $j \geq 1$ by continuity, so we easily adapt \cite[Lemma 4.1.2]{bartlettirreducible} to find a  finite flat $\Lambda$-algebra $B$ and a $B$-valued point $f_B : R \to B$ that lifts $f_A$.  

We apply the previous paragraph with $A = \FF[\epsilon]/(\epsilon^2)$.  A tangent vector to $\Spf R_{\rhobar,\fPbar}^{\mu,\beta,\square,\pflat}$ at the closed point corresponds to a pair $(\fP_A, \rho_A)$ and is the reduction of a $B$-valued point for some finite flat $\Lambda$-algebra $B$.  This point gives a trivialized Kisin module $\fP_B$ and a Galois representation $\rho : \Gamma_K \to G(B)$ such that $\tT_{G,B}(\fP_B) = \rho|_{\Gamma_\infty}$ and $\rho \otimes L$ is crystalline.  The same argument used in the proof of \cite[Theorem 4.2.7]{levin15} which relies on \cite{liu10} shows that  $\fP_B$ admits a crystalline $(\varphi,\widehat{\Gamma})$-structure $\widehat{\fP}_B$ such that $\widehat{T}_{G, B}(\widehat{\fP}_B) = \rho$.  (In fact, as $B$ is finite flat it is the unique such structure.)   Therefore its reduction, the $G$-Kisin module $\fP_A$, also admits a crystalline $(\varphi,\widehat{\Gamma})$-structure such that $\widehat{T}_{G, A}(\fP_A) = \rho_A$.

Finally, given two tangent vectors to $\Spf R_{\rhobar,\fPbar}^{\mu,\beta,\square,\pflat}$ with the same underlying $G$-Kisin module, we know each admits a crystalline $(\varphi,\widehat{\Gamma})$-structure.  By Lemma~\ref{lem:uniquecrystalline} these structures are the same, and we know the crystalline structure determines the Galois representation.
\end{proof}

\begin{cor} \label{cor:closedimmersion}
If $\mu$ is Fontaine-Laffaille, the natural map $\Spf R_{\rhobar,\fPbar}^{\mu,\beta,\square,\pflat} \to \Spf R_{\fPbar}^{\leq \mu,\beta,\square}$ is a closed immersion.
\end{cor}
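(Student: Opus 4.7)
The plan is to deduce this as a formal consequence of Proposition~\ref{prop:tspaceinjection} together with the standard characterization of closed immersions among formal spectra of complete local Noetherian rings.

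First I would set $R := R_{\rhobar,\fPbar}^{\mu,\beta,\square,\pflat}$ and $S := R_{\fPbar}^{\leq \mu,\beta,\square}$, both of which lie in $\widehat{\cC}_{\Lambda}$, so that the morphism $\Spf R \to \Spf S$ corresponds to a local $\Lambda$-algebra homomorphism $\psi : S \to R$ of complete local Noetherian rings with residue field $\F$. A standard consequence of the topological Nakayama lemma is that $\psi$ is surjective if and only if the induced map on (reduced, relative) cotangent spaces
\[
\fm_S/(\fm_S^2 + \fm_\Lambda S) \longrightarrow \fm_R/(\fm_R^2 + \fm_\Lambda R)
\]
is a surjection of $\F$-vector spaces; dualizing over $\F$, this is equivalent to the corresponding map on relative tangent spaces being injective. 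Moreover, $\psi$ being surjective is equivalent to $\Spf R \to \Spf S$ being a closed immersion of formal schemes.

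Next I would identify these tangent spaces with the tangent spaces of the deformation functors from \S\ref{ss:deformationproblems} at the closed point, i.e.\ with the values $D_{\rhobar,\fPbar}^{\mu,\beta,\square,\pflat}(\F[\epsilon]/(\epsilon^2))$ and $D_{\fPbar}^{\leq \mu,\beta,\square}(\F[\epsilon]/(\epsilon^2))$. With this identification the tangent map induced by $\Spf R \to \Spf S$ is exactly the forgetful map on dual-number points considered in Proposition~\ref{prop:tspaceinjection}, which provides the required injectivity in the Fontaine-Laffaille range. Hence $\psi$ is surjective and the map on formal spectra is a closed immersion.

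There is no real obstacle: all of the substantive content has already been established in Proposition~\ref{prop:tspaceinjection} (and hence ultimately in Lemma~\ref{lem:uniquecrystalline} on the uniqueness of crystalline $\widehat{\Gamma}$-structures); the corollary is a formal unwinding of the standard dictionary between surjectivity of maps of complete local Noetherian $\Lambda$-algebras and injectivity on relative tangent spaces.
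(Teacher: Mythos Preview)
Your proposal is correct and follows essentially the same route as the paper, which simply cites Proposition~\ref{prop:tspaceinjection} together with Nakayama's lemma; you have merely spelled out the standard dictionary between surjectivity of local maps in $\widehat{\cC}_{\Lambda}$ and injectivity on tangent spaces.
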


\begin{proof}
This follows from Proposition~\ref{prop:tspaceinjection} and Nakayama's lemma.
\end{proof}

\subsection{Relationships between Deformation Rings} \label{ss:relationships}

\begin{defn} \label{defn:defringmonodromy}
We let $ R_{\fPbar}^{\leq \mu, \beta,\square,\nabla} $ be the $\Lambda$-flat and reduced quotient of $R_{\fPbar}^{\leq \mu, \beta,\square}$ such that $\Spec R_{\fPbar}^{\leq \mu, \beta,\square,\nabla} [1/p]$ is the vanishing locus of the monodromy condition on $\Spec R_{\fPbar}^{\leq \mu, \beta,\square}[1/p]$.  Define $ R_{\fPbar}^{\leq \mu, \beta,\nabla} $ similarly. 
\end{defn}

There are closed immersions 
\[
\Spf R_{\fPbar}^{\leq \mu, \beta,\square,\nabla} \into \Spf R_{\fPbar}^{\leq \mu, \beta,\square} \quad \text{ and  } \quad \Spf  R_{\fPbar}^{\leq \mu, \beta,\nabla} \into  R_{\fPbar}^{\leq \mu, \beta}.
\]
As $R_\rhobar^{\mu,\square}$ is $\Lambda$-flat (recall Fact~\ref{fact:crystallinedeformation}), the forgetful map $\Spf R_{\rhobar,\fPbar}^{\mu,\square} \to \Spf R_\rhobar^{\mu,\square}$ factors through $R^{\mu,\square,\pflat}_{\rhobar,\fPbar}$.   As $\Spf R_{\fPbar}^{\leq \mu, \beta, \square}$ is $\Lambda$-flat by Lemma~\ref{lem:flatmoduli}, we likewise obtain a map $\Spf R^{\mu,\beta,\square,\pflat}_{\rhobar,\fPbar} \to \Spf R_{\fPbar}^{\leq \mu, \beta, \square}$.

\begin{thm} \label{thm:diagram}
Assume that $p \nmid \# \pi_1(G^{\ad})$ and that $\mu$ is Fontaine-Laffaille.  We continue to fix a $G$-Kisin module $\fPbar$ over $\F$ and a continuous Galois representation $\rhobar: \Gamma_K \to G(\F)$ together with an isomorphism $\tT_{G,\F}(\fPbar) \simeq \rhobar|_{\Gamma_\infty}$.
If the Kisin variety $Y^{\leq \mu}_{M_{G,\bF}(\rhobar)}$ is trivial, 
there is a commutative diagram of formal schemes
\begin{equation} \label{eq:bigdiagram}
\begin{tikzcd}
 & & \Spf R_{\fPbar}^{\leq \mu, \beta,\square,\nabla} \ar[d,hook] \ar[r,"f.s."] & \Spf R_{\fPbar}^{\leq \mu, \beta,\nabla} \ar[d,hook] \\
 & \Spf R^{\mu, \beta,\square,\pflat}_{\rhobar,\fPbar} \ar[d, "f.s."] \ar[r,  hook] \ar[ur ,"\imath",  hook] &  \Spf R_{\fPbar}^{\leq \mu, \beta,\square} \ar[r, "f.s."] &  \Spf R_{\fPbar}^{\leq \mu, \beta}\\
\Spf R_{\rhobar}^{\mu,\square} & \Spf R^{\mu,\square,\pflat}_{\rhobar,\fPbar} \ar[l,"\sim", swap]  &                        
\end{tikzcd}
\end{equation}
with the indicated arrows isomorphisms and closed immersions, and with the arrows labeled $f.s.$ formally smooth.  The square is cartesian.
\end{thm}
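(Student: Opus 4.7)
The plan is to assemble the diagram piece by piece from results already established. The bottom-left isomorphism $\Spf R^{\mu,\square,\pflat}_{\rhobar,\fPbar} \xrightarrow{\sim} \Spf R_{\rhobar}^{\mu,\square}$ is exactly Corollary~\ref{cor:forgetkisin}, applied using the hypothesis that the Kisin variety is trivial. The three formally smooth arrows all arise from forgetting a trivialization: each is obtained by forgetting the trivialization $\alpha$ of $\tT_{G,A}(\fP)$ (or its deformed analog), and the fiber over any $A$-valued point is a torsor under the pro-unipotent formal group of elements of $G(A)$ reducing to the identity, hence is formally smooth. The two rightmost vertical closed immersions are definitional, since $R_{\fPbar}^{\leq \mu, \beta,\square,\nabla}$ and $R_{\fPbar}^{\leq \mu, \beta,\nabla}$ are constructed in Definition~\ref{defn:defringmonodromy} as $\Lambda$-flat reduced quotients imposing the monodromy condition on the generic fiber. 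The horizontal closed immersion $\Spf R^{\mu, \beta,\square,\pflat}_{\rhobar,\fPbar} \into \Spf R_{\fPbar}^{\leq \mu, \beta,\square}$ is Corollary~\ref{cor:closedimmersion}.

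Next, I would construct the diagonal arrow $\imath$ by showing that the closed immersion $\Spf R^{\mu, \beta,\square,\pflat}_{\rhobar,\fPbar} \into \Spf R_{\fPbar}^{\leq \mu, \beta,\square}$ factors through $\Spf R_{\fPbar}^{\leq \mu, \beta,\square,\nabla}$. The ring $R^{\mu, \beta,\square,\pflat}_{\rhobar,\fPbar}$ is by definition $\Lambda$-flat, and reduced because its generic fiber is the $\beta$-framed pullback of the reduced ring $R^{\mu,\square}_\rhobar[1/p]$ from Fact~\ref{fact:crystallinedeformation}. On the generic fiber, the associated Galois representation is crystalline, and Corollary~\ref{cor:Gmonodromy} then forces the corresponding $G$-Kisin module to satisfy the monodromy condition. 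Since $R_{\fPbar}^{\leq \mu, \beta,\square,\nabla}$ is the $\Lambda$-flat reduced quotient of $R_{\fPbar}^{\leq \mu, \beta,\square}$ cutting out the monodromy locus on the generic fiber, the map factors through it uniquely; this factored map $\imath$ is a closed immersion because its composition with the closed immersion $\Spf R_{\fPbar}^{\leq \mu, \beta,\square,\nabla} \into \Spf R_{\fPbar}^{\leq \mu, \beta,\square}$ is the closed immersion of Corollary~\ref{cor:closedimmersion}.

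Finally, to establish that the square with corners $R_{\fPbar}^{\leq \mu, \beta,\square,\nabla}$, $R_{\fPbar}^{\leq \mu, \beta,\nabla}$, $R_{\fPbar}^{\leq \mu, \beta,\square}$, $R_{\fPbar}^{\leq \mu, \beta}$ is cartesian, I would observe that the horizontal maps forget the $\alpha$-trivialization while the vertical maps impose the monodromy condition. The monodromy condition, as formulated in Definition~\ref{defn:mono}, depends only on the underlying $G$-Kisin module $\fP$ together with $\beta$ and not on the trivialization $\alpha$ of $\tT_{G,A}(\fP)$, so the two operations commute. More precisely, one checks that the formally smooth forgetful map $\Spf R_{\fPbar}^{\leq \mu,\beta,\square} \to \Spf R_{\fPbar}^{\leq \mu,\beta}$ is a torsor under the pro-unipotent formal group $\widehat{G}$, and pulling back the $\Lambda$-flat reduced locus cut out by $\nabla$ on the bottom yields the $\Lambda$-flat reduced locus cut out by $\nabla$ on top; uniqueness of such a flat reduced quotient identifies the pullback with $R_{\fPbar}^{\leq \mu, \beta,\square,\nabla}$.

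The main subtle step is the factorization through $R_{\fPbar}^{\leq \mu, \beta,\square,\nabla}$ defining $\imath$, since one must track carefully that the flat closure of the crystalline locus automatically lands in the flat reduced monodromy locus: this rests on $R^{\mu, \beta,\square,\pflat}_{\rhobar,\fPbar}$ being $\Lambda$-flat and reduced so that it embeds into its generic fiber, where the monodromy condition holds by Corollary~\ref{cor:Gmonodromy}. The cartesian property, while conceptually clear, requires only the observation that both vertical maps are pulled back from the same closed condition on the underlying Kisin module.
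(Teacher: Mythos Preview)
Your proposal is correct and follows essentially the same approach as the paper's proof. One small inaccuracy: not all three formally smooth arrows forget the trivialization $\alpha$ of $\tT_{G,A}(\fP)$. The two horizontal ones do, but the vertical map $\Spf R^{\mu,\beta,\square,\pflat}_{\rhobar,\fPbar} \to \Spf R^{\mu,\square,\pflat}_{\rhobar,\fPbar}$ forgets the trivialization $\beta$ of $\fP \bmod E(u)^N$ (compare Definition~\ref{defn:deformationproblems1} with Definition~\ref{defn:deformationproblems2}(ii)). Your formal smoothness argument still applies verbatim, since in either case the fibers are torsors under a smooth group scheme. Apart from this, your construction of $\imath$ via flatness, reducedness, and Corollary~\ref{cor:Gmonodromy} matches the paper's, and where the paper simply asserts the square is ``cartesian by construction'' you give a slightly fuller (and correct) justification.
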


\begin{remark}
The hypothesis that $p \nmid \# \pi_1(G^{\ad})$ is equivalent to $Z^{\der}$ being \'{e}tale over $\Lambda$ and $p \nmid \pi_1(G^{\der})$, which are necessary to apply many of our results.  The restrictions ultimately trace back to Theorem~\ref{thm:affineschubert} and Proposition~\ref{prop:ts}.
\end{remark}

\begin{proof}
The maps in \eqref{eq:bigdiagram} labeled $f.s.$ come from forgetting a trivialization modulo $E(u)^N$ of a $G$-Kisin module, or forgetting a trivialization of a Galois representation.  These are formally smooth as the set of trivializations are a $G$-torsor and $G$ is smooth.

The horizontal isomorphism comes from forgetting the $G$-Kisin module; Corollary \ref{cor:forgetkisin} shows it is an isomorphism.

Corollary~\ref{cor:closedimmersion} shows that the forgetful map $\Spf R_{\rhobar,\fPbar}^{\mu,\beta,\square,\pflat} \to \Spf R_{\fPbar}^{\leq \mu,\beta,\square}$ is a closed immersion.  

We next claim that forgetful map $\Spf R_{\rhobar,\fPbar}^{\mu,\beta,\square} \to \Spf R_{\fPbar}^{\leq \mu,\beta,\square}$ factors through $\Spf R_{\fPbar}^{\leq \mu, \beta,\square,\nabla}$ and that $\imath : \Spf R_{\rhobar,\fPbar}^{\mu,\beta,\square} \to \Spf R_{\fPbar}^{\leq \mu, \beta,\square,\nabla}$ is a closed immersion.  It suffices to check this on $\overline{L}$-points as $R_{\rhobar,\fPbar}^{\mu,\beta,\square,\pflat}$ and $ R_{\fPbar}^{\leq \mu,\beta,\square}$ are flat $\Lambda$-algebras (by construction and by Lemma~\ref{lem:flatmoduli})
and $R_{\rhobar,\fPbar}^{\mu,\beta,\square,\pflat}[1/p]$ is reduced (which follows from Fact~\ref{fact:crystallinedeformation}). 
Let $A$ be the a finite flat $\Lambda$-algebra. 
  An $A$-point of $R_{\rhobar,\fPbar}^{\mu,\beta,\square}$ is a $G$-Kisin module $\fP$ with coefficients in $A$ together with a Galois representation $\rho : \Gamma_K \to G(A)$ extending $\widetilde{T}_{G,A}(\fP)$ (plus trivialization).   After inverting $p$, we know that $\rho$ is crystalline with $p$-adic Hodge type $\mu$ (see the discussion after Fact~\ref{fact:crystallinedeformation}).  
Corollary~\ref{cor:Gmonodromy} implies $\fP[1/p]$ satisfies the monodromy condition.  This gives the  factorization.
As $R_{\fPbar}^{\leq \mu,\beta,\square}$ surjects onto $R_{\rhobar,\fPbar}^{\mu,\beta,\square,\pflat}  $, we immediately see that $R_{\fPbar}^{\leq \mu, \beta,\square,\nabla}$ surjects onto $R_{\rhobar,\fPbar}^{\mu,\beta,\square,\pflat}$ and hence that $\imath$ is a closed immersion.

The remaining maps were discussed before the statement of the theorem.  The square is cartesian by construction.
\end{proof}

Using this, we can prove a technical version of our main theorem.  Recall that $P_{\mu,\F}$ (resp. $P_{\mu',\bf}$) are the parabolics over $\bF$ associated to the cocharacter $\mu$ (resp. $\mu'$).

\begin{thm} \label{thm:maintechnical}
Suppose that $p \nmid \# \pi_1(G^{\ad})$.
Fix a Galois representation $\rhobar : \Gamma_K \to G(\bF)$ with shape $\mu'$ and
a Fontaine-Laffaille type $\mu$ for $G$ with $\mu$ and $\mu'$ dominant and $\mu' \leq \mu$.  Suppose that
\begin{enumerate}[(i)]
\item \label{thm1} $\dim P_{\mu,\bF}  \backslash G_{\bF}  \geq \dim P_{\mu', \bF} \backslash G_{\bF}$,

\item \label{thm2} the Kisin variety $Y^{\leq \mu}_{M_{G,\bF}(\rhobar)}$ is trivial,

\item \label{thm3} and $\Spf R_{\rhobar}^{\mu,\square}$ is non-empty.
\end{enumerate}
Then $\Spf R_{\rhobar}^{\mu,\square}$ is formally smooth.
\end{thm}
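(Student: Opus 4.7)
The plan is to use Theorem~\ref{thm:diagram} to reduce the problem to showing that $R_{\fPbar}^{\leq \mu, \beta, \square, \nabla}$ is formally smooth, which I will establish by a dimension argument combining the monodromy approximation of \S\ref{sec:monodromy} with the smoothness of the $\nabla$-locus in the affine Schubert variety from Theorem~\ref{thm:monodromyschubert}. First, by Theorem~\ref{thm:diagram} (applicable via hypothesis (ii) and $\mu$ being Fontaine--Laffaille), there is an isomorphism $R_{\rhobar}^{\mu,\square} \cong R_{\rhobar,\fPbar}^{\mu,\square,\pflat}$ and a formally smooth map onto it from $R_{\rhobar,\fPbar}^{\mu,\beta,\square,\pflat}$, so it suffices to prove the latter is formally smooth. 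By hypothesis (iii), Fact~\ref{fact:crystallinedeformation}, and the $G'_{(N)}$-torsor structure of the $\beta$-forgetful map, this ring is non-zero of Krull dimension
\[
\Delta := 1 + \dim G_{\bF} + \dim G'_{(N)} + \dim P_{\mu, \bF} \backslash G'_{\bF}.
\]

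Next, I would use Proposition~\ref{prop:straightening} (together with the $G_{\bF}$-torsor of $\square$-framings $\alpha$, contributing an additional $\dim G_{\bF}$ of relative dimension) to produce a formally smooth map from $R_{\fPbar}^{\leq \mu, \beta, \square}/p$ to the completed local ring $\cO^\wedge_{\Gr^{\leq \mu}_{G'_{\bF}}, x}$ of relative dimension $\dim G_{\bF} + \dim G'_{(N)}$, where $x = \Psi(\fPbar, \betabar)$. Since the mod-$p$ monodromy condition on the Frobenius matrix $C_{\fP, \beta}$ descends (via Remark~\ref{remark:leibniz}) to the $\nabla$-condition on $\Gr_{G'_{\bF}}$, pulling back identifies $(R_{\fPbar}^{\leq \mu, \beta, \square}/p)^{\nabla_1}$ with a formally smooth object over $\cO^\wedge_{\Gr^{\leq \mu, \nabla}_{G'_{\bF}}, x}$. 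As $\fPbar$ has shape $\mu'$, the point $x$ lies on the connected component of $\Gr^{\leq \mu, \nabla}_{G'_{\bF}}$ passing through $[u^{\mu'}]$, which by Theorem~\ref{thm:monodromyschubert} is the smooth flag variety $P_{\mu'} \backslash G'_{\bF}$. Consequently $(R_{\fPbar}^{\leq \mu, \beta, \square}/p)^{\nabla_1}$ is a regular local ring of dimension
\[
d := \dim G_{\bF} + \dim G'_{(N)} + \dim P_{\mu', \bF} \backslash G'_{\bF}.
\]

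To conclude, I would combine the chain of surjections of local Noetherian rings
\[
(R_{\fPbar}^{\leq \mu, \beta, \square}/p)^{\nabla_1} \twoheadrightarrow R_{\fPbar}^{\leq \mu, \beta, \square, \nabla}/p \twoheadrightarrow R_{\rhobar,\fPbar}^{\mu,\beta,\square,\pflat}/p
\]
arising from Corollary~\ref{cor:modpmonodromy} and the closed immersion $\imath$ of Theorem~\ref{thm:diagram}, together with the $\Lambda$-flatness of $R_{\fPbar}^{\leq \mu, \beta, \square, \nabla}$ (Definition~\ref{defn:defringmonodromy}). This yields $\Delta \leq \dim R_{\fPbar}^{\leq \mu, \beta, \square, \nabla} \leq 1 + d$, which translates to $\dim P_{\mu, \bF} \geq \dim P_{\mu', \bF}$; hypothesis (i) furnishes the reverse inequality, so $\Delta = 1 + d$ and every ring in the chain has dimension $d$. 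Since a surjection from a regular local ring to a Noetherian local ring of equal Krull dimension has zero kernel (the kernel lies in the unique minimal prime of the source), both surjections are isomorphisms. Lifting through $\Lambda$-flatness gives $R_{\rhobar,\fPbar}^{\mu,\beta,\square,\pflat} \cong R_{\fPbar}^{\leq \mu, \beta, \square, \nabla}$ formally smooth over $\Lambda$, whence $R_{\rhobar}^{\mu,\square}$ is formally smooth.

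The hard part is the second paragraph: identifying the mod-$p$ monodromy locus with the pullback of the smooth component of $\Gr^{\leq \mu, \nabla}_{G'_{\bF}}$ through $[u^{\mu'}]$. This is where the Fontaine--Laffaille bound $h_{\mu} < p - 1$ plays its essential role, entering Corollary~\ref{cor:modpmonodromy} (which requires this bound for the monodromy approximation) and Theorem~\ref{thm:monodromyschubert} (whose tangent-space computation in Proposition~\ref{prop:monots} needs the analogous bound). Everything else is a formal dimension count in which condition~(i) serves precisely to pin down the equality $\dim P_{\mu, \bF} \backslash G'_{\bF} = \dim P_{\mu', \bF} \backslash G'_{\bF}$.
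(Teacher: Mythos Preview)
Your proposal is correct and follows essentially the same strategy as the paper: reduce via Theorem~\ref{thm:diagram} to the closed immersion $\imath$, identify the special fiber of the monodromy-constrained deformation ring with (a formally smooth cover of) the completed local ring of $\Gr^{\leq \mu,\nabla}_{G'_{\bF}}$ at a point of shape $\mu'$ using Proposition~\ref{prop:straightening} and Theorem~\ref{thm:monodromyschubert}, and then compare dimensions using Fact~\ref{fact:crystallinedeformation} and hypothesis~(i). Your presentation is arguably cleaner on one point: the paper draws a square with $(R_{\fPbar}^{\leq \mu,\beta,\nabla})_{\bF}$ in the upper left and labels the top arrow ``f.s.'' without further comment, whereas you make explicit that the pullback of $\Gr^{\leq \mu,\nabla}_{G'_{\bF}}$ under $\Psi^{\mu}_{\bF}$ is the $\nabla_1$-locus $(R_{\fPbar}^{\leq \mu,\beta,\square}/p)^{\nabla_1}$, invoke Corollary~\ref{cor:modpmonodromy} to get the surjection onto $R_{\fPbar}^{\leq \mu,\beta,\square,\nabla}/p$, and then use the regular-local-ring dimension argument to force the chain of surjections to collapse; this is exactly what justifies the paper's ``f.s.'' label.
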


\begin{proof}
We will show that $\Spf R_{\fPbar}^{\leq \mu, \beta,\square,\nabla}$ is either empty or formally smooth of the same dimension as $\Spf R^{\mu, \beta,\square,\pflat}_{\rhobar,\fPbar}$.  Since $\imath$ is a closed immersion, it follows that $\Spf R^{\mu, \beta,\square,\pflat}_{\rhobar,\fPbar}$ and hence $\Spf R^{\mu,\square}_{\rhobar}$ is formally smooth. 

Let $\overline{C} \in \LG(\bF)$ correspond to $\varphi_{\fPbar}$; it has shape $\mu'$.
Imposing the monodromy condition on the map from Proposition~\ref{prop:straightening}, we obtain a Cartesian diagram
\[
\begin{tikzcd}
  \Spf (R_{\fPbar}^{\leq \mu, \beta})_{\bF}^ {\nabla} \ar[r, "f.s." ] \ar[d,hook] &   \Spf \cO^{\wedge}_{\Gr_{G'_\bF}^{\leq \mu, \nabla} , \overline{C}} \ar[d,hook] \\
 \Spf (R_{\fPbar}^{\leq \mu, \beta})_\bF \ar[r, "f.s."] & \Spf \cO^{\wedge}_{\Gr^{\leq \mu}_{G'_{\F}}, \overline{C}}.
\end{tikzcd}
\]
Note that $ \Spf (R_{\fPbar}^{\leq \mu, \beta, \nabla})_\bF \subset \Spf (R_{\fPbar}^{\leq \mu, \beta})_{\bF}^ {\nabla} $ by Theorem~\ref{thm:monoapprox}.
As $\Spf R_{\fPbar}^{\leq \mu, \beta, \nabla}$ involves imposing the monodromy condition and then taking the flat closure, it is conceivable that it is empty.
The relative dimension of the bottom map is $\dim G'_{(N)}$, where $N$ is the fixed integer from \S\ref{ss:deformationproblems}
we have been using to define the deformation rings by trivializing $G$-Kisin modules modulo $E(u)^N$.
The upper right formal scheme is formally smooth of dimension $\dim P_{\mu',\bF} \backslash G_\bF$ by Theorem~\ref{thm:monodromyschubert}.
Thus  $\Spf (R_{\fPbar}^{\leq \mu, \beta, \nabla})_\bF$ is either empty or formally smooth of dimension at most $\dim P_{\mu',\bF} \backslash G'_\bF + \dim G'_{(N)}$.  Since by definition $R_{\fPbar}^{\leq \mu, \beta, \nabla}$ is $\Lambda$-flat, if $\Spf R_{\fPbar}^{\leq \mu, \beta, \nabla}$ is non-empty then it is formally smooth of relative dimension at most $\dim P_{\mu',\bF} \backslash G'_\bF + \dim G'_{(N)}$.  
We conclude that $\Spf R_{\fPbar}^{\leq \mu, \beta,\square, \nabla}$ is either empty or formally smooth of relative dimension at most
\begin{equation} \label{eq:dim1}
\dim P_{\mu',\bF} \backslash G'_\bF + \dim G'_{(N)} + \dim G_{\bF}
\end{equation}
as the set of trivializations on the $\Gamma_\infty$-representation is a $G_\bF$-torsor.

On the other hand, we assumed that $\Spf R_{\rhobar}^{\mu,\square}$ is non-empty.  
By Fact~\ref{fact:crystallinedeformation}, we know it has relative dimension $\dim G_{\bF} + \dim P_{\mu, \bF} \backslash G'_{\bF}$.  Hence $\Spf R^{\mu, \beta,\square,\pflat}_{\rhobar,\fPbar}$ has relative dimension
\begin{equation} \label{eq:dim2}
\dim G_{\bF} + \dim P_{\mu, \bF} \backslash G'_{\bF} + \dim G'_{(N)}.
\end{equation}
As $\dim P_{\mu,\bF}  \backslash G_{\bF}  \geq \dim P_{\mu', \bF} \backslash G_{\bF}$, the existence of the closed immersion $\imath$ completes the proof.
\end{proof}

\begin{remark} \label{rmk:cocharacterhyp}
\begin{enumerate}
\item  When $\mu$ is a regular cocharacter, condition (i) is automatic.   
\item We expect that $R_{\rhobar}^{\mu,\square}$ is in fact zero when  $\mu' \neq \mu$  so that condition (i) in Theorem \ref{thm:maintechnical} should not be necessary. 
\end{enumerate}
\end{remark}

Theorem~\ref{thm:mainintro} is a direct consequence.

\begin{proof}[Proof of Theorem~\ref{thm:mainintro}]
Take $\mu' = \mu$ to guarantee (\ref{thm1}), and use Corollary~\ref{cor:kisinvspec} to guarantee (\ref{thm2}).
\end{proof}

Finally, note that $\Spf R_{\rhobar}^{\mu,\square}$ is non-empty if and only if there exists a 
crystalline lift of $\rhobar$ with $p$-adic Hodge type $\mu$.  We now record a few partial results about the non-existence of crystalline lifts which follow from our methods.

\begin{cor} \label{cor:nolift} 
With the setup of Theorem~\ref{thm:maintechnical}, assume that (\ref{thm2}) holds but that $\dim P_{\mu,\bF} \backslash G_{\bF} < \dim P_{\mu',\bF} \backslash G_{\bF}$.  Then there does not exist a crystalline lift of $\rhobar$ with $p$-adic Hodge type $\mu$.
\end{cor}

\begin{proof}
By hypothesis \eqref{eq:dim1} is larger than \eqref{eq:dim2}.  
Then the proof of Theorem~\ref{thm:maintechnical} shows that $\Spf R^{\mu, \beta,\square,\pflat}_{\rhobar,\fPbar}$ is either empty or has relative dimension \emph{larger} than the relative dimension of $\Spf R_{\fPbar}^{\leq \mu, \beta,\square, \nabla}$.  As there is a closed immersion $\imath : \Spf R^{\mu, \beta,\square,\pflat}_{\rhobar,\fPbar} \to \Spf R_{\fPbar}^{\leq \mu, \beta,\square, \nabla}$, we conclude that $\Spf R^{\mu, \beta,\square,\pflat}_{\rhobar,\fPbar}$ is empty.
\end{proof}

\begin{remark}
The techniques of this paper can also be adapted to show that the existence of a crystalline lift of type $\mu$ satisfying the hypotheses (i) and (ii) from Theorem~\ref{thm:mainintro}
implies non-existence of a crystalline lift of weight $\mu'$ for $\mu' < \mu$.
This is a motivation for the expectation in Remark~\ref{rmk:cocharacterhyp}(2).
\end{remark}


\providecommand{\bysame}{\leavevmode\hbox to3em{\hrulefill}\thinspace}
\providecommand{\MR}{\relax\ifhmode\unskip\space\fi MR }
\providecommand{\MRhref}[2]{%
  \href{http://www.ams.org/mathscinet-getitem?mr=#1}{#2}
}
\providecommand{\href}[2]{#2}

\end{document}